\newcommand{\nc}{\newcommand}
\nc{\cmt}{\fbox{comment}}
\numberwithin{equation}{section}
\newcommand{\bj}{\begin{jaune}}
\newcommand{\ej}{\end{jaune}}
\theoremstyle{plain}
\newtheorem*{theorem*}{Theorem}
\newtheorem{lemma}{Lemma}[subsection]
\newtheorem{prop}[lemma]{Proposition}
\newtheorem{theorem}[lemma]{Theorem}
\newtheorem*{maintheorem1}{Main Theorem 1}
\newtheorem*{maintheorem2}{Main Theorem 2}
\newcommand{\Prop}{\begin{prop}}
\newcommand{\enprop}{\end{prop}}
\newcommand{\Lemma}{\begin{lemma}}
\newcommand{\enlemma}{\end{lemma}}
\newcommand{\Th}{\begin{theorem}}
\newcommand{\enth}{\end{theorem}}
\newtheorem{corollary}[lemma]{Corollary}
\newcommand{\Cor}{\begin{corollary}}
\newcommand{\encor}{\end{corollary}}
\newtheorem{definition}[lemma]{Definition}
\newtheorem*{conjecture}{Conjecture}
\newcommand{\Def}{\begin{definition}}
\newcommand{\edf}{\end{definition}}
\newtheorem{sublemma}[lemma]{Sublemma}
\newcommand{\Sub}{\begin{sublemma}}
\newcommand{\ensub}{\end{sublemma}}
\theoremstyle{definition}
\newtheorem{remark}[lemma]{Remark}
\newtheorem{Convention}[lemma]{Convention}
\newcommand{\Conv}{\begin{Convention}}
\newcommand{\enconv}{\end{Convention}}
\nc{\Con}{\begin{conjecture}}
\nc{\encon}{\end{conjecture}}
\nc{\Rem}{\begin{remark}}
\nc{\enrem}{\end{remark}}
\newcommand{\C}{{\mathbb C}}
\newcommand{\Q}{\mathbb {Q}}
\newcommand{\Z}{{\mathbb Z}}
\newcommand{\B}{{\mathbf{B}}}
\newcommand{\A}{{\mathbb Z [q^{\pm1}]}}
\newcommand{\Db}[1][G]{{\operatorname{D^b_{#1}}}}
\newcommand{\R}{{\rm R}}
\newcommand{\one}{{\bf{1}}}
\newcommand{\seteq}{\mathbin{:=}}
\newcommand{\hd}{{\operatorname{hd}}}
\newcommand{\soc}{{\operatorname{soc}}}
\newcommand{\g}{\mathfrak{g}}
\newcommand{\h}{\mathfrak{h}}
\newcommand{\n}{\mathfrak{n}}
\newcommand{\Uq}[1][{\mathfrak{g}}]{{U_q(#1)}}
\newcommand{\Hom}{\operatorname{Hom}}
\newcommand{\End}{\operatorname{End}}
\newcommand{\isoto}[1][]{\mathop{\xrightarrow%
[{\raisebox{.3ex}[0ex][.3ex]{$\scriptstyle{#1}$}}]%
{{\raisebox{-.6ex}[0ex][-.6ex]{$\mspace{2mu}\sim\mspace{2mu}$}}}}}
\newcommand{\tensor}{\otimes}
\newcommand{\M}{\mathsf{M}}
\newcommand{\D}{\mathrm{D}}
\nc{\N}{\Z_{\ge0}}
\newcommand{\eq}{\begin{eqnarray}}
\newcommand{\eneq}{\end{eqnarray}}
\newcommand{\hs}{\hspace*}
\newcommand{\To}[1][{\hs{2ex}}]{\xrightarrow{\,#1\,}}
\newcommand{\eqn}{\begin{eqnarray*}}
\newcommand{\eneqn}{\end{eqnarray*}}
\newcommand{\on}{\operatorname}
\newcommand{\Ker}{\on{Ker}}
\newcommand{\bni}{\be[{\rm(i)}]}
\newcommand{\bna}{\be[{\rm(a)}]}
\newcommand{\QED}{\end{proof}}
\newcommand{\Proof}{\begin{proof}}
\newcommand{\soplus}{\mathop{\mbox{\normalsize$\bigoplus$}}\limits}
\newcommand{\sodot}{\mathop{\mbox{\normalsize$\bigodot$}}\limits}
\newcommand{\sotimes}{\mathop{\mbox{\normalsize$\bigotimes$}}\limits}
\newcommand{\id}{\on{id}}
\newcommand{\ba}{\begin{array}}
\newcommand{\ea}{\end{array}}
\newcommand{\monoto}{\rightarrowtail}
\newcommand{\set}[2]{\left\{#1 \mid #2 \right\}}
\newcommand{\supp}{\operatorname{supp}}
\newcommand{\eqsub}{\begin{subequations}\begin{eqnarray}}
\newcommand{\eneqsub}{\end{eqnarray}\end{subequations}}
\newcommand{\ol}{\overline}
\newcommand{\ko}{{{\mathbf{k}}}}
\nc{\la}{\lambda}
\nc{\lam}{\lambda}
\nc{\U}[1][\g]{U_q(#1)}
\nc{\te}{\tilde{e}}
\nc{\tei}{\tilde{e}_i}
\nc{\tf}{\tilde{f}}
\nc{\tfi}{\tilde{f}_i}
\nc{\tU}{\widetilde U_q(\g)}
\nc{\tE}{\tilde{E}}
\nc{\tF}{\widetilde{F}}
\nc{\tK}{\widetilde{K}}
\nc{\tk}{\tilde{k}}
\nc{\tkone}{\tk_{\ol{1}}}
\nc{\teone}{\tilde{e}_{\ol{1}}}
\nc{\tfone}{\tilde{f}_{\ol{1}}}
\nc{\teibar}{\tilde{e}_{\ol{i}}} \nc{\tfibar}{\tilde{f}_{\ol{i}}}
\nc{\tki}{{\tk}_{\ol {i}}}
\nc{\BZ}{{\mathbb{Z}}}
\nc{\al}{\alpha}
\nc{\qs}{{q}}
\nc{\lan}{\langle}
\nc{\ran}{\rangle}
\nc{\re}{{\mathrm{re}}}
\nc{\wt}{\operatorname{wt}}
\nc{\ch}{\operatorname{ch}}
\nc{\Um}[1][\g]{U^-_q(#1)}
\nc{\Ue}{U^+_q(\g)}
\nc{\eps}{\varepsilon}
\nc{\vphi}{\varphi}
\nc{\sphi}{\varphi^*}
\nc{\seps}{\varepsilon^*}
\nc{\nn}{\nonumber}
\def\max{{\mathop{\mathrm{max}}}}
\nc{\vph}{\varphi}
\nc{\cls}{{\operatorname{cl}}}
\nc{\Wt}{{\operatorname{Wt}}}
\nc{\Us}{U'_q(\g)}
\nc{\La}{\Lambda}
\nc{\tLa}{\widetilde\Lambda}
\nc{\ro}{{\rm(}}
\nc{\rf}{{\rm)}}
\nc{\norm}{{\mathrm{norm}}}
\nc{\qbox}{\quad\mbox}
\nc{\braid}{{\mathfrak{B}}}
\nc{\Ad}{\operatorname{Ad}}
\nc{\Aut}{\operatorname{Aut}}
\nc{\dt}[1]{\tilde{\tilde #1}}
\nc{\Sn}{S^{{\mathrm{norm}}}}
\nc{\aff}{{\rm{aff}}}
\nc{\rk}{{\mathrm{rk}}}
\nc{\tP}{\widetilde{P}}
\nc{\tW}{\widetilde{W}}
\nc{\Dyn}{\mathrm{Dyn}}
\nc{\tD}{\widetilde{\Delta}}
\nc{\height}[1]{{\operatorname{ht}}(#1)}
\nc{\bl}{\bigl(}
\nc{\br}{\bigr)}
\nc{\Hecke}{\mathrm{H}}
\nc{\HA}{\Hecke^{\mathrm{A}}}
\nc{\HB}{\Hecke^{\mathrm{B}}}
\newcommand{\scbul}{{\,\raise1pt\hbox{$\scriptscriptstyle\bullet$}\,}}
\nc{\vac}{{\phi}}
\nc{\Bt}{\B_\theta(\g)}
\nc{\be}{\begin{enumerate}}
\nc{\ee}{\end{enumerate}}
\nc{\low}{{\mathrm{low}}}
\nc{\upper}{{\mathrm{up}}}
\nc{\Zodd}{\Z_{\mathrm{odd}}}
\nc{\Ft}[1][n]{\mathbb{P}\mathrm{ol}_{#1}}
\nc{\Ftf}[1][n]{\widetilde{\mathbb{P}\mathrm{ol}}_{#1}}
\nc{\KA}{\on{K}^{\mathrm{A}}}
\nc{\KB}{\on{K}^{\mathrm{B}}}
\nc{\Res}{\on{Res}}
\nc{\Fc}[1][{n,m}]{\mathbf{F}_{#1}}
\nc{\tphi}{\tilde{\varphi}}
\nc{\CO}{\mathscr{O}}
\nc{\inte}{\mathrm{int}}
\newcommand{\Oint}{\mathcal{O}_{{\rm int}}}
\nc{\vs}{\vspace*}
\nc{\tLt}{\widetilde{L}}
\nc{\tL}{\widetilde{\Lambda}}
\nc{\tu}{\tilde{u}}
\nc{\noi}{\noindent}
\nc{\heigh}{\mathfrak{t}}
\nc{\lowest}{\mathfrak{l}}
\nc{\rootl}{\mathsf{Q}}
\nc{\cl}{\colon}
\nc{\uqpg}{U'_q(\mathfrak g)}
\nc{\uq}{\uqpg}
\nc{\Oh}{\widehat{\mathcal{O}}}
\nc{\KLR}{KLR algebra}
\nc{\KLRs}{KLR algebras}
\nc{\cor}{\mathbf{k}}
\nc{\cora}{{\cor(A)}}
\nc{\haut}{\mathrm{ht}}
\nc{\tens}{\mathop\otimes}
\nc{\gmod}{\mbox{-$\mathrm{gmod}$}}
\nc{\gMod}{\mbox{-$\mathrm{gMod}$}}
\nc{\proj}{\mbox{-$\mathrm{proj}$}}
\nc{\gproj}{\mbox{-$\mathrm{gproj}$}}
\nc{\smod}{\mbox{-$\mathrm{mod}$}}
\nc{\Mod}{\mbox{-$\mathrm{Mod}$}}
\nc{\Rnorm}{R^{\rm{norm}}}
\nc{\Vhat}{\widehat{V}}
\nc{\F}{\mathcal{F}}
\def\T{{\mathcal T}}
\nc{\fd}[1][A]{\on{\mathrm{flat.dim}_{#1}}}
\nc{\bP}{{\mathbb{P}}}
\nc{\bPh}{\widehat{\mathbb{P}}}
\nc{\bK}[1][{n}]{\widehat{\mathbb{K}}_{#1}}
\nc{\bV}[1][{n}]{\widehat{V}^{\otimes{#1}}}
\nc{\bVK}[1][{n}]{\widehat{V}^{\otimes{#1}}_{\widehat{\mathbb{K}}}}
\nc{\hV}{\widehat{V}}
\nc{\opp}{\mathrm{opp}}
\nc{\col}{\colon}
\nc{\bnum}{\be[{\rm(i)}]}
\nc{\bnam}{\be[{\rm(a)}]}
\nc{\oep}{\epsilon}
\nc{\qtext}[1][{and}]{\quad\text{#1}\quad}
\nc{\qtextq}[1]{\quad\text{#1}\quad}
\nc{\longtwoheadrightarrow}[1][]{\xymatrix{\ar@{->>}[r]^-{{#1}}&}}
\nc{\epiTo}[1][]{\longtwoheadrightarrow[{#1}]}
\nc{\epito}{\twoheadrightarrow}
\nc{\monoTo}[1][]{\xymatrix{\ar@{>->}[r]^-{{#1}}&}}
\nc{\sym}{\mathfrak{S}}
\nc{\inp}[1]{{({#1})_{\mathrm{n}}}}
\nc{\rtl}{\rootl}
\nc{\wtd}{\widetilde}
\nc{\etens}{\boxtimes}
\nc{\ds}[1]{\mathrm{d}(#1)}
\nc{\rmat}[1]{{\mathbf{r}}_%
{\mspace{-2mu}\raisebox{-.6ex}{${\scriptstyle{#1}}$}}}
\nc{\rmats}[1]{{\mathbf{r}}_%
{\mspace{-2mu}\raisebox{-.6ex}{${\scriptscriptstyle{#1}}$}}}
\nc{\shc}{\mathcal{C}}
\nc{\shs}{\mathcal{S}}
\nc{\Fct}{{\on{Fct}}}
\nc{\tC}{\widetilde{\shc}}
\nc{\Zp}{\Z_{\ge0}}
\nc{\tPhi}{\widetilde{\Phi}}
\nc{\tT}{{\widetilde{\T}}}
\nc{\Ob}{\on{Ob}}
\nc{\bwr}{\mbox{\large$\wr$}}
\nc{\Img}{\on{Im}}
\nc{\Ab}{\mathcal{A}^{\mathrm{big}}}
\nc{\Sb}{\mathcal{S}^{\mathrm{big}}}
\nc{\As}{\mathcal{A}}
\nc{\Ss}{\mathcal{S}}
\nc{\ntens}{\widetilde{\otimes}}
\nc{\hR}{\widehat{R}}
\nc{\nconv}{\mathop{\mbox{\large $\odot$}}}
\nc{\snconv}{\mbox{\scriptsize$\odot$}}
\nc{\ts}{\tilde{s}}
\nc{\sho}{\mathcal{O}}
\nc{\bc}{\begin{cases}}
\nc{\ec}{\end{cases}}
\nc{\slnh}{{\widehat{\mathfrak{sl}}_N}}
\nc{\UA}{U_q'(\slnh)}
\nc{\cQ}{\mathcal{Q}}
\nc{\Irr}{\mathcal{I}rr}
\nc{\tQ}{\widetilde{\cQ}}
\nc{\bs}{\mathbf{s}}
\nc{\bL}{\mathbb{L}}
\nc{\tg}{\tilde{g}}
\nc{\conv}{\mathbin{\mbox{\large $\circ$}}}
\nc{\Rm}{R^{\mathrm{ren}}}
\nc{\bQ}{\ol{Q}}
\renewcommand{\Im}{\on{Im}}
\nc{\de}{\on{\textfrak{d}}}
\nc{\xmono}{\ar@{>->}}
\nc{\xepi}{\ar@{->>}}
\nc{\db}[1]{\raisebox{-.5ex}[2ex][1.8ex]{$#1$}}
\nc{\wb}[1]{\mbox{$\rule[-1.1ex]{0ex}{2ex}#1$}}
\nc{\univ}{\mathrm{univ}}
\nc{\rM}{{}^*\mspace{-2mu}M}
\nc{\lM}{M^*}
\nc{\uqm}{\uq\smod}
\nc{\tR}{\widetilde{R}_{\gamma,\beta}}
\nc{\tx}{\tilde{x}}
\nc{\bi}{\mathbf{i}}
\nc{\ttau}{\widetilde{\tau}}
\nc{\tEnd}{\on{\widetilde{E}nd}}
\nc{\tHom}{\on{\widetilde{H}om}}
\nc{\K}{{J}}
\nc{\Kex}{{\K}_{\mathrm{ex}}}
\nc{\Kfr}{{\K}_{\mathrm{f\mspace{.01mu}r}}}
\nc{\coro}{\cor}
\nc{\tB}{\widetilde{B}}
\nc{\seed}{\mathscr{S}}
\nc{\up}{\mathrm{up}}
\nc{\bfa}{\mathbf{a}}
\newcommand{\wB}{\widetilde{B}}
\newcommand{\Uqg}{U_q(\g)}
\newcommand{\tUqg}{\widetilde{U}_q(\g)}
\newcommand{\im}{\mathrm{Im}}
\newcommand{\Seed}{\mathscr{S}}
\newcommand{\rl}{\mathsf{Q}}   
\newcommand{\wl}{\mathsf{P}}   
\newcommand{\cmA}{\mathsf{A}}  
\newcommand{\comp}{\Delta_+}
\newcommand{\comm}{\Delta_-}
\newcommand{\hconv}{\mathbin{\mbox{$\nabla$}}}
\newcommand{\shconv}{\mathbin{\large\nabla}}
\newcommand{\sconv}{\mathbin{\mbox{$\Delta$}}}
\nc{\tensp}{\otimes_{_+}\mspace{-1mu}}
\nc{\tensm}{\otimes_{_-}\mspace{-1mu}}
\newcommand{\ve}{\varepsilon}
\newcommand{\ra}{\rangle}
\newcommand{\laa}{\langle}
\newcommand{\raa}{\rangle}
\newcommand{\vpi}{{\varpi_i}}
\newcommand{\ex}{\mathrm{ex}}
\newcommand{\fr}{\mathrm{fr}}
\newcommand{\ri}{{\mspace{1mu}\rm r}}
\newcommand{\li}{{\rm l}}
\newcommand{\Lto}{\longrightarrow}
\newcommand{\Dv}{\mathbf{D}_\varphi}
\newcommand{\oi}{\overline{\iota}}
\nc{\bg}{{\oi_\g}}
\nc{\An}{A_q(\n)}
\nc{\tEs}{\widetilde{E}^*}
\def\max{{\mathop{\mathrm{max}}}}
\nc{\pn}{p_\n}
\nc{\dP}{\mathrm{E}^*}
\nc{\Up}{U_q^+(\g)}
\nc{\Ag}{A_q(\g)}
\nc{\QA}{\mathbf{A}}
\nc{\Pd}{\wl^+}
\nc{\Po}{\wl}
\nc{\De}[1]{\Delta(#1)}
\nc{\rt}{\ri}
\nc{\prtl}{\rtl_+}
\nc{\nrtl}{\rtl_-}
\nc{\lt}{\mathrm{l}}
\nc{\wtl}{\wt_\lt}
\nc{\wtr}{\wt_\rt}
\nc{\Cmp}{\comp}
\nc{\Cmm}{\comm}
\nc{\Cm}{\Delta}
\newlength{\mylength}
\title
{Monoidal categorification of cluster algebras}
\author[S.-J. Kang, M. Kashiwara, M. Kim, S.-j. Oh]{Seok-Jin Kang,
Masaki Kashiwara$^{1}$,  Myungho Kim$^{2}$ and Se-jin Oh$^{3}$}
\address{
Research Institute of Computers, Information and Communication \\
Pusan National University \\
2, Busandaehak-ro
Pusan 46241, Korea}
         \email{soccerkang@hotmail.com}
\address{Research Institute for Mathematical Sciences\\
          Kyoto University\\ Kyoto 606-8502, Japan}
         \email{masaki@kurims.kyoto-u.ac.jp}
\address{Department of Mathematics, Kyung Hee University \\ Seoul 02447, Korea}
         \email{mkim@khu.ac.kr}
\address{Department of Mathematics Ewha Womans University \\Seoul 03760, Korea}
\email{sejin092@gmail.com}
\thanks{$^1$ This work was supported by Grant-in-Aid for
Scientific Research (B) 22340005, Japan Society for the Promotion of
Science.}
\thanks{$^2$ This work was supported by the National Research Foundation of
Korea(NRF) grant funded by the Korea government(MSIP) (No. NRF-2017R1C1B2007824).}
\thanks{$^3$ This work was supported by  NRF Grant \# 2016R1C1B2013135.}
\keywords{Cluster algebra, Quantum cluster algebra, Monoidal categorification,
Khovanov-Lauda-Rouquier algebra, Unipotent quantum coordinate ring, Quantum affine algebra}
\subjclass[2010]
{13F60, 81R50, 16G, 17B37}
\date{\today}
\begin{document}

\begin{abstract}
We prove that the quantum cluster algebra structure of a unipotent quantum coordinate ring $A_q(\n(w))$,
associated with a symmetric Kac-Moody algebra and
its Weyl group element $w$,
admits a monoidal categorification via the representations of symmetric Khovanov-Lauda- Rouquier algebras.
In order to  achieve  this goal,
we  give a formulation of monoidal categorifications of quantum cluster algebras and
 provide a criterion for a monoidal category of
finite-dimensional graded $R$-modules to become a monoidal
categorification,
where $R$ is a symmetric Khovanov-Lauda-Rouquier algebra.
Roughly speaking, this criterion asserts that
a quantum monoidal seed can be mutated successively in all the directions,
once the first-step mutations are possible.
Then,
we show the
existence of a quantum monoidal seed of $A_q(\n(w))$
which admits the first-step mutations in all the directions.
As a consequence, we prove  the conjecture that
any cluster monomial is a member of the
upper global basis up to a power of $q^{1/2}$.
In the course of our investigation, we also give a proof of
a conjecture of Leclerc on
the product of upper global basis elements.
\end{abstract}
\maketitle
\tableofcontents

\section*{Introduction}

The purpose of this paper is to provide a monoidal categorification of
the quantum cluster algebra structure on the unipotent quantum coordinate ring $A_q(\n(w))$,
which is associated with a symmetric Kac-Moody algebra $\g$ and a Weyl group element $w$.

The notion of cluster algebras was introduced by Fomin and Zelevinsky in
\cite{FZ02} for studying total positivity and upper global bases.
Since their introduction,  a lot of  connections and applications have been
discovered in various fields of mathematics including
representation theory, Teichm\"uller theory, tropical geometry, integrable
systems, and Poisson geometry.

A cluster algebra is a $\Z$-subalgebra of a rational function field
given by a set of generators,
called the {\it cluster variables}. These generators are grouped
into overlapping subsets, called  the {\it clusters}, and the clusters are defined
inductively by a procedure called {\it mutation} from the {\it
initial cluster} $\{ X_i\}_{1 \le i \le r}$, which is controlled by
an exchange matrix $\wB$. We call a monomial of cluster
variables in   each cluster {\it a cluster monomial}.

Fomin and Zelevinsky proved that every cluster variable is a Laurent
polynomial of the initial cluster $\{ X_i\}_{1 \le i \le r}$ and
they conjectured that this Laurent polynomial has positive
coefficients (\cite{FZ02}). This {\it positivity conjecture} was
proved by Lee and Schiffler in the {\it skew-symmetric} cluster algebra case
in \cite{LS13}.
The {\it linearly independence
conjecture} on cluster monomials was proved
in the skew-symmetric cluster algebra case
in \cite{CKLP12}.

The notion of quantum cluster algebras, introduced by Berenstein and Zelevinsky in
\cite{BZ05}, can be considered as a $q$-analogue of cluster algebras.
The commutation relation among the cluster variables is determined by
a skew-symmetric matrix $L$.
As in the cluster algebra case, every cluster variable belongs to
$\Z[q^{\pm 1/2}][X_i^{\pm 1}]_{1 \le i \le r}$ (\cite{BZ05}), and is
expected to be an element of $\Z_{\ge0}[q^{\pm 1/2}][X_i^{\pm 1}]_{1 \le i \le r}$,
which is referred to as the {\it quantum
positivity conjecture} (cf.\ \cite[Conjecture 4.7]{DMSS}).
In \cite{KQ14}, Kimura and Qin proved the quantum positivity conjecture for quantum cluster algebras containing {\it acyclic} seed
and specific coefficients.

The \emph{unipotent quantum coordinate rings} $\An$ and $A_q(\mathfrak n(w))$
are  examples of quantum cluster algebras arising from Lie theory.
The algebra $\An$ is a $q$-deformation of the coordinate ring $\C[N]$ of the unipotent subgroup,
and is  isomorphic to the negative half $\Um$ of the quantum group as $\Q(q)$-algebras.
The algebra $A_q(\mathfrak n(w))$ is  a $\Q(q)$-subalgebra of $\An$ generated by
a set of the \emph{dual PBW basis elements} associated with a Weyl group element $w$.
The unipotent quantum coordinate ring $\An$  has a very interesting basis so called   the
{\em upper global basis} (dual canonical basis)
$\mathbf B^\up$ , which is  dual to the lower global basis (canonical basis) (\cite{Kash91, Lusz90}).
The upper global basis has been studied
 emphasizing on its multiplicative structure.
For example, Berenstein and Zelevinsky (\cite{BZ93})
conjectured that,
 in the case $\g$ is of type $A_n$,
the product $b_1 b_2$ of two elements $b_1$ and
$b_2$ in $\mathbf B^\up$ is again an element of $\mathbf B^\up$ up to a
multiple of a power of $q$
 if and only if they are $q$-commuting;
 i.e., \ $b_1b_2=q^m b_2b_1$ for some $m\in\Z$.
This conjecture turned out to be not true in general,
because Leclerc (\cite{L03}) found examples of
an {\em imaginary}  element $b \in \mathbf B^\up$
such that $b^2$ does not belong to $\mathbf B^\up$.
Nevertheless, the idea of considering  subsets of $\mathbf B^\up$
whose elements are $q$-commuting with each other and
 studying the relations between those subsets has survived
 and became one of the motivations  of the study of  (quantum) cluster algebras.

In a series of papers \cite{GLS11,GLS07,GLS}, Gei\ss, Leclerc and
Schr{\"o}er showed that the unipotent  quantum coordinate ring
$A_q(\mathfrak{n}(w))$
has a skew-symmetric quantum cluster algebra structure whose initial cluster
consists of so called the {\it unipotent quantum minors}. In \cite{Kimu12}, Kimura
proved that $A_q(\mathfrak{n}(w))$ is {\it compatible} with the
upper global basis $\B^{\upper}$ of $A_q(\mathfrak{n})$; i.e., the
set $\B^{\upper}(w) \seteq A_q(\mathfrak{n}(w)) \cap \B^{\upper}$
is a basis of $A_q(\mathfrak{n}(w))$. Thus, with a
result of \cite{CKLP12}, one can expect that every cluster monomial
of $A_q(\mathfrak{n}(w))$ is contained in the upper global basis
$\B^{\upper}(w)$, which is named the \emph{quantization conjecture} by Kimura
(\cite{Kimu12}):

\begin{conjecture} [{\cite[Conjecture 12.9]{GLS}, \cite[Conjecture 1.1(2)]{Kimu12}}] \label{conj:intro}
When $\g$ is a symmetric Kac-Moody algebra,  every quantum cluster monomial in
$A_{q^{1/2}}(\n(w))\seteq\Q(q^{1/2})\otimes_{\Q(q)}A_q(\n(w))$ belongs to the upper global basis $\mathbf B^\up$ up to a power of $q^{1/2}$.
\end{conjecture}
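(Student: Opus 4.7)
The plan is to prove the conjecture by establishing a monoidal categorification of $A_q(\n(w))$ via finite-dimensional graded modules over a symmetric Khovanov--Lauda--Rouquier algebra $R$. By the categorification theorems of Khovanov--Lauda, Rouquier, and (for symmetric types) Varagnolo--Vasserot, the Grothendieck ring of $R\gmod$ is isomorphic to the $\A$-form of $\An$, and under this isomorphism the classes of self-dual simple graded $R$-modules correspond, up to a power of $q^{1/2}$, to the elements of $\mathbf B^\up$. Consequently it suffices to realize every quantum cluster monomial of $A_q(\n(w))$ as the class of a self-dual simple graded $R$-module.

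The next step is to formalize a notion of \emph{quantum monoidal seed}: a collection of real self-dual simple modules $\{M_i\}_{i\in K}$ in a monoidal subcategory of $R\gmod$ whose classes $[M_i]$ pairwise $q$-commute via a matrix $L$ and realize an initial quantum seed for $A_q(\n(w))$ with exchange matrix $\wB$. Mutability at an exchangeable index $k$ should be encoded by the existence of a self-dual real simple module $M_k'$, together with a short exact sequence in $R\gmod$ that categorifies the quantum exchange relation; call this the existence of a \emph{first-step mutation in direction} $k$. The central technical device will be a \emph{propagation theorem}: if a quantum monoidal seed admits first-step mutations in all exchangeable directions, then so does every mutation of it, so that successive mutations can be iterated indefinitely and every cluster variable arises as the class of a self-dual real simple module. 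Establishing this propagation should rest on a careful analysis of multiplicative behavior in $\mathbf B^\up$, and in particular on the Leclerc-type statement about products of upper global basis elements that the paper also proves; this is where I expect the main obstacle to lie.

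The remaining input is to exhibit an initial quantum monoidal seed for $A_q(\n(w))$ admitting first-step mutations in every direction. The natural candidate is the Gei\ss--Leclerc--Schr\"oer initial cluster of unipotent quantum minors, each of which is known to lift to a determinantal module in $R\gmod$; their pairwise $q$-commutation and the compatibility of the bilinear form $L$ follow from the theory of $R$-matrices for determinantal modules together with the $T$-system-type identities they satisfy. Verifying that every exchangeable direction admits a first-step mutation will require constructing, for each mutable $k$, a simple module $M_k'$ and a concrete short exact sequence categorifying the exchange relation; this step is expected to be calculational but self-contained, using head/socle analysis of convolutions of determinantal modules.

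Granting these ingredients, the conjecture follows quickly. Any quantum cluster monomial is, in some cluster, a product $\prod_i [M_i']^{n_i}$ of pairwise $q$-commuting real simple modules, and by the standard fact that a convolution product of pairwise commuting real simples is itself a real simple module, this product equals, up to a power of $q^{1/2}$, the class of a self-dual simple graded $R$-module. Under the categorification isomorphism, such a class is (up to $q^{1/2}$) an element of $\mathbf B^\up$, which is exactly the assertion of the conjecture.
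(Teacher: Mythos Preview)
Your proposal is correct and follows essentially the same strategy as the paper: formalize admissible quantum monoidal seeds in $\shc_w\subset R\gmod$, prove a propagation theorem (the paper's Theorem~\ref{th:main}, relying on the Leclerc-type Theorem~\ref{th:leclerc} via Theorem~\ref{thm:divisible}), exhibit the initial seed via determinantial modules and verify first-step mutations through head/socle analysis of their convolutions (Theorem~\ref{thm: main}), and conclude via Varagnolo--Vasserot that cluster monomials are classes of self-dual real simples, hence upper global basis elements up to a power of $q^{1/2}$. The only minor omissions in your sketch are the explicit hypothesis that $K(\shc_w)$ already agrees with the cluster algebra (needed in the propagation step) and the requirement that the mutated module $M_k'$ commute with the remaining $M_i$, both of which the paper builds into its definition of admissible pair.
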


It can be regarded as a reformulation of Berenstein-Zelevinsky's ideas on the multiplicative properties of $\B^\up$.
There are some partial results of this conjecture. It is proved
 for $\g= A_2$, $A_3$, $A_4$ and $ A_q(\n(w))=\An$ in \cite{BZ93} and
 \cite[\S\,12]{GLS05}.
 When $\g=A^{(1)}_1$,   $A_n$ and $w$ is a square of a Coxeter element,  it is shown in \cite{Lampe11} and \cite{Lampe14} that the cluster variables belong to the upper global basis.
   When $\g$ is symmetric and $w$ is a square of  a Coxeter element, the conjecture  is proved
 in \cite{KQ14}.
Notably,  Qin provided recently  a proof of the conjecture for a large class with a condition on the Weyl group element $w$ (\cite{Qin15}).
Note that Nakajima proposed a geometric approach of this conjecture via  quiver varieties (\cite{Nak13}).

In this paper, we  prove the above conjecture completely by showing that there exists a
{\em  monoidal categorification of $A_{q^{1/2}}(\n(w))$}.

In \cite{HL10}, Hernandez and Leclerc introduced the notion of
{\it monoidal categorification} of   cluster algebras.
A simple object $S$ of a monoidal category
$\shc$ is {\it real} if $S \tens S$ is simple, and
 is {\it prime} if there exists no non-trivial
factorization $S \simeq S_1 \tens S_2$.
They say that $\shc$ is
a monoidal categorification of a cluster algebra $A$ if
the Grothendieck ring of $\shc$ is isomorphic to
$A$ and if

\vs{1.5ex}
\hs{0ex}\parbox{80ex}{

\begin{enumerate}
\setlength{\itemsep}{3pt}
\item[{\rm (M1)}] the  cluster monomials of $A$ are the classes of real simple objects of $\shc$,
\item[{\rm (M2)}] the cluster variables of $A$ are the classes of real simple prime objects of $\shc$.
\end{enumerate}}

\vs{1.5ex}
\noi
(Note that the above version is
weaker than the original definition of the monoidal categorification in
\cite{HL10}.) They proved that certain categories
of modules over symmetric quantum affine algebras
$U_q'(\g)$ give monoidal categorifications of  some cluster algebras.
Nakajima extended  this result  to the cases of the cluster algebras of type $A,D,E$
(\cite{Nak11}) (see also \cite{HL13}).
It is worth to remark that once a cluster algebra $A$ has a monoidal categorification,
the positivity of cluster
variables of $A$ and the linear independence of cluster monomials of $A$ follow
(see \cite[Proposition 2.2]{HL10}).

In this paper, we  refine Hernandez-Leclerc's notion of
monoidal categorifications including the quantum cluster algebra case.
Let us briefly explain it.
Let  $\mathcal C$  be an
abelian monoidal category equipped with
 an auto-equivalence  $q$ and
a tensor product which is compatible with a decomposition
$\shc=\soplus\nolimits_{\beta \in \rtl}\shc_\beta$.
Fix a finite index set $\K=\K_\ex \sqcup \K_\fr$ with a decomposition into
the exchangeable part and the frozen part.
Let $\Seed$ be a quadruple $(\{M_i\}_{i \in\K}, L,\wB, D)$
of a family of simple objects $\{M_i\}_{i \in\K}$
in $\mathscr C$, an  integer-valued skew-symmetric $\K \times \K$-matrix $L=(\lambda_{i,j})$,
an integer-valued $\K \times \K_\ex$-matrix $\wB = (b_{i,j})$
with skew-symmetric principal part, and a family of elements  $D=\{d_i\}_{i \in\K}$ in $\rl$.
If this datum satisfies the conditions in
Definition \ref{def:quantum monoidal seed} below, then it is called a {\em quantum monoidal seed} in $\mathcal C$.
For each $k\in\K_\ex$, we have mutations $\mu_k(L),\mu_k(\widetilde B)$ and $\mu_k(D)$ of $L,\widetilde{B}$ and $D$, respectively.
We say that a quantum monoidal seed
$\mathscr S =(\{M_i\}_{i\in\K}, L,\widetilde B, D)$
 \emph{admits a mutation in direction $k\in\K_\ex$} if
there exists  a simple object  $M_k' \in \shc_{\mu_k(D)_k}$
which fits into two short exact sequences \eqref{eq:intro} below
in $\mathcal C$
{\em reflecting} the mutation rule in quantum cluster algebras, and thus obtained
 quadruple $\mu_k(\Seed)\seteq(\{M_i\}_{i\neq k}\cup\{M_k'\},\mu_k(L), \mu_k(\widetilde B), \mu_k(D))$
is again a quantum monoidal seed in $\shc$.
We call $\mu_k(\Seed)$ the mutation of $\Seed$ in direction $k\in\K_\ex$.

Now the category $\shc$ is called a {\em monoidal categorification of a quantum cluster algebra $A$ over $\Z[q^{\pm1/2}]$}
if
\begin{eqnarray} \label{cond:monoidal}
\parbox{72ex}{ \bnum
\item the Grothendieck ring $\Z[q^{\pm1/2}]\tens_{\Z[q^{\pm1}]} K(\shc)$ is isomorphic to $A$,
\item there exists a quantum monoidal seed
$\mathscr S =(\{M_i\}_{i\in\K}, L,\widetilde B, D)$ in $\shc$ such that
$[\mathscr S]\seteq(\{q^{m_i}[M_i]\}_{i\in\K}, L, \widetilde B)$
 is a quantum seed of $A$ for some $m_i \in \frac{1}{2}\Z$,
\item $\mathscr S$ admits successive mutations in all directions in $\K_\ex$.
\ee }
\end{eqnarray}
The existence of monoidal category $\shc$ which provides a
monoidal categorification of quantum cluster algebra $A$ implies the
 following: \eqn&&\parbox{80ex}{ \begin{enumerate}
\item[{\rm (QM1)}] Every quantum cluster monomial corresponds to the isomorphism class of a real simple object of $\shc$. In particular,
the set of quantum cluster monomials is $\Z[q^{\pm 1/2}]$-linearly independent.
\item[{\rm (QM2)}] The quantum positivity conjecture holds for $A$.
\end{enumerate}
} \eneqn

\medskip
In  the case of unipotent quantum coordinate ring $A_q(\n)$, there is a natural candidate for monoidal categorification,
 the category of finite-dimensional graded modules over a {\em Khovanov-Lauda-Rouquier algebras} (\cite{KL09,KL11}, \cite{R08}).
The Khovanov-Lauda-Rouquier algebras (abbreviated by KLR algebras), introduced by
Khovanov-Lauda \cite{KL09,KL11} and Rouquier \cite{R08}
independently, are a family of $\Z$-graded algebras which
categorifies the negative half $U_q^-(\g)$ of a {\it symmetrizable}
quantum group $U_q(\g)$. More precisely, there exists a family of
algebras $\{ R(-\beta) \}_{\beta \in \rtl^-}$ such that the Grothendieck
ring of $R \gmod \seteq \bigoplus_{\beta \in \rtl^-}R(-\beta)\gmod$, the direct sum
of the categories of finite-dimensional graded $R(-\beta)$-modules, is
isomorphic to the integral form $A_q(\mathfrak{n})_{\Z[q^{\pm1}]}$ of
$A_q(\mathfrak{n}) \simeq U_q^-(\g)$. Here the tensor functor $\tens$
of the monoidal category $R \gmod$ is given by
the convolution product $\conv$, and the action of $q$ is given by
the grading shift functor. In \cite{VV09, R11},
Varagnolo-Vasserot and Rouquier
proved that the upper global basis $\B^\upper$ of $A_q(\mathfrak{n})$
corresponds to the
set of the isomorphism classes of all {\it self-dual} simple modules of $R
\gmod$ under the assumption that $R$ is associated with a {\it
symmetric} quantum group $U_q(\g)$ and the base field is of characteristic $0$.

Combining works of \cite{GLS,Kimu12,VV09}, the  unipotent quantum
coordinate ring $A_q(\mathfrak{n}(w))$ associated with a
symmetric quantum group $U_q(\g)$ and a Weyl group element $w$
is isomorphic to the Grothendieck group of
 a  monoidal abelian full subcategory $\shc_w$ of $R \gmod$  whose  base field
$\cor$ is of characteristic $0$,
satisfying the following properties : {\rm (i)} $\shc_w$ is
stable under extensions and grading shift functor, {\rm (ii)} the
composition factors of $M \in \shc_w$ are contained
in $\B^{\upper}(w)$   (see Definition \ref{def:cw}).
In particular, the first condition in \eqref{cond:monoidal} holds.
However, it is not evident that  the second and the third condition in \eqref{cond:monoidal}
on  quantum monoidal seeds are satisfied. The purpose of this paper is to
ensure that those conditions hold in $\shc_w$.

\medskip

In order to establish it, in the first part of the paper, we start with a continuation of  the work of \cite{KKKO14} about the
convolution products, heads and socles of graded modules over symmetric KLR algebras.
One of the main results in \cite{KKKO14} is that the convolution
product $M \conv N$ of a real simple $R(\beta)$-module $M$ and a
simple $R(\gamma)$-modules $N$ has a unique simple quotient and a unique
simple submodule. Moreover, if $M \conv N \simeq N \conv M$ up to a
grading shift, then $M \conv N$ is simple.
In such a case we say that $M$ and $N$ {\it
commute}. The main tool of \cite{KKKO14} was the R-matrix
$\rmat{M,N}$, constructed in \cite{K^3}, which is a homogeneous
homomorphism from $M \conv N$ to $N \conv M$ of degree
$\La(M,N)$.
In this work, we define some integers encoding necessary information on $M\conv N$,
\begin{align*}
\tLa(M,N) \seteq \frac{1}{2}\bl\La(M,N)+(\beta,\gamma)\br, \quad
\de(M,N)\seteq \frac{1}{2}\bl\La(M,N)+\La(N,M)\br
\end{align*}
and study the representation theoretic meaning of the integers $\La(M,N)$, $\tLa(M,N)$ and $\de(M,N)$.

We then prove Leclerc's first conjecture (\cite{L03})
on the multiplicative
structure of elements in $\B^\upper$, when the generalized Cartan matrix is
 symmetric (Theorem \ref{th:leclerc} and
Theorem \ref{th:head}). Theorem \ref{th:head} is due to McNamara
(\cite[Lemma 7.5]{Mc14}) and the authors thank him for informing us  of his result.

 We say that $b \in \B^\upper$ is {\it real} if
$b^2 \in q^\Z\,\B^\upper\seteq\bigsqcup_{n\in\Z}q^n\B^\upper$.
\begin{theorem*}[{\cite[Conjecture 1]{L03}}]  Let $b_1$ and $b_2$ be elements in $\B^\upper$ such that one of them is real and $b_1b_2 \not\in q^\Z\B^\upper$.
Then the expansion of $b_1b_2$ with respect to $\B^\upper$ is of the
form
$$ b_1b_2= q^m b' + q^s b'' + \sum_{c \ne b',b''}\gamma^c_{b_1,b_2}(q)c,$$
where $b' \ne b''$, $m,s \in \Z$, $m < s$,
and
$$  \gamma^c_{b_1,b_2}(q) \in q^{m+1}\Z[q] \cap q^{s-1}\Z[q^{-1}].$$
\end{theorem*}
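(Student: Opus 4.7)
The plan is to translate the statement to the Khovanov-Lauda-Rouquier categorification of $A_q(\n)$ established by Varagnolo-Vasserot and Rouquier. Under this categorification the upper global basis $\B^\upper$ is identified with the set of isomorphism classes of self-dual simple graded modules over the symmetric KLR algebra $R$. Let $M$ and $N$ be the self-dual simple modules with $[M]=b_1$ and $[N]=b_2$; by hypothesis $M$ is real, and $M\conv N$ is not simple since $b_1b_2\notin q^\Z\B^\upper$.

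First, I invoke the structural results of \cite{KKKO14} recalled earlier: because $M$ is real, $M\conv N$ admits a unique simple head and a unique simple socle. Write $\hd(M\conv N)=q^m L'$ and $\soc(M\conv N)=q^s L''$ with $L',L''$ self-dual simples, and set $b'\seteq[L']$, $b''\seteq[L'']$. Second, I compute the shifts $m,s$ using the R-matrix $\rmat{M,N}\colon M\conv N\to N\conv M$, which has degree $\La(M,N)$ and whose image is simple, coinciding with $\hd(M\conv N)$ as a quotient of $M\conv N$ and with $\soc(N\conv M)$ as a submodule of $N\conv M$. Combined with the isomorphism $(M\conv N)^\vee\simeq q^{(\beta,\gamma)}(N\conv M)$ of graded modules and the self-duality of $M$ and $N$, a direct calculation gives $m=-\tLa(M,N)$ and $s=-\tLa(M,N)+\de(M,N)$, whence $s-m=\de(M,N)>0$; the strict positivity comes from the criterion that $\de(M,N)=0$ if and only if $M$ and $N$ commute if and only if $M\conv N$ is simple, which is excluded.

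Third, I show $b'\neq b''$: if $L'\simeq L''$ as ungraded modules, a comparison of the two R-matrices $\rmat{M,N}$ and $\rmat{N,M}$ together with the multiplicity-one property of the head (Theorem \ref{th:head}) forces the head and the socle of $M\conv N$ to coincide as graded subquotients, whence $M\conv N$ is simple, a contradiction. Fourth, I apply Theorem \ref{th:head} of McNamara: since $M$ is real, $q^m L'$ is the unique composition factor of $M\conv N$ whose $q$-shift is at most $m$, and it has multiplicity one. The dual statement applied to $(M\conv N)^\vee$, which is a grading shift of $N\conv M$ and hence still has a real first factor, yields the corresponding assertion on the socle side: $q^s L''$ is the unique composition factor of $M\conv N$ with $q$-shift at least $s$, occurring with multiplicity one.

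Combining everything and expanding $b_1b_2=[M\conv N]$ in the basis of classes of self-dual simples, I read off $\gamma^{b'}_{b_1,b_2}(q)=q^m$ and $\gamma^{b''}_{b_1,b_2}(q)=q^s$, while for $c\neq b',b''$ every power of $q$ appearing in $\gamma^{c}_{b_1,b_2}(q)$ lies strictly between $m$ and $s$, i.e.\ $\gamma^{c}_{b_1,b_2}(q)\in q^{m+1}\Z[q]\cap q^{s-1}\Z[q^{-1}]$. The main obstacle is deploying Theorem \ref{th:head} in both directions at once so as to obtain sharp $q$-degree bounds on every composition factor of $M\conv N$; once both the head-end and the socle-end bounds are in hand, the R-matrix computation and the graded duality glue the remaining ingredients together.
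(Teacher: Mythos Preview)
Your approach follows the paper's strategy closely, but there is a genuine gap in step four. Theorem~\ref{th:head} tells you that the head $q^m L'$ is the only composition factor of $M\conv N$ at $q$-shift $\le m$, and dually $q^s L''$ is the only one at $q$-shift $\ge s$. Neither statement, however, prevents $L'$ (or $L''$) from reappearing as a composition factor at some shift strictly between $m$ and $s$. If that happened you would have $\gamma^{b'}_{b_1,b_2}(q)=q^m+q^{c}+\cdots$ with $m<c<s$, and the conclusion $\gamma^{b'}_{b_1,b_2}(q)=q^m$ would fail.

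The paper closes this gap with Theorem~\ref{th:leclerc}, which is a separate R-matrix argument: one shows that every composition factor $S_k$ of $M\conv N$ other than the head satisfies $\La(M,S_k)<\La(M,M\hconv N)$, so in particular $L'$ cannot occur among the $S_k$; hence $M\hconv N$ appears with multiplicity exactly one in the Jordan--H\"older series of $M\conv N$, and dually for the socle. Only after this multiplicity-one input can one combine the two degree bounds from Theorem~\ref{th:head} and its dual to write
\[
[M\conv N]=q^m[L']+q^s[L'']+\sum_k q^{c_k}[S_k]
\]
with each $S_k\not\simeq L',L''$ and $m<c_k<s$. Your computations of $m$, $s$, and $s-m=\de(M,N)$ are correct, and your argument for $b'\ne b''$ is essentially Lemma~\ref{lem:commute_equiv}\,(iv); what is missing is precisely this invocation of Theorem~\ref{th:leclerc} (or an independent proof of the same multiplicity-one fact).
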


More precisely, we prove that $q^mb'$ and $q^sb''$ correspond to the
simple head
and the simple socle of $M\conv N$, respectively,
when $b_1$ corresponds to a 
simple module $M$ and $b_2$ corresponds to a simple module $N$.

\medskip
Next, we move to provide an algebraic
framework for monoidal categorification of
quantum cluster algebras.
In order to simplify the conditions of quantum monoidal seeds and their mutations, we introduce
the notion of {\em admissible pairs} in $\mathcal C_w$.
 A pair $(\{M_i\}_{i \in\K}, \widetilde B)$ is called admissible in $\mathcal C_w$ if
(i)  $\{M_i\}_{i \in\K}$ is a commuting family
of self-dual real simple  objects of $\shc_w$,
(ii) $\widetilde B$ is an integer-valued $\K \times \K_\ex$-matrix with skew-symmetric principal part,
and (iii)
 for each $k \in\K$, there exists a  self-dual  simple object $M'_k$ in $\shc_w$
such that $M_k'$ commutes with $M_i$  for all  $i\in\K\setminus\{k\}$
and there are exact sequences in $\shc_w$
\eq&&\ba{l}  \label{eq:intro}
 0 \to q \sodot_{b_{i,k} >0} M_i^{\snconv b_{i,k}} \to q^{\tLa(M_k,M_k')} M_k \conv M_k' \to
 \sodot_{b_{i,k} <0} M_i^{\snconv (-b_{i,k})} \to 0\\
 0 \to q \sodot_{b_{i,k} <0} M_i^{\snconv(- b_{i,k})} \to q^{\tLa(M_k',M_k)} M'_k \conv M_k
 \to \sodot_{b_{i,k} >0} M_i^{\snconv b_{i,k}} \to 0
\ea
\eneq
 where $\tLa(M_k,M_k')$ and $\tLa(M_k',M_k)$ are prescribed integers
and $\sodot$ is a convolution product up to a power of $q$.

For an admissible pair  $(\{M_i\}_{i \in\K}, \widetilde B)$, let
$\La=(\La_{i,j})_{i,j \in\K}$
be the skew-symmetric matrix
where $\La_{i,j}$ is the homogeneous degree of $\rmat{{M_i},{M_j}}$, the R-matrix between $M_i$ and $M_j$,
and let $D=\{d_i\}_{ i \in\K}$ be the family of elements in $\rl$ given by $M_i \in R(-d_i) \gmod$.

Then, together with the result of \cite{GLS},
 our main theorem in the first part of the paper reads as follows:
\begin{maintheorem1}[Theorem \ref{th:main} and Corollary \ref{cor:main}]
 If there exists an admissible pair  $(\{M_i\}_{i\in K},\widetilde B)$ in $\mathcal C_w$ such that
 $[\Seed]\seteq\bl\{q^{-(\wt(M_i), \wt(M_i))/4}[M_i]\}_{i\in\K},
-\La,\widetilde B, D\br$
  is an initial seed of $A_{q^{1/2}}(\n(w))$,
then $\mathcal C_w$ is a monoidal categorification of $A_{q^{1/2}}(\n(w))$.
\end{maintheorem1}

The second part of this paper (\S\;8--11) is mainly devoted to showing that there exists an admissible pair in $\mathcal C_w$ for every symmetric Kac-Moody algebra $\g$ and its Weyl group element $w$.
In \cite{GLS},  Gei\ss, Leclerc and
Schr{\"o}er provided an initial quantum seed in $A_q(\n(w))$
whose quantum cluster variables are
unipotent quantum minors.
The unipotent quantum minors are elements in $A_q(\n)$, which are
regarded as a $q$-analogue of a generalization of the minors of upper triangular matrices.
In particular, they are elements in $\mathbf B^\up$.
We define the \emph{determinantial module} $\M(\mu,\zeta)$ to be the simple module in
$R \gmod$ corresponding to the unipotent quantum minor $\D(\mu,\zeta)$
 under the isomorphism $A_q( \n)_{\Z[q^{\pm1}]} \simeq K(R \gmod)$.
Here $(\mu,\zeta)$ is a pair of elements in the weight lattice of $\g$ satisfying certain conditions.

Our main theorem of the second part is as follows.
\begin{maintheorem2} [Theorem \ref{thm: main}]
Let $( \{ D(k,0) \}_{1 \le k \le r}, \wB, L)$ be the initial quantum seed of $A_q(\n(w))$ in \cite{GLS} with respect to a reduced expression $\widetilde{w}=s_{i_r}\cdots s_{i_1}$ of $w$.
Let $\M(k,0):=\M(s_{i_1}\cdots s_{i_k}\varpi_{i_k},\varpi_{i_k})$  be the determinantial module corresponding to the unipotent quantum minor $D(k,0)$.
Then the pair
$$( \{ \M(k,0) \}_{1 \le k \le r}, \wB)$$ is admissible in $\mathcal C_w$.
\end{maintheorem2}

Combining these theorems, the category $\shc_w$ gives  a monoidal categorification of  the quantum cluster algebra  $A_q(\n(w))$.
 If we take the base field of the symmetric KLR algebra to be of characteristic $0$,
these theorems,  along with
Theorem~\ref{thm:categorification 2} due to \cite{VV09, R11}, imply the
quantization conjecture. 

\medskip
The most essential condition for an admissible pair is that there exists the \emph{first mutation} $\M(k,0)'$ in the exact sequences \eqref{eq:intro} for each $k \in \K_\ex$.
To  establish this,
 we investigate the properties of determinantial modules and those of their convolution products.
Note that a unipotent quantum minor is the image of a global basis element of  the \emph{quantum coordinate ring} $A_q(\g)$ under a natural projection $A_q(\g) \to A_q(\n)$.
Since there exists a  bicrystal embedding from the crystal basis $B(A_q(\g))$ of $A_q(\g)$ to
the crystal basis $B(\tUqg)$ of the \emph{modified quantum groups} $\tUqg$,
 this investigation amounts to the  study  of the  interplay among the
 crystal and global bases of
 $A_q(\g)$, $\tUqg$ and $A_q(\n)$.
Hence we start the second part of the paper with the studies  of those algebras and their  crystal / global  bases along the line of the works in \cite{Kas93, Kas93a, Kash94}.

Next, we recall the (unipotent) quantum minors and the  \emph{T-system},
an equation consisting of three terms in products of unipotent quantum minors studied in \cite{BZ05, GLS}.
A detailed study  of the relation between $A_q(\g)$, $\tUqg$ and $A_q(\n)$
and their global bases enables us to establish several equations
involving unipotent quantum minors  in the algebra $\An$.
The upshot is that  those equations can be translated into exact sequences
in the category $R\gmod$ involving convolution products of  determinantial modules via
the categorification of $U_q^-(\g)$.
It enables us to show that the pair $( \{ \M(k,0) \}_{1 \le k \le r}, \wB)$ is admissible.

\medskip

The paper is organized as follows.   In Section 1,  we briefly review
basic materials on quantum group $\U$ and KLR algebra $R$.
In Section 2, we continue the study in
\cite{KKKO14}  of the R-matrices between $R$-modules.
In Section 3, we derive  certain properties of
$\tLa(M,N)$ and $\de(M,N)$.
In Section 4, we prove the first conjecture of Leclerc in \cite{L03}.
In Section 5, we recall the definition of quantum
cluster algebras. In Section 6, we give the definitions of a monoidal
seed, a quantum monoidal seed, a monoidal categorification of a cluster
algebra and a monoidal categorification of a quantum cluster algebra.
In Section 7, we prove  Main Theorem 1.
 In Section 8,   we review the algebras $A_q(\g)$, $\tUqg$ and $A_q(\n)$,
 and study  the relations among them.
 In Section 9, we study the properties of quantum minors including  T-systems and generalized T-systems.
 In Section 10, we study the determinantial modules over \KLRs.
Finally, in Section 11, we establish  Main theorem 2.

\bigskip

\noindent
{\bf Acknowledgements.} The authors would like to express their gratitude to
Peter McNamara who informed us  of his result.
They would also like to express their gratitude to
Bernard Leclerc and Yoshiyuki Kimura for many fruitful discussions.
The last two authors gratefully acknowledge the hospitality of
Research Institute for Mathematical Sciences, Kyoto University
during their visits in 2014.


\section{Quantum groups and global bases}
In this section, we briefly recall the quantum groups and the crystal and global bases theory for
$U_q(\g)$.
We refer to \cite{Kash91,Kas93,Kas95} for materials in this subsection.

\subsection{Quantum groups} \label{subsec:qgroups}
Let $I$
be an index set. A \emph{Cartan datum} is a quintuple $(A,\wl,
\Pi,\wl^{\vee},\Pi^{\vee})$ consisting of
\begin{enumerate}[(i)]
\item an integer-valued matrix $A=(a_{ij})_{i,j \in I}$,
called the \emph{symmetrizable generalized Cartan matrix},
 which satisfies
\be[{\rm(a)}]
\item $a_{ii} = 2$ $(i \in I)$,
\item $a_{ij} \le 0 $ $(i \neq j)$,
\item there exists a diagonal matrix
$D=\text{diag} (\mathsf s_i \mid i \in I)$ such that $DA$ is
symmetric, and $\mathsf s_i$ are relatively prime positive integers,
\end{enumerate}

\item a free abelian group $\wl$, called the \emph{weight lattice},
\item $\Pi= \{ \alpha_i \in \wl \mid \ i \in I \}$, called
the set of \emph{simple roots},
\item $\wl^{\vee}\seteq\Hom_\Z(\wl, \Z)$, called the \emph{co-weight lattice},
\item $\Pi^{\vee}= \{ h_i \ | \ i \in I \}\subset \wl^{\vee}$, called
the set of \emph{simple coroots},
satisfying the following properties:
\be[{\rm(1)}]
\item $\langle h_i,\alpha_j \rangle = a_{ij}$ for all $i,j \in I$,
\item $\Pi$ is linearly independent  over $\Q$,
\item for each $i \in I$, there exists $ \varpi_i \in \wl$ such that 
           $\langle h_j,  \varpi_i \rangle =\delta_{ij}$  for all $j \in I$.

\end{enumerate}
We call $ \varpi_i$ the \emph{fundamental weights}.
\end{enumerate}

\medskip
\noi
The free abelian group $\rootl\seteq\soplus_{i \in I} \Z \alpha_i$ is called the
\emph{root lattice}. Set $\rootl^{+}= \sum_{i \in I} \Z_{\ge 0}
\alpha_i\subset\rootl$ and $\rootl^{-}= \sum_{i \in I} \Z_{\le0}
\alpha_i\subset\rootl$. For $\beta=\sum_{i\in I}m_i\al_i\in\rootl$,
we set
$|\beta|=\sum_{i\in I}|m_i|$.

Set $\mathfrak{h}=\Q \otimes_\Z \wl^{\vee}$.
Then there exists a symmetric bilinear form $(\quad , \quad)$ on
$\mathfrak{h}^*$ satisfying
$$ (\alpha_i , \alpha_j) =\mathsf s_i a_{ij} \quad (i,j \in I)
\quad\text{and $\lan h_i,\lambda\ran=
\dfrac{2(\alpha_i,\lambda)}{(\alpha_i,\alpha_i)}$ for any $\lambda\in\mathfrak{h}^*$ and $i \in I$}.$$

The \emph{Weyl group} of $\g$ is the group of linear transformations on $\h^*$ generated by $s_i$ $(i\in I)$, where
\eqn
s_i(\la) := \la - \lan h_i, \la \ran \al_i \quad \text{for } \ \la \in \h^*, \ i \in  I.
\eneqn

Let $q$ be an indeterminate. For each $i \in I$, set $q_i = q^{\,\mathsf s_i}$.

\begin{definition} \label{def:qgroup}
The {\em quantum group}
associated with a Cartan datum
$(A,\wl,\Pi,\wl^{\vee}, \Pi^{\vee})$ is the  algebra $\U$ over
$\mathbb Q(q)$ generated by $e_i,f_i$ $(i \in I)$ and
$q^{h}$ $(h \in \wl^\vee)$ satisfying the following relations:
\begin{equation*}
\begin{aligned}
& q^0=1,\ q^{h} q^{h'}=q^{h+h'} \ \ \text{for} \ h,h' \in \wl^\vee,\\
& q^{h}e_i q^{-h}= q^{\lan h, \alpha_i\ran} e_i, \ \
          \ q^{h}f_i q^{-h} = q^{-\lan h, \alpha_i\ran} f_i \ \ \text{for} \ h \in \wl^\vee, i \in
          I, \\
& e_if_j - f_je_i = \delta_{ij} \dfrac{t_i -t^{-1}_i}{q_i- q^{-1}_i
}, \ \ \mbox{ where } t_i=q^{\mathsf s_i h_i}, \\
& \sum^{1-a_{ij}}_{r=0} (-1)^r \left[\begin{matrix}1-a_{ij}
\\ r\\ \end{matrix} \right]_i e^{1-a_{ij}-r}_i
         e_j e^{r}_i =0 \quad \text{ if } i \ne j, \\
& \sum^{1-a_{ij}}_{r=0} (-1)^r \left[\begin{matrix}1-a_{ij}
\\ r\\ \end{matrix} \right]_i f^{1-a_{ij}-r}_if_j
        f^{r}_i=0 \quad \text{ if } i \ne j.
\end{aligned}
\end{equation*}
\end{definition}

Here, we set $[n]_i =\dfrac{ q^n_{i} - q^{-n}_{i} }{ q_{i} - q^{-1}_{i} },\quad
  [n]_i! = \prod^{n}_{k=1} [k]_i$ and
  $\left[\begin{matrix}m \\ n\\ \end{matrix} \right]_i= \dfrac{ [m]_i! }{[m-n]_i! [n]_i! }\;$
  for $i \in I$ and $m,n \in \Z_{\ge 0}$ such that $m\ge n$.

Let $U_q^{+}(\g)$ (resp.\ $U_q^{-}(\g)$) be the subalgebra of
$U_q(\g)$ generated by $e_i$'s (resp.\ $f_i$'s), and let $U^0_q(\g)$
be the subalgebra of $U_q(\g)$ generated by $q^{h}$ $(h \in
\wl^{\vee})$. Then we have the \emph{triangular decomposition}
$$ U_q(\g) \simeq U^{-}_q(\g) \otimes U^{0}_q(\g) \otimes U^{+}_q(\g),$$
and the {\em weight space decomposition}
$$U_q(\g) = \bigoplus_{\beta \in \rootl} U_q(\g)_{\beta},$$
where $U_q(\g)_{\beta}\seteq\set{ x \in U_q(\g)}{\text{$q^{h}x q^{-h}
=q^{\lan h, \beta \ran}x$ for any $h \in \wl$}}$.

There are $\Q(q)$-algebra antiautomorphisms $\varphi$ and $^*$ of $U_q(\g)$ given as follows:
\begin{align*}
&\varphi(e_i)=f_i, \quad \varphi(f_i)=e_i, \quad \varphi(q^h)=q^h, \\ 
&e_i^*=e_i, \quad\quad \ f_i^*=f_i, \quad\quad \ (q^h)^*=q^{-h}. 
\end{align*}
There is  also a $\Q$-algebra automorphism $\ol{\phantom{a}}$ of $U_q(\g)$ given by
\begin{align*}
& \overline{e}_i=e_i, \quad \overline{f}_i=f_i, \quad \overline{q^h}=q^{-h}, \quad \overline{q}=q^{-1}. 
\end{align*}

We define the divided powers by
$$e_i^{(n)} = e_i^n / [n]_i!, \quad f_i^{(n)} =
f_i^n / [n]_i! \ \ (n \in \Z_{\ge 0}).$$
Let us denote by $U_q(\g)_\A$ the $\A$-subalgebra of $U_q(\g)$ generated by
$e_i^{(n)}$, $f_i^{(n)}$, $q^h$, and $ \displaystyle\prod_{k=1}^n \dfrac{\{q^{1-k}q^h\}}{[k]}$ ($i\in I, \ n \in \Z_{\ge 0},\  h \in \wl^\vee$), where $\{x\}:=(x-x^{-1})/(q-q^{-1})$.
Let us also denote by $U_q^{-}(\g)_\A$ the
$\A$-subalgebra of $U^-_q(\g)$ generated by $f_i^{(n)}$
($i\in I$, $n \in \Z_{\ge 0}$), and
by $U_q^{+}(\g)_\A$ the
$\A$-subalgebra of $U^+_q(\g)$ generated by $e_i^{(n)}$
($i\in I$, $n \in \Z_{\ge 0}$).



\subsection{Integrable representations}

A $U_q(\g)$-module $M$ is called {\em integrable}
if  $M= \bigoplus_{\eta \in \wl} M_\eta$
where $M_\eta \seteq \{  m \in M \ | \ q^hm=q^{\lan h,\eta \ra} m \}$,  $\dim M_\eta < \infty$, and
 the actions of $e_i$ and $f_i$  on $M$ are locally nilpotent for all $i \in I$.
We denote by $\Oint(\g)$ the category of integrable left $U_q(\g)$-modules $M$ satisfying
that
there exist finitely many weights $\la_1$, \ldots, $\la_m$ such that
$\wt(M)\subset\cup_{j}(\la_j+\rl^-)$. The category $\Oint(\g)$ is semisimple with its simple objects being isomorphic to the
highest weight modules $V(\lambda)$ with highest weight vector $u_\lambda$ of
highest weight $\lambda \in \wl^+ :=\set{\mu \in \wl}{\lan h_i, \mu \ran \ge 0 \ \text{for all} \ i \in I}$, the set of dominant integral weights.

For $M \in \Oint(\g)$, let us denote by $\Dv M$ the left $U_q(\g)$-module $\bigoplus_{\eta \in \wl} \Hom_{\Q(q)}(M_\eta,\Q(q))$ with the action of $U_q(\g)$ given by:
$$  (a\psi)(m)=\psi(\vph(a)m)\quad\text{for $\psi\in\Dv M$,
$m\in M$ and $a\in U_q(\g)$.}  $$
Then $\Dv M$ belongs to $\Oint(\g)$.

For a left $U_q(\g)$-module $M$, we denote by $M^\ri$ the right $U_q(\g)$-module
$\{m^\ri \ | \ m\in M \}$ with the right action of $U_q(\g)$ given by
$$\text{$(m^\ri)\, x=(\vph(x)m)^\ri$ for $m\in M$ and $x\in U_q(\g)$.}$$
We denote by $\Oint^{\ri} (\g)$ the category of right integrable $U_q(\g)$-modules $M^\ri$ such that
$M \in \Oint(\g)$.

There are two comultiplications $\comp$ and $\comm$ on $U_q(\g)$ defined as follows:
\begin{align}
&\comp(e_i)=e_i \tens 1 + t_i \tens e_i,\quad \comp(f_i)=f_i \tens t_i^{-1} + 1 \tens f_i,\quad \comp(q^h)=q^h \tens q^h, \label{eq: comp}  \\
&\comm(e_i)=e_i \tens t_i^{-1} + 1 \tens e_i,\quad \comm(f_i)=f_i \tens 1 + t_i \tens f_i,\quad \comm(q^h)=q^h \tens q^h. \label{eq: comm}
\end{align}

For two $U_q(\g)$-modules $M_1$ and $M_2$, let us denote by $M_1
\tensp M_2$ and $M_1 \tensm M_2$ the vector space $M_1 \tens_{\Q(q)}M_2$ endowed with $U_q(\g)$-module
structure induced by the comultiplications $\comp$ and $\comm$, respectively. Then we have
$$ \Dv(M_1 \otimes_{\pm} M_2 ) \simeq (\Dv M_1) \otimes_{\mp} (\Dv M_2).$$

For any $i \in I$,
there exists a unique $\Q(q)$-linear endomorphism $e'_i$
of $U^-_q(\g)$  such that
\eqn
e'_i(f_j)=\delta_{i,j} \ (j \in I), &
 e'_i(xy) = (e'_i x) y + q_i^{\langle h_i, \beta \rangle} x (e'_iy) \ (x \in U^{-}_q(\g)_{\beta}, y \in U^-_q(\g)).
\eneqn
The \emph{quantum boson algebra} $B_q(\g)$ is defined as the subalgebra of $\End_{\Q(q)}(U_q(\g))$
generated by $f_i$ and $e'_i$ $(i \in I)$.
Then $B_q(\g)$ has  a $\Q(q)$-algebra anti-automorphism $\vph$ which sends $e_i'$ to $f_i$ and
$f_i$ to $e_i'$.  As a $B_q(\g)$-module, $U^-_q(\g)$ is simple.

The  simple $U_q(\g)$-module $V(\lambda)$ and the simple
$B_q(\g)$-module $U_q^-(\g)$
have a unique non-degenerate  symmetric bilinear form
$( \ , \ )$ such that
\begin{align*}
& (u_\lambda,u_\lambda) = 1 \text{ and } (xu,v)=(u,\vph(x)v) \text{ for } u,v \in V(\lambda) \text{ and } x \in U_q(\g), \\
& \ ( \one ,\one) = 1  \text{ and } (xu,v)=(u,\vph(x)v) \text{ for } u,v \in U^-_q(\g) \text{ and } x \in B_q(\g).
\end{align*}

Note that $( \ , \ )$ induces the non-degenerate bilinear form
$$  \lan \cdot , \cdot \ra \cl  V(\lambda)^\ri \times V(\lambda) \to \Q(q)$$
given by $ \lan u^\ri, v \ra  = (u,v)$,
by which $\Dv V(\la)$ is canonically isomorphic to $V(\la)$.


\vskip 2em

\subsection{Crystal bases and global bases}

For a  subring $A$ of $\Q(q)$, we say that $L$ is an $A$-{\em lattice} of
a $\Q(q)$-vector space $V$ if
$L$ is a free $A$-submodule of $V$ such that $V=\Q(q)\tens_AL$.

Let us denote by $\QA_0$ (resp.\ $\QA_\infty$) the ring of rational
functions in $\Q(q)$ which are regular at $q=0$ (resp.\ $q=\infty$).
 Set $\QA\seteq
\Q[q^{\pm1}]$ .

\medskip

Let $M$ be a $\U$-module in $\Oint(\g)$.
Then, for each $i \in I$, any $u\in M$ can be uniquely written as
$$u=\sum_{n \ge 0}f_i^{(n)}u_n\quad\text{with $e_iu_n=0$.}$$
We define the {\em lower Kashiwara operators} by
\eqn&&\te^\low_i(u)=\sum_{n\ge1}f_i^{(n-1)}u_n
\quad\text{and}\quad
\tf^\low_i(u)=\sum_{n\ge0}f_i^{(n+1)}u_n,
\eneqn
and the {\em upper Kashiwara operators}
by
\eqn&&
\te^\up_i(u)=\te_i^{\low}q_i^{-1} t_i^{-1} u\quad\text{and}\quad
\tf^\up_i(u)=\tf_i^{\low}q_i^{-1} t_i u.
\eneqn

Similarly,
for each $ i \in  I$, any element $x \in U_q^-(\g)$ can be written uniquely as
\eqn
x = \sum_{n \ge 0} f_i^{(n)} x_n\quad\text{ with $e'_i x_n =0$.}
\eneqn
We define the {\em Kashiwara operators} $\tilde e_i, \tilde f_i$ on
$U_q^-(\g)$ by
\eqn
\tilde e_i x = \sum_{n \ge 1} f_i^{(n-1)}x_n, & \tilde f_i x = \displaystyle\sum_{n \ge 0} f_i^{(n+1)}x_n.
\eneqn

We say that an $\QA_0$-lattice $L$ of $M$
 is a lower (resp.\ upper) crystal lattice of $M$
if $L=\soplus_{\eta \in \wl}L_\eta$, where $L_\eta = L \cap M_\eta$
and it is invariant by the lower (resp.\ upper) Kashiwara operators.

\Lemma\label{lem:lowup} Let $L$ be a lower crystal lattice of $M\in \Oint(\g)$.
Then we have
\bnum
\item $\soplus\nolimits_{\la\in\wl}q^{-(\la,\la)/2}L_\la$ is an upper crystal lattice of $M$.
\item $L^\vee\seteq\set{\psi\in\Dv M}{\lan\psi,L\ran\in\QA_0}$ is an upper crystal lattice of
$\Dv M$.
\ee
\enlemma
\Proof
(i) Let $\phi_M$ be the endomorphism of $M$ given by $\phi_M\vert_{M_\la}=q^{-(\la,\la)/2}\id_{M_\la}$. Then we have
$\te_i^\up=\phi_M\circ\te_i^\low \circ \phi_M^{-1}$ and
$\tf_i^\up=\phi_M\circ\tf_i^\low \circ \phi_M^{-1}$.

\smallskip
\noi

(ii) follows from $(3.2.1)$, $(3.2.2)$  in \cite{Kas93}.
 Note that the definition of upper Kashiwara operators are slightly different from the ones in \cite{Kas93}, but  similar properties hold.
\QED

\begin{definition} A {\em lower} \ro resp.\ {\em upper}\rf\
{\em crystal basis} of $M$ consists of a pair $(L,B)$
satisfying the following conditions:
\bnum
\item $L$ is a lower \ro resp. upper\rf\ crystal lattice of $M$,
\item $B= \sqcup_{\eta \in \wl} B_\eta$ is a basis of the $\Q$-vector space $L/qL$, where $B_\eta=B \cap (L_\eta/qL_\eta)$,
\item the induced maps $\tilde{e}_i$ and $\tilde{f}_i$ on $L/qL$ satisfy
$$ \tilde{e}_iB, \tilde{f}_iB \subset B \sqcup \{0\}, \text{ and }  \tilde{f}_ib=b' \text{ if and only if } b=\tilde{e}_ib' \text{ for } b,b' \in B.$$
Here $\te_i$ and $\tf_i$ denote the lower (resp.\ upper) Kashiwara operators.
\end{enumerate}
\end{definition}

For $\la \in \wl^+$, let $u_\la$ be a highest weight  vector of $V(\la)$. Let $L^\low(\la)$ be the $\QA_0$-submodule of $V(\la)$ generated by
$\set{\tf_{i_1} \cdots \tf_{i_l} u_\la}{l \in \Z_{\ge 0}, \ i_1 , \ldots, i_l \in I}$  and let $B(\la)$ be the subset of $L^\low(\la) / qL^\low(\la)$ given by
\eqn
B^\low(\la) = \set{\tilde f_{i_1} \cdots \tilde f_{i_l}  u_\la \mod q L(\la)}{l \in \Z_{\ge0}, \ i_1, \ldots, i_l \in I} \backslash \{0\}.
\eneqn

It is shown in \cite{Kash91} that  $(L^\low(\lambda),B^\low(\lambda))$ is a lower crystal basis of $V(\lambda)$. Using
the non-degenerate  symmetric bilinear form $( \ , \ )$, $V(\lambda)$ has
the upper crystal basis
$(L^\up(\lambda),B^\up(\lambda))$ where
$$ L^\up(\lambda) \seteq \{ u \in V(\lambda) \
| \ (u,L^{\low} (\lambda)) \subset \QA_0 \},$$
and $B^\up(\lambda)\subset L^\up(\lambda)/qL^\up(\lambda)$ is the dual basis of $B^\low(\lambda)$ with respect to
the induced non-degenerate pairing between
$L^\up(\lambda)/qL^\up(\lambda)$ and $L^\low(\lambda)/qL^\low(\lambda)$.

An (abstract) {\em crystal} is a set $B$ together with maps
$$ \wt\cl B \to \wl, \ \ \ve_i,\vph_i\cl  B \to \Z \sqcup \{ \infty \} \text{ and } \te_i,\tf_i\cl B \to B \sqcup \{ 0 \} \text{ for } i \in I,$$
such that
\begin{itemize}
\item[{\rm (C1)}] $\vph_i(b)=\ve_i(b)+\lan h_i,\wt(b) \ra$ for any $i$,
\item[{\rm (C2)}] if $b \in B$ satisfies $\te_i(b) \ne 0$, then
$$ \ve_i(\te_ib)=\ve_i(b)-1, \ \vph_i(\te_ib)=\vph_i(b)+1, \ \wt(\te_i b)=\wt(b)+\alpha_i, $$
\item[{\rm (C3)}] if $b \in B$ satisfies $\tf_i(b) \ne 0$, then
$$ \ve_i(\tf_ib)=\ve_i(b)+1, \ \vph_i(\tf_ib)=\vph_i(b)-1, \ \wt(\tf_i b)=\wt(b)-\alpha_i, $$
\item[{\rm (C4)}] for $b,b' \in B$, $b'=\tf_i b$ if and only if $b=\te_i b'$,
\item[{\rm (C5)}] if $\vph_i(b)=-\infty$, then $\te_ib=\tf_ib=0$.
\end{itemize}

Recall that, with the notions of {\em morphism} and {\em tensor product rule} of crystals, the category of  crystals becomes a monoidal category
(\cite{Kash94}). If $(L,B)$ is a crystal basis of
$M$, then $B$ is an abstract crystal.
 Since $B^\low(\lambda) \simeq B^\up(\lambda)$,
we drop the superscripts for simplicity.

Let $V$ be a $\Q(q)$-vector space, and let $L_0$ be an $\QA_0$-lattice of $V$,
$L_\infty$ an $\QA_\infty$-lattice of $V$ and $V_{\QA}$ an
$\QA$-lattice of $V$.
We say that the triple $(V_{\QA},L_0,L_\infty)$ is {\em balanced} if the following canonical map is a $\Q$-linear isomorphism:
$$ E\seteq V_{\QA} \cap L_0 \cap L_\infty  \isoto L_0/qL_0.$$
The inverse of the above isomorphism
$G\cl  L_0/qL_0\isoto E$ is called the
{\em globalizing map}.
If $(V_{\QA},L_0,L_\infty)$ is balanced, then we have
\eqn &&\text{$\Q(q)\tens_\Q E\isoto V$,
$\QA\tens_\Q E\isoto V_{\QA}$, $\QA_0\tens_\Q E\isoto L_0$
and  $\QA_\infty\tens_\Q E\isoto L_\infty$.}
\eneqn
Hence, if $B$ is a basis of $L_0/qL_0$, then $G(B)$ is a basis of $V$,
$V_{\QA}$, $L_0$ and $L_\infty$. We call $G(B)$ a {\em global basis}.

\medskip
We define the two $\QA$-lattices
of $V(\la)$ by
\eqn
&&V^\low(\lambda)_\QA\seteq \bl \Q\tens U_q^-(\g)_{\Z[q^{\pm1}]}\br u_\lambda\quad\text{and}\\
&&V^\up(\lambda)_\QA\seteq
\set{ u \in V(\lambda)}{\bl u, V^\low(\lambda)_\QA\br \subset \QA }.\eneqn
Recall that there is a $\Q$-linear automorphism  $-$  on $V(\la)$ defined by
$$\ol{P u_{\la}}=\overline{P} u_\la,\quad \text{for } \ P \in \Uq.$$
Then $\bl V^\low(\lambda)_\QA,\; L^\low(\lambda),\; \ol{  L^\low(\lambda)}\br$ and
$\bl V^\up(\lambda)_\QA,\; L^\up(\lambda),\; \ol{  L^\up(\lambda)}\br$
are balanced.
Let us denote by $G^\low_\la$ and $G^\up_\la$ the associated globalizing maps, respectively.
(If there is no  danger of confusion, we simply denote them $G^\low$ and $G^\up$, respectively.)
Then the sets
$$\B^\low(\lambda) \seteq  \{G^\low_\lambda(b) \ | \ b \in B^\low(\lambda) \} \ \  \text{ and } \ \ \B^\up(\lambda) \seteq  \{G^\up_\lambda(b) \ | \ b \in B^\up(\lambda) \}$$
form $\A$-bases of
\eqn
&& V^\low(\lambda)_\A:=U_q(\g)_\A u_\la \quad \text{and} \\
&&V^\up(\lambda)_\A \seteq \set{ u \in V(\lambda)}{\bl u, V^\low(\lambda)_\A\br \subset \A },
\eneqn
 respectively.
They are called the {\em lower global basis} and the {\em upper global basis} of $V(\lambda)$.

Set
\eqn
&&L(\infty) := \sum_{l \in \Z_{\ge0}, \, i_1, \ldots, i_l \in I}
\QA_0 \tilde f_{i_1} \cdots \tilde f_{i_l} \cdot \one \subset U_q^-(\g)  \quad \text{and} \\
&&B(\infty) := \set{\tilde f_{i_1} \cdots \tilde f_{i_l} \cdot \one \mod q L(\infty)}{l \in \Z_{\ge0}, i_1, \ldots, i_l \in I}  \subset L(\infty) / q L(\infty).
\eneqn
 Then  $(L(\infty),B(\infty))$ is a lower crystal basis of the simple $B_q(\g)$-module $U_q^-(\g)$
and the triple $( \Q\tens U_q^-(\g)_{\Z[q^{\pm1}]}, L(\infty),\overline{L(\infty)})$ is balanced.
Let us denote the globalizing map by $G^{\low}$.
Then the set
$$\B^\low(\Um)\seteq  \{ G^\low(b )\ | \ b \in B(\infty) \}$$
forms  a $\A$-basis of $\Um_\A$ and is called the {\em lower global basis} of $U_q^-(\g)$.

Let us denote by
\eq \label{eq:Bup}
\B^\up(\Um) \seteq  \{G^\up(b) \ | \ b \in B(\infty) \}
\eneq
the dual basis of $\B^\low(\Um)$ with respect to $( \ , \ )$.
Then it is a $\A$-basis of
 $$\Um_\A^\vee \seteq \{ x \in U^-_q(\g) \ | \ (x,\Um_\A) \subset \A \}$$
and  called the {\em upper global basis} of $U_q^-(\g)$.
Note that $U^-_q(\g)_\A^\vee$ has a $\A$-algebra structure as a subalgebra of $U^-_q(\g)$
(see also \S\,\ref{subsec:unipotent}) .

\section{\KLRs \ and R-matrices}

\subsection{\KLRs}
We recall the definition of Khovanov-Lauda-Rouquier algebra or quiver Hecke algebra
(hereafter, we abbreviate it
as KLR algebra) associated with a given
Cartan datum $(A, \wl, \Pi, \wl^{\vee}, \Pi^{\vee})$.

Let $\cor$ be a base field.
For $i,j\in I$ such that $i\not=j$, set
$$S_{i,j}=\set{(p,q)\in\Z_{\ge0}^2}{(\al_i , \al_i)p+(\al_j , \al_j)q=-2(\al_i , \al_j)}.
$$
Let us take  a family of polynomials $(Q_{ij})_{i,j\in I}$ in $\cor[u,v]$
which are of the form
\eq
&&\parbox{68ex}{
$
Q_{ij}(u,v) = \begin{cases}\hs{5ex} 0 \ \ & \text{if $i=j$,} \\[1.5ex]
\sum\limits_{(p,q)\in S_{i,j}}
t_{i,j;p,q} u^p v^q\quad& \text{if $i \neq j$}
\end{cases}
$

\vs{1ex}
\hs{8ex}\parbox{65ex}{
with $t_{i,j;p,q}\in\cor$ such that
$Q_{i,j}(u,v)=Q_{j,i}(v,u)$ and
$t_{i,j:-a_{ij},0} \in \cor^{\times}$.}
}
\label{eq:Q}
\eneq

We denote by
$\sym_{n} = \langle s_1, \ldots, s_{n-1} \rangle$ the symmetric group
on $n$ letters, where $s_i\seteq (i, i+1)$ is the transposition of $i$ and $i+1$.
Then $\sym_n$ acts on $I^n$ by place permutations.

For $n \in \Z_{\ge 0}$ and $\beta \in \rootl^+$ such that $|\beta| = n$, we set
$$I^{\beta} = \set{\nu = (\nu_1, \ldots, \nu_n) \in I^{n}}%
{ \alpha_{\nu_1} + \cdots + \alpha_{\nu_n} = \beta }.$$

\begin{definition}
For $\beta \in \rootl^+$ with $|\beta|=n$, the {\em
\KLR}  $R(\beta)$  at $\beta$ associated
with a Cartan datum $(A,\wl, \Pi,\wl^{\vee},\Pi^{\vee})$ and a matrix
$(Q_{ij})_{i,j \in I}$ is the  algebra over $\cor$
generated by the elements $\{ e(\nu) \}_{\nu \in  I^{\beta}}$, $
\{x_k \}_{1 \le k \le n}$, $\{ \tau_m \}_{1 \le m \le n-1}$
satisfying the following defining relations:
\allowdisplaybreaks[4]
\begin{equation*} \label{eq:KLR}
\begin{aligned}
& e(\nu) e(\nu') = \delta_{\nu, \nu'} e(\nu), \ \
\sum_{\nu \in  I^{\beta} } e(\nu) = 1, \\
& x_{k} x_{m} = x_{m} x_{k}, \ \ x_{k} e(\nu) = e(\nu) x_{k}, \\
& \tau_{m} e(\nu) = e(s_{m}(\nu)) \tau_{m}, \ \ \tau_{k} \tau_{m} =
\tau_{m} \tau_{k} \ \ \text{if} \ |k-m|>1, \\
& \tau_{k}^2 e(\nu) = Q_{\nu_{k}, \nu_{k+1}} (x_{k}, x_{k+1})
e(\nu), \\
& (\tau_{k} x_{m} - x_{s_k(m)} \tau_{k}) e(\nu) = \begin{cases}
-e(\nu) \ \ & \text{if} \ m=k, \nu_{k} = \nu_{k+1}, \\
e(\nu) \ \ & \text{if} \ m=k+1, \nu_{k}=\nu_{k+1}, \\
0 \ \ & \text{otherwise},
\end{cases} \\
& (\tau_{k+1} \tau_{k} \tau_{k+1}-\tau_{k} \tau_{k+1} \tau_{k}) e(\nu)\\
& =\begin{cases} \dfrac{Q_{\nu_{k}, \nu_{k+1}}(x_{k},
x_{k+1}) - Q_{\nu_{k}, \nu_{k+1}}(x_{k+2}, x_{k+1})} {x_{k} -
x_{k+2}}e(\nu) \ \ & \text{if} \
\nu_{k} = \nu_{k+2}, \\
0 \ \ & \text{otherwise}.
\end{cases}
\end{aligned}
\end{equation*}
\end{definition}

The above relations are homogeneous provided that
\begin{equation*} \label{eq:Z-grading}
\deg e(\nu) =0, \quad \deg\, x_{k} e(\nu) = (\alpha_{\nu_k}
, \alpha_{\nu_k}), \quad\deg\, \tau_{l} e(\nu) = -
(\alpha_{\nu_l} , \alpha_{\nu_{l+1}}),
\end{equation*}
and hence $R( \beta )$ is a $\Z$-graded algebra.

 For a graded $R(\beta)$-module $M=\bigoplus_{k \in \Z} M_k$, we define
$qM =\bigoplus_{k \in \Z} (qM)_k$, where
 \begin{align*}
 (qM)_k = M_{k-1} & \ (k \in \Z).
 \end{align*}
We call $q$ the \emph{grading shift functor} on the category of
graded $R(\beta)$-modules.

If $M$ is an $R(\beta)$-module, then we set $\wt(M)=-\beta \in \rootl^-$
and call it the {\em weight} of $M$.

\smallskip
We denote by $R(\beta) \Mod$
the category of $R(\beta)$-modules,
and by $R(\beta) \smod$
the full subcategory of $R(\beta)\Mod$ consisting of modules $M$
such that $M$ are finite-dimensional over $\cor$,
and the actions of the $x_k$'s on $M$ are nilpotent.

Similarly, we denote by $R(\beta)\gMod$ and by $R(\beta)\gmod$
the category of graded $R(\beta)$-modules and
the category of graded $R(\beta)$-modules which are finite-dimensional over $\cor$, respectively.
We set
$$
R\gmod=\soplus_{\beta\in\rtl^+}R(\beta)\gmod\quad\text{and}\quad
R\smod=\soplus_{\beta\in\rtl^+}R(\beta)\smod.
$$

\medskip
For $\beta, \gamma \in \rootl^+$ with $|\beta|=m$, $|\gamma|= n$,
 set
$$e(\beta,\gamma)=\displaystyle\sum_{\substack{\nu \in I^{\beta+\gamma}, \\ (\nu_1, \ldots ,\nu_m) \in I^{\beta},  \\ (\nu_{m+1}, \ldots ,\nu_{m+n} ) \in I^{\gamma} }} e(\nu) \in R(\beta+\gamma). $$
Then $e(\beta,\gamma)$ is an idempotent.
Let
\eqn R( \beta)\tens R( \gamma  )\to e(\beta,\gamma)R( \beta+\gamma)e(\beta,\gamma) \label{eq:embedding}
\eneqn
be the $\cor$-algebra homomorphism given by
$e(\mu)\tens e(\nu)\mapsto e(\mu*\nu)$ ($\mu\in I^{\beta}$
and $\nu\in I^{\gamma}$)
$x_k\tens 1\mapsto x_ke(\beta,\gamma)$ ($1\le k\le m$),
$1\tens x_k\mapsto x_{m+k}e(\beta,\gamma)$ ($1\le k\le n$),
$\tau_k\tens 1\mapsto \tau_ke(\beta,\gamma)$ ($1\le k<m$),
$1\tens \tau_k\mapsto \tau_{m+k}e(\beta,\gamma)$ ($1\le k<n$).
Here $\mu*\nu$ is the concatenation of $\mu$ and $\nu$;
i.e., $\mu*\nu=(\mu_1,\ldots,\mu_m,\nu_1,\ldots,\nu_n)$.

\medskip
For an $R(\beta)$-module $M$ and an $R(\gamma)$-module $N$,
we define the \emph{convolution product}
$M\conv N$ by
$$M\conv N=R(\beta + \gamma) e(\beta,\gamma)
\tens_{R(\beta )\otimes R( \gamma)}(M\otimes N). $$

For $M \in R( \beta) \smod$, the dual space
$$M^* \seteq \Hom_{\cor}(M, \cor)$$
admits an $R(\beta)$-module structure via
\begin{align*}
(r \cdot  f)(u) \seteq f(\psi(r) u) \quad (r \in R( \beta), \ u \in M),
\end{align*}
where $\psi$ denotes the $\cor$-algebra anti-involution on $R(\beta
)$ which fixes the generators $e(\nu)$, $x_m$ and $\tau_k$ for $\nu
\in I^{\beta}, 1 \leq m \leq  |\beta|$ and $1 \leq k<|\beta|$.

It is known that (see \cite[Theorem 2.2 (2)]{LV11})
\eqn
&&(M_1 \conv M_2)^* \simeq q^{(\beta,\gamma)}
(M_2^* \conv M_1 ^*)\label{eq:dualconv}
\eneqn
for any $M_1 \in R(\beta) \gmod$ and $M_2 \in R(\gamma) \gmod$.

A simple module $M$ in $R \gmod$ is called \emph{self-dual} if $M^* \simeq M$.
Every simple module is isomorphic to a grading shift of a self-dual simple module (\cite[\S 3.2]{KL09}).
 Note also that  we have $\End_{R(\beta)}{M} \simeq \cor$  for every simple module $M$ in $R(\beta) \gmod$ (\cite[Corollary 3.19]{KL09}).

\medskip
Let us denote by $K(R\gmod)$ the  Grothendieck group of $R\gmod$.
Then,
$K(R\gmod)$
is an algebra over  $\A$ with
 the multiplication induced by the convolution product and
the $\A$-action induced by the  grading shift functor $q$.

 In \cite{KL09, R08}, it is shown that
a \KLR \ \emph{categorifies} the negative half of the corresponding quantum group. More precisely, we have the following theorem.

\begin{theorem}[{\cite{KL09, R08}}] \label{thm:categorification 1}
 For a given  Cartan datum $(A,\wl, \Pi,\wl^{\vee},\Pi^{\vee})$,
we take a parameter matrix $(Q_{ij})_{i,j \in \K}$ satisfying the conditions in \eqref{eq:Q}, and let $U_q(\mathfrak g)$ and $R(\beta) $ be
the associated quantum group   and the \KLRs, respectively.
Then there exists  a $\A$-algebra isomorphism
\begin{align}
  U^-_q(\mathfrak g)_\A^{\vee} \simeq K(R\gmod).
\label{eq:KLRU}
\end{align}
\end{theorem}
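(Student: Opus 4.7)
My plan is to exhibit an $\A$-algebra isomorphism $\gamma\col U_q^-(\mathfrak g)_\A^\vee \isoto K(R\gmod)$ by identifying generators, verifying the quantum Serre relations, and then establishing bijectivity via a rank/pairing argument. For the generators, note that for each $i \in I$ the algebra $R(\alpha_i) \simeq \cor[x_1]$ has a unique graded simple module up to shift; let $L(i)$ be its self-dual normalization, so $K(R(\alpha_i)\gmod)$ is the free $\A$-module of rank one generated by $[L(i)]$. The convolution product $\circ$ together with the grading shift $q$ turns $K(R\gmod) = \soplus_{\beta \in \rootl^+} K(R(\beta)\gmod)$ into a $\rootl^+$-graded $\A$-algebra, and I would define $\gamma$ on $\Q(q)$-algebra generators by $\gamma(f_i) = [L(i)]$.

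Next, I would verify that the quantum Serre relations hold in $K(R\gmod)$:
\[
\sum_{r=0}^{1-a_{ij}}(-1)^r [L(i)]^{\circ(1-a_{ij}-r)} \circ [L(j)] \circ [L(i)]^{\circ r} = 0 \qquad (i \neq j).
\]
This amounts to a direct analysis of $R((1-a_{ij})\alpha_i + \alpha_j)$ from the defining relations, especially $\tau_k^2 e(\nu) = Q_{\nu_k, \nu_{k+1}}(x_k, x_{k+1}) e(\nu)$ and the cubic braid relation; one builds an exact complex of projective modules whose Euler characteristic realizes the Serre identity, or equivalently a Mackey-type filtration on the convolution product whose associated graded cancels in alternating pairs. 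Combined with the fact that divided powers on the categorical side correspond to appropriate summands of the $n$-th convolution power of $L(i)$, since $\End_{R(n\alpha_i)}(L(i)^{\circ n})$ is the nil Hecke algebra of graded dimension $[n]_i!$, this extends $\gamma$ to an $\A$-algebra homomorphism $U_q^-(\mathfrak g)_\A^\vee \to K(R\gmod)$.

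For bijectivity I would begin with an $\A$-rank comparison weight-by-weight. On the quantum group side, $(U_q^-(\mathfrak g)_\A^\vee)_{-\beta}$ has $\A$-rank $|B(\infty)_{-\beta}|$. On the KLR side, the faithful polynomial representation of $R(\beta)$ on $\soplus_{\nu \in I^\beta} \cor[x_1, \ldots, x_n]$ yields a PBW-type basis $\{ e(\nu) x^{\mathbf a} \tau_w \}$; a Kashiwara-operator analysis on the socles of restrictions of simples (in the style of Lauda--Vazirani) shows that the self-dual graded simple $R(\beta)$-modules are parameterized by $B(\infty)_{-\beta}$, so the ranks agree in every weight.

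The main obstacle is promoting this rank match to a genuine isomorphism. The cleanest closure I would adopt is Rouquier's pairing argument: define a bilinear form $\langle\cdot, \cdot\rangle \col K(R\gmod) \times K(R\proj) \to \A$ by the graded dimension of Hom-spaces, show that under $\gamma$ it corresponds to the Kashiwara bilinear form on $U_q^-(\mathfrak g)$, and invoke non-degeneracy of both pairings to conclude $\gamma$ is bijective; non-degeneracy on the KLR side comes from faithfulness of the polynomial representation, and on the quantum group side from the classical theory. As a byproduct, $\gamma$ identifies $\B^\up(\Um)$ with the set of isomorphism classes of self-dual simple modules in $R\gmod$, which is the form in which the theorem will be used in the sequel.
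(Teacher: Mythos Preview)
The paper does not prove this statement at all: Theorem~\ref{thm:categorification 1} is quoted from \cite{KL09, R08} and used as a black box, so there is no ``paper's own proof'' to compare with. Your outline is a reasonable sketch of the strategy in those references (send $f_i$ to $[L(i)]$, check the quantum Serre relations via the nil-Hecke structure and Mackey filtrations, then use the nondegenerate pairing between $K(R\gmod)$ and $K(R\gproj)$ to conclude bijectivity), and would be acceptable as a summary of the literature.

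However, your final sentence is wrong and should be removed. The identification of the upper global basis $\B^\up(\Um)$ with the classes of self-dual simple $R$-modules is \emph{not} a byproduct of this argument: it is a separate and much deeper theorem (Theorem~\ref{thm:categorification 2} in the paper, due to \cite{VV09, R11}), which requires the additional hypotheses that $R$ be symmetric and that $\cor$ have characteristic~$0$, and whose proof goes through the geometry of quiver varieties and the decomposition theorem. The Lauda--Vazirani crystal analysis you invoke shows only that the self-dual simples are parameterized as a \emph{set} by $B(\infty)$ via Kashiwara operators; it does not show that their classes coincide with the elements $G^\up(b)$ under the isomorphism~\eqref{eq:KLRU}. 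Conflating these two statements is a genuine error.
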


 \KLRs\ also categorify the upper global bases.

\Def We say that  a \KLR\ $R$ is {\em symmetric} if
$Q_{i,j}(u,v)$ is a polynomial in $u-v$ for all $i,j\in I$.
\edf
In particular, the corresponding generalized Cartan matrix $A$ is symmetric.
 In symmetric case,  we assume $(\al_i,\al_i)=2$ for $i \in I$.

\begin{theorem} [\cite{VV09, R11}] \label{thm:categorification 2}
Assume that  the \KLR\ $R$ is symmetric and
the base field $\cor$ is of characteristic $0$.
 Then under the isomorphism \eqref{eq:KLRU}
in {\rm Theorem \ref{thm:categorification 1}},
the upper global basis  corresponds to
the set of the isomorphism classes of self-dual simple $R$-modules.
\end{theorem}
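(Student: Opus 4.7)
The plan is to follow the geometric approach of Varagnolo--Vasserot, realizing a symmetric $R(\beta)$ as an Ext algebra of Lusztig sheaves on the moduli stack of quiver representations, and then transporting Lusztig's geometric description of $\B^\low(\Um)$ through this identification. Orient the Dynkin graph as a quiver $Q$ with vertex set $I$; for $\beta\in\rtl^+$, let $E_\beta$ be the affine space of $Q$-representations of dimension vector $\beta$, acted on by $G_\beta=\prod_i GL_{\beta_i}(\cor)$. For each sequence $\nu\in I^\beta$ there is a proper $G_\beta$-equivariant map $\pi_\nu\colon F_\nu\to E_\beta$ from the variety of $\nu$-flags, and the Lusztig sheaf
\[
L_\beta=\soplus_{\nu\in I^\beta}(\pi_\nu)_{!}\,\underline{\cor}_{F_\nu}[\dim F_\nu]
\]
is, by the decomposition theorem (where the characteristic $0$ hypothesis on $\cor$ enters), a semisimple $G_\beta$-equivariant complex; its simple perverse summands, running over all $\beta$, give the IC sheaves that form Lusztig's geometric category $\cQ$.

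Next, I would invoke (or re-derive) the key algebraic--geometric identification of Varagnolo--Vasserot and Rouquier: for a specific choice of the parameter matrix $(Q_{ij})$ of \eqref{eq:Q}, there is an isomorphism of $\Z$-graded $\cor$-algebras
\[
R(\beta)\;\isoto\;\operatorname{Ext}^{\bullet}_{G_\beta}(L_\beta,L_\beta),
\]
matching the idempotents $e(\nu)$ with projections onto summands, the $x_k$ with equivariant Chern classes along the tautological line bundles on $F_\nu$, and the $\tau_m$ with classes of correspondences intertwining two adjacent flag varieties. The relations are verified by localization/clean intersection computations on the $F_\nu$, the quadratic relation $\tau_k^2=Q_{\nu_k,\nu_{k+1}}(x_k,x_{k+1})$ dictating the choice of the polynomials $Q_{ij}$. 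This is the main technical obstacle: matching generators and relations precisely, and in particular verifying the braid-type relation and the compatibility with gradings.

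Having this isomorphism, Morita theory for graded Ext algebras of semisimple objects gives an equivalence of additive graded categories between $R(\beta)\gproj$ and the additive hull of summands of $L_\beta$ in $\cQ$, and dually an equivalence between $R(\beta)\gmod$ and the associated category whose Grothendieck group is $\A$-spanned by IC classes. Under this equivalence, self-dual simple $R(\beta)$-modules (those invariant under the $\psi$-duality $M\mapsto M^*$) correspond to IC sheaves (which are self-dual under Verdier duality), because $\psi$ goes over to Verdier duality under the Ext isomorphism, and grading shifts on both sides match perfectly normalized IC complexes. In particular, each isomorphism class of self-dual simple module $[M]$ is sent to the class $[IC_{\CO}]$ of a well-defined IC sheaf of a $G_\beta$-orbit closure.

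Finally, I would combine this with Lusztig's theorem identifying the Grothendieck group of $\cQ$ with $\Um_{\A}$ via the geometric Hall algebra and sending IC classes to the lower global basis $\B^\low(\Um)$. Dualizing using the pairing $( \ , \ )$, which matches the Ext pairing on the geometric side (up to a natural shift), transports this to the statement that, under $\Um_\A^\vee\simeq K(R\gmod)$ of Theorem~\ref{thm:categorification 1}, the classes of self-dual simple modules correspond exactly to $\B^\up(\Um)$ as defined in \eqref{eq:Bup}. The delicate points are: (i) tracking normalizations so that the shift used in defining ``self-dual'' on the KLR side matches the perverse normalization of $IC_{\CO}$; (ii) checking that the algebra isomorphism above is actually induced by the explicit KLR presentation with the given $Q_{ij}$; (iii) the appeal to the decomposition theorem, which genuinely needs $\mathrm{char}(\cor)=0$.
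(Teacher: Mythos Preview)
The paper does not give its own proof of this theorem: it is stated with a citation to \cite{VV09,R11} and used as a black box thereafter. So there is nothing to compare your argument against in the paper itself; your write-up is a sketch of the Varagnolo--Vasserot proof, which is exactly the intended source.

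That said, two technical points in your outline deserve tightening. First, writing ``IC sheaf of a $G_\beta$-orbit closure'' is only correct for Dynkin quivers; for arbitrary symmetric Kac--Moody data (which is the generality of the statement) there are infinitely many orbits and the simple perverse constituents of $L_\beta$ are indexed by a more complicated set (pairs of an orbit and an irreducible equivariant local system, or equivalently Lusztig's set parametrizing the canonical basis). You should replace this by ``simple perverse summands of $L_\beta$'' and avoid the orbit language. Second, the Ext-algebra construction pins down one particular family $(Q_{ij})$, whereas the statement is for any symmetric $(Q_{ij})$; you need the (easy but not automatic) observation that rescaling the generators shows all symmetric choices yield isomorphic graded algebras, so the conclusion transfers. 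With these two corrections your outline matches the argument in \cite{VV09}, and the paper's later \S\ref{subsec:geometry} in fact reuses the same Ext-algebra realization (for a different purpose), so your framework is consistent with what the paper assumes.
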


\subsection{R-matrices for \KLRs} \hfill

  For $|\beta|=n$ and $1\le a<n$,  we define $\vphi_a\in R( \beta)$
by
\eqn&&\ba{l}
  \vphi_a e(\nu)=
\begin{cases}
  \bl\tau_ax_a-x_{a}\tau_a\br e(\nu)
  & \text{if $\nu_a=\nu_{a+1}$,} \\[2ex]
\tau_ae(\nu)& \text{otherwise.}
\end{cases}
 \ea \eneqn
They are called the {\em intertwiners}. Since $\{\vphi_a\}_{1\le a<n}$
satisfies the braid relation, $\vphi_w\seteq\vphi_{i_1}\cdots\vphi_{i_\ell}$
does not depend on the choice of reduced expression $w=s_{i_1}\cdots s_{i_\ell}$.

For $m,n\in\Z_{\ge0}$,
let us denote by $w[{m,n}]$ the element of $\sym_{m+n}$  defined by
\eqn
&&w[{m,n}](k)=\begin{cases}k+n&\text{if $1\le k\le m$,}\\
k-m&\text{if $m<k\le m+n$.}\end{cases}
\eneqn

Let $\beta,\gamma\in \rtl^+$ with $|\beta|=m$, $|\gamma|=n$,
 and let
$M$ be an $R(\beta)$-module and $N$ an $R(\gamma)$-module. Then the
map $M\tens N\to N\conv M$ given by
$u\tens v\longmapsto \vphi_{w[n,m]}(v\tens u)$
is  $R( \beta)\tens R(\gamma )$-linear, and  hence it extends
to an $R( \beta +\gamma)$-module  homomorphism
\eqn &&R_{M,N}\col
M\conv N\To N\conv M. \eneqn

\medskip
Assume that the \KLR\ $R(\beta)$ is symmetric.
Let $z$ be an indeterminate  which is homogeneous of degree $2$, and
let $\psi_z$ be the graded algebra homomorphism
\eqn
&&\psi_z\col R( \beta )\to \cor[z]\tens R( \beta )
\eneqn
given by
$$\psi_z(x_k)=x_k+z,\quad\psi_z(\tau_k)=\tau_k, \quad\psi_z(e(\nu))=e(\nu).$$

For an $R( \beta )$-module $M$, we denote by $M_z$
the $\bl\cor[z]\tens R( \beta )\br$-module
$\cor[z]\tens M$ with the action of $R( \beta )$ twisted by $\psi_z$.
Namely,
\eqn&&
\ba{l}
e(\nu)(a\tens u)=a\tens e(\nu)u,\\[1ex]
x_k(a\tens u)=(za)\tens u+a\tens (x_ku),\\[1ex]
\tau_k(a\tens u)=a\tens(\tau_k u)
\ea
\label{eq:spt1}
\eneqn
for  $\nu\in I^\beta$,  $a\in \cor[z]$ and $u\in M$.
 Note that the multiplication by $z$ on $\cor[z]$ induces  an  $R(\beta)$-module endomorphism on $M_z$.
 For $u\in M$, we sometimes denote by $u_z$ the corresponding element $1\tens u$ of
the $R( \beta )$-module  $M_z$.

For a non-zero $M\in R(\beta)\smod$
and a non-zero $N\in R(\gamma)\smod$,
\eq&&\parbox{70ex}{%
let $s$ be the order of zero of $R_{M_z,N}\col  M_z\conv
N\To N\conv M_z$;
i.e., the largest non-negative integer such that the image of
$R_{M_z,N}$ is contained in $z^s(N\conv M_z)$.}
\label{def:s} \eneq
Note that such an $s$ exists because
$R_{M_z,N}$ does not vanish (\cite[Proposition 1.4.4 (iii)]{K^3}).
We denote by $\Rm_{M_z,N}$
the morphism $z^{-s}R_{M_z,N}$.

\Def Assume that $R(\beta)$ is symmetric.
For a non-zero $M\in R(\beta)\smod$
and a non-zero $N\in R(\gamma)\smod$,
let $s$ be an integer as in \eqref{def:s}.
We define $$\rmat{M,N}\col M\conv N\to N\conv M$$
by  $$\rmat{M,N} = \Rm_{M_z,N}\vert_{z=0},$$
and call it the {\em renormalized R-matrix}.
\edf

By the definition, the renormalized R-matrix
$\rmat{M,N}$ never vanishes.

We define also
$$\rmat{N,M}\col N\conv M\to M\conv N$$
by  $$\rmat{N,M} = \bl (-z)^{-t}R_{N,M_z}\br\vert_{z=0},$$
where $t$ is the order of zero of
$R_{N,M_z}$.

If $R(\beta)$ and $R(\gamma)$ are symmetric, then
$s$ coincides with the order of zero of
$R_{M,N_z}$, and $\bl z^{-s}R_{M_z,N}\br\vert_{z=0}=\bl (-z)^{-s}R_{M,N_z}\br\vert_{z=0}$
(see, \cite[(1.11)]{KKKO14}).

By the construction, if the composition
$(N_1 \conv \rmat{M,N_2}) \circ (\rmat{M,N_1} \conv N_2)$
for $M,N_1,N_2 \in R \smod$
doesn't vanish, then it is equal to $\rmat{M,N_1 \circ N_2}$.

\Def
A simple $R(\beta)$-module $M$ is called \emph{real} if $M \conv M$ is simple.
\edf

The  following lemma was used significantly in \cite{KKKO14}.
\Lemma[{\cite[Lemma 3.1]{KKKO14}}] \label{lem:simplicity}
Let $\beta_k\in\rtl^+$ and $M_k\in R(\beta_k)\smod$
$(k=1,2,3)$.
Let $X$ be an $R(\beta_1+\beta_2)$-submodule of $M_1\conv M_2$ and
$Y$ an $R(\beta_2+\beta_3)$-submodule of $M_2\conv M_3$
such that $X\conv M_3\subset M_1\conv Y$ as submodules of
$M_1\conv M_2\conv M_3$.
Then there exists an $R(\beta_2)$-submodule $N$ of $M_2$
such that
$X\subset M_1\conv N$ and $N\conv M_3\subset Y$.
\enlemma

One of the main results in \cite{KKKO14} is
\Th[{\cite[Theorem 3.2]{KKKO14}}] \label{thm:simplicity}
Let $\beta, \gamma \in \rootl^+$ and assume that $R(\beta)$ is symmetric.
Let $M$ be a real simple module in
$R(\beta) \smod$ and
$N$ a simple module in $R(\gamma) \smod$.
Then
\bnum
  \item $M \conv N$ and $N \conv M$ have simple socles and simple heads.
  \item Moreover, $\Im(\rmat{M,N})$ is equal to the head of $M \conv N$
and socle of $N \conv M$.
Similarly,
$\Im(\rmat{N,M})$ is equal to the head of $N \conv M$
and socle of $M \conv N$.
\end{enumerate}
\enth

We will use the following convention frequently.
\Def
For simple $R$-modules $M$ and $N$, we denote by  $M \hconv N$   the head of
$M \conv N$ and  by $M \sconv N$ the socle of $M \conv N$ .
\edf

\section{Simplicity of heads and socles of convolution products}

{\em In this section,
we assume that $R(\beta)$ is symmetric} for any $\beta\in\rtl^+$,
i.e., $Q_{ij}(u,v)$ is a function in $u-v$ for any $i,j\in I$.

We also work always in the category of graded modules.
For the sake of simplicity, {\em
we simply say that $M$ is an $R$-module instead of saying
that $M$ is a graded $R(\beta)$-module for $\beta\in\rtl^+$.}
We also sometimes ignore grading shifts if there is no  danger of confusion.
Hence, for $R$-modules $M$ and $N$, {\em we sometimes say that
$f\col M\to N$ is a homomorphism if
$f\col q^aM\to N$ is a morphism in $R\gmod$ for some $a\in\Z$.}
If we want to emphasize that $f\col q^aM\to N$ is a morphism in $R\gmod$,
we say so.

\subsection{Homogeneous degrees of R-matrices}

\Def
For non-zero $M,N \in R \gmod$, we denote by $\Lambda(M,N)$ the homogeneous degree of the R-matrix
$\rmat{M,N}$.
\edf

Hence
\eqn
&&\Rm_{M_z,N}:M_z \conv N \to   q^{-\Lambda(M,N)} N \conv M_z \quad \text{and}\\
&&\rmat{M,N}:M\conv N \to q^{-\Lambda(M,N)} N \conv M
\eneqn
are morphisms in $R\gMod$ and in $R \gmod$, respectively.

\Lemma \label{lem:tLa even}
 For non-zero $R$-modules $M$ and $N$, we have
\eqn
\Lambda(M,N) \equiv \bl\wt(M),\wt(N)\br \mod 2.
\eneqn
\enlemma
\Proof
Set $\beta\seteq-\wt(M)$ and $\gamma\seteq-\wt(N)$.
By \cite[(1.3.3)]{K^3}, the homogeneous degree of  $R_{M_z,N}$ is $-(\beta,\gamma)+2\inp{\beta,\gamma}$, where
$\inp{{\scbul,\scbul}}$ is the symmetric bilinear form on $\rootl$ given by
$\inp{\al_i,\al_j}=\delta_{ij}$.
Hence  $\Rm_{M_z,N}=z^{-s}R_{M_z,N}$  has degree $-(\beta,\gamma)+2\inp{\beta,\gamma}-2s$.
\QED

\Def
For non-zero $R$-modules $M$ and $N$, we set
\eqn
\tL(M,N) \seteq \frac{1}{2}\bl\Lambda(M,N) +(\wt(M), \wt(N))\br  \in \Z .
\eneqn
\edf

\Lemma\label{lem:selfdual}
Let $M$ and $N$ be self-dual simple modules.
If one of them is real, then
\eqn
q^{\tL(M,N)} M \hconv N
\eneqn
is a self-dual simple module.
\enlemma
\Proof
Set $\beta=\wt(M)$ and $\gamma=\wt(N)$.
Set $M \hconv N = q^c L$ for some self-dual simple module $L$ and some $c \in \Z$.
Then we have
\eqn
M \conv N \epito q^c L \monoto q^{-\Lambda(M,N)} N \conv M,
\eneqn
since $M \hconv N=\Im \rmat{M,N}$.
Taking dual, we obtain
\eqn
q^{\Lambda(M,N)+(\beta,\gamma)} M \conv N \epito q^{-c} L \monoto q^{(\beta,\gamma)} N \conv M.
\eneqn
In particular, $q^{-c-\Lambda(M,N)-(\beta,\gamma)} L $ is a simple quotient of $M\conv N$.
Hence we have $c=-c-\Lambda(M,N)-(\beta,\gamma)$, which implies $c= -\tL(M,N)$.
\QED

\Lemma \label{lem:composition}
\bni
 \item  Let $M_k$ be non-zero modules $( k=1,2,3)$, and let $\vphi_1: L \to  M_1\conv M_2$ and
$\vphi_2:  M_2 \conv M_3 \to L'$ be non-zero homomorphisms.  Assume further that $M_2$ is a simple module.
Then the composition
\eqn
L \conv M_3 \To[\vphi_1 \circ M_3] M_1 \conv M_2 \conv M_3 \To[M_1 \circ \vphi_2] M_1 \conv L'
\eneqn
does not vanish.
\item Let $M$ be a simple module and let $N_1, N_2$ be non-zero modules.
Then the composition
\eqn
M \conv N_1 \conv N_2 \To[\rmat{M,N_1} \circ N_2]
N_1  \conv M \conv N_2 \To[N_1 \circ \; \rmat{M,N_2}]
N_1 \conv N_2 \conv M
\eneqn
coincides with $\rmat{M,N_1 \circ N_2}$,
 and the composition
\eqn
N_1 \conv N_2 \conv M \To[N_1\circ\; \rmat{N_2,M}]
N_1  \conv M \conv N_2 \To[\rmat{N_1,M} \circ N_2]
 M \conv N_1 \conv N_2
\eneqn
coincides with $\rmat{N_1 \circ N_2,M}$.

In particular, we have
\eqn
\Lambda(M, N_1\conv N_2) = \Lambda(M,N_1)+\Lambda(M,N_2)
\eneqn
and
\eqn
\Lambda(N_1\conv N_2,M) = \Lambda(N_1,M)+\Lambda(N_2,M).
\eneqn
\end{enumerate}
\enlemma
\Proof
 (i)  Assume  that the composition vanishes. Then we have $\im \vphi_1 \conv M_3 \subset M_1 \conv \Ker \vphi_2$.
By Lemma \ref{lem:simplicity},
 there is a submodule $N$ of $M_2$
such that $\im \vphi_1  \subset M_1 \conv N$ and $N \conv M_3 \subset \Ker \vphi_2$.
The first inclusion implies that $N\neq 0$  since $\vphi_1$ is non-zero,  and the second  implies  $N\neq M_2$ since $\vphi_2$ is non-zero.
It contradicts  the simplicity of  $M_2$.

(ii) It is  enough to show that the compositions
$(N_1 \conv \rmat{M,N_2}) \circ (\rmat{M,N_1} \conv N_2)$ and
$ (\rmat{N_1,M} \conv N_2) \circ (N_1 \conv \rmat{N_2,M})$
 do not vanish, but these immediately follow from (i).
\QED

\subsection{Properties of $\tL(M,N)$ and $\de(M,N)$}
\Lemma \label{lem:even}
Let $M$ and $N$ be simple $R$-modules.
Then we have
\bnum
   \item $\Lambda(M,N)+\Lambda(N,M) \in 2 \Z_{\ge 0}.$
   \item If $\Lambda(M,N)+\Lambda(N,M) =2m$ for some $m \in \Z_{\ge 0}$, then
  \eqn
\Rm_{M_z,N} \circ \Rm_{N,M_z}=z^m \id_{N \conv M_z} \quad
\text{and} \quad
\Rm_{N,M_z} \circ \Rm_{M_z,N}=z^m \id_{M_z \conv N}
  \eneqn
  up to constant multiples.
\end{enumerate}
\enlemma
\Proof
By \cite[Proposition 1.6.2]{K^3}, the morphism
\eqn
\Rm_{N,M_z} \circ \Rm_{M_z,N} : M_z \circ N \to
M_z \circ N
\eneqn
is equal to $f(z) \id_{M_z \circ N}$ for some $0 \neq f(z) \in \cor[z]$.
Since $\Rm_{N,M_z} \circ \Rm_{M_z,N}$ is  homogeneous of degree $\Lambda(M,N)+\Lambda(N,M)$,
we have $f(z)=c z^{\frac{1}{2}(\Lambda(M,N)+\Lambda(N,M))}$ for some $c \in \cor^\times$.
\QED

\Def
For non-zero modules $M$ and $N$, we set
\eqn
\de(M,N) = \frac{1}{2}\bl\Lambda(M,N)+\Lambda(N,M)\br.
\eneqn
\edf
Note that if $M$ and $N$ are simple modules, then  we have
$\de(M,N) \in \Z_{\ge 0}$.
Note also that
if $M,N_1,N_2$ are simple modules, then we have
$\de(M,N_1 \conv N_2) =\de(M,N_1)+\de(M,N_2)$ by Lemma \ref{lem:composition} (ii).

\Lemma[\cite{KKKO14}] \label{lem:commute_equiv}
Let $M,N$ be simple modules and assume that one of them is real. Then the following conditions are equivalent.
\bnum
  \item $\de(M,N)=0$.
  \item $\rmat{M,N}$ and $\rmat{N,M}$ are inverse to each other up to a constant multiple.
  \item $M \conv N$ and $N \conv M$ are isomorphic up to a grading shift.
  \item $M \hconv N$ and $N \hconv M$ are isomorphic up to a grading shift.
  \item $M \conv N$ is simple.
\end{enumerate}
\enlemma
\Proof
By specializing the equations in Lemma \ref{lem:even} (ii) at $z=0$, we obtain that $\de(M,N)=0$ if and only if $\rmat{M,N} \circ \rmat{N,M} = \id_{N\conv M}$ and $\rmat{N,M} \circ \rmat{M,N} = \id_{M\conv N}$ up to non-zero constant multiples. Hence the conditions (i) and (ii) are equivalent.

The conditions  (ii), (iii), (iv), and (v) are equivalent by \cite[Theorem 3.2, Proposition 3.8, and Corollary 3.9]{KKKO14}.
\QED

\Def
Let $M,N$ be simple modules.
\bnum
  \item We say that $M$ and $N$ \emph{commute} if $\de(M,N)=0$.
  \item We say that $M$ and $N$  are \emph{simply-linked} if $\de(M,N)=1$.
\end{enumerate}
\edf

\Prop  \label{prop:real simple}
Let $M_1,\ldots, M_r$ be a commuting family of real simple modules.
Then the convolution product
\eqn
M_{1} \conv \cdots \conv M_{r}
\eneqn
is a real simple module.
\enprop
\Proof We shall first show the simplicity of the convolutions.
By induction on $r$, we may assume that $ M_{2} \conv \cdots \conv M_{r}$
is a simple module.
Then we have
\eqn
\de(M_{1}, M_{2} \conv \cdots \conv M_{r})
=\sum_{s=2}^r \de(M_{1},M_{s})=0
\eneqn
so that
$M_{1} \conv \cdots \conv M_{r}$ is simple by Lemma \ref{lem:commute_equiv}.

Since $(M_{1} \conv \cdots \conv M_{r}) \conv (M_{1} \conv \cdots \conv M_{r})$ is also simple,
$M_{1} \conv \cdots \conv M_{r}$ is real.
\QED

\begin{definition}
Let $M_1,\ldots,M_m$ be real
simple modules. Assume that they commute with each other. We set
\eqn
&&M_1\sodot M_2\seteq q^{\tLa(M_1,M_2)}M_1\conv M_2,\\
&&\sodot_{1 \le k \le m}M_k
\seteq(\cdots(M_1\sodot M_2)\cdots)\sodot M_{m-1})\sodot M_m\\
&&\hs{20ex}\simeq q^{\sum_{1 \le i <j\le m} \tLa(M_i,M_j)}M_1 \conv \cdots \conv M_m.
\eneqn
\end{definition}
It is invariant  under the permutations of $M_1,\ldots,M_m$.

\Lemma\label{lem:multipro}
Let $M_1,\ldots, M_m$ be real simple modules commuting with each other.
Then for any $\sigma\in\sym_m$, we have
$$\nconv_{1\le k\le m} M_k\simeq \nconv_{1\le k\le m} M_{\sigma(k)}\quad\text{in $R\gmod$.}$$
Moreover, if the $M_k$'s are self-dual,
then so is
$\nconv_{1\le k\le m} M_k$.
\enlemma
\Proof
It follows from Lemma~\ref{lem:selfdual} and
$q^{\tL(M_i,M_j)}M_i\conv M_j\simeq
q^{\tL(M_j,M_i)}M_j\conv M_i$.
\QED

\Prop \label{prop:inequalities}
Let $f:N_1 \to N_2$ be a morphism between non-zero
$R$-modules $N_1, N_2 $
and let $M$ be a non-zero $R$-module.
\bnum
\item
If $\Lambda(M,N_1)=\Lambda(M,N_2)$, then the following diagram is commutative:
  \eqn
  \xymatrix@C=5em{
  M\conv N_1 \ar[r]^{\rmat{M,N_1}} \ar[d]_{M \circ f} &%
N_1 \conv M \ar[d]^{f \circ M} \\
  M\conv N_2 \ar[r]^{\rmat{M,N_2}}& %
N_2 \conv M.
  }\eneqn
\item If $\Lambda(M,N_1) < \Lambda(M,N_2)$, then the composition
  \eqn
  M \conv N_1 \To[M \circ f] M \conv N_2 \To[\rmat{M,N_2}] N_2 \conv M
  \eneqn
  vanishes.
 \item If $\Lambda(M,N_1) > \Lambda(M,N_2)$, then the composition
  \eqn
M \conv N_1 \To[\rmat{M,N_1}] N_1 \conv M \To[f \circ M] N_2 \conv M
  \eneqn
  vanishes.
 \item If $f$ is surjective, then we have
 \eqn
 \Lambda(M,N_1) \ge \Lambda(M,N_2) \quad \text{and} \quad
 \Lambda(N_1,M) \ge \Lambda(N_2,M)
 \eneqn
If $f$ is injective, then we have
\eqn
\Lambda(M,N_1) \le \Lambda(M,N_2) \quad \text{and}\quad
\Lambda(N_1,M) \le \Lambda(N_2,M)
\eneqn
\end{enumerate}

\enprop
\Proof

Let $s_i$  be the order of zero of $R_{M_z,N_i}$ for $i=1,2$.
Then we have  $\Lambda(M,N_1) - \Lambda(M,N_2)=2(s_2-s_1)$.

Set $m\seteq\min\{s_1,s_2\}$.
Then the following diagram is commutative:
  \eqn
  \xymatrix@C=10em{
  M_z\conv N_1 \ar[r]^{z^{-m}R_{M_z,N_1}} \ar[d]_{M_z \conv f} &  N_1 \conv M_z \ar[d]^{f \conv M_z} \\
  M_z\conv N_2 \ar[r]^{z^{-m}R_{M_z,N_2}}&  N_2 \conv M_z.
  }\eneqn

(i) If $s_1=s_2$, then by specializing $z=0$ in the above diagram,
we obtain the commutativity of the diagram in (i).

(ii) If $s_1 > s_2$, then we have
\eqn
z^{-m}R_{M_z,N_1}=z^{s_1-m} \bl z^{-s_1}R_{M_z,N_1}\br
\eneqn
so that $z^{-m}R_{M_z,N_1}|_{z=0}$ vanishes.
Hence
we have
\eqn
\rmat{M,N_2} \circ (M\conv f) = z^{-m}R_{M_z,N_2}|_{z=0} \circ (M\conv f) =0,
\eneqn
as desired. In particular, $f$ is not surjective.

(iii) Similarly, if $s_1 < s_2$, then we have
$
(f \conv M) \circ \rmat{M,N_1} =0,
$
and $f$ is not injective.

(iv) The statements for $\Lambda(M,N_1)$ and   $\Lambda(M, N_2)$ follow
from (ii) and (iii).
The other statements can be shown in a similar way.
\QED

\Prop\label{prop:rmat}
 Let $M$ and $N$ be simple modules.
We assume that one of them is real.
Then we have
$$\Hom_{R\smod}(M\conv N,N\conv M)=\cor\,\rmat{M,N}.$$
\enprop
\Proof
 Since the other case can be proved similarly, we assume that $M$ is real.
Let $f\cl M\conv N\to N\conv M$ be a morphism.
 Note that we have $\rmat{M,\,M\circ N}=M\conv \rmat{M,N}$ and $\rmat{M,\,N\circ M}=\rmat{M,N}\conv M$   by Lemma \ref{lem:composition} (ii) and by the fact that $\rmat{M,M}=\id_{M\conv M}$ up a constant multiple.
Thus, by Proposition \ref{prop:inequalities}, we have a commutative diagram (up to a constant multiple)
$$\xymatrix@C=10ex{
M\conv M\conv N\ar[r]^{M\circ \rmat{M,N}}\ar[d]_{M\circ f}
&M\conv N\conv M\ar[d]_{f\circ M}\\
M\conv N\conv M\ar[r]^{\rmat{M,N}\circ M}
&N\conv M\conv M
}$$
Hence we have
$$M\conv {\rm Im}(\rmat{M,N})\subset f^{-1}\bl {\rm Im}(\rmat{M,N})\br\conv M.$$
Hence there exists a submodule $K$ of $N$ such that
${\rm Im}(\rmat{M,N})\subset K\conv M$ and
$M\conv K\subset f^{-1}\bl{\rm Im}(\rmat{M,N})\br$ by Lemma \ref{lem:simplicity}.
Since $K\not=0$, we have $K=N$.
Hence
$f(M\conv N)\subset {\rm Im}(\rmat{M,N})$, which means that
$f$ factors as
$M\conv N\to \soc(N\conv M)\monoto N\conv M$.
It remains to remark that
$\Hom_{R\smod}\bl M\conv N,\soc(N\conv M)\br=\cor\,\rmat{M,N}$.
\QED

\Prop \label{pro:subquotient}
Let $L$, $M$ and $N$ be simple modules.
Then we have
\eq&&\ba{l}
\La(L,S)\le\La(L,M)+\La(L,N),\
\La(S,L)\le\La(M,L)+\La(N,L)\ \text{and}\\[1ex]
\de(S,L)\le\de(M,L)+\de(N,L)\ea\label{eq:LMN<}
\eneq
for any subquotient $S$ of $M\conv N$.
Moreover, when $L$ is real, the following conditions are equivalent.
\bni
\item $L$ commutes with $M$ and $N$.
\item Any simple subquotient $S$ of $M\conv N$ commutes with
$L$ and satisfies $\La(L,S)=\La(L,M)+\La(L,N)$.
\item Any simple subquotient $S$ of $M\conv N$ commutes with $L$ and satisfies
$\La(S,L)=\La(M,L)+\La(N,L)$.
\ee
\enprop
\Proof
The inequalities \eqref{eq:LMN<} are consequences of Proposition \ref{prop:inequalities}.
Let us show the equivalence
of (i)--(iii).

Let $M\conv N=K_0\supset K_1\supset \cdots\supset  K_\ell\supset K_{\ell+1}=0$ be a Jordan-H\"older series of $M\conv N$.
Then the renormalized R-matrix $\Rm_{L_z,M\circ N}=(M\conv\Rm_{L_z,N})\circ(\Rm_{L_z,M}\conv N)
\cl L_z\conv M\conv N\to M\conv N\conv L_z$
is homogeneous of degree $\La(L,M)+\La(L,N)$ and it sends $L_z\conv K_k$ to
$K_k\conv L_z$ for any $k\in\Z$. Hence
$f\seteq\rmat{L,M\circ N}=\Rm_{L_z,M\circ N}\vert_{z=0}$ sends
$L\conv K_k$ to $K_k\conv L$.

\medskip
\noi
First assume (i).
Then $f$ is an isomorphism.
Hence $f\vert_{L\circ K_k}\cl L\conv K_k\to K_k\conv L$ is injective.
By comparing their dimension, $f\vert_{L\circ K_k}$ is an isomorphism,
Hence $f\vert_{L\circ (K_k/K_{k+1})}$ is an isomorphism of homogeneous degree
$\La(L,M)+\La(L,N)$.
Hence we obtain (ii).

\medskip
\noi
Conversely, assume (ii).
Then,
$\Rm_{L_z,M\circ N}\vert_{L_z\conv (K_k/K_{k+1})}$ and $\Rm_{L_z,K_k/K_{k+1}}$
have the same homogeneous degree, and hence they should coincide.
It implies that $f\vert_{L\circ(K_k/K_{k+1})}=\rmat{L,K_k/K_{k+1}}$ is an isomorphism
for any $k$.
Therefore $f=(M\conv\rmat{L,N})\circ(\rmat{L,M}\conv N)$ is an isomorphism,
which implies that $\rmat{L,N}$ and $\rmat{L,M}$ are isomorphisms.
Thus we obtain (i).

\medskip
\noi
Similarly, (i) and (iii) are equivalent.
\QED

\Lemma \label{lem:LMN1}
Let $L$, $M$ and  $N$ be simple modules.
 We assume that $L$ is real and commutes with $M$.
Then the diagram
$$\xymatrix@C=10ex
{L\conv (M\conv N)\ar[r]^{\rmat{L,M\circ N}}\ar[d]
&(M\conv N)\conv L\ar[d]\\
L\conv (M\hconv N)\ar[r]^{\rmat{L,M\shconv N}}
&(M \hconv N)\conv L}
$$
commutes.
\enlemma
\Proof
Otherwise
the composition

$$
\xymatrix@C=8ex{
L\conv M\conv N\ar[r]^-\sim_-{\rmat{L,M}\circ N}
&M\conv L\conv N\ar[r]_-{M\circ\rmat{L,N}}&M\conv N\conv L
\ar[r]&
(M\hconv N)\conv L}
$$
vanishes by Proposition \ref{prop:inequalities}.
Hence we have
$$M\circ \Im(\rmat{L,N})\subset \Ker(M\conv N\to M\hconv N)\conv L.$$
Hence, by Lemma \ref{lem:simplicity}, there exists a submodule $K$ of $N$ such that
$$\text{$\Im(\rmat{L,N})\subset K\conv L$ and
$M\conv K\subset\Ker(M\conv N\to M\hconv N)$.}$$
The first inclusion implies $K\not=0$ and the second implies $K\not=N$,
which contradicts the simplicity of $N$.
\QED

The following lemma can be proved similarly.
\Lemma\label{lem:LMN2}
Let $L$, $M$ and  $N$ be simple modules.
We assume that $L$ is real and commutes with $N$.
Then the diagram
$$\xymatrix@C=10ex
{(M\conv N)\conv L\ar[r]^{\rmat{M\circ N,L}}\ar[d]
&L\conv(M\conv N)\ar[d]\\
(M\hconv N)\conv L\ar[r]^{\rmat{M\shconv N,L}}
&L\conv(M\hconv N)}
$$
 commutes. %
\enlemma

The following proposition follows from Lemma~\ref{lem:LMN1} and Lemma~\ref{lem:LMN2}.
\Prop \label{prop:Lambda decomp}
Let $L$, $M$ and  $N$ be simple modules. Assume that $L$ is real.
Then we have:
\bnum
\item
If $L$ and $M$ commute, then
$$\La(L,M\hconv N)=\La(L,M)+\La(L,N).$$

\item
If $L$ and $N$ commute, then
$$\La(M\hconv N,L)=\La(M,L)+\La(N,L).$$
\ee
\enprop


\Prop \label{prop:socNM}
Let $M$ be a real simple module and let $N$ be a module with a simple socle.
If the following diagram
$$\xymatrix@C=12ex
{\wb{\soc(N)\conv M}\ar[r]^{\rmat{\soc(N),M}}\ar@{>->}[d]
&\wb{M\conv\soc(N)}\ar@{>->}[d]\\
N\conv M\ar[r]^{\rmat{N,M}}&M\conv N
}
$$ commutes up to a non-zero constant multiple, then
$\soc\bl M\conv\soc(N)\br$ is
equal to the socle of $M\conv N$.
In particular, $M\conv N$ has a simple socle.
\enprop
\Proof
Let $S$ be an arbitrary simple submodule of $M\conv N$.
Then we have the following commutative diagram:
$$\xymatrix@C=12ex
{\wb{S\conv M_z}\ar[r]^{ R_{S\circ M_z} }  \ar@{>->}[d]
&\wb{M_z\conv S}\ar@{>->}[d]\\
M\conv N\conv M_z\ar[r]^{R_{M\circ N,M_z}}
&M\conv M\conv N.
}$$
 By multiplying $z^{-m}$, where  $m$ be the order of zero of $R_{M\conv N, M}$,  and
specializing at $z=0$,
we have a commutative diagram  (up to a constant multiple):
$$\xymatrix@C=12ex
{\wb{S\conv M}\ar[r] \ar@{>->}[d]
&\wb{M\conv S}\ar@{>->}[d]\\
M\conv N\conv M\ar[r]^{M\circ\rmat{N,M}}&M\conv M\conv N.
}$$

 Here, we use the fact that
$\rmat{M\conv N, M} = (\rmat{M,M} \conv N) \circ (M\conv \rmat{N,M})$ from Lemma \ref{lem:composition} and the fact that
$\rmat{M,M}$
is equal to $\id_{M\conv M}$ up to a non-zero constant multiple, because $M$ is a real simple module.

It follows that  $S\conv M\subset M\conv (\rmat{N,M})^{-1}(S)$.
Hence there exists a submodule $K$ of $N$ such that
$S\subset M\conv K$ and $K\conv M\subset (\rmat{N,M})^{-1}(S)$
by Lemma \ref{lem:simplicity}.
Hence $K\not=0$ and $\soc(N)\subset K$ by the assumption.
Hence $\rmat{N,M}\bl\soc(N)\conv M\br
\subset \rmat{N,M}\bl K\conv M\br\subset S$.
Since $\rmat{N,M}\bl\soc(N)\conv M\br$ is non-zero by the assumption, we have
$\rmat{N,M}\bl\soc(N)\conv M\br=S$.
Thus we obtain the desired result.
\QED

The following is a dual form of the preceding proposition.
\Prop\label{prop:hdMN}
Let $M$ be a real simple module.
Let $N$ be a module with a simple head.
If the following diagram
$$\xymatrix@C=12ex
{{M\conv N}\ar[r]^{\rmat{M,N}}\ar@{->>}[d]
&{N\conv M}\ar@{->>}[d]\\
M\conv \hd(N)\ar[r]^{\rmat{M,\hd(N)}}&\hd(N)\conv M
}
$$ commutes up to a non-zero constant multiple, then
$M\hconv \hd(N)$ is
equal to the simple head of $M\conv N$.
\enprop

\begin{prop} \label{prop:3simple}
Let $L$, $M$ and  $N$ be simple modules.
We assume that $L$ is real and
one of $M$ and $N$ is real.
\begin{enumerate}
\item[{\rm (i)}]
If $\La(L,M\hconv N)=\La(L,M)+\La(L,N)$,
then
$L\conv M\conv N$ has a simple head and
$N\conv M\conv L$ has a simple socle.
\item[{\rm (ii)}]
If $\La(M\hconv N,L)=\La(M,L)+\La(N,L)$,
then
$M\conv N\conv L$ has a simple head and
$L\conv N\conv M$ has a simple socle.
\item[{\rm (iii)}]
If $\de(L,M\hconv N)=\de(L,M)+\de(L,N)$,
then
$L\conv M\conv N$ and $M\conv N\conv L$  have  simple heads, and
$N\conv M\conv L$ and $L\conv N\conv M$ have  simple socles.
\end{enumerate}
\end{prop}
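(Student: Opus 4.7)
The plan is to derive (iii) from (i) and (ii) via Proposition~\ref{pro:subquotient}, to obtain the simple-head conclusion in (i) and the simple-socle conclusion in (ii) from Propositions~\ref{prop:hdMN} and~\ref{prop:socNM} respectively (with the required commutative squares supplied by Proposition~\ref{prop:inequalities}(i)), and to get the remaining dual conclusions in each of (i) and (ii) by dualizing via~\eqref{eq:dualconv}. The main subtlety is that Propositions~\ref{prop:hdMN} and~\ref{prop:socNM} fix the real module on a specific side of the convolution, so (i) and (ii) must be attacked from the direction compatible with their respective hypotheses.

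For (i), I would apply Proposition~\ref{prop:hdMN} with its $M$ replaced by $L$ and its $N$ replaced by $M\conv N$. Since one of $M,N$ is real, Theorem~\ref{thm:simplicity} makes $M\conv N$ have a simple head equal to $M\hconv N$, and Lemma~\ref{lem:composition}(ii) together with the hypothesis of (i) gives $\La(L,M\conv N)=\La(L,M)+\La(L,N)=\La(L,M\hconv N)$. Proposition~\ref{prop:inequalities}(i) applied to the surjection $M\conv N\epito M\hconv N$ then supplies the commutative square required by Proposition~\ref{prop:hdMN}, so $\hd(L\conv M\conv N)=L\hconv(M\hconv N)$ is simple. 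Dualizing this quotient via~\eqref{eq:dualconv} exhibits a unique simple submodule of a graded shift of $N\conv M\conv L$, which is therefore the simple socle of $N\conv M\conv L$.

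For (ii), I would apply Proposition~\ref{prop:socNM} with its $M$ replaced by $L$ and its $N$ replaced by $N\conv M$. Theorem~\ref{thm:simplicity}(ii) identifies $\soc(N\conv M)$ with a graded shift of $M\hconv N$, so the hypothesis of (ii) together with Lemma~\ref{lem:composition}(ii) gives $\La(\soc(N\conv M),L)=\La(M\hconv N,L)=\La(N\conv M,L)$. The mirror analogue of Proposition~\ref{prop:inequalities}(i), in which the varying module appears as the first argument of the R-matrix (proved by the same argument with $R_{N_{i,z},M}$ in place of $R_{M_z,N_i}$), then yields the commutative square required by Proposition~\ref{prop:socNM}, forcing $\soc(L\conv N\conv M)=L\sconv\soc(N\conv M)$ to be simple. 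Dualizing gives the simple head of $M\conv N\conv L$.

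Finally, (iii) should follow by applying Proposition~\ref{pro:subquotient} to the simple subquotient $S=M\hconv N$ of $M\conv N$, which yields $\La(L,M\hconv N)\le\La(L,M)+\La(L,N)$ and $\La(M\hconv N,L)\le\La(M,L)+\La(N,L)$. Adding these and using $2\de(X,Y)=\La(X,Y)+\La(Y,X)$ shows that the hypothesis $\de(L,M\hconv N)=\de(L,M)+\de(L,N)$ forces both inequalities to be equalities, recovering the hypotheses of (i) and (ii) simultaneously. The main technical point that needs careful attention is the grading-shift bookkeeping: both the shift in $\soc(N\conv M)\cong q^{\La(M,N)}(M\hconv N)$ coming from Theorem~\ref{thm:simplicity} and the shifts appearing in~\eqref{eq:dualconv} must be tracked so that the dualization arguments correctly exhibit the simple socle of the reversed triple product as the dual of the simple head.
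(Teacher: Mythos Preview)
Your proposal is correct and follows essentially the same strategy as the paper: for (i) and (ii) one verifies the commutative-square hypothesis of Proposition~\ref{prop:hdMN} (resp.~\ref{prop:socNM}) using the equality $\La(L,M\conv N)=\La(L,M\hconv N)$ (resp.\ its mirror) via Proposition~\ref{prop:inequalities}, and (iii) is reduced to (i) and (ii) by the inequalities in Proposition~\ref{pro:subquotient}.

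The only noteworthy difference is how the second conclusion in each of (i) and (ii) is obtained. The paper builds a single three-row commutative diagram
\[
\xymatrix@C=12ex{
L\conv M\conv N \ar[r]^{\rmat{L,M\circ N}}\ar@{->>}[d] & (M\conv N)\conv L\ar@{->>}[d]\\
L\conv(M\hconv N)\ar[r]^{\rmat{L,M\shconv N}}\ar@{>->}[d] & (M\hconv N)\conv L\ar@{>->}[d]\\
L\conv(N\conv M)\ar[r]^{\rmat{L,N\circ M}} & (N\conv M)\conv L
}
\]
and applies Proposition~\ref{prop:hdMN} to the top square (giving the simple head of $L\conv M\conv N$) and the mirror form of Proposition~\ref{prop:socNM} to the bottom square (giving the simple socle of $N\conv M\conv L$). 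You instead derive the socle statement by dualizing the head statement through $(L\conv M\conv N)^*\simeq q^{\,c}\,N\conv M\conv L$. Both routes are valid; your dualization avoids invoking the mirror of Proposition~\ref{prop:socNM}, at the cost of the self-duality bookkeeping you flag, while the paper's three-row diagram handles head and socle in one uniform picture but tacitly uses the left--right mirror of Proposition~\ref{prop:socNM}. Your explicit remark that a mirror of Proposition~\ref{prop:inequalities}(i) (with the varying module in the first argument) is needed for (ii) is well taken, and the proof is indeed identical with $R_{N_i,M_z}$ in place of $R_{M_z,N_i}$.
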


\begin{proof}
(i)\ Denote $k=\La(L,M\hconv N)=\La(L,M)+\La(M,N)$ and $m=\La( M,N)$. Then the diagram
$$\xymatrix@C=12ex@R=3ex
{{L\conv M\conv N}\ar[r]^{\rmat{L,M\circ N}}\ar@{->>}[d]
& q^{-k}{M\conv N\conv L}\ar@{->>}[d]
\\
\wb{L\conv (M\hconv N)}\ar[r]^{\rmat{L,M\shconv N}}\ar@{>->}[d]
&\wb{q^{-k}(M\hconv N)\conv L}\ar@{>->}[d]\\
q^{-m}L\conv N\conv M\ar[r]^{\rmat{L,N\circ M}}&q^{-k-m} N\conv M\conv L
}
$$
commutes. Hence
 Proposition \ref{prop:socNM} and Proposition \ref{prop:hdMN}  imply that $L\conv M\conv N$ has a simple head and $N\conv M\conv L$ has a simple socle.
(ii) are proved similarly.

\medskip\noindent
(iii)\ If $\de(L,M\hconv N)=\de(L,M)+\de(L,N)$,
then we have $\La(L,M\hconv N)=\La(L,M)+\La(L,N)$
and $\La(M\hconv N,L)=\La(M,L)+\La(N,L)$ by Proposition \ref{prop:inequalities}. Thus
 the statements in (iii) follow from (i) and (ii).
\end{proof}

\Prop\label{Prop: l2}
Let $M$ and $N$ be simple modules.
Assume that one of them is real and $\de(M,N)=1$.
Then we have an exact sequence
\eqn
&&0\to  M\sconv N \to M\conv N\to M\hconv N\to 0.
\eneqn
In particular, $M\conv N$ has length $2$.
\enprop
\Proof In the course of the proof, we ignore the grading.

Set $X=M_z\circ N$
and $Y=N\circ M_z$.
By $\Rm_{N,M_z}\cl Y\monoto X$ let us regard $Y$ as a submodule of $X$.
By the condition, we have
$\Rm_{N,M_z}\circ \Rm_{M_z,N}=z\id_X$ up to a constant multiple (see Lemma \ref{lem:even} (ii)),
and hence we have
$$zX\subset Y\subset X.$$

We have an exact sequence
$$ 0\To \dfrac{Y}{zX}\To\dfrac{X}{zX}\To\dfrac{X}{Y}\To0.$$
Since
$$M\conv N\simeq\dfrac{X}{zX}\epito \dfrac{X}{Y}\monoto \dfrac{z^{-1}Y}{Y}
\simeq N\conv M,$$
we have $\dfrac{X}{Y}\simeq M\hconv N$ by Proposition \ref{prop:rmat}.
Similarly,
$$N\conv M\simeq\dfrac{Y}{zY}\epito \dfrac{Y}{zX}\monoto \dfrac{X}{zX}
\simeq M\conv N$$
implies that
$\dfrac{Y}{zX}\simeq  M\sconv N $ by Proposition \ref{prop:rmat}.
\QED

\Lemma \label{lem:crde}
Let $M$ and $N$ be simple  modules.
Assume that one of them is real.
If there is an exact sequence
$$0\to q^mX\To M\circ N\To q^n Y\To 0$$
for self-dual simple modules $X$, $Y$ and
integers $m$, $n$, then we have
$$\de(M,N)=m-n.$$
\enlemma
\Proof
We may assume that $M$ and $N$ are self-dual without loss of generality.
Then we have
$n=-\tLa(M,N)$.
Since $q^mX\simeq q^{\La(N,M)}N\hconv M
\simeq q^{\La(N,M)-\tL(N,M)}\bl q^{\tL(N,M)}N\hconv M\br$, we have
$m=\La(N,M)-\tL(N,M)$.
Thus we obtain
$$m-n=\La(N,M)-\tL(N,M)+\tLa(M,N)=\de(M,N).$$
\QED

\Lemma\label{lem:short}
Let $M$ and $N$ be simple modules.
Assume that one of them is real.
If the equation
$$[M][N]=q^m[X]+q^n[Y]$$
holds in $K(R\gmod)$ for self-dual simple modules $X$, $Y$ and
integers $m$, $n$ such that $m\ge n$,
then we have
\bnum
\item $\de(M,N)=m-n>0$,
\item there exists an exact sequence
$$0\To q^mX\To M\conv N\To q^nY\To0,$$
\item
 $ q^mX$ is  the  socle of $M\conv N$ and $q^nY$ is
 the
head of
$M\conv N$.
\ee
\enlemma
\Proof
First note that $\de(M,N)>0$ since $M\conv N$ is not simple.
By the assumption,
there exists either an exact sequence
$$0\To q^mX\To M\conv N\To q^nY\To0,$$
or
$$0\To q^nY\To M\conv N\To q^mX\To0.$$
The  second sequence cannot exist by Lemma \ref{lem:crde}
because $\de(M,N)=n-m\le0$.
Hence the first sequence exists, and the assertion  (iii) follows from
 Theorem \ref{thm:simplicity}.
\QED


\Prop \label{prop:real}
Let $X,Y,M$ and $N$ be simple $R$-modules.
Assume that there is  an exact sequence
\eqn
0 \to X \to M \conv N \to Y \to 0,
\eneqn
$X \conv N$ and $Y \conv N$ are simple and $X \conv N \not\simeq Y \conv N$
as ungraded modules.
Then $N$ is a real simple module.
\enprop
\Proof
Assume that $N$ is not real.
Then $N \conv N$ is reducible and we have $\rmat{N,N} \neq c \id_{N \conv N}$ for any $c \in \cor$ by \cite[Corollary 3.3]{KKKO14}.
Note that $N \conv N$
is of length $2$, because $M\conv N \conv N$ is of length $2$.

Let $S$ be a simple  submodule of $N \conv N$.
Consider an exact sequence
$$0\To X\conv N\To M\conv N\conv N\To Y\conv N\To0.$$
Then we have
\eq (X \conv N) \cap (M\conv S) =0.
\label{eq:SN}
\eneq
Indeed, if $(X\conv N) \subset (M \conv S)$, then there exists a submodule $Z$ of $N$ such that
$X \subset M \conv Z$ and $Z \conv N \subset S$ by \cite[Lemma 3.1]{KKKO14}.
It contradicts the simplicity of $N$.
Thus \eqref{eq:SN} holds.

Note that \eqref{eq:SN} implies
$$M\conv S\simeq Y\conv N$$
since $Y \conv N$ is simple.

\medskip\noi
(a)\ Assume first
that $N \conv N$ is semisimple so that $N \conv N=S \oplus S'$ for some simple submodule
 $S'$ of $N \conv N$.
 Then
$M\conv S\simeq Y\conv N\simeq M\conv S'$.
Hence $M\conv S\simeq X\conv N\simeq M\conv S'$.
Therefore we obtain $X \conv N \simeq Y \conv N,$ which is a contradiction.

\medskip\noi
(b)\   Assume that $N \conv N$ is not semisimple so that $S$ is a unique non-zero proper submodule of
  $N \conv N$ and $(N \conv N) / S$ is a unique non-zero proper quotient of $N \conv N$.
  Without loss of generality, we may assume that $\cor$ is algebraically closed (\cite[Corollay 3.19]{KL09}).
  Let $x \in \cor$ be an eigenvalue of $\rmat{N,N}$.
 Since $\rmat{N,N} \not\in\cor\id_{N \conv N}$, we have
 $0  \subsetneq \Im(\rmat{N,N}-x \id_{N\conv N}) \subsetneq N\conv N$.
It follows that
\eqn
S = \Im(\rmat{N,N}-x \id_{N\conv N}) \simeq (N \conv N) /S,
\eneqn
and  hence we have an exact sequence
\eqn
0 \To M\conv S \To M \conv N \conv N \To M \conv \bl(N \conv N) /S\br  \To 0.
\eneqn
Since $M \conv N \conv N $ is of length $2$, we have
\eqn
X \conv N \simeq M \conv S \simeq M\conv\bl(N \conv N) /S\br\simeq  Y \conv N,
\eneqn
 which is a contradiction.
 \QED

 \Cor \label{cor:real}
Let $X,Y,N$ be simple $R$-modules and let $M$ be a real simple  $R$-module.
 If we have an exact sequence
\eqn
0 \to X \to M \conv N \to Y \to 0
\eneqn
and if $X \conv N$ and $Y \conv N$ are simple,
then $N$ is a real simple module.
\encor
\Proof
Since $M$ is real and $M \conv N$ is not simple, $X$ is not isomorphic to $Y$
as an ungraded module by Lemma \ref{lem:commute_equiv} (iv).
It follows that $X \conv N$ is not isomorphic to $Y \conv N$,
because $K(R\gmod)$ is a domain so that
$[X\conv N]=q^m [Y \conv N]$ for some $m \in \Z$ implies $[X]=q^m [Y]$.
Now the assertion follows from Proposition \ref{prop:real}.
\QED

\Lemma\label{lem:MN}
Let $\{M_i\}_{1\le i\le n}$ and $\{N_i\}_{1\le i\le n}$ be a pair of
commuting families of real simple modules.
We assume that
\bnam
 \item $\{M_i\hconv N_i\}_{1\le i\le n}$ is a commuting family of real simple
modules,
\item $M_i\hconv N_i$ commutes with $N_{j}$
for any $1\le i,j\le n$.
\ee

Then we have
$$(\conv_{1\le i\le n}M_i)\hconv (\conv_{1\le j\le n}N_j)
\simeq \conv_{1\le i\le n}(M_i\hconv N_i)\quad
\text{up to a grading shift.}
$$
\enlemma
\Proof
 Since $\conv_{1\le i\le n}(M_i\hconv N_i)$ is simple, it is enough to give
an epimorphism $(\conv_{1\le i\le n}M_i)\conv (\conv_{1\le j\le n}N_j)
\epito \conv_{1\le i\le n}(M_i\hconv N_i)$.
We shall show it by induction on $n$.
For $n>0$, we have
\eqn
&&(\conv_{1\le i\le n}M_i)\conv (\conv_{1\le j\le n}N_j)\simeq
(\conv_{1\le i\le n-1}M_i)\conv M_n\conv N_n\conv (\conv_{1\le j\le n-1}N_j)\\
&&\epito (\conv_{1\le i\le n-1}M_i)\conv (M_n\hconv N_n) \conv (\conv_{1\le j\le n-1}N_j) \\
&&\simeq (\conv_{1\le i\le n-1}M_i)\conv (\conv_{1\le j\le n-1}N_j) \conv  (M_n\hconv N_n)  \\
&&\epito (\conv_{1\le i\le n-1}(M_i \hconv N_i)) \conv  (M_n\hconv N_n),
\eneqn
as desired.
\QED

\section{Leclerc's conjecture}
{\em In this section, $R$ is assumed to be a symmetric KLR algebra
over a base field $\cor$.}

\subsection{Leclerc's conjecture}

  The following theorem is a part of Leclerc's conjecture stated in the introduction.
\Th\label{th:leclerc}
Let $M$ and $N$ be simple modules.
We assume that $M$ is real.
Then we have
the equalities in the Grothendieck group $K(R\gmod)${\rm:}
\bnum
\item
$[M\conv N]=[M\hconv N]+\sum_{k}[S_k]$\\[.5ex]
with simple modules $S_k$ such that $\La(M,S_k)<\La(M,M\hconv N)=\La(M,N)$,
\item
$[M\conv N]=[ M\sconv N]+\sum_{k}[S_k]$\\[.5ex]
with simple modules $S_k$ such that $\La(S_k,M)<\La( M\sconv N,M)=\La(N,M)$,
\item
$[N\conv M]=[N\hconv M]+\sum_{k}[S_k]$\\[.5ex]
with simple modules $S_k$ such that $\La(S_k,M)<\La(N\hconv M,M)=\La(N,M)$,
\item
$[N\conv M]=[ N\sconv M]+\sum_{k}[S_k]$\\[.5ex]
with simple modules $S_k$ such that $\La(M,S_k)<\La(M, N \sconv M )=\La(M,N)$.
\ee
In particular, $M\hconv N$ as well as $ M \sconv N $ appears only once
in the Jordan-H\"older series of $M\conv N$ in $R\smod$.
\enth

The following result is an immediate consequence of this theorem.
\Cor\label{cor:compest}
Let $M$ and $N$ be simple modules.
We assume that one of them is real.
Assume that $M$ and $N$ do not commute,
Then we have
the equality in the Grothendieck group $K(R\gmod)$
$$[M\conv N]=[M\hconv N]+[ M\sconv N ]+\sum_{k}[S_k]$$
with simple modules $S_k$.
Moreover we have
\bnum
\item
If $M$ is real, then we have
$\La(M,  M\sconv N)<\La(M,N)$, $\La(M\hconv N,M)<\La(N,M)$ and
$\La(M, S_k)<\La(M,N)$, $\La(S_k,M)<\La(N,M)$.
\item
If $N$ is real, then we have
$\La(N, M\hconv N)<\La(N,M)$, $\La( M \sconv N, N)<\La( M,N)$ and
$\La(N, S_k)<\La(N,M)$, $\La(S_k,N)<\La(M,N)$.
\ee
\encor

\medskip

\Proof[Proof of Theorem~\ref{th:leclerc}]
We shall prove only (i).
The other statements are proved similarly.

$$M\conv N=K_0\supset K_1\supset \cdots\supset K_\ell\supset K_{\ell+1}=0.$$
Then we have
$K_0/K_1\simeq M\hconv N$.
Let us consider the
renormalized R-matrix
$\Rm_{M_z,M\circ N}=(M\conv\Rm_{M_z,N})\circ(\Rm_{M_z,M}\conv N)$
$$\xymatrix@C=10ex{
M_z\conv M\conv N\ar[r]^{\Rm_{M_z,M}\circ N}
&M\conv M_z\conv N\ar[r]^{M\circ\Rm_{M_z,N}}&M\conv N\conv M_z.}
$$
Then $\Rm_{M_z,M\circ N}$ sends $M_z\conv K_k$ to $K_k\conv M_z$ for any $k$.
Hence evaluating the above diagram at $z=0$, we obtain
$$\xymatrix@C=10ex{
M\conv M\conv N\ar[r]^{M\circ\rmat{M,N}}
&M\conv N\conv M\\
\wb{M\conv K_1}\ar[r]\ar@{^{(}->}[u] &\wb{K_1\conv M\,.}\ar@{^{(}->}[u]}
$$
Since $\Im(\rmat{M,N}\cl M\conv N\to N\conv M)\simeq (M\conv N)/K_1$,
we have
$\rmat{M,N}(K_1)=0$.
Hence, $\Rm_{M_z,M\circ N}$ sends $M_z\conv K_1$ to
$(K_1\conv M_z)\cap z\bl (M\conv N)\conv M_z\br
=z(K_1\conv M_z)$.
Thus $z^{-1}\Rm_{M_z,M\circ N}\vert_{M_z\circ K_1}$ is well defined.
Then it sends
$M_z\conv K_k$ to $K_k\conv M_z$ for $k\ge1$.
Thus we obtain an R-matrix
$$z^{-1}\Rm_{M_z, M\circ N}\vert_{M_z\circ(K_k/K_{k+1})}\cl
M_z\conv (K_k/K_{k+1})\to (K_k/K_{k+1})\conv M_z
\quad\text{for $1\le k\le \ell$.}$$
Hence we have
$$\Rm_{M_z, K_k/K_{k+1}}=z^{-s_k}z^{-1}\Rm_{M_z, M\circ N}\vert_{M_z\circ(K_k/K_{k+1})}$$
for some $s_k\in \Z_{\ge0}$.
Since the homogeneous degree of $\Rm_{M_z,M\circ N}$
is $\La(M,M\circ N)=\La(M,N)$, we obtain
$$\La(M,K_k/K_{k+1})=\La(M,N)-2(1+s_k)<\La(M,N).$$
\QED

Recall that the isomorphism classes of self-dual simple modules in $R \gmod$ are  parameterized by the crystal basis $B(\infty)$
(\cite{LV11}).
The following theorem is an application of the above theorem.

\Th \label{thm:divisible}
Let $\phi$ be an element of the Grothendieck group $K(R\gmod)$
given by
\eqn
\phi = \sum_{b \in B(\infty)} a_b [L_b],
\eneqn
where $L_b$ is the self-dual simple module corresponding to $b \in B(\infty)$ and $a_b \in \Z[q^{\pm1}]$.
Let $A$ be a real simple module in $R\gmod$.
Assume that we have an equality
\eqn
\phi [A] = q^l [A] \phi
\eneqn
in $K(R\gmod)$ for some $l \in \Z$.
Then $A$ commutes with $L_b$ and
\eqn
l= \La(A,L_b)
\eneqn
for  every $b \in B(\infty)$ such that $a_b \neq 0$.

\enth
\Proof
Note that we have
\eqn
\phi [A] = \sum_b a_b [L_b \conv A]
= \sum_b a_b ([L_b \hconv A] + \sum_k [S_{b,k}]) \quad \text{and} \\
q^l [A] \phi = q^l \sum_b a_b [A \conv L_b ]
= q^l \sum_b a_b (q^{\La(L_b,A)}[L_b \hconv A] + \sum_k [S^{b,k}]),
\eneqn
for some simple modules $S_{b,k}$ and $S^{b,k}$ satisfying
\eqn
\La(S_{b,k},A) < \La(L_b,A) \quad \text{and} \quad\La(S^{b,k},A) < \La(L_b,A)
\eneqn
by Theorem \ref{th:leclerc}.

We may assume that $\set{ b\in B(\infty)}{a_b \neq 0}\not=\emptyset$. Set
\eqn
t\seteq\displaystyle\max \set{ \La(L_b,A)}{ a_b \neq 0}.
\eneqn
By taking the classes of self-dual simple modules $S$ with $\La(S,A)=t$
in the expansions of $\phi [A]$ and $q^l [A] \phi$, we obtain
\eqn
\sum_{\La(L_b,A)=t} a_b [L_b \hconv A ]
=\sum_{\La(L_b,A)=t} q^l a_b q^{\La(L_b,A)} [L_b \hconv A ].
\eneqn
In particular, we have
$t=-l$.

Set
\eqn
t'\seteq\displaystyle\max \set{ \La(A,L_b)}{a_b \neq 0}.
\eneqn
Then, by a similar argument we have
$t'=l$.

It follows that

\eqn
0= t+t' \ge  \La(L_b,A) + \La(A,L_b)\ge0
\eneqn
for every $b$ such that $a_b \neq 0$. Hence $A$ and $L_b$ commute.

Since
\eqn \sum a_b ~ q^{\La(A,L_b)} [A\conv L_b]
 =\sum a_b [L_b \conv A]
=\phi [A]=q^l [A] \phi
 =q^l \sum a_b [A\conv L_b],
\eneqn
we have
\eqn
l=\La(A,L_b)
\eneqn
for any $b$ such that $a_b \neq 0$, as desired.
\QED

\Cor \label{cor:qcomm_module}
Let $M$ and $N$ be simple modules. Assume that one of them is real.
If $[M]$ and $[N]$ q-commute \ro i.e., $[M][N]=q^n[N][M]$ for some $n\in\Z$\rf, then
$M$ and $N$ commute. In particular, $M\conv N$ is simple.
\encor
The following corollary is an immediate consequence of the corollary above
and Theorem~\ref{thm:categorification 2}.
\Cor \label{cor:qcomm}
Assume that the generalized Cartan matrix $A$ is symmetric
and  that $b_1, b_2\in B(\infty)$ satisfy the conditions{\rm:}
\bni
\item one of  $G^\up(b_1)^2$ and $G^\up(b_2)^2$ is a member of the upper global basis
up to a power of $q$,
\item
$G^\up(b_1)$ and $G^\up(b_2)$ q-commute.
\ee
Then their product $G^\up(b_1)G^\up(b_2)$ is a member of the upper global basis
of $\Um$ up to a power of $q$.
\encor

\medskip
\subsection{Geometric results}\label{subsec:geometry}
The result of this subsection (Theorem~\ref{th:head})
 was explained to us
by Peter McNamara.  It will be used in the proof of the crucial result Theorem \ref{thm: canonical surjection}.
{\em In this subsection, we assume further that
the base field $\cor$ is  of characteristic $0$}.

\Th[{\cite[Lemma 7.5]{Mc14}}]\label{th:head}
Assume that the base field\/ $\cor$
is  of characteristic $0$.
Assume that $M\in R\gmod$ has
a head $q^cH$ with a self-dual simple module $H$ and $c\in\Z$. Then we have
the equality in the Grothendieck group $K(R\gmod)$
$$[M]=q^c[H]+\sum_{k}q^{c_k}[S_k]$$
with self-dual simple modules $S_k$ and $c_k>c$.
\enth

By duality, we obtain the following corollary.
\Cor Assume that the base field $\cor$ is a field  of characteristic $0$.
Assume that $M\in R\gmod$ has a socle $q^cS$
with a self-dual simple module $S$ and $c\in\Z$. Then we have
the equality in $K(R\gmod)$
$$[M]=q^c[S]+\sum_{k}q^{c_k}[S_k]$$
with self-dual simple modules $S_k$ and $c_k<c$.
\encor

Applying this theorem to convolution products,
we obtain the following corollary.

\Cor Assume that the base field $\cor$ is  of characteristic $0$.
Let $M$ and $N$ be simple modules.
We assume that one of them is real.
Then we have the equalities in  $K(R\gmod)$:
\bnum
\item
$[M\conv N]=[M\hconv N]+\sum_{k}q^{c_k}[S_k]$\\
with self-dual simple modules $S_k$ and
$$c_k>-\tL(M,N)=\bl -\La(M,N)-(\wt(M),\wt(N)\br/2.$$
\item
$[M\conv N]=[ M \sconv N ]+\sum_{k}q^{c_k}[S_k]$\\
with self-dual simple modules $S_k$ and $c_k<\bl \La(N,M)-(\wt(N),\wt(M))\br/2$.
\ee
\encor
Note that $q^{\tL(M,N)}M\hconv N$ is self-dual by Lemma~\ref{lem:selfdual}.

Theorem~\ref{th:leclerc} and Theorem~\ref{th:head} solve affirmatively Conjecture~1 of Leclerc (\cite{L03}) in the symmetric generalized Cartan matrix case, as stated in the introduction.
 More precisely, let  $R$ be  a symmetric \KLR\ over a base field $\cor$ of characteristic $0$ and let $M$ and $N$ be simple modules over $R$.  Assume further that  $M$ is real.
Then by Theorem ~\ref{th:leclerc} $M \hconv N$ and $M \sconv N$ appear exactly once in a Jordan-H\"older series of $M \conv N$.
Write $M \hconv N = q^m H$ and $M \sconv N=q^s S$ for some self-dual simple modules $H$,  $S$ and $m,s \in \Z$. By Theorem \ref{th:head}, we have
\eqn
[M \conv N] = q^m [H] + q^s [S] + \sum_k q^{c_k} [S_k],
\eneqn
where $S_k$ are self-dual simple modules, and $ m < c_k < s$ for all $k$.
Collecting the terms,
we obtain
\eqn
[M\conv N]=q^m [H] + q^s [S] + \sum_{L \not \simeq H, S} \gamma_{M,N}^L (q) [L],
\eneqn
 with
$$  \gamma^L_{M,N}(q) \in q^{m+1}\Z[q] \cap q^{s-1}\Z[q^{-1}],$$
which proves Leclerc's first conjecture via Theorem \ref{thm:categorification 2}.

We obtain the following result
which is a generalization of Lemma~\ref{lem:crde} in
the characteristic-zero case.

\Cor \label{cor:minmaxD}
Assume that the base field $\cor$ is  of characteristic $0$.
Let $M$ and $N$ be simple modules.
We assume that one of them is real. Write
$$[M\conv N]=\sum_{k=1}^nq^{c_k}[S_k]$$
with self-dual simple modules $S_k$ and $c_k\in\Z$.
Then we have
$$\max\set{c_k}{1\le k\le n}-\min\set{c_k}{1\le k\le n}=\de(M,N).$$
\encor

\subsection{Proof of Theorem~\ref{th:head}}
Recall that the graded algebra $R(\beta)$ ($\beta\in \rtl^+$)
is geometrically realized as follows (\cite{VV09}).
There exist a reductive group $G$ and a $G$-equivariant projective morphism
$f\cl X\to Y$ from a smooth algebraic $G$-variety $X$
to an affine $G$-variety $Y$ defined over the complex number field $\C$
such that
$$R(\beta)\simeq \tEnd_{\Db(\cor_Y)}(\R f_* (\cor_X [\dim X]))
\quad\text{as a graded $\cor$-algebra.}$$
Here, $\Db(\cor_Y)$ denotes the  $G$- equivariant derived category
of the $G$-variety $Y$ with coefficient $\cor$,
and $\tEnd_{\Db(\cor_Y)}(K)=\tHom_{\Db(\cor_Y)}(K, K)$  with
$$\tHom_{\Db(\cor_Y)}(K, K')\seteq\soplus_{n\in\Z} \Hom_{\Db(\cor_Y)}(K, K'[n]).$$
We denote by $\cor_X[\dim X]$ the direct sum of the constant sheaves  on each connected components of $X$, all of which are shifted by their  dimensions.
By the decomposition theorem (\cite{BBDG}), we have a decomposition
$$\R f_* (\cor_X [\dim X])\simeq\soplus_{a\in J}E_a\tens \F_a,$$
where $\{\F_a\}_{a\in J}$ is a finite family of simple perverse sheaves
on $Y$ and $E_a$ is a  non-zero  finite-dimensional graded $\cor$-vector space such that
\eq
H^k(E_a)\simeq H^{-k}(E_a)\quad\text{for any $k\in\Z$.}
\label{eq:Eselfdual}
\eneq
The last fact \eqref{eq:Eselfdual} follows from the hard Lefschetz theorem
(\cite{BBDG}).

Set $A_{a,b}=\tHom_{\Db(\cor_Y)}(\F_b, \F_a)$.
Then we have the multiplication morphisms
$$A_{a,b}\tens A_{b,c}\to A_{a,c}$$ so that
$$A\seteq\soplus_{a,b\in J}A_{a,b}$$
has a structure of $\Z$-graded algebra such that
$$A_{\le0}\seteq\soplus_{n\le0}A_n=A_0\simeq\cor^J.$$
Hence the family of the isomorphism classes of simple objects
(up to a grading shift) in $A\gmod$
is $\{\cor_a\}_{a\in J}$.
Here, $\cor_a$ is the module obtained by the algebra homomorphism
$A\to A_{\le0}\simeq \cor^J\to\cor$,
where the last arrow is the $a$-th projection.
Hence we have
$$K(A\gmod)\simeq\soplus_{a\in J}\Z[q^{\pm1}][\cor_a].$$

On the other hand, we have
$$R(\beta)\simeq \soplus_{a,b\in J}E_a\tens A_{a,b}\tens E_b^*.$$
Set
$$L\seteq\soplus_{a,b\in J}E_a\tens A_{a,b}.$$
Then, $L$ is endowed with a natural structure of
$(\soplus_{a,b\in J}E_a\tens A_{a,b}\tens E_b^*, A)$-bimodule.
It is well-known that
the functor $M\mapsto L\otimes_AM$ gives a graded Morita-equivalence
$$\Phi\cl A\gmod\isoto R(\beta)\gmod.$$
Note that
$\Phi(\cor_a)\simeq E_a$ and
$\{E_a\}_{a\in J}$ is the set of isomorphism classes of self-dual
simple graded $R(\beta)$-modules by
\eqref{eq:Eselfdual}.

By the above observation, in order to prove the theorem,
it is enough to show the corresponding statement for
the graded ring $A$, which is obvious.

\section{Quantum cluster algebras}
In this section we recall the definition of skew-symmetric quantum cluster algebras following \cite{BZ05}, \cite[\S 8]{GLS}.

\subsection{Quantum seeds} Fix a finite index set
$\K=\Kex\sqcup\Kfr$ with the decomposition into
the set $\Kex$ of exchangeable indices   and the set $\Kfr$ of frozen indices.
Let $L=(\lambda_{ij})_{i,j\in \K}$ be a skew-symmetric integer-valued $\K\times \K$-matrix.
\Def We define $\mathscr P(L)$ as the  $\Z[q^{\pm 1/2}]$-algebra generated by
a family of elements $\{X_i\}_{i\in \K}$
with the defining relations
\eqn
&&X_iX_j=q^{\lambda_{ij}}X_j X_i \quad (i,j\in \K).\label{eq:Xcom}
\eneqn
We denote by $\mathscr F(L)$ the skew field of fractions of $\mathscr P(L)$.

\edf
For ${\bf a}=(a_i)_{i\in\K}\in \Z^\K$, we define the element $X^{\bf a}$
of $\mathscr F(L)$
as
\eqn
&&X^{\bf a}\seteq q^{1/2 \sum_{i > j} a_ia_j\lambda_{ij}}
\overset{\To[{\ }]}{\prod}_{i\in\K}
 X_i^{a_i}.
\eneqn
Here we take a total order $<$ on the set $\K$ and
$\overset{\To[{\ }]}{\prod}_{i\in\K} X_i^{a_i}=X_{i_1}^{a_{i_1}}\cdots X_{i_r}^{a_{i_r}}$
where $\K=\{i_1,\ldots,i_r\}$ with $i_1<\cdots <i_r$.
Note that
$X^{\bf a}$ does not depend on the choice of a total order of $\K$.

We have
\eq
&&X^{\bf a}X^{\bf b}=q^{1/2\sum_{i,j\in\K} a_ib_j\la_{ij}}X^{\bf a+\bf b}.
\label{eq:XaXb}
\eneq
If $\mathbf{a}\in\Z_{\ge0}^\K$, then $X^{\mathbf a}$ belongs to $\mathscr P(L)$.

It is well known that $\{X^{\mathbf a}\}_{\mathbf{a}\in\Z_{\ge0}^\K}$ is a basis
of $\mathscr P(L)$ as a $\Z[q^{\pm 1/2}]$-module.

Let $A$ be a $\Z[q^{\pm1/2}]$-algebra. We say that a family
$\{x_i\}_{i\in\K}$ of elements of $A$ is {\em $L$-commuting} if
it satisfies $x_ix_j=q^{\la_{ij}}x_jx_i$ for any $i,j\in\K$.
In such a case we can define $x^\mathbf{a}$ for any $\mathbf{a}\in\Z_{\ge0}^\K$.
We say that an $L$-commuting family $\{x_i\}_{i\in\K}$ is {\em algebraically independent} if the algebra map
$\mathscr P(L)\to A$ given by $X_i\mapsto x_i$ is injective.

Let $\widetilde B = (b_{ij})_{(i,j)\in\K\times\Kex}$ be an  integer-valued
$\K \times \Kex$-matrix.
 We assume that the {\em principal part $B\seteq(b_{ij})_{i,j\in\Kex}$}  of $\widetilde B$ is skew-symmetric.

To the matrix $\wB$ we can associate the quiver $Q_{\wB}$ without loops,  $2$-cycles  and arrows between frozen vertices
such that its vertices are labeled by $J$ and
the arrows are given by
\begin{equation} \label{eq: bij}
 b_{ij} =  (\text{the number of  arrows from $i$ to $j$}) - (\text{the number of  arrows from $j$ to $i$}).
 \end{equation}
Here we extend the $\K\times\Kex$-matrix $\wB$ to
the skew-symmetric $\K\times\K$-matrix $\wB'=(b_{ij})_{i,j\in\K}$
by setting $b_{ij}=0$ for $i,j\in\Kfr$.

Conversely, whenever we have a quiver with vertices labeled by $J$ and without loops, $2$-cycles and arrows between frozen vertices, we can
associate a $\K \times \K_\ex$-matrix $\wB$ by \eqref{eq: bij}.

We say that the pair $(L, \widetilde B)$ is {\em compatible} if there exists a positive integer $d$ such that
\eq \label{eq:compatible}
&&\sum_{k\in\K} \lambda_{ik} b_{kj} =\delta_{ij} d \quad (i\in \K,\;j\in\Kex).
\eneq

Let $(L,  \widetilde B)$ be a compatible pair and $A$ a $\Z[q^{\pm1/2}]$-algebra.
We say that $\seed = (\{x_i\}_{i\in\K},L, \widetilde B)$ is
a {\em quantum seed} in $A$ if $\{x_i\}_{i\in\K}$ is
an algebraically independent $L$-commuting family of elements of
 $A$.

The set $\{x_i\}_{i\in\K}$ is called the {\em cluster} of $\seed$ and
its elements  the {\em cluster variables}.
The cluster variables $x_i$ ($i\in\Kfr$) are called  the {\em frozen variables}.
The elements $x^{\bf a}$ (${\bf a}\in \Z_{\ge0}^\K$) are called  the {\em quantum cluster monomials}.

\subsection{Mutation}
For $k\in\Kex$, we define a $\K\times \K$-matrix  $E=(e_{ij})_{i,j\in\K}$ and a
$\Kex\times \Kex$-matrix $F=(f_{ij})_{i,j\in\Kex}$ as follows:
\eqn
e_{ij}=
\begin{cases}
  \delta_{ij} & \text{if} \ j \neq k, \\
  -1 & \text{if} \ i= j = k, \\
  \max(0, -b_{ik}) & \text{if} \ i \neq  j = k,
\end{cases}
\hs{10ex}
f_{ij}=
\begin{cases}
  \delta_{ij} & \text{if} \ i \neq k, \\
  -1 & \text{if} \ i= j = k, \\
  \max(0, b_{kj}) & \text{if} \ i = k \neq j.
\end{cases}
\eneqn
The {\em mutation $\mu_k(L,\widetilde B)\seteq(\mu_k(L),\mu_k(\widetilde B))$ of a compatible pair $(L,\widetilde B)$ in direction $k$} is given by
\eqn
\mu_k(L)\seteq(E^T) \, L \, E, \quad \mu_k(\widetilde B)\seteq E \, \widetilde B \, F.
\eneqn
Then the pair $(\mu_k(L),\mu_k(\widetilde B))$
is also compatible with the same integer $d$ as in the case of $(L,\widetilde B)$ (\cite{BZ05}).

Note that for each $k\in\Kex$, we have
\eq \label{eq:mutation B}
\mu_k(\widetilde B)_{ij} =
\begin{cases}
  -b_{ij} & \text{if}  \ i=k \ \text{or} \ j=k, \\
  b_{ij} + (-1)^{\delta(b_{ik} < 0)} \max(b_{ik} b_{kj}, 0) & \text{otherwise,}
\end{cases}
\eneq
and
\eqn
\mu_k(L)_{ij} =
\begin{cases}
  0 & \text{if}  \ i=j \\
  -\la_{kj}+\displaystyle\sum _{t\in\K} \max(0, -b_{tk}) \la_{tj} & \text{if} \ i=k, \ j\neq k, \\
  -\la_{ik}+\displaystyle\sum _{t\in\K} \max(0, -b_{tk}) \la_{it} & \text{if} \ i \neq k, \ j= k, \\
  \la_{ij} & \text{otherwise.}
\end{cases}
\eneqn
Note  also that we have $$\displaystyle\sum _{t\in\K} \max(0, -b_{tk}) \la_{it}
 =\displaystyle\sum _{t\in\K} \max(0, b_{tk}) \la_{it}$$
 for $i\in\K$ with $i\neq k$, since $(L,\widetilde B)$ is compatible.

We define
\eq
&&a_i'=
\begin{cases}
  -1 & \text{if} \ i=k, \\
 \max(0,b_{ik}) & \text{if} \ i\neq k,
\end{cases} \qquad
a_i''=
\begin{cases}
  -1 & \text{if} \ i=k, \\
 \max(0,-b_{ik}) & \text{if} \ i\neq k.
\end{cases}
\label{eq:aa}
\eneq
and set ${\bf a}'\seteq(a_i')_{i\in\K}$ and ${\bf a}''\seteq(a_i'')_{i\in\K}$.

Let $A$ be a $\Z[q^{\pm1/2}]$-algebra contained in a skew field $K$.
Let $\seed=(\{x_i\}_{i\in\K},L, \widetilde B)$ be a quantum seed in $A$.
Define the elements $\mu_k(x)_i$ of $K$ by
\eq \label{eq:quantum mutation}
\mu_k(x)_i\seteq
\begin{cases}
  x^{{\bf a}'}  +   x^{{\bf a}''}, & \text{if} \ i=k, \\
 x_i & \text{if} \ i\neq k.
\end{cases}
\eneq
Then $\{\mu_k(x)_i\}$ is an algebraically independent
$\mu_k(L)$-commuting family in $K$.
We call
\eqn
\mu_k(\seed)\seteq\bl\{\mu_k(x)_i\}_{i\in\K},\mu_k(L),\mu_k(\widetilde B)\br
\eneqn
the {\em mutation
of $\seed$ in direction $k$}.
It becomes a new quantum seed in $K$.

 \begin{definition}
Let $\seed=(\{x_i\}_{i\in\K},L, \widetilde B)$ be a quantum seed in $A$.
   The {\em quantum cluster algebra $\mathscr A_{q^{1/2}}(\seed)$} associated to the quantum seed $\seed$ is  the $\Z[q^{\pm 1/2}]$-subalgebra of the skew field $K$ generated by all the quantum cluster variables in the quantum seeds obtained from $\seed$ by any sequence of mutations.
 \end{definition}
We call $\seed$ the {\em initial quantum seed}
of the quantum cluster algebra $\mathscr A_{q^{1/2}}(\seed)$.

\section{Monoidal categorification of cluster algebras}

Throughout  this section, fix $\K=\Kex\sqcup\Kfr$ and a base field
$\coro$.

Let $\shc$ be a $\coro$-linear abelian monoidal category.
For the definition of monoidal category, see,  for example, \cite[Appendix A.1]{K^3}.
Note that  in \cite{K^3}, it was called   the \emph{tensor category}.
A  $\coro$-linear abelian monoidal category is a $\coro$-linear
monoidal category such that
it is abelian and
the tensor functor $\;\tens\;$ is $\coro$-bilinear and exact.

We assume further the following conditions on $\shc$
\eq
&&\hs{-4ex}\left\{\parbox{60ex}{
\bnum
\item
Any object of $\shc$ has a finite length,
\vs{.5ex}
\item $\coro\isoto\Hom_{\shc}(S,S)$ for any simple object $S$ of $\shc$.
\ee}\right.\label{cond:monoidal category}
\eneq

A simple object $M$ in $\shc$ is called \emph{real} if $M \otimes M$ is simple.

\subsection{Ungraded cases}
\begin{definition}
Let  $\seed = (\{ M_i\}_{i\in \K },\widetilde B)$ be a pair
of
a family $\{ M_i\}_{i\in\K}$ of simple objects in $\shc$ and
an  integer-valued $\K\times\Kex$-matrix
$\widetilde B = (b_{ij})_{(i,j)\in\K\times\Kex}$
whose principal part is skew-symmetric.
We call $\seed$ a \emph{monoidal seed in $\shc$} if
\bnum
  \item $M_i \otimes M_j \simeq M_j \otimes M_i$ for any $i,j\in\K$,
  \item $\sotimes_{i\in\K} M_i^{\otimes a_i}$ is simple for any $(a_i)_{i\in\K}\in \Z_{\ge 0}^{\K}$.
  \end{enumerate}
\end{definition}

\begin{definition}
  For $k\in\Kex$,
  we say that a  monoidal seed  $\seed = (\{ M_i\}_{i\in \K },\widetilde B)$
 \emph{admits a mutation in direction $k$} if
there exists a simple object  $M_k' \in \shc$
such that
\bni
  \item
there exist exact sequences in $\shc$
\eqn
&&0 \to  \sotimes_{b_{ik} >0} M_i^{\tensor b_{ik}} \to M_k \tensor M_k' \to
 \sotimes_{b_{ik} <0} M_i^{\tensor (-b_{ik})} \to 0, \label{eq:ses_mutation1}\\
 &&0 \to  \sotimes_{b_{ik} <0} M_i^{\tensor(-b_{ik})} \to M_k' \tensor M_k \to
  \sotimes_{b_{ik} >0} M_i^{\tensor b_{ik}} \to 0.\label{eq:ses_mutation2}
\eneqn
\item the pair $\mu_k(\seed)\seteq
(\{M_i\}_{i\neq k}\cup\{M_k'\},\mu_k(\widetilde B))$ is
a monoidal seed in $\shc$.
\end{enumerate}
\end{definition}

Recall that a cluster algebra $A$ with an initial seed
$(\{x_i\}_{i\in\K},\widetilde B)$ is the $\Z$-subalgebra of
$\Q(x_i\vert i\in\K)$ generated by all the cluster variables in the seeds obtained from  $(\{x_i\}_{i\in\K},\widetilde B)$ by any sequence of mutations.
Here, the mutation $x_k'$ of a cluster variable $x_k$ $(k\in\Kex)$ is given  by
\eqn
x_k' = \dfrac{\prod_{b_{ik} \ge 0} x_i^{b_{ik}}+\prod_{b_{ik} \le 0} x_i^{-b_{ik}}}{x_k},
\eneqn
and the mutation of $\widetilde B$ is given in \eqref{eq:mutation B}.

\begin{definition}
A $\coro$-linear abelian monoidal category $\shc$
 satisfying \eqref{cond:monoidal category} is called a \em{monoidal categorification of a cluster algebra $A$}
if
\bni
\item the Grothendieck ring $K(\shc)$ is isomorphic to $A$,
\item there exists a monoidal seed $\seed = ( \{M_i\}_{i\in\K},\widetilde B)$ in $\shc$ such that
$[\seed]\seteq( \{[M_i]\}_{i\in\K},\widetilde B)$ is the initial seed of $A$ and
$\seed$ admits  successive mutations in all directions.
\end{enumerate}
\end{definition}
Note that if $\shc$ is a monoidal categorification of $A$,
then every seed in $A$ is of the form
$( \{[M_i]\}_{i\in\K},\widetilde B)$ for some monoidal seed
$( \{M_i\}_{i\in\K},\widetilde B)$  in $\shc$.
 In particular, all the cluster monomials in $A$ are the classes of real
simple objects in $\shc$.

\subsection{Graded cases}
Let $\rootl$ be a free abelian group equipped with a symmetric bilinear form
\eqn
( \ , \ ) : \rootl \times \rootl \to \Z \quad \text{such that} \
 (\beta,\beta) \in 2\Z \ \text{for all} \ \beta \in \rootl.
\eneqn
We consider  a $\coro$-linear abelian monoidal category $\shc$ satisfying
\eqref{cond:monoidal category} and the following conditions:
\eq
&&\hs{-2ex}\left\{\parbox{67ex}{
\bnum
\item We have a direct sum decomposition
  $\shc = \soplus_{\beta \in \rootl} \shc_\beta $ such that
 the tensor product $\otimes$
   sends  $\shc_\beta \times \shc_\gamma$ to $\shc_{\beta + \gamma}$
   for every  $\beta, \gamma \in \rootl$.

 \item There exists an object $Q  \in \shc_0$ satisfying
\bna
\item there is an isomorphism
$$R_{Q}(X) : Q \tensor X \isoto X \tensor Q$$
functorial in $X \in \shc$ such that
$$\xymatrix@C=7ex{
Q\tensor X\tensor Y\ar[r]_-{R_Q(X)}\ar@/^3ex/[rr]^{R_Q(X\tensor Y)}
&X\tensor Q\tensor Y\ar[r]_-{R_Q(Y)}
&X\tensor Y\tensor Q}$$
 commutes for any $X,Y\in \shc$,
\item the functor $X \mapsto Q \tensor X$ is an  equivalence of categories.
\end{enumerate}
\item for any $M$, $N\in\shc$, we have
$\Hom_\shc(M,Q^{\tens n}\tens N)=0$ except finitely many integers $n$.
\end{enumerate}
}\right.\label{cond:quantum monoidal category}
\eneq

We denote by $q$ the auto-equivalence $Q \otimes \scbul$ of $\shc$,
and call it the {\em grading shift functor}.

In such a case the Grothendieck group $K(\shc)$ is a $\rtl$-graded
$\Z[q^{\pm1}]$-algebra: $K(\shc)=\soplus_{\beta\in\rtl}K(\shc)_\beta$ where
$K(\shc)_\beta=K(\shc_\beta)$. Moreover, we have
$$K(\shc)=\soplus_{S}\Z[q^{\pm1}][S],$$
where $S$ ranges over  equivalence classes of
simple modules. Here, two simple modules $S$ and $S'$ are equivalent
if $q^nS\simeq S'$ for some $n\in\Z$.

\medskip
For $M\in\shc_{\beta}$, we sometimes write
$\beta=\wt(M)$ and call it the {\em weight} of $M$.
Similarly, for $x\in \Q(q^{1/2})\tens_{\Z[q^{\pm1}]}K(\shc_\beta)$,
we write $\beta=\wt(x)$ and call it the {\em weight} of $x$.

\begin{definition} \label{def:quantum monoidal seed}
We call a quadruple $\seed = (\{M_i\}_{i\in\K}, L,\widetilde B, D)$
a \emph{quantum monoidal seed  in $\shc$}
 if it satisfies the following conditions{\rm:}
\bnum
\item $\widetilde B = (b_{ij})_{i\in\K,\,j\in\Kex}$ is an  integer-valued $\K\times\Kex$-matrix
whose principal part is skew-symmetric,
\item $L=(\lambda_{ij})_{i,j\in\K}$ is an integer-valued skew-symmetric $\K\times\K$-matrix,
\item $D=\{d_i\}_{i\in\K}$ is a family of elements in $\rootl$,
\item $\{M_i\}_{i\in\K}$ is a family of simple objects such that $M_i \in \shc_{d_i}$ for any $i\in\K$,
\item $M_i \otimes M_j \simeq q^{\lambda_{ij}} M_j \otimes M_i$ for all $ i, j \in\K$,
\item $M_{i_1} \otimes \cdots \otimes M_{i_t}$ is simple for any
sequence  $(i_1,\ldots,i_t)$ in $\K$,
\item The pair  $(L,\widetilde B)$  is  compatible in the sense of \eqref{eq:compatible} with $d=2$,
\item $\lambda_{ij} - (d_i,d_j) \in 2\Z$ for all $i,j \in\K$,
\item $\displaystyle\sum_{i\in\K}b_{ik}d_i =0$ for all $k\in\Kex$.
\end{enumerate}
\end{definition}

Let  $\seed =(\{M_i\}_{i\in\K}, L,\widetilde B, D)$
be a  quantum monoidal seed.
For any $X\in\shc_\beta$ and $Y\in\shc_\gamma$ such that $X\otimes Y\simeq
q^{c}Y \otimes X$ and $c+(\beta,\gamma)\in2\Z$, we set
\eqn&&\tL(X,Y)=\frac{1}{2}\bl-c+(\beta,\gamma)\br\in\Z\eneqn
and
\eqn X\nconv Y\seteq q^{\tL(X,Y)}X\tens Y\simeq
q^{\tL(Y,X)}Y\tens X.\label{eq:balpro}\eneqn
Then $X\nconv Y\simeq Y\nconv X$.
For any sequence $(i_1,\ldots, i_\ell)$ in $\K$,
we define
$$\sodot_{s=1}^{\ell} M_{i_s}\seteq
(\cdots((M_{i_1}\nconv M_{i_2})\nconv M_{i_3})\cdots)\nconv M_{i_\ell}.$$
Then we have
\eqn
\sodot_{s=1}^{\ell} M_{i_s} =
q^{\frac{1}{2}\sum_{1\le u<v \le \ell}(-\lambda_{i_u i_v}  +(d_{i_u},d_{i_v}) ) } M_{i_1} \tensor \cdots \tensor M_{i_\ell}.
\eneqn
For any $w\in\sym_\ell$, we have
\eqn
&&\sodot_{s=1}^{\ell} M_{ i_{w(s)}}\simeq
\sodot_{s=1}^{\ell} M_{i_s}\eneqn
Hence for any subset $A$ of $\K$ and any set of non-negative integers
$\{m_a\}_{a\in A}$, we can define
$\sodot_{a\in A}M_a^{\snconv m_a}$.

For $(a_i)_{i\in\K}\in\Z_{\ge0}^\K$ and $(b_i)_{i\in\K}\in\Z_{\ge0}^\K$,
we have
$$\bl\sodot_{i\in \K}M_i^{\snconv a_i}\br\nconv
\bl\sodot_{i\in\K}M_i^{\snconv b_i}\br\simeq\sodot_{i\in \K}M_i^{\snconv (a_i+b_i)}.$$

\medskip
Let $\seed=(\{M_i\}_{i\in\K}, L,\widetilde B, D)$ be  a quantum monoidal seed.
When the $L$-commuting family $\{[M_i]\}_{i\in\K}$
of elements of $\Z[q^{\pm1/2}]\tens_{\Z[q^{\pm1}]}K(\shc)$
is algebraically independent,
we shall define a quantum seed
$[\seed]$ in $\Z[q^{\pm1/2}]\tens_{\Z[q^{\pm1}]}K(\shc)$
by
\eqn&&[\seed]=(\{q^{-(d_i,d_i)/4}[M_i]\}_{i\in\K}, L,\widetilde B).
\eneqn

Set
\eqn&&X_i:=q^{-(d_i,d_i)/4}[M_i].\label{eq:XM}
\eneqn
Then for any $\mathbf a=(a_i)_{i\in\K}\in\Z_{\ge0}^\K$, we have
$$X^{\mathbf a}=q^{-(\mu,\mu)/4}[\sodot_{i\in \K}M_i^{\snconv a_i}],$$
where $\mu=\wt(\sodot_{i\in \K}M_i^{\snconv a_i})=\wt(X^{\mathbf a})=\sum_{i\in \K}a_id_i$.

\medskip
For a given $k\in\Kex$, we define the \emph{mutation $\mu_k(D) \in \rootl^\K$ of $D$ in direction $k$ with respect to $\widetilde B$} by
\eqn
\mu_k(D)_i =d_i \ (i \neq k), \quad \mu_k(D)_k=-d_k+\sum_{b_{ik} >0}   b_{ik} d_i.
\eneqn
Note that
\eqn
\mu_k(\mu_k(D))=D.
\eneqn
Note also  that
$(\mu_k(L),\mu_k(\tB),\mu_k(D))$ satisfies conditions (viii) and (ix)
in Definition~\ref{def:quantum monoidal seed} for any $k\in\Kex$.

\medskip

We have the following
\Lemma\label{lem:decat} Set $X'_k=\mu_k(X)_k$, the mutation of $X_k$
as in \eqref{eq:quantum mutation}.
Set $\zeta=\wt(X'_k)=-d_k+\sum_{b_{ik}>0}b_{ik}d_i$.
Then we have
\eqn
&&\hs{-20ex}\ba{l}q^{m_k}[M_k] q^{(\zeta,\zeta)/4}X'_k=
q [\sodot_{b_{ik} >0} M_i^{\snconv b_{ik}}]+
[\sodot_{b_{ik} <0} M_i^{\snconv (-b_{ik})}],\\[2.5ex]
q^{m'_k} q^{(\zeta,\zeta)/4}X'_k[M_k]= [\sodot_{b_{ik} >0} M_i^{\snconv b_{ik}}]+
q[\sodot_{b_{ik} <0} M_i^{\snconv (-b_{ik})}],\ea
\eneqn
where
\eq
&&\hs{-10ex}\left\{\ba{l}
m_k=\dfrac{1}{2}(d_k,\zeta) +\dfrac{1}{2} \displaystyle
\sum_{b_{ik} < 0}\la_{ki}  b_{ik},
\\[1ex]
m'_k=\dfrac{1}{2}(d_k,\zeta) +\dfrac{1}{2} \displaystyle
\sum_{b_{ik} > 0} \la_{ki}b_{ik}.
\ea\right.\label{eq:mm'}
\eneq
\enlemma
\Proof
By \eqref{eq:XaXb}, we have
\eqn
X_k X^{\bf a} = q^{\frac{1}{2} \sum_{i \in \K} a_i \la_{ki}}  X^{{{\bf e}_k} +\bf a}
\quad \text{for ${\bf a}=(a_i)_{i\in\K} \in \Z^\K$ and
$({\bf e}_k)_i = \delta_{ik}$  ($i \in \K$).}
\eneqn
Let $\mathbf{a}'$ and ${\bf a}''$ be as in \eqref{eq:aa}. Because
\eqn
\sum_{i \in \K} a_i' \la_{ki} - \sum_{i \in \K} a_i'' \la_{ki}
= \sum_{b_{ik} >0} b_{ik} \la_{ki} - \sum_{b_{ik} <0} (-b_{ik}) \la_{ki}
= \sum_{i \in \K} b_{ik} \la_{ki} = 2,
\eneqn
 we have
\eqn
X_k X'_k = X_k(X^{{\bf a}'}  +   X^{{\bf a}''})
=q^{\frac{1}{2} \sum_i a_i'' \la_{ki} } (qX^{{\bf e_k}+{\bf a}'} + X^{{\bf e_k}+{\bf a}''}).
\eneqn
Note that
$
\wt(X^{{\bf e_k}+{\bf a}'}) =\wt(X^{{\bf e_k}+{\bf a}''})= d_k +\zeta.
$
It follows that
\eqn
m_k&=&-\dfrac{1}{4}((d_k,d_k)+(\zeta,\zeta))-\dfrac{1}{2} \sum_{i \in J} a_i'' \la_{ki} + \dfrac{1}{4}(\zeta+d_k,\zeta+d_k) \\
&=&\dfrac{1}{2} (d_k, \zeta) +\dfrac{1}{2} \sum_{b_{ik} <0} b_{ik} \la_{ki}.
\eneqn

One can calculate $m_k'$ in a similar way.
\QED

\begin{definition} \label{def:monoidal mutation}
We say that a quantum monoidal seed
$\seed =(\{M_i\}_{i\in\K}, L,\widetilde B, D)$
 \emph{admits a mutation in direction $k\in\Kex$} if
there exists  a simple object  $M_k' \in \shc_{\mu_k(D)_k}$
such that
\bni
  \item
there exist exact sequences in $\shc$
\eq
&&0 \to q \sodot_{b_{ik} >0} M_i^{\snconv b_{ik}} \to q^{m_k} M_k \tensor M_k' \to
 \sodot_{b_{ik} <0} M_i^{\snconv (-b_{ik})} \to 0,
\label{eq:ses graded mutation1} \\
 &&0 \to q \sodot_{b_{ik} <0} M_i^{\snconv(-b_{ik})} \to q^{m_k'} M_k' \tensor M_k \to
  \sodot_{b_{ik} >0} M_i^{\snconv b_{ik}} \to 0, \label{eq:ses graded mutation2}
\eneq
where $m_k$ and $m'_k$ are as in \eqref{eq:mm'}.
\item 
$\mu_k(\seed)\seteq\bl\{M_i\}_{i\neq k}\sqcup\{M_k'\},\mu_k(L),
\mu_k(\widetilde B), \mu_k(D)\br$ is
a quantum monoidal seed in $\shc$.
\end{enumerate}
We call $\mu_k(\seed)$ the {\em mutation} of $\seed$ in direction $k$.
\end{definition}

By Lemma~\ref{lem:decat}, the following lemma is obvious.
\Lemma
Let $\seed =(\{M_i\}_{i\in\K}, L,\widetilde B, D)$ be
 a quantum monoidal seed  which admits a mutation in direction $k\in\Kex$.
Then we have
$$[\mu_k(\seed)]=\mu_k([\seed]).$$
\enlemma

\begin{definition}
Assume that a $\coro$-linear abelian monoidal category
$\shc$ satisfies  \eqref{cond:monoidal category}
and \eqref{cond:quantum monoidal category}.
The category $\shc$ is called a
\em{monoidal categorification of a quantum cluster algebra $A$
over $\Z[q^{\pm1/2}]$}
if
\bnum
\item the Grothendieck ring $\Z[q^{\pm1/2}]\tens_{\Z[q^{\pm1}]} K(\shc)$ is isomorphic to $A$,
\item there exists a quantum monoidal seed
$\seed =(\{M_i\}_{i\in\K}, L,\widetilde B, D)$ in $\shc$ such that
$[\seed]\seteq(\{q^{-(d_i,d_i)/4}[M_i]\}_{i\in\K}, L, \widetilde B)$
 is a quantum seed of $A$,
\item $\seed$ admits successive mutations in all the directions.
\end{enumerate}
\end{definition}
Note that if $\shc$ is a monoidal categorification of a quantum cluster algebra
$A$, then any quantum seed in $A$ obtained by  a sequence of mutations from the initial quantum seed
is of the form
$(\{q^{-(d_i,d_i)/4}[M_i]\}_{i\in\K}, L, \widetilde B)$  for some quantum monoidal seed
$(\{M_i\}_{i\in\K}, L,\widetilde B, D)$.
 In particular, all the quantum cluster monomials in $A$ are the
classes of real simple objects in $\shc$ up to a power of $q^{1/2}$.

\section{Monoidal categorification via modules over KLR algebras}
\subsection{Admissible pair}
In this section, we assume that \emph{$R$ is a symmetric KLR algebra
over a base field $\coro$}.

From now on, we focus on the case when $\shc$ is a full subcategory of $R \gmod$
 stable under taking convolution products, subquotients, extensions
and grading shift.
In particular, we have
\eqn
\shc = \soplus_{\beta \in \rootl^-} \shc_\beta, \quad
\text{where} \ \shc_\beta \seteq\shc\cap R(-\beta) \gmod,
\eneqn
and we have the grading shift functor $q$ on $\shc$.
Hence we have
$$K(\shc_\beta)\subset\Um_\beta,$$
and $K(\shc)$ has a $\Z[q^{\pm1}]$-basis consisting of
the isomorphism classes of self-dual simple modules.

\begin{definition} \label{def:admissible}
A pair $(\{M_i\}_{i\in\K}, \widetilde B)$ is called \emph{admissible} if
\bnum
\item  $\{M_i\}_{i\in\K}$ is a family of real simple  self-dual  objects of $\shc$ which commute with each other,
\item $\widetilde B$ is an integer-valued $\K \times\Kex$-matrix with
skew-symmetric principal part,
\item
 for each $k\in\Kex$, there exists a  self-dual  simple object $M'_k$ of $\shc$
 such that
 there is an exact sequence in $\shc$
 \eq
&&0 \to q \sodot_{b_{ik} >0} M_i^{\snconv b_{ik}} \to q^{\tLa(M_k,M_k')} M_k \conv M_k' \to
 \sodot_{b_{ik} <0} M_i^{\snconv (-b_{ik})} \to 0,
\label{eq:ses graded mutation KLR}
 \eneq
and
$M_k'$ commutes with $M_i$  for any  $i \neq k$.
  \end{enumerate}
\end{definition}

Note that $M'_k$ is uniquely determined by $k$ and
$(\{M_i\}_{i\in\K}, \widetilde B)$. Indeed, it follows from
$q^{\tLa(M_k,M_k')}M_k\hconv M'_k\simeq\sodot_{b_{ik} <0} M_i^{\snconv(-b_{ik})}$ and
\cite[Corollary 3.7]{KKKO14}.
Note also that if there is an epimorphism $q^mM_k\conv M'_k\epito
\sodot_{b_{ik} <0} M_i^{\snconv(-b_{ik})}$ for some $m\in\Z$, then $m$ should coincide with $\tL(M_k,M'_k)$ by Lemma~\ref{lem:selfdual} and Lemma~\ref{lem:multipro}.

\smallskip
For an admissible pair  $(\{M_i\}_{i\in\K}, \widetilde B)$, let
$\La=(\La_{ij})_{i,j\in\K}$
be the skew-symmetric matrix
given by $\La_{ij}=\Lambda(M_i,M_j)$.
and let $D=\{d_i\}_{i\in\K}$ be the family of elements of $\rootl^-$ given by
$d_i=\wt(M_i)$.

\smallskip
Now we can simplify  the conditions in Definition \ref{def:quantum monoidal seed} and
Definition \ref{def:monoidal mutation} as follows.

\begin{prop} \label{prop:condition simplified}
Let $(\{M_i\}_{i\in\K},\widetilde B)$ be an admissible pair in $\shc$,
and let $M'_k$ $(k\in\Kex)$ be as in {\rm Definition~\ref{def:admissible}}.
    Then we have the following properties.
  \bna
      \item The quadruple $\seed\seteq(\{M_i\}_{i\in\K}, -\La,\widetilde B,D)$
    is a quantum monoidal seed in $\shc$.\label{item:1}
    \item The self-dual simple object $M_k'$ is real for every $k\in\Kex$.
\label{item:2}
    \item The quantum monoidal seed $\seed$ admits a mutation in each direction $k\in\Kex$.\label{item:3}
\item $M_k$ and $M'_k$ are simply-linked for any $k\in\Kex$
\ro i.e., $\de(M_k,M'_k)=1$\rf.\label{item:sl}
\item For any $j\in\K$ and $k\in\Kex$, we have
\eq&&
\ba{l}\La(M_j,M'_k)=-\La(M_j,M_k)-\sum_{b_{ik}<0}\La(M_j,M_i)b_{ik},\\[1ex]
\La(M'_k,M_j)=-\La(M_k,M_j)+\sum_{b_{ik}>0}\La(M_i,M_j)b_{ik}.
\ea\label{eq:Larel}
\eneq\label{item:Larel}
  \end{enumerate}
\end{prop}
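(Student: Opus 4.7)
The plan is to prove the items in the order (d), (e), (b), (a), (c), with (e) supplying the key identities that drive everything else. First I would note that by Proposition~\ref{prop:real simple} and Lemma~\ref{lem:multipro}, both ends of the sequence \eqref{eq:ses graded mutation KLR}, namely $X\seteq\sodot_{b_{ik}>0}M_i^{\snconv b_{ik}}$ and $Y\seteq\sodot_{b_{ik}<0}M_i^{\snconv(-b_{ik})}$, are self-dual simple modules; rewriting the sequence as $0 \to q^{1-\tLa(M_k,M_k')}X \to M_k\conv M_k' \to q^{-\tLa(M_k,M_k')}Y \to 0$ and applying Lemma~\ref{lem:crde} yields $\de(M_k,M_k') = 1$, proving (d).

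To establish (e) and, as a byproduct, the compatibility of $(-\La, \widetilde B)$ with $d=2$, I would apply Propositions~\ref{pro:subquotient} and \ref{prop:Lambda decomp} to the simple subquotients of $M_k\conv M_k'$. For $j\ne k$, by admissibility $M_j$ commutes with both $M_k$ and $M_k'$, so Proposition~\ref{pro:subquotient} gives $\La(M_j,M_k)+\La(M_j,M_k') = \La(M_j, M_k\hconv M_k') = \sum_{b_{ik}<0}(-b_{ik})\La(M_j,M_i)$, which is exactly the first formula in \eqref{eq:Larel}; an analogous argument with the right-$\La$ version yields the second. For $j = k$, the preceding proposition no longer applies since $M_k$ does not commute with $M_k'$, but Proposition~\ref{prop:Lambda decomp}(i) (with $L=M=M_k$, $N=M_k'$) still gives $\La(M_k, M_k\hconv M_k') = \La(M_k, M_k')$, and Proposition~\ref{prop:Lambda decomp}(ii) applied to $M_k'\hconv M_k$ (which, via Theorem~\ref{thm:simplicity}, is a grading shift of $X$) yields $\La(M_k', M_k) = \sum_{b_{ik}>0} b_{ik}\La(M_i, M_k)$. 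The compatibility then reads: for $i\ne j$, the two identities at the two ends of the $j$-th exact sequence force $\sum_l b_{lj}\La(M_i,M_l) = 0$; for $i = j$, the identities at $j=k$ combine with (d) and the relation $\La(M_i,M_k) = -\La(M_k,M_i)$ (valid for $i\ne k$ by admissibility) to give $\sum_l b_{lj}\La(M_j,M_l) = -\La(M_j,M_j')-\La(M_j',M_j) = -2$.

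Assertion (b) then follows from Corollary~\ref{cor:real}: $M_k$ is real by hypothesis, and since $M_k'$ commutes with every $M_i$ ($i\ne k$) by admissibility, both $X\conv M_k'$ and $Y\conv M_k'$ are convolutions of commuting real simples and hence simple by Proposition~\ref{prop:real simple}. For (a), the axioms (i)--(vi), (viii) and (ix) of Definition~\ref{def:quantum monoidal seed} are routine: the commutation with shift follows from admissibility (together with $\La(M_i,M_j) = -\La(M_j,M_i)$ for commuting modules), simplicity of convolutions is Proposition~\ref{prop:real simple}, the parity condition comes from Lemma~\ref{lem:tLa even}, and the relation $\sum_i b_{ik}d_i = 0$ comes from equating the weights of the two ends of \eqref{eq:ses graded mutation KLR}; axiom (vii) was just established.

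Finally for (c), the first exact sequence \eqref{eq:ses graded mutation1} coincides with the admissibility sequence \eqref{eq:ses graded mutation KLR} once one verifies $m_k = \tLa(M_k,M_k')$, which is an immediate substitution into \eqref{eq:mm'} using the formula of (e). The second exact sequence \eqref{eq:ses graded mutation2} is produced by applying Proposition~\ref{Prop: l2} to $M_k'\conv M_k$ (which has length $2$ by (d)) and using Theorem~\ref{thm:simplicity} to identify its head and socle with grading shifts of $X$ and $Y$, respectively; the analogous computation gives $m_k' = \tLa(M_k',M_k)$. It remains to check that $\mu_k(\seed)$ is a quantum monoidal seed, which proceeds by repeating the verifications of (a) with $M_k$ replaced by $M_k'$, using the formulas of (e) to match the entries of $\mu_k(-\La)$ with $-\La$ evaluated on the new cluster. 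The principal obstacle is the compatibility calculation at $j = k$ in Step~2: because $M_k$ and $M_k'$ do not commute, a naive one-sided application of Proposition~\ref{prop:Lambda decomp} yields $0$ rather than $-2$, and one must combine its two parts (acting on both the head of $M_k\conv M_k'$ and the head of $M_k'\conv M_k$) in order to recover the correct compatibility constant.
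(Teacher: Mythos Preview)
Your proposal is correct and follows essentially the same strategy as the paper's proof: establish (d) via Lemma~\ref{lem:crde}, (b) via Corollary~\ref{cor:real}, (e) via the additivity of $\La$ on heads, then derive compatibility (vii) by summing the two identities in (e) and using $\de(M_j,M_k')=\delta_{jk}$, and finally handle (c).

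Two minor remarks. First, your case-split in (e) is unnecessary: Proposition~\ref{prop:Lambda decomp}(i) only requires $L=M_j$ to commute with $M=M_k$ (not with $N=M_k'$), and since every $M_j$ commutes with $M_k$ (including $j=k$, as $M_k$ is real), the argument goes through uniformly for all $j\in\K$; likewise for part (ii) applied to $M_k'\hconv M_k$. The paper exploits this and avoids the split. Second, for the second exact sequence in (c) the paper simply takes the dual of \eqref{eq:ses graded mutation KLR} (using $(M\conv N)^*\simeq q^{(\beta,\gamma)}N^*\conv M^*$ and self-duality), which is quicker than your route through Proposition~\ref{Prop: l2} and Theorem~\ref{thm:simplicity}, though both are valid.
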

\Proof
\eqref{item:sl} follows from the exact sequence
\eqref{eq:ses graded mutation KLR} and Lemma \ref{lem:crde}.

\medskip\noi
\eqref{item:2} follows from the exact sequence
\eqref{eq:ses graded mutation KLR}
by applying Corollary \ref{cor:real} to the case
$$M=M_k, \ N=M'_k, \ X= q  \sodot_{b_{ik} > 0} M_i^{\snconv b_{ik}}
\ \text{and} \  Y= \sodot_{b_{ik} < 0} M_i^{\snconv(-b_{ik})}. $$

\medskip
\noi
\eqref{item:Larel} follows from
\begin{align*}
\La(M_j,M_k)+\La(M_j,M'_k)&=\La(M_j,M_k\hconv M'_k)
=\La\bl M_j,\sodot_{b_{ik} <0} M_i^{\snconv(-b_{ik})}\br\\
&=\sum_{b_{ik}<0}\La(M_j,M_i)(-b_{ik})
\end{align*}
and
\begin{align*}
\La(M_k,M_j)+\La(M'_k,M_j)&=\La(M'_k\hconv M_k,M_j)
=\La\bl\sodot_{b_{ik} >0} M_i^{\snconv b_{ik}}, M_j\br\\
&=\sum_{b_{ik}>0}\La(M_i,M_j)b_{ik}.
\end{align*}

\medskip 
Let us show \eqref{item:1}.
 The conditions (i)--(v) in Definition \ref{def:quantum monoidal seed} are satisfied by the construction.
 The condition (vi) follows from Proposition \ref{prop:real simple}
and the fact that $M_i$ is real simple
for every $i\in\K$.
 The condition (viii) is nothing but Lemma \ref{lem:tLa even}.
 The condition (ix) follows easily from the fact that
the weights of the first and the last terms in the exact sequence \eqref{eq:ses graded mutation KLR} coincide.

Let us show  the condition (vii)  in Definition \ref{def:quantum monoidal seed}.
By \eqref{eq:Larel} and \eqref{item:sl} of this proposition, we have
\begin{align*}
2\delta_{jk}=2\de(M_j,M'_k)
&=-2\de(M_j,M_k)-\sum_{b_{ik}<0}\La(M_j,M_i)b_{ik}
+\sum_{b_{ik}>0}\La(M_i,M_j)b_{ik}\\
&=-\sum_{b_{ik}<0}\La(M_j,M_i)b_{ik}-\sum_{b_{ik}>0}\La(M_j,M_i)b_{ik}
=-\sum_{i\in\K}\La(M_j,M_i)b_{ik}
\end{align*}
for $k\in\Kex$ and $j\in\K$.
Thus we have shown that $\seed$ is a quantum monoidal seed
in $\shc$.

\medskip 
Let us show \eqref{item:3}. Let $k\in\Kex$. The exact sequence
\eqref{eq:ses graded mutation1}
follows from  \eqref{eq:ses graded mutation KLR}
and the equality
\eq
\tL(M_k,M'_k)=\dfrac{1}{2}\bl(\wt(M_k,M'_k)
-\sum_{b_{ik}<0}\La(M_k,M_i)b_{ik}\br=m_k,
\eneq
which is an immediate consequence of \eqref{eq:Larel}.

 Similarly, taking the dual of
the exact sequence \eqref{eq:ses graded mutation KLR},
we obtain an exact sequence
 \eqn
&&0 \to  \sodot_{b_{ik} <0} M_i^{\snconv (-b_{ik})}  \to q^{-\tLa(M_k,M_k')+(\wt M_k, \wt M_k')} M_k' \conv M_k  \to q^{-1}\sodot_{b_{ik} >0} M_i^{\snconv b_{ik}}
 \to 0,
 \eneqn
which gives the exact sequence \eqref{eq:ses graded mutation2}.
\smallskip

It remains to prove that $\mu_k(\seed)\seteq(\{M_i\}_{i\neq k}\cup\{M_k'\},\mu_k(-\La),
\mu_k(\widetilde B), \mu_k(D))$ is
a quantum monoidal seed in $\shc$ for any $k\in\Kex$.

We see easily that $\mu_k(\seed)$
satisfies the conditions (i)--(iv) and (vii)--(ix) in Definition~\ref{def:quantum monoidal seed}.

For  the condition (v), it is enough to show that
for $i,j\in\K$ we have
\eqn \mu_k(-\La)_{ij} = -\Lambda(\mu_k(M)_i,\mu_k(M)_j),
\eneqn
where $\mu_k(M)_i = M_i$ for $i\neq k$ and $\mu_k(M)_k=M_k'$.
In the case $i \neq k$ and $j \neq k$, we have
\eqn \mu_{k}(-\La)_{ij}=-\Lambda(M_i,M_j)=-\Lambda(\mu_k(M_i),\mu_k(M_j)).
\eneqn
The other cases follow from \eqref{eq:Larel}.

 The condition (vi) in Definition~\ref{def:quantum monoidal seed}
for $\mu_k(\seed)$ follows from Proposition \ref{prop:real simple} and the fact that $\{\mu_k(M)_i\}_{i\in\K}$ is a commuting family of real simple modules.
\QED

Now we are ready to give  one of our main theorems.
\Th\label{th:main}
Let  $(\{M_i\}_{i\in \K},\widetilde B)$ be an admissible pair in $\shc$
and set $$\seed=(\{M_i\}_{i\in\K}, -\La,\widetilde B,D)$$
as in {\rm Proposition~\ref{prop:condition simplified}}.
We set
$[\seed]\seteq(\{q^{-\frac{1}{4}(\wt(M_i), \wt(M_i))}[M_i]\}_{i\in\K}, -\La,\widetilde B,D)$.
We assume further  that
\eq
&&\hs{1ex}\parbox[t]{79ex}{
 The $\Q(q^{1/2})$-algebra $\Q(q^{1/2})\tens\limits_{\Z[q^{\pm1}]}K(\shc)$
is isomorphic to $\Q(q^{1/2})\tens\limits_{\Z[q^{\pm1}]}\mathscr A_{q^{1/2}}([\seed])$.
}\label{eq:Cluster}
\eneq
Then, for each $x\in\Kex$, the pair
$\bl\{\mu_x(M)_i\}_{i\in\K},\mu_x(\widetilde B)\br$
 is admissible in $\shc$.
\enth
\Proof In Proposition ~\ref{prop:condition simplified} (\ref{item:2}), we have already  shown that
the condition (i) in Definition~\ref{def:admissible} holds for
$(\{\mu_x(M)_i\}_{i\in\K},\mu_x(\widetilde B))$.
The condition (ii) is clear from the definition.
Let us show (iii).
Set $N_i\seteq\mu_x(M)_i$ and
$b_{ij}' \seteq\mu_x(\widetilde B)_{ij}$ for $i\in \K$ and $j\in \Kex$.
It is enough to show that, for any $y\in\Kex$,
there exists a self-dual simple module $M_y'' \in \shc$ such that
there is a short exact sequence
\eq
\xymatrix{
0 \ar[r] &  q \sodot_{b'_{iy} > 0} N_i^{\snconv b'_{iy}} \ar[r]
&q^{\tLa(N_y,M_y'')}  N_y \conv  M_y'' \ar[r]
& \sodot_{b'_{iy} <0} N_i^{\snconv (-b'_{iy})}   \ar[r] & 0
}\label{eq:seqdes}
\eneq
and
$$\de(N_i,M_y'')=0 \quad \text{for $i \neq y$.}$$

If $x=y$, then $b'_{iy}=-b_{ix}$ and hence $M''_y=M_x$ satisfies the desired condition.

Assume that $x\not=y$ and $b_{xy}=0$. Then $b'_{iy} = b_{iy}$ for any $i$
and $N_i=M_i$ for any $i\not=x$.
Hence  $M_y''=\mu_y(M)_y$ satisfies the desired condition.

\medskip
We will show the assertion in the case $b_{xy} > 0$.
We omit the proof of the case $b_{xy} <0$ because it
can be shown in a similar way.

Recall that we have
\eq \label{eq:bxy>0 mutation}
b_{iy}'=\begin{cases}
b_{iy}+b_{ix}b_{xy} & \text{if } b_{ix} >0, \\
b_{iy} & \text{if } b_{ix} \le 0
\end{cases}
\eneq
for $i  \in \K$ different from $x$ and $y$.

Set
\eqn
&&M_x'\seteq\mu_x(M)_x, \hs{3ex} M_y'\seteq\mu_y(M)_y, \\
&&C\seteq\sodot_{b_{ix} >0} M_i^{\snconv b_{ix}}, \quad  S\seteq\sodot_{b_{ix} <0, \ i\neq y} M_i^{\snconv -b_{ix}}, \\
&&P\seteq\sodot_{b_{iy} >0, i\neq x} M_i^{\snconv b_{iy}}, \quad  Q:=\sodot_{b_{iy}' <0, \ i\neq x} M_i^{\snconv-b_{iy}'}, \\
&&A\seteq\sodot_{b_{iy}' \le 0, \ b_{ix} >0} M_i^{\snconv b_{ix}b_{xy}} \nconv \sodot_{\substack{b_{iy} <0,\; b_{iy}' >0,\;b_{ix} >0}} M_i^{\snconv -b_{iy}}
 \\
&&\hs{3ex}\simeq
\sodot_{\substack{b_{iy} <0,\; b_{ix} >0}}
M_i^{\snconv \min(b_{ix}b_{xy},-b_{iy})},\\
&&B\seteq\sodot_{ b_{iy} \ge 0,  \ b_{ix} >0} M_i^{\snconv b_{ix}b_{xy}} \nconv \sodot_{\substack{b_{iy}' >0, \; b_{iy} <0,\;b_{ix} >0}} M_i^{\snconv b_{iy}'}.
 \eneqn

Then    using \eqref{eq:bxy>0 mutation} repeatedly,
we have
\begin{align*}
Q \nconv A \simeq \sodot_{b_{iy} < 0} M_i^{\snconv -b_{iy}}, \quad
A \nconv B \simeq C^{\snconv b_{xy}}, \quad \text{and}  \quad
 B\nconv P \simeq \nconv \sodot_{ b_{iy}' > 0} M_i^{\snconv b_{iy}'}.
\end{align*}
Set
 \eqn
&&L\seteq(M_x')^{\snconv b_{xy}},   \quad 
V\seteq M_x^{\snconv b_{xy}}
\eneqn
 and set
\eqn
X \seteq \sodot_{b_{iy} >0} M_i^{\snconv b_{iy}}\simeq
M_x^{\snconv b_{xy}} \nconv P =V \nconv P,
 \quad Y\seteq \sodot_{b_{iy} <0} M_i^{\snconv-b_{iy}}\simeq Q \nconv A.
\eneqn

Then \eqref{eq:seqdes}  is read as
\eq
\xymatrix{
0 \ar[r] &  q (B \nconv P)\ar[r]  &q^{\tL(M_y,M''_y)}M_y \conv M''_y \ar[r] &
L  \nconv Q\ar[r] & 0.
}\label{eq:seqdes'}
\eneq

Note that we have
\eq
&&0 \to q C  \to q^{\tLa(M_x,M_x')}M_x \conv M_x' \to  M_y^{\snconv b_{xy}} \nconv S \to 0,  \\
&&0 \to q X \to  q^{\tLa(M_y,M_y')}M_y \conv M_y' \to  Y \to 0. \label{eq:sesMy}
\eneq

Taking the convolution products of $L=(M_x')^{\snconv b_{xy}}$  and \eqref{eq:sesMy}, we obtain
\eqn
&&
\xymatrix@R=.5ex{0 \ar[r]& qL \conv X  \ar[r]&
 q^{\tLa(M_y,M_y')} L \conv (M_y \conv M_y')\ar[r]& L \conv Y \ar[r]& 0, \\
0\ar[r]& qX \conv L \ar[r]&
q^{\tLa(M_y,M_y')} (M_y \conv M_y') \conv L \ar[r]&  Y\conv L \ar[r]& 0.}
\eneqn

Since $L$ commutes with  $M_y$,
we have
\eqn
&&\Lambda(L, Y)=\Lambda(L, M_y \hconv M_y') \\
 &&=\Lambda(L, M_y) +\Lambda(L, M_y') = \Lambda(L,M_y \conv M_y' ).
\eneqn

On the other hand, we have
\eqn
&&\Lambda(M_x',X) - \Lambda(M_x',Y)  \\
&&=\Lambda(M_x', \sodot_{b_{iy} >0} M_i^{\snconv b_{iy}})-\Lambda(M_x',\sodot_{b_{iy} <0} M_i^{\snconv -b_{iy}})  \\
&&=\sum_{b_{iy} >0} \Lambda(M_x',M_i)b_{iy} -\sum_{b_{iy} <0} \Lambda(M_x',M_i)(-b_{iy})  \\
&&=\sum_{i\in\K} \Lambda(M_x', M_i)b_{iy}
=\sum_{i \neq x} \Lambda(M_x',M_i)b_{iy} + \Lambda(M_x',M_x)b_{xy} \\
&&=\sum_{i \neq x} \Lambda(M_x', M_i)(b_{iy}' -\delta(b_{ix} >0)b_{ix} b_{xy}) + \Lambda(M_x',M_x)b_{xy} \\
&&=\sum_{i \neq x} \Lambda(M_x', M_i) b_{iy}'
-\sum_{b_{ix} >0} \Lambda(M_x', M_i)b_{ix}b_{xy} + \Lambda(M_x',M_x)b_{xy} \\
&&\mathop=\limits_{\mathrm(a)}0-\Lambda(M_x', \sodot_{b_{ix} >0} M_i^{\snconv b_{ix}})b_{xy} + \Lambda(M_x',M_x)b_{xy} \\
&&=\bl-\Lambda(M_x',\sodot_{b_{ix} >0} M_i^{\snconv b_{ix}} ) + \Lambda(M_x',M_x) \br b_{xy} \\
&&=(-\Lambda(M_x', M_x' \hconv M_x) + \Lambda(M_x',M_x) )b_{xy} \\
&&=(-\Lambda(M_x', M_x')-\Lambda(M_x', M_x) + \Lambda(M_x',M_x) )b_{xy}=0.
\eneqn
Note that we used the compatibility of the pair
$\bl\bl-\Lambda(\mu_x(M_i),\mu_x(M_j))\br_{i,j\in\K}, \, \mu_x(\widetilde B)\br$
when we derive the equality (a).

Since $L=(M'_x)^{\snconv b_{xy}}$, the equality $\La(M'_x,X)=\La(M'_x,Y)$ implies
\eqn
&& \Lambda(L,X)
=\Lambda(L,Y)=\La(L, M_y \conv M_y'). \label{eq:LaLX=LaLY}
\eneqn
Hence  the following  diagram is commutative by Proposition \ref{prop:inequalities} (i):
\eqn \label{eq:commutative ses}
&&\ba{c}\xymatrix{
0 \ar[r] & q L \conv X \ar[r] \ar[d]^{\rmat{L,X}} &  q^{\tLa(M_y,M_y')} L \conv (M_y \conv M_y')
 \ar[d]^{\rmat{L,M_y \circ M_y'}} \ar[r] &  L \conv Y \ar[r] \ar[d]_{\rmat{L,Y}} ^{\bwr}&0\\
0 \ar[r] & q^{d+1} X \conv L \ar[r]  & q^{d+\tLa(M_y,M_y')} (M_y \conv M_y') \conv L \ar[r] & q^{d}\, Y \conv L \ar[r] & 0,
}\ea
\eneqn
where $d=-\Lambda(L,X)=-\Lambda(L,M_y \conv M_y')=-\Lambda(L,Y)$.
Note that  since $L=(M_x')^{\snconv b_{xy}}$ commutes with $Q$ and $A$, $\rmat{L,Y}$ is an isomorphism.  Hence
we have
$$\Im(\rmat{L,Y}) \simeq L \conv Y.$$
 Therefore we obtain  an exact sequence
\eq
\xymatrix{0\ar[r]&\Im(\rmat{L,X})\ar[r]&\Im(\rmat{L, M_y\circ M'_y})\ar[r]
&L\circ Y\ar[r]&0.}\label{eq:exIm}
\eneq

On the other hand, $\rmat{L, M_y \circ M_y'}$ decomposes
(up to a grading shift) by Lemma \ref{lem:composition} as follows:
\eqn
\xymatrix@C=6em{
L \conv M_y \conv M_y'  \ar[r]^\sim_{\rmat{L,M_y} \circ M_y'}
\ar@/^2pc/ [rr]^{\rmat{L, M_y \circ M_y'}}&
 M_y \conv L \conv M_y'  \ar[r]_{M_y \circ \rmat{L,M_y'}} &
 M_y  \conv M_y' \conv L.
 }
 \eneqn
Since $L=(M_x')^{\snconv b_{xy}}$ commutes with $M_y$,
the homomorphisms $\rmat{L,M_y} \conv M_y'$ is an isomorphism and hence we have
\eqn
\Im
 (\rmat{L, M_y \circ M_y'}) \simeq  M_y \conv (L\hconv M'_y)
\quad\text{up to a grading shift.}
 \eneqn

Similarly, $\rmat{L, X}$ decomposes
(up to a grading shift) as follows:
\eqn
\xymatrix@C=6em{
L \conv V \conv P  \ar[r]_{\rmat{L,V} \circ P}
\ar@/^1.5pc/ [rr]^{\rmat{L,X}}&
 V \conv L \conv P  \ar[r]_{ V\circ \rmat{L,P}}^\sim &
V \conv P  \conv L.
 }
 \eneqn
Since $L$ commutes with $P$,
the  homomorphism  $V \conv \rmat{L,P} $ is an isomorphism and hence we have
  \eqn
\Im
 (\rmat{L,X})
  \simeq (L \hconv V) \conv P
\simeq \bl(M_x')^{\conv b_{xy}} \hconv M_x^{\conv b_{xy}}\br \conv P
\quad\text{up to a grading shift.}
 \eneqn
On the other hand,
Lemma~\ref{lem:MN} implies that
\eqn
(M_x')^{\conv b_{xy}} \hconv M_x^{\conv b_{xy}}
\simeq (M_x' \hconv M_x)^{\conv b_{xy}}
\simeq  C^{\conv b_{xy}}
\simeq B \nconv A
\eneqn
and hence we obtain
  \eqn
\Im
 (\rmat{L,X})
\simeq (B \nconv P)  \nconv A\quad\text{up to a grading shift.}
\eneqn
Thus the exact sequence \eqref{eq:exIm} becomes the exact sequence in $\shc$:
\eq
&&\xymatrix{
0 \ar[r] &  q^m (B \nconv P) \nconv A \ar[r]  &q^nM_y \conv (L\hconv M_y') \ar[r] & (L  \nconv Q) \nconv A \ar[r] & 0
}\label{eq:exML}
\eneq
for some $m,n\in\Z$.
Since $(L  \nconv Q) \nconv A$ is self-dual, $n=\tL(M_y,L\hconv M_y')$.
On the other hand, by Proposition \ref{prop:Lambda decomp} (i) and  Proposition \ref{prop:condition simplified} (\ref{item:sl}), we have
\eqn
\de(M_y,L \hconv M_y') \le \de(M_y, L) + \de(M_y, M_y') =1.
\eneqn
By the exact sequence \eqref{eq:exML},
$M_y \conv (L\hconv M_y')$ is not simple and we conclude
$$\de(M_y,L \hconv M_y')=1.$$
Then Lemma~\ref{lem:crde} implies that $m=1$.
Thus we obtain an exact sequence in $\shc$:
\eq
&&\xymatrix@C=3ex{
0 \ar[r] &  q (B \nconv P) \nconv A \ar[r]  &q^{\tL(M_y,L\shconv M_y')}M_y \conv (L\hconv M_y') \ar[r] & (L  \nconv Q) \nconv A \ar[r] & 0.
}\label{eq:exML1}
\eneq

Now we shall rewrite \eqref{eq:exML1} by using $\scbul\conv A$ instead of
$\scbul\nconv A$.
We have \eqn
&&\tLa(B,A)+\tLa(A,A)
=b_{xy}\tLa(C,A)
=b_{xy}\tLa(M_x' \hconv M_x,A) \\
&&\hs{10ex}=b_{xy}\tLa(M_x',A)+b_{xy}\tLa(M_x,A)=\tL(L,A)+b_{xy}\tLa(M_x,A).
\eneqn
On the other hand, the exact sequence \eqref{eq:sesMy} gives
\eqn
&&b_{xy}\tLa(M_x,A)+\tLa(P,A)=\tL(X,A)=\tL(M'_y\hconv M_y,A)\\
&&\hs{5ex}=\tL(M'_y,A)+\tL(M_y,A)=\tL(M_y\hconv M'_y,A)=
\tL(Y,A)=\tL(Q,A)+\tL(A,A).
\eneqn
It follows that
\eqn
&&\tLa(B \conv P,A)=\tLa(B,A) + \tLa(P,A) \\
&&\hs{5ex}=  \bl \tL(L,A)+b_{xy}\tLa(M_x,A)-\tLa(A,A)\br
+\bl \tL(Q,A)+\tL(A,A)-b_{xy} \tLa(M_x,A)\br\\
&&\hs{5ex}=\tLa(L,A) + \tLa(Q,A)=\tLa(L \conv Q, A). \label{eq:qpower3}
\eneqn

 Hence we have
\eqn
&&\xymatrix{
0 \ar[r] &  q (B \nconv P) \conv A \ar[r]  &q^{c}  M_y \conv (L\hconv M_y') \ar[r] & (L  \nconv Q) \conv A \ar[r] & 0,
}\label{eq:LQA}
\eneqn
where $c=\tLa(M_y, L \hconv M_y')-\tLa(B \nconv P,A)$ by Lemma \ref{lem:selfdual}.

Thus we obtain the identity in $K(R \gmod)$:
\eqn
q^c  [M_y] [L \hconv M_y'] = \bl q [B \nconv P] + [L \nconv Q]\br [A].
\eneqn

On the other hand,  the hypothesis \eqref{eq:Cluster} implies that
there exists $\phi \in \Q(q^{1/2}) \otimes_{\Z[q^{\pm1}]} K(\shc)$
corresponding to $\mu_y\mu_x([M])$ so that
it satisfies
\eq
[M_y] \phi = q [B \nconv P] + [L \nconv Q]
\label{eq:Mphi}
\eneq
and
\eq
\phi [\mu_x(M)_i] = q^{\la'_{yi}} [\mu_x(M)_i] \phi
\quad
\text{for} \ i \neq y,
\label{eq:phiA}
\eneq
where $\mu_y\mu_x(-\La)=(\la'_{ij})_{i,j\in\K}$.

Hence, in $\Q(q^{1/2}) \otimes_{\Z[q^{\pm1}]} K(\shc)$, we have
\eqn
[M_y] \phi [A] = \bl q [B \nconv P] + [L \nconv Q]\br [A ]=  q^c  [M_y][L \hconv M_y'].
\eneqn
Since $\Q(q^{1/2}) \otimes_{\Z[q^{\pm1}]} K(\shc)$ is a domain, we conclude that
\eqn
\phi [A] = q^c [L \hconv M_y'].
\eneqn

On the other hand,  \eqref{eq:phiA}  implies
\eqn
\phi [A] = q^{l} [A] \phi\quad\text{for some $l\in\Z$.}
\eneqn
Hence, Theorem \ref{thm:divisible} implies that, when we write
\eqn
\phi =\sum_{b \in B(\infty)} a_b [L_b]\quad\text{for some $a_b \in \Q(q^{1/2})$,}
\eneqn
we have $$L_b\conv A \simeq q^{l} A\conv L_b \quad \text{whenever } \ a_b\not=0.$$
In particular,  each module  $L_b\conv A $ with $a_b\not=0 $ is simple because $A$ is a real simple module.
Thus we obtain
\eqn
q^c [ L \hconv M_y' ] =\phi [A] = \sum_{b \in B(\infty)} a_b [L_b \conv A].
\eneqn
 Since $L \hconv M_y'$ is simple,
 there exists $b_0$ such that $L_{b_0} \conv A$ is isomorphic to $L \hconv M_y'$ up to a grading shift, and
 $a_b=0$ for $b \neq b_0$.
Set $M_y'':=L_{b_0}$.
Then we conclude that $\phi[A] = q^m [M_y'' \conv A]  =q^m[M_y''][A] $ so that
$$ \phi=q^m[M''_y] \quad \text{for some } \ m \in\Z.$$
We emphasize that $M_y''$ is a self-dual simple module in $R \gmod$ which satisfies that $M_y'' \conv A
\simeq L \hconv M_y $ up to a grading shift.

Now \eqref{eq:Mphi}  implies
$$q^m[M_y\conv M''_y] = q [B \nconv P] + [L \nconv Q].$$
Hence there exists an exact sequence
$$0\To W\To q^m\;M_y\conv M''_y\To Z\To 0,$$
where $W=q B \nconv P$ and $Z=L \nconv Q$ or
$W=L \nconv Q$ and $Z=q B \nconv P$.
By Lemma~\ref{lem:crde}, the  second case does not occur and we have an exact sequence
$$0\To q B \nconv P\To q^m\;M_y\conv M''_y\To L \nconv Q\To 0.$$

Since $M_y$, $M_y''$ and $L \nconv Q$ are self-dual, we have
$m=\tLa(M_y,M_y'')$,
and we obtain the desired short exact sequence \eqref{eq:seqdes'}.

\vskip 1em
Since $\phi$ commutes with $[\mu_x (M)_i]$  up to a power of $q$ in
$K(\shc)$,
and
$\mu_x(M)_i$ is real simple,
$M_y''$ commutes with $\mu_x (M)_i$ for $i\neq y$, by Corollary \ref{cor:qcomm_module}.
\QED

\Cor\label{cor:main}
Let  $(\{M_i\}_{i\in\K},\widetilde B)$ be an admissible pair in $\shc$.
Under the assumption \eqref{eq:Cluster},
$\shc$ is a monoidal categorification of the quantum cluster algebra
$\mathscr A_{q^{1/2}}([\seed])$.
Furthermore,  the following statements hold{\rm:}
\bnum
  \item The quantum monoidal seed $\seed=(\{M_i\}_{i\in\K},  -\La,\widetilde B,D)$
  admits successive mutations in all directions.
  \item Any cluster monomial in $\Z[q^{\pm1/2}] \otimes_{\Z[q^\pm1]} K(\shc)$
   is the isomorphism class of a  real simple object in $\shc$ up to a power of $q^{1/2}$.
  \item Any cluster monomial in $\Z[q^{\pm1/2}]\otimes_{\Z[q^\pm1]} K(\shc)$
is a Laurent polynomial of the initial cluster variables with coefficient
in $\Z_{\ge0}[q^{\pm1/2}]$.
\end{enumerate}
\encor

\Proof
(i) and (ii) are straightforward.

Let us show (iii). Let $x$ be a cluster monomial.
By the Laurent phenomenon (\cite{BZ05}),
we can
write
$$xX^{\mathbf{c}}=\sum_{\bfa\in\Z_{\ge0}^\K}c_\bfa X^\bfa,$$
where
$X=(X_i)_{i\in \K}$ is the initial cluster,  $\mathbf{c}\in\Z_{\ge0}^\K$
and $c_\bfa\in \Q(q^{\pm1/2})$.
Since $x$ and $X^{\mathbf{c}}$ are the isomorphism classes of simple modules
up to a power of $q^{1/2}$,
their product $xX^{\mathbf{c}}$ can be written as
a linear combination of
the isomorphism classes of simple modules
with coefficients in $\Z_{\ge0}[q^{\pm1/2}]$.
Since
every $X^\bfa$ is the isomorphism class of a simple module
up to a power of $q^{1/2}$,
we have
$c_\bfa\in \Z_{\ge0}[q^{\pm1/2}]$.
\QED


\section{Quantum coordinate rings and modified quantized enveloping algebras}

\subsection{Quantum coordinate ring} Let $U_q(\g)^*$ be $\Hom_{\Q(q)}(U_q(\g),\Q(q))$. Then the comultiplication $\comp$  (see \eqref{eq: comp})   induces the
 multiplication $\mu$ on $U_q(\g)^*$ as follows:
\begin{align*}
 \mu \cl  & U_q(\g)^* \tens U_q(\g)^* \to (\U\tens\U)^*\To[\;(\comp)^*\;]
U_q(\g)^* .
\end{align*}
Later on, it will be convenient to use Sweedler's notation
$\comp(x)=x_{(1)} \tens x_{(2)}$.
With this notation,
$$\text{$\bl fg\br(x)=f(x_{(1)})\,g(x_{(2)})$\quad
for $f,g \in U_q(\g)^*$ and  $x\in\U$.}$$

The $U_q(\g)$-bimodule structure on $U_q(\g)$ induces
a $U_q(\g)$-bimodule structure on $U_q(\g)^*$. Namely,
\begin{align*}
(x \cdot f) (v) =f(vx) \quad \text{ and } \quad (f \cdot x) (v)
=f(xv) \quad\text{ for } f \in U_q(\g)^* \text{ and } x,v \in U_q(\g).
\end{align*}

Then the multiplication $\mu$ is a morphism of a
$U_q(\g)$-bimodule, where $U_q(\g)^* \tens U_q(\g)^*$ has the
structure of a $U_q(\g)$-bimodule via $\comp$. That is, for $f,g \in U_q(\g)^*$ and  $x,y \in U_q(\g)$,  we have
$$ x(fg)y=(x_{(1)}fy_{(1)})(x_{(2)}gy_{(2)}),$$
where $\comp(x)=x_{(1)}\tens x_{(2)}$ and $\comp(y)=y_{(1)}\tens y_{(2)}$.

\begin{definition}
We define the {\em quantum coordinate ring} $A_q(\g)$ as follows:
$$A_q(\g) = \{ u \in U_q(\g)^* \ | \ \text{ $U_q(\g)u$ belongs to $\Oint(\g)$ and $u U_q(\g)$ belongs to $\Oint^\ri(\g)$}\}. $$
\end{definition}

Then, $A_q(\g)$ is a
subring of $U_q(\g)^*$
because {\rm (i)} $\mu$ is $U_q(\g)$-bilinear, {\rm (ii)}  $\Oint(\g)$ and
$\Oint^\ri(\g)$ are closed under the tensor product.

We have the weight decomposition:
$A_q(\g) = \soplus_{\eta,\zeta \in \wl} A_q(\g)_{\eta,\zeta}$, where
$$A_q(\g)_{\eta,\zeta} \seteq \{  \psi \in A_q(\g) \ | \  q^{h_\li} \cdot \psi \cdot q^{h_\ri} = q^{\langle h_\li,\eta \rangle +\langle h_\ri,\zeta \rangle  } \psi
\text{ for } h_\li,h_\ri \in \wl^\vee \},$$
For $\psi \in A_q(\g)_{\eta,\zeta}$, we write
$$ \wt_\li(\psi)=\eta \quad \text{ and } \quad \wt_\ri(\psi)=\zeta.$$

\medskip
For any $V\in\Oint(\g)$, we have the $U_q(\g)$-bilinear homomorphism
$$\Phi_V\colon V\tens (\Dv V)^\ri\to A_q(\g)$$
given by
$$\Phi_V(v\tens \psi^\ri)(a)=\lan \psi^\ri, av\ra=\lan \psi^\ri a, v\ra \quad\text{for $v\in V$,
$\psi\in\Dv V$ and $a\in U_q(\g)$.}$$

\begin{prop} [{\cite[Proposition 7.2.2]{Kas93}}] We have an isomorphism $\Phi$ of $\Uqg$-bimodules
\begin{equation} \label{eq: Phi}
\Phi\cl  \soplus_{\lambda \in \wl^+} V(\lambda) \tens_{\Q(q)}
V(\lambda)^\ri \overset{\sim}{\longrightarrow} A_q(\g)
\end{equation}
given by $\Phi|_{V(\lambda) \tens_{\Q(q)} V(\lambda)^\ri} = \Phi_\lambda \seteq \Phi_{V(\lambda)}$. Namely,
$$ \Phi(u \tens v^\ri)(x)= \lan v^\ri,xu \ra = \lan v^\ri x,u \ra =(v,xu) \text{ for any } v,u \in V(\lambda) \text{ and } x \in U_q(\g).$$
\end{prop}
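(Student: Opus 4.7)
My plan is to prove the statement in three steps: (a) $\Phi_\lambda$ is a well-defined $\Uqg$-bimodule homomorphism into $A_q(\g)$, (b) $\Phi$ is injective, and (c) $\Phi$ is surjective. For (a), I would directly compute from the definitions of the bimodule structure on $\Uqg^*$ to obtain the identities $x\cdot\Phi_\lambda(v\tens\psi^\ri)=\Phi_\lambda(xv\tens\psi^\ri)$ and $\Phi_\lambda(v\tens\psi^\ri)\cdot x=\Phi_\lambda(v\tens\psi^\ri\cdot x)$ for $x\in\Uqg$. These show $\Phi_\lambda$ is a bimodule map, and also that each $\Phi_\lambda(v\tens w^\ri)$ generates integrable left and right modules (quotients of $V(\lambda)\in\Oint(\g)$ and $V(\lambda)^\ri\in\Oint^\ri(\g)$), hence lies in $A_q(\g)$.

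For (b), I would first note that uniqueness of the highest weight vector forces $\End_{\Uqg}(V(\lambda))=\Q(q)$, so $V(\lambda)$ is absolutely simple, and likewise for $V(\lambda)^\ri$ by transport via $\varphi$; a Jacobson density argument then shows $V(\lambda)\tens V(\lambda)^\ri$ is simple as a $\Uqg\tens\Uqg^{\opp}$-module. Since $\Phi_\lambda(u_\lambda\tens u_\lambda^\ri)\ne 0$, each $\Phi_\lambda$ is injective. The simple bimodules for distinct $\lambda\in\wl^+$ have distinct highest left weights and so are mutually non-isomorphic, whence $\Phi=\bigoplus_\lambda\Phi_\lambda$ is injective.

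For (c), given $\psi\in A_q(\g)$, the cyclic left $\Uqg$-module $V\seteq\Uqg\cdot\psi$ lies in $\Oint(\g)$, which is semisimple; by cyclicity and the fact that $\psi$ has finite left weight support, $V$ decomposes into a finite direct sum of copies of $V(\lambda_k)$'s. Writing $\psi=\sum_k\psi_k$ with $\psi_k\in V(\lambda_k)\subset A_q(\g)$, it suffices to handle an arbitrary left $\Uqg$-linear inclusion $\iota\col V(\lambda)\hookrightarrow A_q(\g)$. Setting $\xi(v)\seteq\iota(v)(1)$, the left linearity of $\iota$ yields $\iota(v)(a)=(a\cdot\iota(v))(1)=\iota(av)(1)=\xi(av)$, so once $\xi$ is represented via the invariant form as $\xi(v)=(w,v)$ for some $w\in V(\lambda)$, we recover $\iota(v)=\Phi_\lambda(v\tens w^\ri)$, which gives surjectivity.

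The main obstacle is verifying that $\xi$ lies in $\Dv V(\lambda)$, i.e.\ has finite weight support; only then is it representable through the invariant form. I plan to argue this via the right-integrability built into $A_q(\g)$: writing $v=y\cdot u_\lambda$ with $y\in\Um$ of weight $\gamma\in\rtl^-$, one has $\iota(v)(1)=(y\cdot\iota(u_\lambda))(1)=\iota(u_\lambda)(y)$. Since $\iota(u_\lambda)\cdot\Uqg\in\Oint^\ri(\g)$ is integrable, $\iota(u_\lambda)$ is supported on a finite set of right weights. A short calculation comparing the left action of $q^h$ on $\iota(u_\lambda)$ (which produces $q^{\langle h,\lambda\rangle}$) with the right action of $q^h$ on each right weight component $\iota(u_\lambda)_\nu$ after pushing $q^h$ past $y$ then shows $\iota(u_\lambda)_\nu(y)\ne 0$ only when $\nu=\lambda+\gamma$. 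Hence $\xi(v)\ne 0$ forces $\wt(v)=\lambda+\gamma$ to lie in the finite right weight support of $\iota(u_\lambda)$, bounding the support of $\xi$ and completing the argument.
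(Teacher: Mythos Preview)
The paper does not prove this proposition; it is quoted as \cite[Proposition~7.2.2]{Kas93} and used as input. Your outline is the standard Peter--Weyl--type argument and is essentially correct.

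Two small clarifications would tighten it. In step~(c), the claim that $V=\Uqg\cdot\psi$ is a \emph{finite} direct sum of $V(\lambda)$'s deserves a word: after reducing to a weight-vector generator $\psi_\eta$, any highest weight of a summand lies in $(\eta+\rtl^+)\cap\bigcup_j(\lambda_j+\rtl^-)$, which is finite, and each weight space of $V$ is finite-dimensional; this bounds the number of summands. Second, the fact that $\iota(u_\lambda)$ has only finitely many right-weight components is not a consequence of right integrability of the cyclic module it generates, but simply of the fact that $\iota(u_\lambda)$ is a single element of the direct sum $A_q(\g)=\bigoplus_{\eta,\zeta}A_q(\g)_{\eta,\zeta}$; what right integrability (together with left integrability) buys you is that this biweight decomposition of $A_q(\g)$ is a genuine direct sum rather than a product. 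With these adjustments your weight-comparison argument showing $\theta_\nu(y)\ne 0\Rightarrow\nu=\lambda+\gamma$ is exactly right, and the representation $\iota(v)=\Phi_\lambda(v\tens w^\ri)$ follows.
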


We introduce the
 crystal basis $\big(L^\up(A_q(\g)), B(A_q(\g)) \big)$ of $\Ag$ as the images by $\Phi$ of
$$ \soplus_{\lambda \in \wl^+} L^\up(\lambda) \otimes L^\up(\lambda)^\ri \text{ and }
\bigsqcup_{\lambda \in \wl^+} B(\lambda) \otimes B(\lambda)^\ri.$$
Hence it is a crystal base with respect to the left action of
$\U$ and also the right action of $\U$.
We sometimes write by
$e_i^*$ and $f_i^*$ the operators  of $\Ag$ obtained by the right actions of
$e_i$ and $f_i$.

\medskip
We define the $\A$-form of $\Ag$ by
$$\Ag_\A\seteq\set{\psi\in\Ag}{\lan \psi,\, \U_\A\ran\subset\A}.$$
We define the bar-involution $-$ of $A_q(\g)$ by
$$ \overline{\psi}(x) = \overline{\psi(\overline{x})} \quad \text{ for } \psi \in A_q(\g), \  x \in U_q(\g).$$
Note that the bar-involution is not a ring homomorphism but it satisfies
\eqn
&&\ol{\psi\;\theta\;}=q^{(\wt_\lt(\psi),\wt_\lt(\theta))
-(\wt_\rt(\psi),\wt_\rt(\theta))}\;\ol{\theta}\;\ol{\psi}\quad
\text{for any $\psi$, $\theta\in\Ag$.}
\eneqn
Since we do not use this formula and
it is proved similarly to Proposition~\ref{prop:phimul} below,
we omit its proof.

The triple $\big(\Q\tens \Ag_\A,\,L^\up(A_q(\g)),\,\overline{L^\up(A_q(\g))}\big)$ is balanced (\cite[Theorem 1]{Kas93}), and hence there exists an upper global basis of $\Ag$
$$ \B^\up(A_q(\g))\seteq \{ G^\up(b) \ | \ b \in B^\up(A_q(\g)) \}.$$

For $\la\in\Pd$ and $\mu\in W\la$, we denote by $u_{\mu}$
 the unique member of the upper global basis
of $V(\la)$ with weight $\mu$. It is also a member of the lower global basis.

\begin{prop}\label{prop:Agglobal}
Let $\lambda\in\wl^+$, $w\in W$ and $b\in B(\lambda)$.
Then, $\Phi(G^\up(b)\tens u_{w\lambda}^\ri)$ is a member of
the upper global basis of $A_q(\g)$.
\end{prop}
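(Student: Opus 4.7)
My plan is to show that $\Phi(G^\up(b) \tens u_{w\lambda}^\ri)$ is nothing but the upper global basis element of $A_q(\g)$ indexed by $b \tens u_{w\lambda}^\ri \in B(\lambda) \tens B(\lambda)^\ri \subset B(A_q(\g))$, by checking directly that it lies in $\Q\tens A_q(\g)_\A \cap L^\up(A_q(\g)) \cap \overline{L^\up(A_q(\g))}$ and has the correct image in $L^\up(A_q(\g))/qL^\up(A_q(\g))$. Since the triple $(\Q\tens A_q(\g)_\A,\,L^\up(A_q(\g)),\,\overline{L^\up(A_q(\g))})$ is balanced, the characterization of upper global basis elements as preimages under the balanced-triple isomorphism then yields the conclusion.

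The membership in $L^\up(A_q(\g))$ and the correct reduction modulo $q$ are immediate from the definition of $L^\up(A_q(\g))$ and $B(A_q(\g))$ as the images of $\soplus_{\lambda} L^\up(\lambda)\tens L^\up(\lambda)^\ri$ and $\bigsqcup_{\lambda} B(\lambda)\tens B(\lambda)^\ri$ under $\Phi$: since $G^\up(b) \in L^\up(\lambda)$ and $u_{w\lambda}$ belongs to $L^\up(\lambda)$ and represents a crystal basis element (being an extremal weight vector, it simultaneously lies in the lower and the upper global bases), we obtain $\Phi(G^\up(b) \tens u_{w\lambda}^\ri) \in L^\up(A_q(\g))$ with image $b \tens u_{w\lambda}^\ri$ modulo $qL^\up(A_q(\g))$. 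Integrality follows from the formula $\Phi(G^\up(b) \tens u_{w\lambda}^\ri)(x) = (u_{w\lambda}, x \cdot G^\up(b))$ combined with $G^\up(b) \in V^\up(\lambda)_\A$, $u_{w\lambda} \in V^\low(\lambda)_\A$, the stability of $V^\up(\lambda)_\A$ under $U_q(\g)_\A$, and the integral pairing between $V^\up(\lambda)_\A$ and $V^\low(\lambda)_\A$.

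The crux, and the anticipated main obstacle, is bar-invariance. I would first establish that the form $(\cdot,\cdot)$ on $V(\lambda)$ is bar-compatible, i.e., $\overline{(u,v)} = (\bar u, \bar v)$; since $\varphi$ commutes with the bar involution on $U_q(\g)$, the form $(u,v)'\seteq\overline{(\bar u, \bar v)}$ is symmetric, satisfies $(u_\lambda, u_\lambda)' = 1$, and obeys $(xu, v)' = (u, \varphi(x) v)'$, and so by uniqueness it must coincide with $(\cdot,\cdot)$. Combining bar-compatibility of the form with the duality relation $(G^\up(b), G^\low(b')) = \delta_{b,b'}$ and the bar-invariance $\overline{G^\low(b')} = G^\low(b')$ of the lower global basis then forces $\overline{G^\up(b)} = G^\up(b)$; in particular, the extremal element satisfies $\overline{u_{w\lambda}} = u_{w\lambda}$. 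Using the definition $\overline{\psi}(x) = \overline{\psi(\bar x)}$ of the bar involution on $A_q(\g)$, we then compute
\[
\overline{\Phi(G^\up(b) \tens u_{w\lambda}^\ri)}(x) = \overline{(u_{w\lambda}, \bar x \cdot G^\up(b))} = (u_{w\lambda}, x \cdot G^\up(b)) = \Phi(G^\up(b) \tens u_{w\lambda}^\ri)(x),
\]
which yields bar-invariance and completes the argument via the balanced-triple characterization of $G^\up(b \tens u_{w\lambda}^\ri)$.
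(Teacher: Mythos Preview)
Your proof is correct and follows essentially the same strategy as the paper's proof: verify bar-invariance, membership in $L^\up(A_q(\g))$ with the correct image modulo $q$, and membership in $A_q(\g)_\A$, then invoke the balanced-triple characterization of the upper global basis. The paper's argument is terser---it simply asserts that $\psi$ is bar-invariant and reduces to a crystal basis element, then spells out only the integrality step (via $PG^\up(b)\in V^\up(\lambda)_\A$ and $u_{w\lambda}\in V^\low(\lambda)_\A$, exactly as you do)---whereas you supply a full justification of bar-invariance by first establishing bar-compatibility of the form on $V(\lambda)$ and then deducing $\overline{G^\up(b)}=G^\up(b)$ from duality with the lower global basis.
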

\begin{proof}
The element $\psi\seteq\Phi(G^\up(b)\tens u_{w\lambda}^\ri)$ is
bar-invariant and a member of crystal basis modulo $q L^\up(A_q(\g))$.
For any $P\in U_q(\g)_\A$,
$$\lan\psi, P\ra=\big( u_{w\lambda}, PG^\up(b)\big)$$
belongs to $\A$ because $PG^\up(b)\in V^\up(\lambda)_\A$ and
$u_{w\lambda}\in V^\low(\lambda)_\A$.
Hence $\psi$ belongs to $\Ag_\A$.
\end{proof}

\medskip

The $\Q(q)$-algebra anti-automorphism $\vphi$ of $\U$ induces
a $\Q(q)$-linear automorphism $\vphi^*$
of $\Ag$
by
$$\bl\vphi^* \psi \br(x)=\psi\bl\vphi(x)\br\quad\text{for any $x\in \U$.}$$
We have
$$\vphi^*\bl\Phi(u\tens v^\rt)\br=\Phi(v\tens u^\rt),$$
and
$$\wtl(\vphi^*\psi)=\wtr(\psi)\qtext \wtr(\vphi^*\psi)=\wtl(\psi).$$
It is obvious that
$\vphi^*$ preserves
$\Ag_\A$, $L^\up(\Ag)$ and $ \B^\up(A_q(\g))$.
\Prop\label{prop:phimul}
$$\vphi^*(\psi\theta)=
q^{(\wtr(\psi),\wtr(\theta))-(\wtl(\psi),\wtl(\theta))}
(\vphi^*\psi)(\vphi^*\theta).$$
\enprop
In order to prove this proposition, we prepare
a sublemma.

Let $\xi$ be the $\Q(q)$-algebra automorphism of $\Uq$ given by
$$\xi(e_i)=q_i^{-1}t_ie_i, \quad\xi(f_i)=q_if_it_i^{-1},\quad \xi(q^h)=q^h.$$
We can easily see
\eqn
&&(\xi\tens\xi)\circ\Cmp=\Cmm\circ\xi.
\label{eq:Dexi}
\eneqn

Let $\xi^*$ be the automorphism of $\Ag$ given by
$$(\xi^*\psi)(x)=\psi(\xi(x))\quad\text{for $\psi\in\Ag$ and $x\in\Uq$.}$$

\Sub\label{sub:xi}
We have
\eqn
&&\xi^*(\psi)=q^{A(\wt_\lt(\psi),\wt_\rt(\psi))}\psi,\label{eq:xistar}
\eneqn
 where $A(\la,\mu)=\dfrac{1}{2}\bl(\mu,\mu)-(\la,\la)\br$.
\ensub
\Proof
Let us show that, for each $x$, the following equality
\eq
&&\psi(\xi(x))=q^{A(\wt_\lt(\psi),\wt_\rt(\psi))}\psi(x)\label{eq:xi}
\eneq
holds for any $\psi$.

The equality \eqref{eq:xi} is obviously true for $x=q^h$.
If \eqref{eq:xi} is true for $x$, then
\eqn
&&\xi^*(\psi)(xe_i)=\psi\bl\xi(xe_i)\br=\psi\bl\xi(x)e_it_i)q_i\\
&&=q^{(\al_i,\wt_\lt(\psi))+(\al_i ,\al_i)/2}\psi(\xi(x)e_i)\\
&&=q^{(\al_i,\wt_\lt(\psi))+(\al_i , \al_i)/2}\bl\xi^*(e_i\psi)\br(x)\\
&&=q^{(\al_i,\wt_\lt(\psi))+(\al_i , \al_i)/2+A(\wt_\lt(\psi)+\al_i,\wt_\rt(\psi))}
(e_i\psi)(x).
\eneqn
Since $\Vert\la+\al_i\Vert^2
=\Vert\la\Vert^2+2(\al_i,\la)+ \Vert \al_i\Vert^2 $,
\eqref{eq:xi} holds for $xe_i$.
Similarly if
\eqref{eq:xi} holds for $x$,
then it holds for $xf_i$.
\QED

\Proof[{Proof of Proposition~\ref{prop:phimul}}]
We have
\eqn
&&(\vphi\circ\vphi)\circ \Cmm=\Cmp\circ \vphi.
\label{eq:phiC}
\eneqn
Hence, we have
\begin{align*}
\lan \vphi^*(\psi\theta), x\ran
&=\lan\psi\theta,\vphi(x)\ran\\
&=\lan\psi\tens\theta,\Cmp(\vphi(x))\ran\\
&=\lan\psi\tens\theta,(\vphi\tens\vphi)\circ\Cmm(x)\ran\\
&=\lan \vphi^*(\psi)\tens\vphi^*(\theta),\;\Cmm(x)\ran.
\end{align*}
 It follows that
\begin{align*}
\lan \xi^*(\vphi^*(\psi\theta)),x\ran
&=\lan\vphi^*(\psi\theta), \xi (x)\ran
=\lan \vphi^*(\psi)\tens\vphi^*(\theta),\Cmm(\xi(x))\ran\\
&=\lan\vphi^*(\psi)\tens\vphi^*(\theta),(\xi\tens\xi)\circ\Cmp x \ran\\
&=\lan \xi^*\vphi^*(\psi)\tens\xi^*\vphi^*(\theta),\Cmp x \ran\\
&=\lan\bl\xi^*\vphi^*(\psi)\br\bl\xi^*\vphi^*(\theta)\br,x\ran\\
&=q^{A(\wtr(\psi),\wtl(\psi))+A(\wtr(\theta),\wtl(\theta))}
\lan  (\vphi^*\psi)\,(\vphi^*\theta) ,\,x\ran.
\end{align*}
 Therefore we obtain
$$\vphi^*(\psi\theta)=q^c(\vphi^*\psi)\,(\vphi^*\theta)$$
with
\eqn
&&c=A(\wtr(\psi),\wtl(\psi))+A(\wtr(\theta),\wtl(\theta))-
A(\wtr(\psi)+\wtr(\theta),\wtl(\psi)+\wtl(\theta))\\
&&\hs{2ex}=(\wtr(\psi),\wtr(\theta))-(\wtl(\psi),\wtl(\theta)).
\eneqn
\QED

\subsection{Unipotent quantum coordinate ring} \label{subsec:unipotent}
Let us endow $U_q^+(\g) \tens U_q^+(\g)$ with the algebra structure defined by
$$(x_1 \tens x_2) \cdot (y_1 \tens y_2) = q^{-(\wt(x_2),\wt(y_1))}(x_1y_1 \tens x_2y_2).$$
Let $\Delta_\n$ be the algebra homomorphism $U_q^+(\g) \to U_q^+(\g)
\tens U_q^+(\g)$ given by
$$ \Delta_\n(e_i) = e_i \tens 1 + 1 \tens e_i.$$

Set
$$ A_q(\n) = \soplus_{\beta \in  \rl^-} A_q(\n)_\beta \quad \text{ where } A_q(\n)_\beta \seteq (U^+_q(\g)_{-\beta})^*.$$

Defining the bilinear form $\langle \ \cdot \ , \ \cdot \ \rangle\cl
(A_q(\n) \tens A_q(\n)) \times (U^+_q(\g) \tens U^+_q(\g))  \to \Q(q)$ by
$$ \langle \psi \tens \theta, x \tens y \rangle =\theta(x) \psi(y),$$
we  get an algebra structure on $A_q(\n)$  given by
$$(\psi \cdot \theta)(x) = \langle \psi \tens \theta, \Delta_\n(x) \rangle
= \theta(x_{(1)})\psi(x_{(2)})$$
where $\Delta_\n(x)=x_{(1)} \tens x_{(2)}$.

Since $U_q^+(\g)$ has a $U_q^+(\g)$-bimodule structure, so does
$A_q(\n)$.

We define the $\A$-form of $\An$ by
\eqn
&&\An_\A=\set{\psi\in\An}{\psi\bl\Up_\A\br\subset\A},
\eneqn
 and define the bar-involution $-$ on $\An$ by

$$\ol{\psi}(x)=\ol{\psi(\ol{x})} \quad\text{for $\psi\in\An$ and $x\in\Up$.}
$$

Note that the bar-involution is not a ring homomorphism but it satisfies
\eqn
&&\ol{\psi\;\theta\;}=q^{(\wt(\psi),\wt(\theta))}\;\ol{\theta}\;\ol{\psi}\quad
\text{for any $\psi$, $\theta\in\An$.}
\eneqn

For $i\in I$, we denote by $e_i^*$ the right action of
$e_i$ on $\An$.

\begin{lemma} For $u,v \in A_q(\n)$, we have  $q$-boson relations
\begin{align*}
e_i(uv) = (e_iu)v + q^{(\alpha_i,\wt(u))} u (e_iv) \  \text{ and }
 \ e_i^*(uv) = u(e_i^*v) + q^{(\alpha_i,\wt(v))} (e_i^*u)v.
\end{align*}
\end{lemma}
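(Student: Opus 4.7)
The plan is to check both identities by evaluating each side on an arbitrary $x \in \Up$, using the algebra homomorphism property of $\Delta_\n$ together with the twisted multiplication on $\Up \tens \Up$. Writing $\Delta_\n(x) = \sum x_{(1)} \tens x_{(2)}$ in Sweedler notation, the definition of the product on $\An$ reads $(uv)(x) = \sum v(x_{(1)})\,u(x_{(2)})$, and the bimodule conventions read $(e_iu)(x) = u(xe_i)$ and $(e_i^*u)(x) = u(e_ix)$.

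For the first identity, I would expand
\begin{align*}
\Delta_\n(xe_i) &= \Delta_\n(x)\bigl(e_i\tens 1 + 1\tens e_i\bigr)\\
&= \sum q^{-(\wt(x_{(2)}),\alpha_i)}\bigl(x_{(1)}e_i \tens x_{(2)}\bigr) + \sum \bigl(x_{(1)} \tens x_{(2)}e_i\bigr),
\end{align*}
where the $q$-factor in the first sum comes from the twisted multiplication $(x_1\tens x_2)(y_1\tens y_2) = q^{-(\wt(x_2),\wt(y_1))}(x_1y_1\tens x_2y_2)$. Pairing with $u\tens v$ via the product rule for $\An$ gives
$$
(e_i(uv))(x) = \sum q^{-(\wt(x_{(2)}),\alpha_i)} v(x_{(1)}e_i)\, u(x_{(2)}) + \sum v(x_{(1)})\, u(x_{(2)}e_i).
$$
The second sum is exactly $((e_iu)v)(x)$. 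In the first sum, the factor $u(x_{(2)})$ vanishes unless $\wt(x_{(2)}) = -\wt(u)$ (because $\An_\beta = (\Up_{-\beta})^*$), so the scalar becomes $q^{(\alpha_i,\wt(u))}$ and the sum equals $q^{(\alpha_i,\wt(u))}(u(e_iv))(x)$, matching the right-hand side.

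The second identity is verified symmetrically: starting from $\Delta_\n(e_ix) = (e_i\tens 1 + 1\tens e_i)\Delta_\n(x)$ and applying the twisted multiplication on $\Up \tens \Up$ from the other side produces the twist $q^{-(\alpha_i,\wt(x_{(1)}))}$, which collapses to $q^{(\alpha_i,\wt(v))}$ on the weight component where $v(x_{(1)}) \ne 0$.

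Since the computation is essentially bookkeeping, there is no conceptual obstacle. The only things to track carefully are the direction of the $q$-twist in the multiplication of $\Up \tens \Up$ and the sign reversal when converting $\wt(x_{(i)})$ to $-\wt(u)$ or $-\wt(v)$ via $\An_\beta = (\Up_{-\beta})^*$.
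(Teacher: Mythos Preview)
Your proof is correct and follows essentially the same approach as the paper's own proof: both expand $\Delta_\n(xe_i)$ via the algebra-homomorphism property of $\Delta_\n$ and the twisted multiplication on $\Up\tens\Up$, then pair with $u\tens v$ and convert the twist $q^{-(\alpha_i,\wt(x_{(2)}))}$ into $q^{(\alpha_i,\wt(u))}$ using the weight constraint $\wt(x_{(2)}) = -\wt(u)$. The paper likewise omits the details of the second identity as a symmetric computation.
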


\begin{proof}
$$
\lan e_i(uv), x\ra=\lan uv, xe_i\ra= \lan u\tens v,\; \Delta_\n(xe_i)\ra.$$
If we set $\Delta_\n x=x_{(1)}\tens x_{(2)}$,
then we have
$$\Delta_\n(xe_i)=(x_{(1)}\tens x_{(2)})(e_i\tens 1+1\tens e_i)
=q^{-(\alpha_i,\wt(x_{(2)}))}(x_{(1)}e_i)\tens x_{(2)}+x_{(1)}\tens (x_{(2)}e_i).$$
Hence, we have
\begin{align*}
&\lan u\tens v,\; \Delta_\n(xe_i)\ra
=q^{-(\alpha_i,\wt(x_{(2)}))}u(x_{(2)})v(x_{(1)}e_i)+u(x_{(2)}e_i)v(x_{(1)})\\
&\hspace{5ex}=q^{(\alpha_i,\wt(u))}u(x_{(2)})\cdot (e_iv)(x_{(1)})+(e_iu)(x_{(2)})
\cdot v(x_{(1)})\\
&\hspace{5ex}=\lan q^{(\alpha_i,\wt(u))}u\tens(e_iv)+(e_iu)\tens v,\,\Delta_\n x\ra.
\end{align*}
The second  identity follows in a similar way.
\end{proof}

We define the map $\iota \cl  U_q^-(\g) \to A_q(\n)$ by
$$  \langle \iota(u),x \rangle = (u,\vph(x)) \quad \text{ for any } u \in U_q^-(\g) \text{ and } x \in U_q^+(\g).$$
 Since $(\ ,\ )$ is a non-degenerate bilinear form on $\Um$,
$\iota$ is injective.
The relation $$\langle \iota(e_i'u), x \rangle = (e_i'u,\vph(x)) =
(u,f_i \vph(x)) =(u, \vph(xe_i))=\lan \iota(u),x e_i \ran = \langle e_i \iota(u),x
\rangle,$$ implies that
\eqn \iota(e_i'u) = e_i \iota(u).
\eneqn

\begin{lemma} $\iota$ is an algebra isomorphism.
\end{lemma}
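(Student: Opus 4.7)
The plan is to verify two things: that $\iota$ is an algebra homomorphism (multiplicativity), and that it is surjective. Injectivity is already observed from the non-degeneracy of the bilinear form $(\cdot,\cdot)$ on $U_q^-(\g)$, together with the definition of $\iota$.

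For multiplicativity, I would unravel both sides of the proposed identity $\iota(uv)=\iota(u)\cdot\iota(v)$ against an arbitrary test element $x\in U_q^+(\g)$. The left side pairs with $x$ as $(uv,\vph(x))$. The right side, by the definition of the product on $A_q(\n)$ recalled just above, equals $\iota(v)(x_{(1)})\,\iota(u)(x_{(2)})=(v,\vph(x_{(1)}))(u,\vph(x_{(2)}))$, where $\Delta_\n(x)=x_{(1)}\tens x_{(2)}$. Thus I need the identity
\begin{equation*}
(uv,\vph(x))=\sum (u,\vph(x_{(2)}))\,(v,\vph(x_{(1)})).
\end{equation*}
The key input is the standard compatibility of the bilinear form on $U_q^-(\g)$ with the coproduct on $U_q^+(\g)$: $(u_1u_2,y)=(u_1\tens u_2,\Delta_\n^-(y))$ for a suitable coproduct $\Delta_\n^-$ on $U_q^-$ dual to $\Delta_\n$. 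Combined with the fact that $\vph$ intertwines $\Delta_\n$ on $U_q^+$ with the opposite of $\Delta_\n^-$ on $U_q^-$ (this is a direct consequence of $\vph(e_i)=f_i$ and the generator-level definitions), the identity drops out. The only subtle point is the $q$-twist in the multiplication on $U_q^+(\g)\tens U_q^+(\g)$ recalled in the excerpt: I would verify on the generators $x=e_i$ that the weight-twist $q^{-(\wt(x_2),\wt(y_1))}$ in the product on $U_q^+\tens U_q^+$ is exactly compensated by the corresponding twist in the dual bilinear form pairing, using the $q$-boson relations already established in the preceding lemma. This reduces the proof to a generator check and an induction on the length of words in the $e_i$'s.

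For surjectivity, I would argue by a weight-space dimension count. Both $U_q^-(\g)$ and $A_q(\n)=\soplus_{\beta\in\rl^-}(U_q^+(\g)_{-\beta})^*$ are graded by the weight lattice, with finite-dimensional weight spaces, and $\iota$ respects the grading: it sends $U_q^-(\g)_{-\beta}$ into $A_q(\n)_{-\beta}$ for $\beta\in\rl^+$. Since the Cartan involution $\omega$ swaps $e_i\leftrightarrow f_i$ and preserves weight spaces up to sign, $\dim U_q^-(\g)_{-\beta}=\dim U_q^+(\g)_{\beta}=\dim A_q(\n)_{-\beta}$. As $\iota$ is already injective on each finite-dimensional weight space, it must be an isomorphism on each, hence surjective as a whole.

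The main potential obstacle is bookkeeping the $q$-twist in the multiplication on $U_q^+\tens U_q^+$ and matching it against the twist appearing in the pairing; this is where sign and weight conventions could easily misalign. I would handle this by stating everything in terms of Sweedler notation and checking the two sides on the generating set $\{e_i\}$ for $x$, using the $q$-boson relations proved in the excerpt, and then extending by the $\Q(q)$-algebra structure of $U_q^+(\g)$. Once multiplicativity on generators is verified, the general case follows from the facts that both sides of $\iota(uv)=\iota(u)\iota(v)$ are $\Q(q)$-bilinear in $(u,v)$ and that $U_q^-(\g)$ is generated as an algebra by the $f_i$'s, whose images under $\iota$ can be computed explicitly.
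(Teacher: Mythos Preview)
Your surjectivity argument (weight-space dimension count) is correct and is exactly what the paper tacitly relies on; the paper's one-line proof only addresses multiplicativity, with bijectivity following from injectivity plus equality of graded dimensions.

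For multiplicativity, however, the paper takes a much shorter route than your coproduct-unraveling approach. The paper simply invokes the two facts already established immediately before the lemma: the intertwining $\iota(e_i'u)=e_i\iota(u)$, and the $q$-boson (twisted Leibniz) rule $e_i(\psi\theta)=(e_i\psi)\theta+q^{(\alpha_i,\wt\psi)}\psi(e_i\theta)$ on $A_q(\n)$, which matches the rule for $e_i'$ on $U_q^-(\g)$. Setting $D(u,v)=\iota(uv)-\iota(u)\iota(v)$, one computes $e_iD(u,v)=D(e_i'u,v)+q^{(\alpha_i,\wt u)}D(u,e_i'v)$; by induction on the height of $\wt(uv)$ this vanishes for all $i$, and since $\bigcap_i\ker e_i$ in $A_q(\n)$ is concentrated in weight zero, $D(u,v)=0$. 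This is the content of the paper's sentence ``because $e_i'$ and $e_i$ both satisfy the same $q$-boson relation.''

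Your approach---proving the coproduct compatibility $(uv,\vph(x))=\sum(u,\vph(x_{(2)}))(v,\vph(x_{(1)}))$ directly---is a legitimate alternative, but it requires setting up a coproduct on $U_q^-(\g)$ and verifying that $\vph$ intertwines it with $\Delta_\n$, which is more bookkeeping than the paper needs. One small imprecision: your final reduction (``bilinearity in $(u,v)$ plus checking on generators $f_i$'') is not quite enough as stated---bilinearity alone does not propagate multiplicativity from pairs of generators to all pairs. What you actually need is $\iota(f_iv)=\iota(f_i)\iota(v)$ for every $v$ (not just $v=f_j$), and then induct on word length in $u$. This is fixable, but by the time you prove that identity via the pairing you are essentially redoing the $q$-boson computation that the paper uses directly.
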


\begin{proof}
The map $\iota$ is an algebra homomorphism because $e_i'$ and $e_i$
both satisfy the same $q$-boson relation.
\end{proof}

Hence, the algebra $A_q(\n)$ has an upper crystal basis $(L^\up(A_q(\n)),B(A_q(\n)))$ such that
$B(A_q(\n)) \simeq B(\infty)$. Furthermore, $A_q(\n)$ has an upper global basis $$\B^\up(A_q(\n))=\{ G^\up(b) \}_{ \ b \in B(A_q(\n))}$$
induced by the balanced triple
$\bl \Q\tens\An_\A,\,L^\up(A_q(\n)),\,\overline{L^\up(A_q(\n))}\br$
 (see \eqref{eq:Bup}).

There exists an injective map
\eqn &&\oi_\lambda\cl  B(\lambda)\to B(\infty)
\eneqn
induced by the $\Up$-linear homomorphism
$\iota_\la\cl V(\lambda)\to A_q(\n)$ given by
$$v\longmapsto \big ( \Up \ni a\mapsto (av,u_\lambda)\big).$$
 The map $\oi_\lambda$   commutes with $\te_i$.
We have
$$
G^\low_\lambda(b)=G^\low(\oi_\lambda(b))u_\lambda
 \quad\text{and}\quad
\iota_\la G_\la^\up(b)=G^\up(\oi_\la(b))\quad\text{for any $b\in B(\lambda)$.}$$

\begin{remark}
Note that the multiplication on $\An$ given in \cite{GLS} is different from ours.
Indeed, by denoting
the product of $\psi$ and $\phi$  in \cite[\S4.2]{GLS} by $\psi \cdot \phi$,
 for $x \in U_q^+(\g)$,  we have
\eqn
(\psi \cdot \phi) (x) = \psi(x^{(1)}) \phi(x^{(2)}),
\eneqn
where $\Delta_+(x)=x^{(1)}q^{h_{(1)}} \otimes x^{(2)}q^{h_{(2)}}$  for $x^{(1)}, x^{(2)} \in U_q^{+}(\g),  \ h_{(1)}, h_{(2)} \in \wl^\vee$.
By Lemma \ref{lem:Deltan} below, we have
\eqn
(\psi \cdot \phi) (x) &&= \psi\bl q^{(\wt(x_{(1)}),\wt(x_{(2)}))} (x_{(2)}) \br \phi \bl x_{(1)} \br \\
&&=q^{(\wt(x_{(1)},\wt(x_{(2)}))} \psi(x_{(2)}) \phi(x_{(1)})
= q^{(\wt(\psi),\wt(\phi))} (\psi \phi)(x)
\eneqn
for $x \in U_q^+(\g)$, where $\Delta_\n(x)=x_{(1)} \tens x_{(2)}$.
In particular, we have a $\Q(q)$-algebra isomorphism
from $(\An, \cdot)$ to $\An$ given by
\eq \label{eq:isoAqn}
 x \mapsto q^{-\frac{1}{2}(\beta, \beta)} x \quad \ \text{for} \ x \in \An_{\beta}.
\eneq
Note also that
the bar-involution $-$ is a ring anti-isomorphism
between $\An$ and $(\An, \cdot)$.
\end{remark}

\subsection{Modified quantum enveloping algebra}
For the materials in this subsection we refer the reader to
\cite{Lusz92, Kash94}.
We denote by ${\rm Mod}(\g,\wl)$
the category of left $U_q(\g)$-modules with the weight space decomposition. Let $({\rm forget})$ be the functor from
${\rm Mod}(\g,\wl)$ to the category of vector spaces over $\Q(q)$,
forgetting the $U_q(\g)$-module structure.

Let us denote by $\mathscr{R}$ the endomorphism ring of $({\rm forget})$. Note that $\mathscr{R}$ contains $U_q(\g)$. For $\eta \in \wl$, let
$a_\eta \in \mathscr{R}$ denotes the projector $M \to M_\eta$ to the
weight space of weight $\eta$. Then the defining relation of $a_\eta$ (as a left
$\Uqg$-module) is
$$q^h  a_\eta = q^{\langle h,\eta \rangle}a_\eta.$$
We have
\begin{align*}
a_\eta a_\zeta =\delta_{\eta,\zeta} a_\eta, \quad a_\eta P =
P a_{\eta-\xi} \quad \text{for $\xi \in \rl$ and $P \in \Uqg_\xi$.}
\end{align*}
Then $\mathscr{R}$ is isomorphic to $\displaystyle\prod_{\eta \in \wl} \Uqg
a_\eta$. We set
$$ \tUqg \seteq \soplus_{\eta \in \wl} \Uqg a_\eta\subset \mathscr{R}.$$
Then $\tUqg$ is a subalgebra of $\mathscr{R}$.
We call it the {\em modified quantum enveloping algebra}.
Note that any $\U$-module in ${\rm Mod}(\g,\wl)$ has a natural
$\tU$-module structure.

The (anti-)automorphisms $*$, $\vph$ and $\bar{ \ \ }$ of $\Uqg$ extend to the ones of $\tUqg$
by
$$ a_\eta^* = a_{-\eta}, \quad \varphi(a_\eta)=a_\eta, \quad  \overline{a}_\eta=a_\eta.$$

For a dominant integral weight $\lambda \in \wl^+$, let us denote by
$V(\lambda)$
(resp.\ $V(-\lambda)$) the irreducible module with highest (resp.\
lowest) weight $\lambda$ (resp.\ $-\lambda$). Let $u_\lambda$ (resp.\
$u_{-\lambda}$) be the highest (resp.\ lowest) weight vector.

For $\lambda \in \wl^+$, $\mu \in \wl^- \seteq - \wl^+$, we set
$$V(\la,\mu) \seteq V(\lambda) \tensm V(\mu).$$
Then $V(\la,\mu)$ is generated by
 $u_\lambda  \tensm u_\mu$ as a $\Uqg$-module, and the defining relation of
$u_\lambda \tensm u_\mu$ is
\begin{align*}
& q^h(u_\lambda \tensm u_\mu)=q^{\langle h,\lambda+\mu \rangle}(u_\lambda \tensm u_\mu), \\
& e_i^{1-\langle h_i,\mu \rangle}(u_\lambda \tensm
u_\mu)=0, \quad f_i^{1+\langle h_i,\lambda \rangle}(u_\lambda
\tensm u_\mu)=0.
\end{align*}
Let us define the $\Q$-linear automorphism $\bar{ \ \ }$ of $V(\lambda,\mu)$
by
$$ \overline{P(u_\lambda \tensm u_\mu)} = \overline{P}(u_\lambda \tensm u_\mu).$$

We set
\begin{itemize}
\item[{\rm (i)}] $L^\low(\lambda,\mu) \seteq L^\low(\lambda) \tens_{\QA_0} L^\low(\mu)$,
\item[{\rm (ii)}]  $V(\lambda,\mu)_\A \seteq V(\lambda)_\A \tens_\A V(\mu)_\A$,
\item[{\rm (iii)}] $B(\lambda,\mu) \seteq B(\lambda) \tens B(\mu)$.
\end{itemize}

\begin{prop}[\cite{Lusz92}] $(L^\low(\lambda,\mu),B(\lambda,\mu))$ is a lower crystal basis of $V(\la,\mu)$. Furthermore,
$\bl\Q\tens V(\lambda,\mu)_\A,\,L^\low(\lambda,\mu),\,\ol{L^\low(\lambda,\mu)}\br$ is balanced, and
there exists a lower global basis $\B^\low(V(\la,\mu))$ obtained from the lower crystal basis  $(L^\low(\lambda,\mu),B(\lambda,\mu))$.
\end{prop}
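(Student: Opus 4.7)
The plan is to establish the crystal basis property by a tensor product argument, and then obtain balancedness via Lusztig's quasi-R-matrix construction.

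For the crystal basis assertion, I would use the tensor product rule for crystal bases adapted to the comultiplication $\Cmm$. The factors $V(\lambda)$ and $V(\mu)$ each carry a lower crystal basis in their own right: for $V(\lambda)$ with $\lambda \in \wl^+$ this is classical, and for $V(\mu)$ with $\mu \in \wl^-$ (an integrable lowest weight module) the construction is entirely parallel to the highest weight case via the involution swapping $e_i$ and $f_i$. A direct computation using $\Cmm(e_i) = e_i \tens t_i^{-1} + 1 \tens e_i$ and $\Cmm(f_i) = f_i \tens 1 + t_i \tens f_i$ then yields the expected Kashiwara tensor product rule: for $b_1 \tens b_2 \in B(\lambda) \tens B(\mu)$,
$$\tf_i^\low(b_1 \tens b_2) = \begin{cases} \tf_i^\low b_1 \tens b_2 & \text{if } \vph_i(b_1) > \ve_i(b_2), \\ b_1 \tens \tf_i^\low b_2 & \text{otherwise,}\end{cases}$$
with the analogous rule for $\te_i^\low$. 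It follows that $L^\low(\lambda,\mu) = L^\low(\lambda) \tens_{\QA_0} L^\low(\mu)$ is stable under the lower Kashiwara operators and that the image of $B(\lambda,\mu)$ in $L^\low(\lambda,\mu)/qL^\low(\lambda,\mu)$ is a $\Q$-basis closed under $\te_i^\low$, $\tf_i^\low$ modulo $0$.

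For the balanced triple, I would follow Lusztig's quasi-R-matrix strategy. The bar-involution on $V(\lambda,\mu)$, defined by $\ol{P(u_\lambda \tensm u_\mu)} = \ol{P}(u_\lambda \tensm u_\mu)$, does not agree with the naive involution $\ol{v_1} \tensm \ol{v_2}$ on the tensor factors; the discrepancy is measured by an element $\Theta = \sum_{\beta \in \rl^+} \Theta_\beta$ with $\Theta_0 = 1$ and $\Theta_\beta \in U_q^-(\g)_{-\beta} \tens U_q^+(\g)_{\beta}$, satisfying $\ol{v_1 \tensm v_2} = \Theta \cdot (\ol{v_1} \tensm \ol{v_2})$. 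Because each $\Theta_\beta$ ($\beta > 0$) preserves the $\A$-form and carries $L^\low(\lambda,\mu)$ into $qL^\low(\lambda,\mu)$, the naive leading term $G^\low_\lambda(b_1) \tensm G^\low_\mu(b_2)$ can be inductively corrected by strictly lower order terms to a bar-invariant element $G^\low(b_1 \tens b_2) \in V(\lambda,\mu)_\A \cap L^\low(\lambda,\mu)$ whose image modulo $q$ is $b_1 \tens b_2$. The resulting family $\B^\low(V(\lambda,\mu))$ then witnesses both the balancedness of the triple and the global basis.

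The main obstacle is the integrality of $\Theta$: each $\Theta_\beta$ must act as an $\A$-linear endomorphism of $V(\lambda)_\A \tens V(\mu)_\A$ and send $L^\low(\lambda,\mu)$ into $qL^\low(\lambda,\mu)$ for $\beta > 0$. This rests on the explicit expression $\Theta_\beta = \sum_k b_k \tens b_k^*$, where $\{b_k\}$ and $\{b_k^*\}$ are mutually dual $\A$-bases of $U_q^-(\g)_{-\beta}$ and $U_q^+(\g)_{\beta}$ with respect to the canonical pairing; the required dual-lattice statement follows from the existence of the lower and upper global bases of $U_q^\pm(\g)$. Granting this, a height induction on $|\wt(b_1 \tens b_2) - (\lambda+\mu)|$ solves the triangular system uniquely for each $G^\low(b_1 \tens b_2)$, yielding the balanced triple and the lower global basis $\B^\low(V(\lambda,\mu))$ simultaneously.
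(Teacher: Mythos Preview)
The paper does not supply its own proof of this proposition; it is quoted directly from \cite{Lusz92}. Your outline is precisely the standard argument from that reference (tensor product of crystal bases plus quasi-$R$-matrix correction for the bar-involution), so there is nothing to compare.

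One detail to fix: for the comultiplication $\Delta_-$ used here, the quasi-$R$-matrix lies in $\one + \prod_{\beta\in\prtl\setminus\{0\}} U_q^+(\g)_\beta \tens U_q^-(\g)_{-\beta}$, not in $U_q^-(\g)_{-\beta} \tens U_q^+(\g)_\beta$ as you wrote; the paper itself records this later as the element $\Xi$ in \S8.6. The orientation matters: with the correct form, the action on $u_\lambda \tensm u_\mu$ is a finite sum (each $\Xi_\beta$ with $\beta\ne0$ annihilates $u_\lambda$ via the $U_q^+$-factor), whereas with your form neither tensor factor is killed and the infinite sum need not make sense in the general Kac--Moody setting. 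The rest of your triangularity and integrality argument is correct once this is adjusted.
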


\begin{theorem} [\cite{Lusz92}]\label{th:tU}
The algebra $\tUqg$ has a lower crystal basis $(L^\low(\tUqg),B(\tUqg))$ satisfying the following properties:
\bnum
\item $L^\low(\tUqg) = \soplus_{\lambda \in \wl} L^\low(\tUqg a_\lambda)$ and $B(\tUqg) = \bigsqcup_{\lambda \in \wl} B(\tUqg a_\lambda)$, where
\begin{itemize}
\item $L^\low(\tUqg a_\lambda) = L^\low(\tUqg) \cap \Uqg a_\lambda$ and
\item $B(\tUqg a_\lambda) = B(\tUqg) \cap \big( L^\low(\tUqg a_\lambda)/qL^\low(\tUqg a_\lambda) \big)$.
\end{itemize}
\item
Set $\tUqg_\A:=\soplus_{\eta \in \wl} \Uqg_\A  a_\eta$. Then
$\bl\Q\tens\tUqg_\A,\, L^\low(\tUqg),\,\ol{ L^\low(\tUqg)}\br$ is balanced,
and
$\tUqg$ has the lower global basis
$\B^\low(\tUqg)\seteq\{ G^\low(b)\mid b \in B(\tUqg)\}$.
\item For any $\lambda \in \wl^+$ and $\mu \in \wl^-$, let
$$\Psi_{\lambda,\mu}\cl\Uqg a_{\lambda+\mu} \to V(\lambda,\mu)$$
 be the $U_q(\g)$-linear map
$a_{\lambda+\mu} \longmapsto u_\lambda \tens u_\mu$.
Then we have $\Psi_{\lambda,\mu}\bl L(\tUqg a_{\lambda+\mu})\br =
L^\low(\lambda,\mu)$.
\item Let $\overline{\Psi}_{\lambda,\mu}$ be the induced homomorphism
$$L^\low(\tUqg a_{\lambda+\mu})/qL^\low(\tUqg a_{\lambda+\mu})   \longrightarrow  L^\low(\lambda,\mu)/qL^\low(\lambda,\mu) .$$
Then we have
\bnam
\item $\{ b \in B(\tUqg a_{\lambda+\mu}) \ | \ \overline{\Psi}_{\lambda,\mu}b \ne 0\} \overset{\sim}{\longrightarrow} B(\lambda,\mu)$,
\item $\Psi_{\lambda,\mu}\bl G^\low(b)\br =  G^\low(\overline{\Psi}_{\lambda,\mu}(b))$ for any $b \in B(\tUqg a_{\lambda+\mu})$.
\ee
\item $B(\tUqg)$ has a structure of crystal such that
 the injective map induced by {\rm (iv) (a)}
$$ B(\lambda,\mu) \to B(\tUqg a_{\lambda+\mu}) \subset B(\tUqg)$$
is a strict embedding of crystals  for any $\la\in\Pd$ and $\mu\in\Po^-$.
\end{enumerate}
\end{theorem}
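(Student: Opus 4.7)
My plan is to build the crystal and global bases of $\tUqg$ by viewing it as an inverse limit (in a suitable sense) of the tensor product modules $V(\lambda,\mu)$, with $\lambda\in\Pd$ and $\mu\in\Po^-$ serving as ``parameters'' that become sharper as $\la$ and $-\mu$ grow dominant. The starting point is that we already have a lower crystal basis $(L^\low(\lambda,\mu),B(\lambda,\mu))=(L^\low(\lambda)\tens L^\low(\mu),\,B(\lambda)\tens B(\mu))$ on each $V(\lambda,\mu)$ by the tensor product rule, together with the balanced triple yielding a lower global basis. The map $\Psi_{\la,\mu}\cl \Uqg a_{\la+\mu}\to V(\la,\mu)$ is $\Uqg$-linear and surjective, and the core of the whole argument is to show the following stabilization property.

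The key technical step I would establish first is: for any $\nu\in\Po$, any $\xi\in\rl^+$ and any pair $(\la,\mu)\in\Pd\times\Po^-$ with $\la+\mu=\nu$ such that $\lan h_i,\la\ran$ and $\lan h_i,-\mu\ran$ are sufficiently large (depending only on $\xi$) for every $i\in I$, the homomorphism $\Psi_{\la,\mu}$ restricts to an isomorphism on the weight space of weight $\nu-\xi$, and moreover the induced identifications for different such $(\la,\mu)$ are compatible. This is where the real work lies, and it is the main obstacle: one has to show that adding further dominant (resp.\ antidominant) shifts to $\la$ (resp.\ $\mu$) only enlarges the kernel in weight spaces far from $\nu$, while leaving the weight space of weight $\nu-\xi$ untouched. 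I would proceed by analyzing the defining relations $e_i^{1-\lan h_i,\mu\ran}(u_\la\tensm u_\mu)=0$ and $f_i^{1+\lan h_i,\la\ran}(u_\la\tensm u_\mu)=0$, showing that any element of $\Ker\Psi_{\la,\mu}$ of low $\xi$-height must come from these high-power Serre-type relations, hence disappears for large enough $\la$, $-\mu$.

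Granted this stabilization, I would define
\begin{equation*}
L^\low(\tUqg a_\nu)_{\nu-\xi} \seteq \Psi_{\la,\mu}^{-1}\bl L^\low(\la,\mu)_{\nu-\xi}\br
\end{equation*}
for any $(\la,\mu)$ as above; compatibility makes this independent of the choice. Taking the direct sum over $\xi\in\rl^+$ produces $L^\low(\tUqg a_\nu)$, and the Kashiwara operators on $V(\la,\mu)$ are intertwined with those on $\tUqg a_\nu$ (by general properties of $\te_i^\low,\tf_i^\low$ on integrable modules), so $L^\low(\tUgg a_\nu)$ is stable under them. The crystal $B(\tUqg a_\nu)$ is then defined as $\bigcup_{(\la,\mu)}\Psi_{\la,\mu}^{-1}\bl B(\la,\mu)\br\setminus\{0\}\subset L^\low(\tUqg a_\nu)/qL^\low(\tUqg a_\nu)$, yielding (i), (iii), (iv)(a), and a natural crystal structure from (iv)(a) together with the crystal embedding $B(\la,\mu)\hookrightarrow B(\tUqg a_{\la+\mu})$, which gives (v).

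For the balanced triple in (ii), I would check that the bar involution $\ol{\phantom{a}}$ on $\tUqg$ induces, via $\Psi_{\la,\mu}$, the bar involution on $V(\la,\mu)$ defined by $\ol{P(u_\la\tensm u_\mu)}=\ol{P}(u_\la\tensm u_\mu)$; this is immediate from the definition of $\Psi_{\la,\mu}$ on the cyclic generator. The $\A$-form $\tUqg_\A=\soplus_\eta\Uqg_\A a_\eta$ is compatible with the $\A$-forms $V(\la,\mu)_\A$ under $\Psi_{\la,\mu}$. Balancedness on each weight space then follows by pulling back balancedness of $\bl\Q\tens V(\la,\mu)_\A,\,L^\low(\la,\mu),\,\ol{L^\low(\la,\mu)}\br$ through the isomorphism on weight spaces given by the stabilization result. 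This simultaneously gives the global basis $\B^\low(\tUqg)$ and, by applying $\Psi_{\la,\mu}$, yields (iv)(b): $\Psi_{\la,\mu}\bl G^\low(b)\br=G^\low(\ol{\Psi}_{\la,\mu}(b))$. The main difficulty throughout remains the asymptotic-injectivity lemma; once that is in hand, everything else is formal bookkeeping of crystals, $\A$-lattices and the bar involution.
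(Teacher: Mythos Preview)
The paper does not give its own proof of this theorem; it is stated as a result of Lusztig \cite{Lusz92} (with the crystal-basis formulation due to Kashiwara \cite{Kash94}) and is simply cited without argument. There is therefore nothing in the paper to compare your proposal against.

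That said, your outline is essentially the standard approach used in the literature: one realizes $\tUqg a_\nu$ as an inverse limit of the modules $V(\la,\mu)$ with $\la+\mu=\nu$, using a stabilization lemma asserting that $\Psi_{\la,\mu}$ is an isomorphism on each fixed weight space once $\la$ and $-\mu$ are deep enough in the dominant cone. Your identification of this asymptotic-injectivity step as the crux is correct, and the deduction of (i)--(v) from it is accurate bookkeeping. Two small remarks. First, the compatibility you invoke between different pairs $(\la,\mu)$ and $(\la',\mu')$ with $\la+\mu=\la'+\mu'$ is usually handled via explicit $\Uqg$-linear maps $V(\la+\la'',\mu+\mu'')\to V(\la,\mu)$ sending $u_{\la+\la''}\tensm u_{\mu+\mu''}$ to $u_\la\tensm u_\mu$ (for $\la''\in\Pd$, $\mu''\in\Po^-$), and one checks these respect the crystal lattices and the bar involution; this is routine but should be stated. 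Second, there is a typo: you wrote $L^\low(\tUgg a_\nu)$ where $\tUqg$ is intended.
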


For $\lambda \in \wl$, take any $\zeta \in \wl^+$ and $\eta \in \wl^-$ such that
$\lambda = \zeta +\eta$. Then $B(\zeta) \tens B(\eta)$ is
embedded into $B(\tUqg a_\lambda)$.

For $\mu \in \wl$, let $T_\mu=\{ t_\mu \}$ be the crystal with
$$\wt(t_\mu) = \mu, \quad \ve_i(t_\mu) = \vph_i(t_\mu)=-\infty, \quad
\te_i (t_\mu) =\tf_i (t_\mu) =0.$$
Since we have
$$ B(\zeta) \hookrightarrow B(\infty) \tens T_\zeta, \  B(\eta) \hookrightarrow  T_\eta \tens B(-\infty) \text{ and }  T_\zeta \tens T_\eta \simeq T_{\zeta+\eta},$$
$B(\zeta) \tens B(\eta)$ is embedded into the crystal $B(\infty)
\tens T_\lambda \tens B(-\infty)$. Taking $\zeta \to \infty$ and $\eta \to -\infty$, we have
\Lemma [\cite{Kash94}]  For any $\la\in\Po$, we have a canonical crystal isomorphism
$$ B(\tUqg a_{\la}) \simeq B(\infty) \tens T_\lambda \tens B(-\infty).$$
\enlemma
Hence we identify
$$ B(\tUqg)=\bigsqcup_{\la\in\Po}B(\infty) \tens T_\lambda \tens B(-\infty).$$

For $\xi\in\nrtl$ and $\eta\in\prtl$, we shall denote by
\begin{align*}
\Um_{>\xi}\seteq\soplus_{\xi'\in\nrtl\cap(\xi+\prtl)\setminus\{\xi\}}\Um_{\xi'}, \qquad
\Up_{<\eta}\seteq\soplus_{\eta'\in\prtl\cap(\eta+\nrtl)\setminus\{\eta\}}\Up_{\eta'}.
\end{align*}
Then for any $\la\in \Po$, $b_-\in B(\infty)_\xi$ and
$b_+\in B(-\infty)_\eta$, we have
\eq
G^\low(b_-\tens t_\la\tens b_+)
-G^\low(b_-) G^\low(b_+)a_\la\in
\Um_{>\xi}\Up_{<\eta}a_\la
\label{eq:tUc}
\eneq
(\cite[(3.1.1)]{Kash94}). In particular, we have
\eqn
G^\low(b_\infty \tens t_\la \tens b_+ )  = G^\low(b_+)a_\la \quad \text{and} \ G^\low(b_- \tens t_\la \tens b_{-\infty}) =G^\low(b_-)a_\la.
\eneqn

\begin{theorem}  [\cite{Kash94}] \label{Thm: Kas94} \hfill
\bnum
\item $L^\low(\tUqg)$ is invariant under the anti-automorphisms $*$ and $\vph$.
\item $B(\tUqg)^* = \vph(B(\tUqg)) = B(\tUqg)$.
\item $\bl G^\low(b)\br^*=G^\low(b^*)$ and $\vph(G^\low(b))=G^\low(\vph(b))$ for $b \in B(\tUqg)$.
\end{enumerate}
\end{theorem}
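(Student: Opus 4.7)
The plan is to reduce all three statements to the corresponding facts for $L(\infty)$ and $B(\infty)$ under the $*$-involution (a result of Kashiwara), combined with the asymptotic realization $B(\tUqg a_\la)\simeq B(\infty)\tens T_\la\tens B(-\infty)$ and the surjections $\Psi_{\la',\la''}\col\tUqg a_{\la'+\la''}\to V(\la',\la'')$ of Theorem~\ref{th:tU}. I first extend $*$ and $\vph$ to $\tUqg$ by setting $a_\la^*=a_{-\la}$ and $\vph(a_\la)=a_\la$, verifying well-definedness via the relation $a_\mu P=Pa_{\mu-\xi}$ for $P\in\Uqg_\xi$. Under these extensions, $*$ maps $\Uqg_\beta a_\la$ to $\Uqg_\beta a_{-\la-\beta}$ and $\vph$ maps it to $\Uqg_{-\beta}a_{\la+\beta}$.

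For the $\vph$-invariance in (i), I appeal to the fact that $\vph$ preserves the lower crystal lattice on each integrable module, which for $V(\la',\la'')=V(\la')\tensm V(\la'')$ can be established via the tensor product rule combined with the duality $\Dv V(\la')\simeq V(\la')$ (lower and upper crystal lattices coincide up to the diagonal scaling of Lemma~\ref{lem:lowup}). Pulling back along $\Psi_{\la',\la''}$ and letting $\la'\to+\infty$, $\la''\to-\infty$, so that $\ker\Psi_{\la',\la''}$ vanishes on any preassigned finite-dimensional subspace of $\tUqg a_\la$, this gives $\vph\bl L^\low(\tUqg)\br\subset L^\low(\tUqg)$.

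The $*$-invariance of $L^\low(\tUqg)$ is the main obstacle. My strategy rests on \eqref{eq:tUc}:
$$G^\low(b_-\tens t_\la\tens b_+)\equiv G^\low(b_-)\,G^\low(b_+)\,a_\la\pmod{\Um_{>\xi}\,\Up_{<\eta}\,a_\la}.$$
Applying $*$ and using the identity $(Pa_\la Q)^*=Q^*a_{-\la}P^*$, together with Kashiwara's invariance of $L(\infty)$ and $B(\infty)$ under $*$ (and the analogous statement for $\Up$ obtained by transport via the standard isomorphism $\Um\simeq\Up$), recasts the image as another normal-form element associated to a shifted weight. An induction on the $\rootl$-grading, using that the correction terms strictly increase the joint height on both sides, then shows that $*$ preserves $L^\low(\tUqg)$, completing (i).

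Parts (ii) and (iii) follow essentially formally. Reducing the normal-form identity modulo $q$ and combining it with the crystal isomorphism $B(\tUqg a_\la)\simeq B(\infty)\tens T_\la\tens B(-\infty)$ and the known action of $*$ on $B(\infty)$ shows $B(\tUqg)^*=B(\tUqg)$; the case of $\vph$ is similar, giving (ii). For (iii), $G^\low(b)$ is characterized uniquely as the bar-invariant element of $\tUqg_\A\cap L^\low(\tUqg)$ whose class modulo $q$ is $b$. Since $*$ and $\vph$ commute with the bar-involution (a direct check on generators, using $\overline{a_\la}=a_\la$), preserve the $\A$-form $\tUqg_\A$, and induce by (ii) the maps $b\mapsto b^*$ and $b\mapsto \vph(b)$ on the crystal, uniqueness of $G^\low$ forces the two identities in (iii).
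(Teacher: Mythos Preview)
The paper does not give its own proof of this theorem: it is quoted verbatim from \cite{Kash94} and no argument is supplied. There is therefore nothing in the paper to compare your proposal against.

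That said, a brief comment on your sketch. The overall architecture --- extend $*$ and $\vph$ to $\tUqg$, use the surjections $\Psi_{\la',\la''}$ and the asymptotic realization $B(\tUqg a_\la)\simeq B(\infty)\tens T_\la\tens B(-\infty)$, then invoke the normal-form identity \eqref{eq:tUc} and the known $*$-invariance of $(L(\infty),B(\infty))$ --- is exactly the strategy Kashiwara follows in \cite{Kash94}. The weakest point in your write-up is the $\vph$-invariance step: you assert that $\vph$ preserves the lower crystal lattice of $V(\la')\tensm V(\la'')$ ``via the tensor product rule combined with the duality $\Dv V(\la')\simeq V(\la')$,'' but $\vph$ does not act on a single $\Uqg$-module in any obvious way (it is an anti-automorphism of the algebra, not an endomorphism of a module), so you need to be more careful about what map you are actually considering and on what space. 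In \cite{Kash94} the $\vph$-case is handled by reducing it to the $*$-case via the relation $\vph=*\circ\omega$ for a suitable Chevalley-type involution $\omega$ whose compatibility with crystal lattices is more transparent. Your induction for the $*$-invariance using the filtration by $\Um_{>\xi}\Up_{<\eta}$ is correct in spirit and matches the original argument.
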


\begin{corollary} [\cite{Kash94}] \label{cor: inv cry}
For $b_1 \in B(\infty)$, $b_2 \in B(-\infty)$, we have
\begin{enumerate}
\item[{\rm (1)}] $(b_1 \tens t_\mu \tens b_2)^* = b_1^* \tens t_{-\mu-\wt(b_1)-\wt(b_2)} \tens b_2^*$.
\item[{\rm (2)}] $\vph(b_1 \tens t_\mu \tens b_2) = \vph(b_2) \tens t_{\mu+\wt(b_1)+\wt(b_2)} \tens \vph(b_1)$.
\end{enumerate}
\end{corollary}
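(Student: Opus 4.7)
\medskip

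The plan is to derive both statements directly from Theorem~\ref{Thm: Kas94}, which asserts $(G^\low(b))^* = G^\low(b^*)$ and $\vph(G^\low(b)) = G^\low(\vph(b))$ for $b \in B(\tU)$, combined with the characterization of crystal basis elements of $\tU a_\la$ furnished by \eqref{eq:tUc}. Write $\xi = \wt(b_1) \in \rl^-$ and $\eta = \wt(b_2) \in \rl^+$. By \eqref{eq:tUc} we have
\[
G^\low(b_1\otimes t_\la\otimes b_2) \equiv G^\low(b_1)\,G^\low(b_2)\,a_\la \pmod{\Um_{>\xi}\,\Up_{<\eta}\,a_\la}.
\]
For part (1) I would apply $*$, use that $*$ is an anti-involution fixing $e_i$ and $f_i$ (so $(G^\low(b_j))^* = G^\low(b_j^*)$ for $j=1,2$, by applying part~(iii) of Theorem~\ref{Thm: Kas94} to the two subcrystals $B(\infty)\otimes t_0\otimes b_{-\infty}$ and $b_\infty\otimes t_0\otimes B(-\infty)$ of $B(\tU)$), and compute
\[
\bl G^\low(b_1)G^\low(b_2)a_\la\br^* \;=\; a_{-\la}\,G^\low(b_2^*)\,G^\low(b_1^*).
\]
Pushing $a_{-\la}$ to the right via $a_\eta P = P a_{\eta-\wt(P)}$ yields $G^\low(b_2^*)G^\low(b_1^*)\,a_{-\la-\xi-\eta}$. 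Here the order of the two factors is swapped, so I need the observation that for $x\in\Um_{\xi'}$ and $y\in\Up_{\eta'}$ one has $yx \in xy + \Um_{>\xi'}\,\U^0\,\Up_{<\eta'}$ by the triangular decomposition; multiplying by $a_{-\la-\xi-\eta}$ absorbs the $\U^0$ into a scalar and gives
\[
G^\low(b_2^*)G^\low(b_1^*)\,a_{-\la-\xi-\eta} \;\equiv\; G^\low(b_1^*)G^\low(b_2^*)\,a_{-\la-\xi-\eta} \pmod{\Um_{>\xi}\Up_{<\eta}\,a_{-\la-\xi-\eta}}.
\]
One also checks that the image under $*$ of the error term $\Um_{>\xi}\Up_{<\eta}a_\la$ lands in $\Um_{>\xi}\Up_{<\eta}a_{-\la-\xi-\eta}$, by the same commutation argument. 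Comparing with \eqref{eq:tUc}, the characterization forces $(b_1\otimes t_\la\otimes b_2)^* = b_1^*\otimes t_{-\la-\xi-\eta}\otimes b_2^*$, proving (1).

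For part (2) the argument is parallel: applying $\vph$ (an anti-automorphism exchanging $e_i\leftrightarrow f_i$, and hence interchanging $\Um$ and $\Up$ with $\vph(\U_\beta)=\U_{-\beta}$), we obtain
\[
\vph\bl G^\low(b_1)G^\low(b_2)a_\la\br \;=\; a_\la\,G^\low(\vph(b_2))\,G^\low(\vph(b_1)).
\]
Since $\vph(b_2)\in B(\infty)_{-\eta}$ and $\vph(b_1)\in B(-\infty)_{-\xi}$, sliding $a_\la$ to the right gives $G^\low(\vph(b_2))G^\low(\vph(b_1))\,a_{\la+\xi+\eta}$, which is already in the normal form required by \eqref{eq:tUc}. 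Identifying the crystal element and applying the same error-term bookkeeping as in (1) yields $\vph(b_1\otimes t_\la\otimes b_2) = \vph(b_2)\otimes t_{\la+\wt(b_1)+\wt(b_2)}\otimes \vph(b_1)$.

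The main (minor) obstacle is the bookkeeping surrounding the congruence $yx\equiv xy$ modulo the ideal $\Um_{>\xi}\Up_{<\eta}\,a_\bullet$: one needs to verify that the commutator terms, which a priori lie in $\Um_{>\xi}\U^0\Up_{<\eta}$, become genuinely divisible by the ideal once the central part is absorbed into the scalar action on $a_\bullet$, and that the same bound governs the $*$-image (resp.\ $\vph$-image) of the original error term. Once this weight-shift lemma is in hand, both formulas are immediate from the uniqueness of the decomposition \eqref{eq:tUc}.
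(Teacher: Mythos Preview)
The paper does not give its own proof of this corollary; it is simply quoted from \cite{Kash94} without argument. So there is nothing in the present paper to compare your proposal against.

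That said, your argument is correct and is essentially the standard one. The key inputs are exactly Theorem~\ref{Thm: Kas94}~(iii) and the triangularity \eqref{eq:tUc}, and the only subtlety---which you correctly isolate---is that after applying $*$ or $\vph$ one must reorder the factors $\Up\cdot\Um$ back into $\Um\cdot\Up$ modulo strictly smaller weight pieces. One small clarification that makes the error-term bookkeeping cleaner: since $G^\low(b_1\otimes t_\mu\otimes b_2)\in \U_{\xi+\eta}a_\mu$, the error terms appearing in \eqref{eq:tUc} all lie in $\bigoplus_{\xi'>\xi,\ \eta'<\eta,\ \xi'+\eta'=\xi+\eta}\Um_{\xi'}\Up_{\eta'}a_\mu$; with this homogeneity constraint, applying $*$ and sliding $a_{-\mu}$ through lands every term in $\Um_{>\xi}\Up_{<\eta}a_{-\mu-\xi-\eta}$ on the nose, and the commutator corrections stay in the same space. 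For part~(2) the factors come out already in the order $\Um\cdot\Up$, so no reordering is needed and the identification via \eqref{eq:tUc} is immediate.
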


We define, for $b \in B$ with $B=B(\tUqg), B(\infty)$ or $B(-\infty)$,
\begin{align*}
\ve_i^*(b) = \ve_i(b^*),  \ \vph_i^*(b) = \vph_i(b^*), \  \wt^*(b)=\wt(b^*), \  \te_i^*(b)
= \te_i(b^*)^* \text{ and } \tf_i^*(b) = \tf_i(b^*)^*.
\end{align*}

This defines another crystal structure on $\tUqg$: For $b_1 \in
B(\infty)$ and $b_2 \in B(-\infty)$ and $\eta \in \wl$, we have
\begin{align*}
 \ve^*_i(b_1 \tens t_\eta \tens b_2) & = \max( \ve^*_i(b_1), \vph^*_i(b_2)+\langle h_i, \eta \rangle ), \\
 \vph_i^*(b_1 \tens t_\eta \tens b_2) & = \max( \ve^*_i(b_1)-\langle h_i, \eta \rangle , \vph^*_i(b_2)), \\
&= \ve_i^*(b_1 \tens t_\eta \tens b_2) + \langle  h_i,\wt^*(b_1 \tens t_\eta \tens b_2)\rangle, \\
 \wt^*(b_1 \tens t_\eta \tens b_2) & = -\eta, \\
 \te^*_i(b_1 \tens t_\eta \tens b_2) & =
 \begin{cases} (\te^*_ib_1) \tens t_{\eta-\alpha_i} \tens b_2 & \text{ if } \ve^*_i(b_1) \ge \vph^*_i(b_2)+\langle h_i,\eta \rangle, \\
               b_1 \tens t_{\eta-\alpha_i} \tens( \te^*_ib_2) & \text{ if } \ve^*_i(b_1) < \vph^*_i(b_2)+\langle h_i,\eta \rangle, \end{cases} \\
 \tf^*_i(b_1 \tens t_\eta \tens b_2) & =
 \begin{cases} (\tf^*_ib_1) \tens t_{\eta+\alpha_i} \tens b_2 & \text{ if } \ve^*_i(b_1) > \vph^*_i(b_2)+\langle h_i,\eta \rangle, \\
 b_1 \tens t_{\eta+\alpha_i} \tens ( \tf^*_ib_2) & \text{ if } \ve^*_i(b_1) \le \vph^*_i(b_2)+\langle h_i,\eta \rangle. \end{cases} \\
\end{align*}

In particular, we have
$$\te_i\circ \vphi=\vphi\circ\tf^*_i\quad\text{and}\quad
\tf_i\circ \vphi=\vphi\circ\te^*_i\quad\text{for every $i\in I$.}$$

\subsection{Relationship of $\Ag$ and $\tU$}

There exists a canonical pairing $A_q(\g) \times \tUqg \to \Q(q)$ by
\begin{equation*} \label{eq: coupling}
\laa \psi, x a_\mu \raa = \delta_{\mu,\wt_\li(\psi)} \psi(x)
\quad\text{ for any  $\psi \in A_q(\g)$, $x\in\U$ and $\mu\in\Po$.}
\end{equation*}

\begin{theorem}[{\cite{Kash94}}] \label{thm: bicrystal embed}
There exists a bi-crystal embedding
\begin{equation*} \label{eq: iota}
\oi_\g\cl  B(A_q(\g)) \To B(\tUqg)
\end{equation*}
which satisfies:
$$\laa G^\up(b),\vph(G^\low(b')) \raa =  \delta_{\,\oi_\g(b),b'}$$
for any $b \in B(A_q(\g))$ and $b' \in B(\tUqg)$.
\end{theorem}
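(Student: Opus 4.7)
The plan is to construct $\oi_\g$ explicitly, piece by piece, using the decomposition $A_q(\g) \simeq \soplus_{\lambda \in \wl^+} V(\la) \tens V(\la)^\ri$ of Proposition 8.2 and the identification $B(\tU a_\mu) \simeq B(\infty) \tens T_\mu \tens B(-\infty)$, and then verify the claimed duality with $\varphi(G^\low(\scbul))$ by combining Proposition \ref{prop:Agglobal} with the congruence \eqref{eq:tUc}.

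First, I would unpack the pairing. For $\psi = \Phi_\la(u\tens v^\ri) \in V(\la)\tens V(\la)^\ri$ and $xa_\mu \in \U a_\mu$, the definition of $\Phi_\la$ and of the pairing give $\laa \psi, xa_\mu\raa = \delta_{\mu,\wt(v)}\,(v,xu)$, where $(\,,\,)$ is the symmetric form on $V(\la)$. In particular $\wt_\lt(\psi)=\wt(v)$ and $\wt_\rt(\psi)=\wt(u)$. Thus the pairing on the $\la$-summand factors through the Shapovalov form of $V(\la)$, under which the upper and lower global bases $\B^\up(\la)$ and $\B^\low(\la)$ are dual. Using this, for $b=b_1\tens b_2^\ri \in B(\la)\tens B(\la)$, the upper global basis vector $\Phi_\la(G^\up_\la(b_1)\tens u_{\wt(b_2)}^\ri)$ pairs with $xa_{\wt(b_2)}$ to give $(u_{\wt(b_2)},xG^\up_\la(b_1))$; since $u_{\wt(b_2)}$ is simultaneously a member of the lower global basis, one recovers duality with a specific lower global basis element on the $\tU$ side.

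Second, I would define $\oi_\g$ as follows. Given $b=b_1\tens b_2^\ri\in B(\la)\tens B(\la)\subset B(A_q(\g))$, set $\mu=\wt(b_2)$, and define
\[
\oi_\g(b_1\tens b_2^\ri) \seteq \oi_\la(b_1)\tens t_{\mu}\tens \oi^-_\la(b_2) \ \in\ B(\infty)\tens T_\mu\tens B(-\infty)\simeq B(\tU a_\mu),
\]
where $\oi_\la\cl B(\la)\to B(\infty)$ is the map of Section 8.2 and $\oi^-_\la\cl B(\la)\to B(-\infty)$ is its lowest-weight analogue, obtainable as $\varphi\circ\oi_{-w_0\la}\circ(\text{the isomorphism }B(\la)\simeq B(-w_0\la))$ together with Corollary \ref{cor: inv cry}. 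To justify independence of the choice of $\la$ (different summands in the decomposition of $A_q(\g)$ may yield the same crystal element), I would use Theorem \ref{th:tU}(iv)(a), which shows the compatibility of the embeddings $B(\la,\nu)\monoTo B(\tU a_{\la+\nu})$.

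Third, for the duality formula, I would fix $b'=b_-\tens t_\mu\tens b_+\in B(\tU a_\mu)$ and use \eqref{eq:tUc} to write
\[
G^\low(b')\;=\;G^\low(b_-)\,G^\low(b_+)\,a_\mu \pmod{\Um_{>\wt(b_-)}\Up_{<\wt(b_+)}\,a_\mu}.
\]
Applying $\varphi$ and pairing against $\Phi_\la(G^\up_\la(b_1)\tens u_\mu^\ri)$ (which is an upper global basis vector of $A_q(\g)$ by Proposition \ref{prop:Agglobal}), the leading term yields $(u_\mu,\varphi(G^\low(b_+))\cdot G^\up_\la(b_1)\cdot\varphi(G^\low(b_-)))$. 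Via the Shapovalov duality inside $V(\la)$ and the identification of $\B^\up(\la)=\iota_\la^{-1}\B^\up(\An)$ with the restriction of the upper global basis of $\An$, this leading term is exactly $\delta_{b_-,\oi_\la(b_1)}\delta_{b_+,\oi_\la^-(b_2)}\delta_{\mu,\wt(b_2)}$. The lower-order terms from \eqref{eq:tUc} pair trivially, because $\Phi_\la(G^\up_\la(b_1)\tens u_\mu^\ri)$ annihilates $\Um_{>\wt(b_-)}\Up_{<\wt(b_+)}a_\mu$ by weight/triangularity considerations.

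Finally, the bi-crystal compatibility amounts to checking that $\tei$ and $\tfi$ acting on $B(A_q(\g))$ from the left (via the left $\Uqg$-module structure) correspond under $\oi_\g$ to the ordinary Kashiwara operators on $B(\infty)\tens T_\mu\tens B(-\infty)$, while $\te_i^*$ and $\tf_i^*$ from the right correspond to the $*$-operators; this follows from the intertwining properties of $\oi_\la$ with $\tei$ and the fact that $\varphi$ swaps $\tei$ and $\tf_i^*$ as recorded after Corollary \ref{cor: inv cry}. The main obstacle will be the last step in the third paragraph: controlling the "lower-order" corrections from \eqref{eq:tUc} uniformly in $\la$ and verifying that all normalizations (in particular the constants introduced by Sublemma \ref{sub:xi} and Proposition \ref{prop:phimul}) cancel to yield the clean Kronecker-delta duality on the nose, with no residual power of $q$.
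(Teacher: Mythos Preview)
The paper does not prove this theorem; it is quoted from \cite{Kash94} and used as a black box. So there is nothing in the paper to compare your argument against, and I can only comment on the proposal itself.

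There are two genuine gaps.

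First, your construction of $\oi_\la^-\colon B(\la)\to B(-\infty)$ invokes the longest element $w_0$ and the isomorphism $B(\la)\simeq B(-w_0\la)$. The paper works over an arbitrary symmetrizable Kac--Moody algebra, where $w_0$ need not exist, so this step is unavailable in the stated generality. One has to produce the $B(-\infty)$-component of $\oi_\g$ by other means (for instance via the $*$-twisted crystal structure, or directly from the characterisation in Theorem~\ref{th:tU}(iv) in terms of $\Psi_{\la,\mu}$).

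Second, and more seriously, your duality computation in the third step tacitly assumes that the upper global basis element $G^\up(b_1\tens b_2^\ri)$ of $A_q(\g)$ equals $\Phi_\la\bigl(G^\up_\la(b_1)\tens G^\up_\la(b_2)^\ri\bigr)$. Proposition~\ref{prop:Agglobal} asserts this only when $b_2=u_{w\la}$ is extremal; for general $b_2$ the bar-involution on $A_q(\g)$ mixes the two tensor factors and the global basis vector is not a pure tensor. Consequently, your pairing argument, as written, establishes the formula $\laa G^\up(b),\vph(G^\low(b'))\raa=\delta_{\oi_\g(b),b'}$ only on the thin slice where the right factor is extremal, not for all $b\in B(A_q(\g))$. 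Bridging this requires either an inductive reduction (via $\te_i^*$, $\tf_i^*$ acting on the right) to the extremal case, or the full machinery of \cite{Kash94} relating the global basis of $A_q(\g)$ to that of $\tU$.

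A minor slip: you write $\wt_\lt(\psi)=\wt(v)$ and $\wt_\rt(\psi)=\wt(u)$ for $\psi=\Phi_\la(u\tens v^\ri)$, but the paper's conventions give $\wt_\lt(\psi)=\wt(u)$ and $\wt_\rt(\psi)=\wt(v)$; this does not affect the overall strategy but would propagate into wrong indices for $t_\mu$ if uncorrected.
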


\subsection{Relationship of $\Ag$ and $\An$}

\begin{definition} \label{def: p_n}
 Let $p_\n\cl A_q(\g) \to A_q(\n)$ be the homomorphism induced by $U_q^+(\g) \to U_q(\g)$:
$$\langle p_\n(\psi),x \rangle = \psi(x) \quad \text{for \ any
$x \in U_q^+(\g)$.}$$
\end{definition}

Then we have
$$ \wt(p_\n(\psi))=\wt_\li(\psi)-\wt_\ri(\psi).$$

It is obvious that $p_\n$ sends all $\Phi(u_{w\lambda}\tens u_{w\lambda}^\ri)$
($\lambda\in\wl^+$ and $w\in W$) to $1$. Note that
$\oi_\g(u_{w\lambda}\tens u_{w\lambda}^\ri)=b_\infty\tens t_{w\lambda}\tens b_{-\infty}\in B(\tUqg)$.

\begin{prop} \label{prop: nonzero elt}
For $b \in B(A_q(\g))$,
set $\bg(b)=b_1 \tens t_\zeta \tens b_2
\in B(\infty) \tens T_\zeta \tens B(-\infty)\subset B(\tU)$ $(\zeta \in \wl)$.
Then we have
$$ p_\n(G^\up(b)) = \delta_{b_2,\,b_{-\infty}}G^\up(b_1).$$
\end{prop}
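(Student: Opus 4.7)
The plan is to verify that both sides of the claimed identity pair identically with the $\Q(q)$-basis $\{\vph(G^\low(c))\}_{c\in B(\infty)}$ of $\Up$ (obtained from the algebra anti-isomorphism $\vph\cl\Um\isoto\Up$), and then conclude by non-degeneracy of the natural pairing $\An\times\Up\to\Q(q)$. This reduces the proposition to an identity of scalars extractable from Theorem~\ref{thm: bicrystal embed}.

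For the left-hand side, fix $c\in B(\infty)$. The definition of $p_\n$ gives $\lan p_\n(G^\up(b)),\vph(G^\low(c))\ra=G^\up(b)(\vph(G^\low(c)))$. Setting $\eta\seteq\wt_\li(G^\up(b))$, the rule $\laa\psi,x\,a_\mu\raa=\delta_{\mu,\wt_\li(\psi)}\psi(x)$ lifts this scalar to the canonical pairing between $\Ag$ and $\tU$, yielding $\laa G^\up(b),\vph(G^\low(c))\,a_\eta\raa$. A short computation using $\vph(G^\low(c))\,a_\eta=\vph\bl a_{\eta-\wt(c)}\,G^\low(c)\br=\vph\bl G^\low(c)\,a_{\eta-\wt(c)}\br$ (since $\vph$ is an anti-involution fixing the $a_\mu$'s) together with the identity $G^\low(c)\,a_\la=G^\low(c\tens t_\la\tens b_{-\infty})$ recorded after \eqref{eq:tUc} rewrites this as $\laa G^\up(b),\vph(G^\low(c\tens t_{\eta-\wt(c)}\tens b_{-\infty}))\raa$, which by Theorem~\ref{thm: bicrystal embed} equals $\delta_{\oi_\g(b),\,c\tens t_{\eta-\wt(c)}\tens b_{-\infty}}$. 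Writing $\oi_\g(b)=b_1\tens t_\zeta\tens b_2$, the weight-compatibility of $\oi_\g$ forces $\eta=\zeta+\wt(b_1)$, so this Kronecker delta simplifies to $\delta_{b_1,c}\,\delta_{b_2,b_{-\infty}}$.

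For the right-hand side, the algebra isomorphism $\iota\cl\Um\isoto\An$ respects the bar-involutions, $\A$-forms and upper crystal lattices, hence matches upper global bases under $B(\infty)\simeq B(\An)$, so $G^\up(b_1)\in\An$ equals $\iota(G^\up(b_1)_{\Um})$. From $\lan\iota(u),x\ra=(u,\vph(x))$ and $\vph^2=\id$, the duality of upper and lower global bases of $\Um$ under $(\,\cdot\,,\,\cdot\,)$ gives $\lan G^\up(b_1),\vph(G^\low(c))\ra=(G^\up(b_1),G^\low(c))=\delta_{b_1,c}$. Consequently both $p_\n(G^\up(b))$ and $\delta_{b_2,b_{-\infty}}\,G^\up(b_1)$ pair with every $\vph(G^\low(c))$ to yield $\delta_{b_1,c}\,\delta_{b_2,b_{-\infty}}$, and non-degeneracy of the pairing forces them to coincide. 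The only non-formal step is the weight-matching $\eta=\zeta+\wt(b_1)$; this will follow from the compatibility of the bi-crystal embedding $\oi_\g$ with the $\wl$-weight structures on both sides, together with the formulas recording how $\wt_\li$ and $\wt_\ri$ are read off from the decomposition $B(\tU)=\bigsqcup_\la B(\infty)\tens T_\la\tens B(-\infty)$.
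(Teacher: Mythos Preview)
Your approach is essentially identical to the paper's: both pair $p_\n(G^\up(b))$ against $\vph(G^\low(c))$ for $c\in B(\infty)$, lift to the pairing between $\Ag$ and $\tU$ via $a_\eta$ with $\eta=\wt_\li(b)$, rewrite the resulting element of $\tU$ as $\vph(G^\low(c\tens t_{\eta-\wt(c)}\tens b_{-\infty}))$, and invoke Theorem~\ref{thm: bicrystal embed}. Two small slips to fix: in the chain $\vph(G^\low(c))\,a_\eta=\vph\bl a_{\eta-\wt(c)}\,G^\low(c)\br=\vph\bl G^\low(c)\,a_{\eta-\wt(c)}\br$ the middle term should read $\vph\bl a_{\eta}\,G^\low(c)\br$ (since $\vph$ is an anti-automorphism fixing $a_\eta$, then use $a_\eta G^\low(c)=G^\low(c)a_{\eta-\wt(c)}$); and the weight identity is $\eta=\zeta+\wt(b_1)+\wt(b_2)$ in general, reducing to $\eta=\zeta+\wt(b_1)$ only once $b_2=b_{-\infty}$ has been established.
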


\begin{proof}
Set $\eta \seteq \wt(b_1)+\zeta+\wt(b_2)=\wt_\lt(b)$. Then for any $\tilde{b} \in B(\infty)$, we
have
\begin{align*}
\big\lan p_\n(G^\up(b)), \vph(G^\low(\tilde{b})) \big\ra &= \big\laa G^\up(b), G^\low(\vph(\tilde{b}))a_{\eta} \big\raa
\\
& \hspace{-20ex} = \big\laa G^\up(b), G^\low(b_\infty \tens t_\eta \tens \vph(\tilde{b})) \big\raa
 = \big\laa G^\up(b), \vph(G^\low(\tilde{b} \tens t_{\eta-\wt(\tilde{b})} \tens b_{-\infty})) \big\raa \\
& \hspace{-20ex} = \delta(\bg(b) = \tilde{b} \tens t_{\eta-\wt(\tilde{b}) } \tens b_{-\infty} ) =\delta(b_2=b_{-\infty},b_1 = \tilde{b}).
\qedhere
\end{align*}
\end{proof}

Hence the map $p_\n$ sends
the upper global basis of $A_q(\g)$ to the upper global basis of
$A_q(\n)$ or zero. Thus we have a map
$$ \overline{p}_\n \cl  B(A_q(\g)) \to B(A_q(\n)) \bigsqcup \{ 0 \}.$$

Although the map $\pn$ is not an algebra homomorphism,
it preserves the multiplications up to a power of $q$, as we will see below.

\begin{lemma}  \label{lem:Deltan}
 For $x \in U_q^+(\g)$, if $\Delta_\n(x)=x_{(1)} \tens x_{(2)}$, then
\begin{align} \label{eq: Delta_n comp}
\comp(x)=q^{\wt(x_{(1)})} x_{(2)} \tens x_{(1)}.
\end{align}
\end{lemma}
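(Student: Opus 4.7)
The plan is to prove the identity by induction on the height of the weight of $x$, using the fact that $U_q^+(\g)$ is generated as an algebra by the $e_i$'s. Let me interpret the notation $q^{\wt(x_{(1)})}$ in the displayed formula as $t_{\wt(x_{(1)})} = \prod_i t_i^{n_i}$ where $\wt(x_{(1)}) = \sum_i n_i \alpha_i \in \rl^+$; with this convention, the two sides of \eqref{eq: Delta_n comp} are indeed elements of $U_q(\g) \tens U_q(\g)$.

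The base cases $x = 1$ and $x = e_i$ are immediate. For $x = 1$ we have $\Delta_\n(1) = 1 \tens 1$ and $\comp(1) = 1 \tens 1 = t_0 \cdot 1 \tens 1$. For $x = e_i$, $\Delta_\n(e_i) = e_i \tens 1 + 1 \tens e_i$, and the right-hand side of \eqref{eq: Delta_n comp} becomes $t_{\wt(e_i)}\cdot 1 \tens e_i + t_0\cdot e_i \tens 1 = t_i \tens e_i + e_i \tens 1 = \comp(e_i)$.

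For the inductive step, suppose the formula holds for $x$ and $y$, with $\Delta_\n(x) = x_{(1)} \tens x_{(2)}$ and $\Delta_\n(y) = y_{(1)} \tens y_{(2)}$. Since $\Delta_\n$ is an algebra map with respect to the twisted product on $U_q^+(\g)\tens U_q^+(\g)$, we have
\begin{equation*}
\Delta_\n(xy) = \sum q^{-(\wt(x_{(2)}),\,\wt(y_{(1)}))}\, x_{(1)}y_{(1)} \tens x_{(2)}y_{(2)}.
\end{equation*}
On the other hand, $\comp$ is an algebra homomorphism for the ordinary tensor product, so
\begin{equation*}
\comp(xy) = \comp(x)\comp(y) = \sum \bigl(t_{\wt(x_{(1)})}x_{(2)}\bigr)\bigl(t_{\wt(y_{(1)})}y_{(2)}\bigr) \tens x_{(1)}y_{(1)}.
\end{equation*}
Using the commutation $t_\mu e = q^{(\mu,\wt(e))} e\, t_\mu$ for $e \in U_q^+(\g)$ homogeneous, we move $t_{\wt(y_{(1)})}$ past $x_{(2)}$, picking up a factor $q^{-(\wt(y_{(1)}),\,\wt(x_{(2)}))}$, and combine the $t$'s using $t_{\wt(x_{(1)})}t_{\wt(y_{(1)})} = t_{\wt(x_{(1)})+\wt(y_{(1)})} = t_{\wt(x_{(1)}y_{(1)})}$. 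This yields exactly the formula prescribed by \eqref{eq: Delta_n comp} applied to $xy$ via its $\Delta_\n$-expansion above.

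There is no real obstacle here: the identity is essentially a bookkeeping exercise comparing the twisted algebra structure governing $\Delta_\n$ with the commutation relations in $U_q(\g)$ that govern $\comp$. The only subtle point is making sure the $q$-powers produced by passing $t$'s through the $U_q^+(\g)$-factor of the tensor product match the $q$-power $q^{-(\wt(x_{(2)}),\wt(y_{(1)}))}$ coming from the twisted multiplication in $\Delta_\n(xy)$, which is exactly what the computation above confirms.
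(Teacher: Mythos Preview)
Your proof is correct and follows essentially the same inductive approach as the paper: both verify the base case on generators and then use that $\Delta_\n$ and $\comp$ are algebra homomorphisms (for the twisted and ordinary tensor products, respectively) to propagate the identity. The only cosmetic difference is that the paper carries out the induction by left-multiplying by a single generator $e_i$, whereas you prove the full multiplicative step for an arbitrary product $xy$; the underlying computation of commuting $t_{\wt(y_{(1)})}$ past $x_{(2)}$ to produce the factor $q^{-(\wt(y_{(1)}),\wt(x_{(2)}))}$ is the same in both.
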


\begin{proof}
Assume that \eqref{eq: Delta_n comp} holds for $x \in U_q^+(\g)$.
Note that
$$ \Delta_\n(e_ix) = (e_i \tens 1+1 \tens e_i)(x_{(1)} \tens x_{(2)}) =
e_ix_{(1)} \tens x_{(2)} + q^{-(\alpha_i,\wt(x_{(1)}))}x_{(1)} \tens (e_ix_{(2)}).$$
On the other hand, we have
\begin{align*}
\comp(e_ix)&=(e_i  \tens 1+ q^{\alpha_i} \tens e_i)(q^{\wt(x_{(1)})} x_{(2)} \tens x_{(1)}) \allowdisplaybreaks \\
&=(e_i q^{\wt(x_1)}) x_{(2)} \tens x_{(1)} +
( q^{\alpha_i+\wt(x_{(1)})} x_{(2)}) \tens (e_ix_{(1)}) \allowdisplaybreaks \\
& = q^{-(\alpha_i,\wt(x_{(1)}))}(q^{\wt(x_{(1)})}e_ix_{(2)}) \tens
x_{(1)}+ (q^{\wt(e_ix_{(1)})}x_{(2)}) \tens (e_ix_{(1)}).
\end{align*}
Hence \eqref{eq: Delta_n comp} holds for $e_ix$.
\end{proof}

\begin{prop}\label{prop: p_n}
For $\psi,\theta \in A_q(\g)$, we have
$$ p_\n(\psi\theta)= q^{(\wt_\ri(\psi),\wt_\ri(\theta)-\wt_\li(\theta))}p_\n(\psi)p_\n(\theta).$$
\end{prop}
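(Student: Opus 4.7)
The goal is to compare the multiplication on $A_q(\g)$, defined via $\Delta_+$, with the multiplication on $A_q(\n)$, defined via $\Delta_\n$. Since the two comultiplications differ by a weight-twist on a Cartan factor (Lemma~\ref{lem:Deltan}), the plan is to evaluate $p_\n(\psi\theta)$ on an arbitrary $x\in \Up$, insert $\Delta_+(x)=q^{\wt(x_{(1)})}x_{(2)}\tens x_{(1)}$ where $\Delta_\n(x)=x_{(1)}\tens x_{(2)}$, push the Cartan factor $q^{\wt(x_{(1)})}$ across $\psi$ using the right weight-grading of $\psi$, and then identify the resulting sum with $(p_\n(\psi)\,p_\n(\theta))(x)$ up to the scalar dictated by $\wt_\ri(\theta)$.

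\textbf{Main steps.} First, I would fix $x\in \Up$ and, using Sweedler's notation combined with Lemma~\ref{lem:Deltan}, write
\[
(\psi\theta)(x)=\lan \psi\tens\theta,\Delta_+(x)\ran
=\sum \psi\bl q^{\wt(x_{(1)})}x_{(2)}\br\,\theta(x_{(1)}).
\]
Next, using the definition of the right Cartan action on $A_q(\g)$ together with the grading convention $\psi\in A_q(\g)_{\wt_\li(\psi),\wt_\ri(\psi)}$, I would rewrite
\[
\psi\bl q^{\wt(x_{(1)})}x_{(2)}\br=q^{(\wt_\ri(\psi),\,\wt(x_{(1)}))}\,\psi(x_{(2)}).
\]
Then I would note that for $x\in\Up$ both $x_{(1)}$ and $x_{(2)}$ lie in $\Up$ (since $\Delta_\n$ preserves $\Up$), so $\psi(x_{(2)})=p_\n(\psi)(x_{(2)})$ and $\theta(x_{(1)})=p_\n(\theta)(x_{(1)})$. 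The element $p_\n(\theta)$ has weight $\wt_\li(\theta)-\wt_\ri(\theta)$ and so vanishes on $\Up_\gamma$ unless $\gamma=\wt_\ri(\theta)-\wt_\li(\theta)$; in particular the factor $q^{(\wt_\ri(\psi),\wt(x_{(1)}))}$ can only contribute when $\wt(x_{(1)})=\wt_\ri(\theta)-\wt_\li(\theta)$, producing the constant $q^{(\wt_\ri(\psi),\,\wt_\ri(\theta)-\wt_\li(\theta))}$. Pulling this scalar out and recognising the remaining sum $\sum p_\n(\psi)(x_{(2)})\,p_\n(\theta)(x_{(1)})$ as the definition of $(p_\n(\psi)\,p_\n(\theta))(x)$ in $A_q(\n)$ yields the desired formula.

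\textbf{Main obstacle.} There is no real technical obstacle; the computation is essentially bookkeeping. The only subtlety is to be careful about conventions: the algebra structure on $A_q(\n)$ is defined so that $(\psi\theta)(x)=\theta(x_{(1)})\psi(x_{(2)})$, i.e.\ the order of the two functionals is opposite to the naive one, and the Cartan factor in Lemma~\ref{lem:Deltan} lands on the first tensor factor. Keeping the left/right actions and the order of multiplication straight is the only thing that requires attention, but once the conventions are fixed the identity follows in a few lines as sketched above.
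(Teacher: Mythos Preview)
Your proposal is correct and follows essentially the same approach as the paper: both evaluate $p_\n(\psi\theta)$ on $x\in\Up$, invoke Lemma~\ref{lem:Deltan} to rewrite $\Delta_+(x)$ as $q^{\wt(x_{(1)})}x_{(2)}\tens x_{(1)}$, extract the Cartan factor via the right weight of $\psi$, and then use the weight constraint $\wt(x_{(1)})=-\wt(p_\n(\theta))=\wt_\ri(\theta)-\wt_\li(\theta)$ to identify the resulting expression with $q^{(\wt_\ri(\psi),\wt_\ri(\theta)-\wt_\li(\theta))}(p_\n(\psi)p_\n(\theta))(x)$.
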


\begin{proof}
For $x \in U_q^+(\g)$, set $\Delta_\n(x)=x_{(1)} \tens x_{(2)}$. Then, we
have
\begin{align*}
\lan p_\n(\psi\theta),x \ra & = \lan \psi\theta,x  \ra
= \lan \psi \tens \theta, q^{\wt(x_{(1)})} x_{(2)} \tens x_{(1)} \ra
= \lan \psi , q^{\wt(x_{(1)})} x_{(2)} \ra \lan \theta , x_{(1)} \ra \\
&= q^{(\wt_\ri(\psi),\wt(x_{(1)}))} \lan \psi ,  x_{(2)} \ra \lan \theta , x_{(1)} \ra
= q^{(\wt_\ri(\psi),\wt(x_{(1)}))} \lan p_\n(\psi) ,  x_{(2)} \ra
\lan p_\n(\theta) , x_{(1)} \ra  \\
&\underset{\mathrm{(a)}}{=} q^{(\wt_\ri(\psi),\wt_\ri(\theta)-\wt_\li(\theta))} \lan  p_\n(\psi)\tens p_\n(\theta),\Delta_\n(x) \ra \\
&= q^{(\wt_\ri(\psi),\wt_\ri(\theta)-\wt_\li(\theta))} \lan
p_\n(\psi)p_\n(\theta),x \ra .
\end{align*}
Here, we used $\wt(x_{(1)})=-\wt\bl p_\n (\theta )\br$  in (a).
\end{proof}

\subsection{Global basis of $\tUqg$ and tensor products of $\Uqg$-modules in $\Oint(\g)$}

Let $V$ be an integrable $\Uqg$-module with a bar-involution $-$, that is, there is a $\Q$-linear automorphism $-$ satisfying
$\ol{P v} = \ol{P} \ol{v}$ for all $P \in \Uqg$ and for all $v \in V$.
Then, for any $\la\in\Pd$, there exists a unique bar-involution  $-$ on $V(\la)\tensm V$
satisfying
$$\text{$\ol{(u_\la\tensm v)}=u_\la\tensm \ol{v}$ for any $v\in V$.}$$
Indeed, there exists
$\Xi\in \one+\prod_{\beta\in\prtl\setminus\{0\}}\Up_\beta \tens\Um_{-\beta}$, which defines a  bar-involution by setting
 $$\ol{u\tensm v}:=\Xi\bl\ol{u}\tensm\ol{v}\br$$
(see, \cite[Chapter 4]{Lus93}).
Assume that $V$ has a lower crystal basis $\bl L(V),B(V)\br$ and an $\QA$-form
$V_\QA$  such that
$\bl  V_\QA, L(V),\ol{L(V)}\br$ is balanced.
Then we have
\Prop The triple
$\Bigl( V(\la)_\QA\tens_\QA V_\QA, L(\la)\tens_{\QA_0} L(V), \ol{ L(\la)\tens_{\QA_0} L(V)}
\Bigr)$
in $V(\la)\tensm V$
is balanced.
\enprop
Note that $u_\la\tensm G^\low(b)$ is a lower global basis for
any $b\in B(V)$, i.e., $G^\low(u_\la\tens b)=u_\la\tensm G^\low(b)$.

In particular, it applies to $V(\la)\tensm V(\mu)$.
Moreover, we have

\begin{prop} \label{pro:gltens}
Let $\lambda,\mu \in \wl^+$ and $w \in W$. Then for any $b \in
B(\tUqg a_{\lambda+w\mu})$, $G^\low(b)(u_\lambda \tensm
u_{w\mu})$ vanishes or is a member of the lower global basis of
$V(\lambda) \tensm V(\mu)$.
\end{prop}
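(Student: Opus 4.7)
The plan is to define the $\Uqg$-linear map $\Psi_w\cl\tU a_{\la+w\mu}\to V(\la)\tensm V(\mu)$ by $a_{\la+w\mu}\mapsto u_\la\tensm u_{w\mu}$, and to argue that $\Psi_w$ sends the lower global basis to the lower global basis or zero. By the uniqueness of the global basis arising from a balanced triple, this amounts to verifying that $\Psi_w$ is compatible with each of the three ingredients on both sides: the $\A$-form, the bar-involution, and the lower crystal lattice.

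The $\A$-form compatibility is immediate, since $u_\la\tens u_{w\mu}\in V(\la)_\A\tens_\A V(\mu)_\A$ (both $u_\la$ and $u_{w\mu}$ being lower global basis elements of their respective modules) and $\Psi_w$ is $\Uqg_\A$-linear. The bar-involution compatibility reduces, after $\Uqg$-linearity, to the bar-invariance of $u_\la\tensm u_{w\mu}$ in $V(\la)\tensm V(\mu)$. This follows at once from the preceding proposition: applied with $V=V(\mu)$ and with the crystal class $b\in B(\mu)$ of the extremal vector $u_{w\mu}$, it gives $G^\low(u_\la\tens b)=u_\la\tensm u_{w\mu}$, so $u_\la\tensm u_{w\mu}$ is itself a member of the lower global basis of $V(\la)\tensm V(\mu)$, hence automatically bar-invariant.

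The main obstacle is the lower crystal lattice compatibility $\Psi_w\bl L^\low(\tU a_{\la+w\mu})\br\subset L^\low(V(\la))\tens_{\QA_0}L^\low(V(\mu))$. The approach is to combine the crystal identification $B(\tU a_{\la+w\mu})\simeq B(\infty)\tens T_{\la+w\mu}\tens B(-\infty)$ with the decomposition formula \eqref{eq:tUc}, which expresses each lower global basis element $G^\low(b_-\tens t_{\la+w\mu}\tens b_+)$ as $G^\low(b_-)G^\low(b_+)a_{\la+w\mu}$ up to a correction in $\Um_{>\wt(b_-)}\Up_{<\wt(b_+)}a_{\la+w\mu}$. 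Since $\Um_{>\wt(b_-)}$ and $\Up_{<\wt(b_+)}$ consist of homogeneous elements of strictly smaller height than $\wt(b_-)$ and $\wt(b_+)$ respectively, applying $\Psi_w$ and inducting on $|\wt(b_-)|+|\wt(b_+)|$ reduces matters to two tasks: the base case $b_-=b_\infty$, $b_+=b_{-\infty}$, which is the already-known fact that $u_\la\tensm u_{w\mu}$ lies in $L^\low(V(\la))\tens_{\QA_0}L^\low(V(\mu))$; and the inductive step, in which the correction terms are handled by expanding them in the lower global basis of $\tU a_{\la+w\mu}$ in strictly lower height and invoking the induction hypothesis. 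Once this lattice compatibility is established, the uniqueness of the lower global basis from the balanced triple on $\tU a_{\la+w\mu}$ forces $\Psi_w\bl G^\low(b)\br$ to be either zero or a member of $\B^\low\bl V(\la)\tensm V(\mu)\br$ for every $b\in B(\tU a_{\la+w\mu})$, which is the statement of the proposition.
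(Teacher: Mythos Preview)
The paper states this proposition without proof; it is a known extension of Theorem~\ref{th:tU}~(iii)--(iv) (where the second tensor factor is a lowest-weight module), resting on the Demazure-module compatibility of global bases from \cite{Kas93a,Kash94}. Your outline is the natural one, but both the lattice step and the final implication have genuine gaps.

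For the lattice compatibility, your induction via \eqref{eq:tUc} does not close. Writing $G^\low(b_-\tens t_{\la+w\mu}\tens b_+)=G^\low(b_-)G^\low(b_+)a_{\la+w\mu}+C$ with $C\in\Um_{>\wt(b_-)}\Up_{<\wt(b_+)}a_{\la+w\mu}$, neither summand is under control. The main term is not obviously in the target lattice: $\Um_\A$ and $\Up_\A$ do \emph{not} preserve lower crystal lattices of integrable modules in general (already for the three-dimensional $\mathfrak{sl}_2$-module one has $f\cdot\tf u_\la=[2]\,\tf^{\,2}u_\la$ with $[2]=q+q^{-1}\notin\QA_0$), so there is no a~priori reason for $G^\low(b_-)G^\low(b_+)(u_\la\tensm u_{w\mu})$ to lie in $L^\low(\la)\tens_{\QA_0}L^\low(\mu)$. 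As for $C$, its expansion in the lower global basis of $\tU a_{\la+w\mu}$ has coefficients in $\A$, not $\QA_0$, so knowing that lower-height basis elements map into the lattice does not force $\Psi_w(C)$ to.

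Even granting lattice compatibility, your last sentence claims too much. Preservation of the balanced triple gives $\Psi_w\bl G^\low(b)\br=G^\low\bl\ol{\Psi}_w(b)\br$, where $\ol{\Psi}_w$ is the induced map on $L/qL$; this places $\Psi_w\bl G^\low(b)\br$ in the $\Q$-span of the global basis, but to conclude it is a \emph{single} global basis element or zero you must also show that $\ol{\Psi}_w$ sends crystal basis elements to crystal basis elements or zero. That crystal-level statement---that $\ol{\Psi}_w$ is a crystal morphism $B(\tU a_{\la+w\mu})\to B(\la)\tens B(\mu)\sqcup\{0\}$---is the substantive content of the proposition and needs its own argument, typically by induction on $\ell(w)$ relating $u_{w\mu}$ to $u_{s_iw\mu}$.
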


Hence we have a crystal morphism
\begin{align} \label{eq: pi la wmu}
\pi_{\lambda,w\mu}\cl  B(\tUqg a_{\lambda+w\mu}) \to B(\lambda)
\tens B(\mu)
\end{align}
by $G^\low(b)(u_\lambda \tensm
u_{w\mu})=G^\low(\pi_{\lambda,w\mu}(b))$.

Similarly, we have a bar-involution $-$ on
$V\tensp V(\la)$ such that
$$\text{$\ol{(v\tensp u_\la)}=\ol{v}\tensp u_\la$ for any $v\in V$.}$$
Hence if
$V$ has an upper crystal basis $(L^\up(V),B(V))$ and an $\QA$-form $V_\QA$
such that $\bl  V_\QA, L^\up(V), \ol{L^\up(V)} \br$ is balanced, then
$V\tensp V(\la)$ has an upper global basis. Note that
$G^\up(b)\tensp u_\la$ is a member of the upper global basis for
$b\in B(V)$.

In particular
for $\la,\mu\in \Po$,
$V(\la)\tensm V(\mu)$ has a lower global basis and $V(\la)\tensp V(\mu)$
has an upper global basis.

The bilinear form
\eqn (\scbul,\scbul)\cl \Bigl(V(\la)\tensm V(\mu)\Bigr)
\times  \Bigl(V(\la)\tensp V(\mu)\Bigr)\to \cor  \label{eq:bilinear}  \eneqn
defined by
$(u\tensm v,u'\tensp v')=(u,u')(v,v')$, $u,u'\in V(\la)$,
 $v,v'\in V(\mu)$ satisfies
\eqn\text{$(ax,y)=(x,\vphi(a)y)$ for any $x\in V(\la)\tensm V(\mu)$,
$y\in  V(\la)\tensp V(\mu)$, $a\in\U$.}\eneqn
With respect to this bilinear form,
 the lower global basis of $V(\la)\tensm V(\mu)$
and the upper global basis of $V(\la)\tensp V(\mu)$
are the dual  bases of each other.

\section{Quantum minors and T-systems}\label{sec:Quantum minors and T-systems}

\subsection{Quantum minors}
Using the isomorphism $\Phi$ in \eqref{eq: Phi}, for each
$\lambda \in \wl^+$ and $\mu, \zeta \in W \lambda$, we define the elements
$$ \Delta(\mu,\zeta)\seteq \Phi(u_\mu\tens u_\zeta^\ri) \in A_q(\g)$$
and
$$\D(\mu,\zeta)\seteq p_\n(\Delta(\mu,\zeta)) \in A_q(\n).$$
The element $\Delta(\mu,\zeta)$ is called a {\em {\rm (}generalized{\rm )}  quantum minor}
and $\D(\mu,\zeta)$ is called {\em a unipotent quantum minor}.

\begin{lemma} \label{lem: global element}
$\Delta(\mu,\zeta)$ is a member of the upper global basis of $A_q(\g)$. Moreover,
$\D(\mu,\zeta)$
is either a member of the upper global basis of $ A_q(\n)$ or zero.
\end{lemma}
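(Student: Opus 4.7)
The plan is to reduce both statements to results already established in the paper, namely Proposition~\ref{prop:Agglobal} and Proposition~\ref{prop: nonzero elt}. No new machinery should be required.

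For the first claim, I would write $\zeta = w\lambda$ for some $w\in W$ (possible since $\zeta\in W\lambda$), so that $u_\zeta = u_{w\lambda}$. The key observation is that the extremal weight vector $u_\mu\in V(\lambda)$ is itself a member of the upper global basis of $V(\lambda)$: the weight space $V(\lambda)_\mu$ is one-dimensional (as $\mu$ is extremal) and the unique element $b_\mu\in B(\lambda)$ of weight $\mu$ satisfies $G^\up(b_\mu) = G^\low(b_\mu) = u_\mu$. Consequently,
\[
\Delta(\mu,\zeta) \;=\; \Phi\bl u_\mu \tens u_\zeta^\ri\br \;=\; \Phi\bl G^\up(b_\mu) \tens u_{w\lambda}^\ri\br,
\]
which is a member of $\B^\up(A_q(\g))$ by Proposition~\ref{prop:Agglobal}. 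Thus there exists a (unique) element $b\in B(A_q(\g))$ with $\Delta(\mu,\zeta)=G^\up(b)$.

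For the second claim, I would apply Proposition~\ref{prop: nonzero elt} directly to this $b$. Writing $\bg(b) = b_1 \tens t_{\zeta'} \tens b_2$ in $B(\infty)\tens T_{\zeta'}\tens B(-\infty)$ under the bi-crystal embedding of Theorem~\ref{thm: bicrystal embed}, that proposition yields
\[
\D(\mu,\zeta) \;=\; p_\n\bl \Delta(\mu,\zeta)\br \;=\; p_\n\bl G^\up(b)\br \;=\; \delta_{b_2,\, b_{-\infty}}\,G^\up(b_1).
\]
Hence $\D(\mu,\zeta)$ is either zero (when $b_2\ne b_{-\infty}$) or the upper global basis element $G^\up(b_1)\in \B^\up(A_q(\n))$, as required.

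There is essentially no obstacle: both statements are direct corollaries of results already proved. The only mild point is the identification of the extremal vector $u_\mu$ with an upper global basis element of $V(\lambda)$, which one could either invoke as folklore or verify by noting that $u_\mu$ is obtained from $u_\lambda$ by a sequence of operators $\te_i^{(n)}$ or $\tf_i^{(n)}$ applied along a reduced path realizing $\mu\in W\lambda$, and that such a sequence preserves the upper global basis on extremal weight spaces.
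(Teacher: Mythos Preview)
Your proof is correct and follows the same route as the paper, which simply cites Proposition~\ref{prop:Agglobal} and Proposition~\ref{prop: nonzero elt}. One minor remark: in the paper $u_\mu$ is \emph{defined} as the unique member of the upper global basis of $V(\lambda)$ with weight $\mu$ (see the paragraph preceding Proposition~\ref{prop:Agglobal}), so the ``mild point'' you flag is actually nothing more than unwinding the notation.
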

\begin{proof}   Our assertions follow from Proposition~\ref{prop:Agglobal} and
Proposition~\ref{prop: nonzero elt}.
\end{proof}
\Lemma[{\cite[{(9.13)}]{BZ05}}]
 For $u,v \in W$ and $\lambda,\mu \in \wl^+$, we have
$$ \Delta(u\lambda,v\lambda)\Delta(u\mu,v\mu) =
\Delta\bl u (\lambda+\mu),v(\lambda+\mu)\br.$$
\enlemma

By Proposition \ref{prop: p_n}, we have the
following corollary:

\begin{corollary}\label{cor:Duv}
For $u,v \in W$ and $\lambda,\mu \in \wl^+$, we have
$$ \D(u\lambda,v\lambda)\D(u\mu,v\mu) =
q^{-(v\lambda,v\mu-u\mu)}\D\bl u (\lambda+\mu),v(\lambda+\mu)\br.$$
\end{corollary}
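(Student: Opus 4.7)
The plan is to apply the projection $p_\n$ to both sides of the identity $\Delta(u\lambda,v\lambda)\Delta(u\mu,v\mu)=\Delta(u(\lambda+\mu),v(\lambda+\mu))$ from the preceding lemma, and then use Proposition \ref{prop: p_n} to track the $q$-power that appears from the fact that $p_\n$ is only multiplicative up to a scalar.

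Concretely, I would first read off the weights of the quantum minors involved: for $\psi=\Delta(u\lambda,v\lambda)$ and $\theta=\Delta(u\mu,v\mu)$, the isomorphism $\Phi$ of \eqref{eq: Phi} gives $\wt_\li(\psi)=u\lambda$, $\wt_\ri(\psi)=v\lambda$ and $\wt_\li(\theta)=u\mu$, $\wt_\ri(\theta)=v\mu$. Then Proposition \ref{prop: p_n} yields
\begin{equation*}
p_\n(\psi\theta)=q^{(\wt_\ri(\psi),\,\wt_\ri(\theta)-\wt_\li(\theta))}p_\n(\psi)\,p_\n(\theta)
=q^{(v\lambda,\,v\mu-u\mu)}\D(u\lambda,v\lambda)\,\D(u\mu,v\mu).
\end{equation*}
On the other hand, by the definition of $\D$ we have $p_\n(\Delta(u(\lambda+\mu),v(\lambda+\mu)))=\D(u(\lambda+\mu),v(\lambda+\mu))$. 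Combining these two equalities via the preceding lemma and dividing through by the scalar $q^{(v\lambda,\,v\mu-u\mu)}$ gives the desired identity.

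There is essentially no obstacle here: the corollary is a formal consequence of the lemma above it together with the explicit deformation formula for $p_\n$ recorded in Proposition \ref{prop: p_n}. The only thing to double-check is the bookkeeping of weights under $\Phi$ (so that one is confident $\wt_\ri(\psi)=v\lambda$ rather than some other weight), which is immediate from the definition $\Phi(u\otimes v^\ri)(a)=\langle v^\ri,au\rangle$ and the action conventions on $A_q(\g)$.
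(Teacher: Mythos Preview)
Your proof is correct and is exactly the argument the paper intends: immediately before stating the corollary the paper writes ``By Proposition~\ref{prop: p_n}, we have the following corollary,'' so applying $p_\n$ to the identity $\Delta(u\lambda,v\lambda)\Delta(u\mu,v\mu)=\Delta(u(\lambda+\mu),v(\lambda+\mu))$ and tracking the factor $q^{(v\lambda,\,v\mu-u\mu)}$ from Proposition~\ref{prop: p_n} is precisely the intended derivation.
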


Note that
$$ \D(\mu,\mu)=1 \quad \text{ for $\mu \in W\lambda$ }.$$

Then $\D(\mu,\zeta) \ne 0$ if and only if $\mu \preceq \zeta$. Recall
that for $\mu,\zeta$ in the same $W$-orbit, we say that $\mu \preceq
\zeta$ if there exists a sequence $\{ \beta_k \}_{1 \le k \le l}$ of positive real roots such that, defining $\lambda_0=\zeta$,
$\lambda_k=s_{\beta_k}\lambda_{k-1}$ $(1 \le k \le l)$, we have
$(\beta_k,\lambda_{k-1}) \ge 0$ and $\lambda_l=\mu$.

More precisely, we have

\begin{lemma} \label{lem: D-property}
Let $\lambda \in \wl^+$ and $\mu,\zeta \in W \lambda$. Then the
following conditions are equivalent:
\begin{enumerate}[(i)]
\item $\D(\mu,\zeta)$ is an element of upper global basis of $A_q(\n)$,
\item $\D(\mu,\zeta) \ne 0$,
\item $u_\mu \in U_q^-(\g)u_\zeta$,
\item $u_\zeta \in U_q^+(\g)u_\mu$,
\item $\mu \preceq \zeta$,
\item for any $w \in W$ such that $\mu = w \lambda$, there exists $u \le w$ \ro in the Bruhat order\rf\ such that $\zeta=u \lambda$,
\item there exist $u,v \in W$ such that $\mu = w \lambda$, $\zeta=u\lambda$ and $ u \le w$.
\end{enumerate}
\end{lemma}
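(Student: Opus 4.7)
The plan is to establish the implications in the cycle (i) $\Leftrightarrow$ (ii) $\Rightarrow$ (iii), (iv) $\Rightarrow$ (v) $\Rightarrow$ (vi) $\Rightarrow$ (vii) $\Rightarrow$ (ii), with the remaining equivalences derived along the way. The equivalence (i) $\Leftrightarrow$ (ii) is immediate from Lemma~\ref{lem: global element}, which asserts that $\D(\mu,\zeta)$ is always either zero or a member of $\B^{\up}(A_q(\n))$.

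For the equivalences (ii) $\Leftrightarrow$ (iii) $\Leftrightarrow$ (iv), the key observation is the formula
\begin{equation*}
\lan \D(\mu,\zeta), x\ran = \Phi(u_\mu\tens u_\zeta^\ri)(x) = (u_\zeta, x u_\mu)\qquad\text{for all $x\in\Up$,}
\end{equation*}
coming from the definitions of $p_\n$ and $\Phi$. Since $\mu,\zeta\in W\lambda$ are extremal weights, the weight spaces $V(\lambda)_\mu$ and $V(\lambda)_\zeta$ are one-dimensional and the invariant form $(\,\cdot\,,\,\cdot\,)$ restricts non-degenerately to each. Hence (ii) holds iff some $x\in\Up_{\zeta-\mu}$ satisfies $xu_\mu=cu_\zeta$ with $c\neq 0$, i.e.\ (iv). The equivalence (iii) $\Leftrightarrow$ (iv) follows from the identity $(u_\zeta, xu_\mu) = (\varphi(x) u_\zeta, u_\mu)$ with $\varphi(\Up)=\Um$, combined with the one-dimensionality of $V(\lambda)_\mu$ and $V(\lambda)_\zeta$.

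For (iv) $\Leftrightarrow$ (v), I would proceed by induction. Given a chain $\zeta = \la_0, \la_1, \ldots, \la_l = \mu$ with $\la_k = s_{\beta_k}\la_{k-1}$ and $(\beta_k,\la_{k-1})\ge 0$, the integrability of $V(\lambda)$ and the $\mathfrak{sl}_2$-theory along the root $\beta_k$ yield $u_{\la_{k-1}} = (e_{\beta_k})^{n_k}u_{\la_k}$ up to a nonzero scalar where $n_k = \lan\beta_k^\vee,\la_{k-1}\ran\ge 0$, so $u_{\la_{k-1}}\in\Up u_{\la_k}$; iterating gives $u_\zeta\in\Up u_\mu$, proving (v)$\Rightarrow$(iv). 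The reverse implication (iv)$\Rightarrow$(v) is the main obstacle: given $u_\zeta = x u_\mu$ with $x\in\Up$, one must extract a chain of real positive roots. I would appeal to the crystal basis theory of $V(\lambda)$ together with Kashiwara's extremal weight calculus, or equivalently to Demazure module theory: choose a minimal-length $w\in W$ with $\mu = w\lambda$, then (iv) is known to be equivalent to $\zeta = u\lambda$ for some $u\leq w$ in the Bruhat order of $W$, from which the desired chain of simple reflections (and hence of positive real roots) can be read off.

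Finally, (v) $\Leftrightarrow$ (vi) $\Leftrightarrow$ (vii) are standard facts about the induced order on the orbit $W\lambda \simeq W/W_\lambda$, where $W_\lambda$ denotes the stabilizer of $\lambda$. The passage (vii)$\Rightarrow$(vi) follows from the fact that for any pair $w, w'\in W$ with $w\lambda = w'\lambda$, the Bruhat cosets $w W_\lambda$ and $w' W_\lambda$ coincide, so the existence of $u\le w$ with $u\lambda = \zeta$ for some particular $w$ upgrades to existence for every $w$ representing $\mu$ by lifting through $W_\lambda$. The implication (vi)$\Rightarrow$(v) is obtained by subword/chain decomposition of a Bruhat path $u < w_1 < \cdots < w$, applying each intermediate reflection to $\lambda$ and checking the non-negativity condition $(\beta_k,\la_{k-1})\ge 0$. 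The trickiest point here will be handling the case when the stabilizer $W_\lambda$ is nontrivial, which requires care in choosing minimal representatives throughout.
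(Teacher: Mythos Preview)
Your proposal is correct and follows essentially the same route as the paper's proof, which is very terse: (i)$\Leftrightarrow$(ii) by Lemma~\ref{lem: global element}; (ii)$\Leftrightarrow$(iii)$\Leftrightarrow$(iv) declared ``obvious''; (v)$\Leftrightarrow$(vi)$\Leftrightarrow$(vii) declared ``well-known''; and the bridge (iv)$\Leftrightarrow$(vi) attributed to \cite{Kas93a}. You spell out the ``obvious'' part explicitly and invoke the same Demazure/extremal-weight input for the bridge, so the strategies coincide.

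One small caution: in your direct argument for (v)$\Rightarrow$(iv) you write $u_{\la_{k-1}} = (e_{\beta_k})^{n_k} u_{\la_k}$ for an arbitrary positive real root $\beta_k$. In the quantum (and general Kac--Moody) setting there is no element $e_{\beta_k}\in\Up$ for non-simple $\beta_k$ without invoking Lusztig's braid group operators $T_w$, and one must check that the resulting root vector actually lies in $\Up$. This is fixable (write $\beta_k = w\alpha_i$ with $\ell(ws_i)>\ell(w)$ and use $T_w(e_i)\in\Up$), but since your cycle already closes via (vi) and the cited result, this step is redundant and you could simply omit it.
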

\Proof
(i) and (ii) are equivalent by  Lemma \ref{lem: global element}.
The equivalence of (ii), (iii) and  (iv) is obvious.
The equivalence of (v), (vi), (vii) is well-known.
The equivalence of (iv) and (vi) is proved in \cite{Kas93a}.
\QED

For any $u\in\An\setminus\{0\}$ and $i\in I$,
we set
\begin{align*}
\eps_i(u)&\seteq\max\set{n\in\Z_{\ge0}}{e_i^nu\not=0},\\
\eps^*_i(u)&\seteq\max\set{n\in\Z_{\ge0}}{e_i^{*\,n}u\not=0}.
\end{align*}
Then for any $b\in B(\An)$, we have
$$\text{$\eps_i(G^\up(b))=\eps_i(b)$ and $\eps^*_i(G^\up(b))=\eps^*_i(b)$.}$$

\Lemma\label{lem: weyl left right}
Let $\lambda \in \wl^+$, $\mu,\zeta \in W \lambda$
such that $\mu \preceq \zeta$ and $i \in I$.
\bnum
\item If $n \seteq \lan h_i,\mu \ra \ge 0$, then
$$ \ve_i(\D(\mu,\zeta))=0 \ \ \text{ and } \ \ e_i^{(n)}\D(s_i\mu,\zeta)=\D(\mu,\zeta).$$
\item If $\lan h_i,\mu \ra \le 0$ and $s_i\mu \preceq \zeta$, then $\ve_i(\D(\mu,\zeta))=-\lan h_i,\mu \ra$.
\item If $m \seteq -\lan h_i,\zeta \ra  \ge  0$, then
$$ \ve^*_i(\D(\mu,\zeta))=0 \ \ \text{ and } \ \ {e_i^*}^{(m)} \D(\mu,s_i\zeta)=
\D(\mu,\zeta).$$
\item If $\lan h_i,\zeta \ra \ge 0$ and $\mu \preceq s_i\zeta$, then $\ve^*_i(\D(\mu,\zeta))=\lan h_i,\zeta \ra$.
\end{enumerate}
\enlemma
\Proof
We have
$\eps_i\bl\Delta(\mu,\zeta)\br=\max(-\lan h_i,\mu\ran, 0)$
and $\eps_i^*\bl\Delta(\mu,\zeta)\br=\max(\lan h_i,\zeta\ran, 0)$.
Moreover, $\pn$ commutes with $e_i^{(n)}$ and ${e_i^*}^{(n)}$.

Let us show (ii). Set $\ell=-\lan h_i,\mu \ra$.
Then we have
$e_i^{\ell+1}\Delta(\mu,\zeta)=0$, which implies
$e_i^{\ell+1}\D(\mu,\zeta)=0$. Hence $\eps_i(\D(\mu,\zeta))\le \ell$.
We have
$$e_i^{(\ell)}\Delta(\mu,\zeta)=\Delta(s_i\mu,\zeta).$$
Hence we have $e_i^{(\ell)}\D(\mu,\zeta)=\D(s_i\mu,\zeta)$.
By the assumption  $s_i\mu \preceq \zeta$, $\D(s_i\mu,\zeta)$ does not vanish.
Hence we have $\eps_i(\D(\mu,\zeta))\ge \ell$.

\smallskip
The other statements can be proved similarly.
\QED

\begin{prop}[{\cite[(10.2)]{BZ05}}] \label{prop: BZ form} Let $\lambda,\mu \in \wl^+$ and $s,t,s',t' \in W$ such that $\ell(s's)=\ell(s')+\ell(s)$ and
$\ell(t't)=\ell(t')+\ell(t)$. Then we have
\bnum
\item
$ \Delta(s's\lambda,t'\lambda)\Delta(s'\mu,t't\mu)=q^{(s\lambda,\mu)-(\lambda,t\mu)}\Delta(s'\mu,t't\mu)\Delta(s's\lambda,t'\lambda)$.
\item
If we assume further that $s's\lambda \preceq t'\lambda$ and
$s'\mu \preceq t't\mu$, then we have
\begin{align}
& \D(s's\lambda,t'\lambda)\D(s'\mu,t't\mu) = q^{(s's\lambda+t'\lambda,\;s'\mu-t't\mu)} \D(s'\mu,t't\mu)\D(s's\lambda,t'\lambda),
\label{eq: quantum commuting factor} \end{align}
or equivalently
\begin{align}
& q^{(t'\lambda,\;t't\mu-s'\mu)}\D(s's\lambda,t'\lambda)\D(s'\mu,t't\mu)
=q^{(s'\mu-t't\mu,\;s's\lambda)}\D(s'\mu,t't\mu)\D(s's\lambda,t'\lambda)
\label{eq: quantum commuting factor 2}
\end{align}
\ee
\end{prop}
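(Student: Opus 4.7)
My plan is to establish part (i) first as a $q$-commutation relation for generalized quantum minors in $A_q(\g)$, mirroring the approach of \cite{BZ05}, and then derive part (ii) from (i) by applying the projection $p_\n$ together with Proposition~\ref{prop: p_n}.

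For part (i), the strategy is to realize both products $\Delta(s's\la, t'\la)\Delta(s'\mu, t't\mu)$ and $\Delta(s'\mu, t't\mu)\Delta(s's\la, t'\la)$ as matrix coefficients of extremal weight vectors in the tensor product modules $V(\la) \tensp V(\mu)$ and $V(\mu) \tensp V(\la)$, via the multiplication rule on $A_q(\g)$ induced by the comultiplication $\comp$. Using the length conditions $\ell(s's) = \ell(s') + \ell(s)$ and $\ell(t't) = \ell(t') + \ell(t)$, the vector $u_{s's\la} \otimes u_{s'\mu}$ in $V(\la) \tensp V(\mu)$ and the vector $u_{s'\mu} \otimes u_{s's\la}$ in $V(\mu) \tensp V(\la)$ are each proportional to the image of $u_{s'(\la+\mu)}$ under the natural embeddings from $V(\la+\mu)$, with explicit scalars; similarly on the right-hand extremal vectors using the length condition for $t$. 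The discrepancy of these proportionality constants yields the $q$-factor $q^{(s\la, \mu) - (\la, t\mu)}$.

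For part (ii), apply $p_\n$ to the equality in (i). By Proposition~\ref{prop: p_n}, we have $p_\n\bigl(\Delta(\mu_1,\zeta_1)\Delta(\mu_2,\zeta_2)\bigr) = q^{(\zeta_1,\, \zeta_2 - \mu_2)} \D(\mu_1,\zeta_1) \D(\mu_2,\zeta_2)$. Applying this to both orderings and combining with the relation from (i), the resulting total $q$-exponent can be simplified using the $W$-invariance of the bilinear form on $\h^*$, for instance $(t'\la, t't\mu) = (\la, t\mu)$ and $(s's\la, s'\mu) = (s\la, \mu)$. A short calculation shows the exponent collapses to $(s's\la + t'\la,\, s'\mu - t't\mu)$, yielding \eqref{eq: quantum commuting factor}. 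The equivalent form \eqref{eq: quantum commuting factor 2} follows by transferring the $q$-factor across the identity and again using $W$-invariance.

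The principal obstacle lies in part (i): the careful bookkeeping of $q$-factors when tensor products of extremal weight vectors $u_{w\la_1} \otimes u_{w\la_2}$ are identified with $u_{w(\la_1+\la_2)}$ inside the submodule $V(\la_1+\la_2) \hookrightarrow V(\la_1) \tensp V(\la_2)$. These scalars can be computed inductively along a reduced expression for $w$ using Lusztig's braid operators $T_w$, whose action on tensor products produces precisely the required weight-dependent twist; the length hypotheses on $s's$ and $t't$ are essential in order for this induction to proceed without additional cancellations from lower-weight terms.
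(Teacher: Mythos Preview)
Your approach is correct and, for part (ii), identical to the paper's: the paper simply notes that (ii) follows from (i) by applying Proposition~\ref{prop: p_n}, which is exactly your computation, and the equivalence of \eqref{eq: quantum commuting factor} and \eqref{eq: quantum commuting factor 2} is the straightforward rearrangement you describe.

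The only difference is in part (i): the paper does not prove it at all but cites it directly as \cite[(10.2)]{BZ05}, whereas you outline a proof via extremal vectors in $V(\la)\tensp V(\mu)$ and Lusztig's braid operators. Your outline is essentially the argument of Berenstein--Zelevinsky, so this is not a genuinely different route---just a recapitulation of the cited source. If you intend to include this, be aware that the delicate point is exactly where you flag it: the scalar relating $T_w(u_{\la}\otimes u_{\mu})$ to $u_{w\la}\otimes u_{w\mu}$ must be tracked carefully, and the length-additivity hypotheses are used to factor $T_{s's}=T_{s'}T_s$ and $T_{t't}=T_{t'}T_t$ so that the computation reduces to the case $s'=t'=1$.
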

Note that (ii) follows from by Proposition \ref{prop: p_n} and (i).
Note also that the both sides of \eqref{eq: quantum commuting factor 2}
are bar-invariant, and hence they are members of the upper global basis
as seen by Corollary \ref{cor:qcomm}.

\Prop\label{prop:Deprod}
For $\la,\mu\in\Pd$ and $s,t\in W$,
set
$\bg\bl u_{s\la}\tens (u_\la)^\rt\br=b_-\tens t_\la\tens b_{-\infty}$
and $\bg\bl u_{\mu}\tens (u_{t\mu})^\rt\br=b_{\infty}\tens t_{t\mu}\tens b_+$
with $b_{\mp}\in B(\pm\infty)$.
Then we have
$$\De{s\la,\la}\De{\mu,t\mu}=G^\up\bl\bg^{-1} (b_-\tens t_{\la+t\mu}\tens b_+)\br.$$
\enprop
\Proof

Recall that there is a
  pairing $(\scbul,\scbul):
\bl V(\la)\tensm V(\mu)\br\times \bl V(\la)\tensp V(\mu)\br\to \Q(q)$
 defined by
$(u\tensm  v,u'\tensp v')=(u,u')(v,v')$.
It satisfies
$$\bl P(u\tensm v), u'\tensp v'\br=\bl u\tensm v, \vphi(P)(u'\tensp v')\br
 \quad \text{for any} \ P \in \Uqg.
$$
For $u,u'\in V(\la)$ and $v,v'\in V(\mu)$,
we have
\begin{align*}
\lan \Phi(u\tens u'^\rt)\Phi(v\tens v'^\rt), P\ran&=
\bl u'\tensm v', P(u\tensp v)\br\\
&=\bl\vphi(P)(u'\tensm v'),u\tensp v\br.
\end{align*}
Hence for $P\in\U$, we have
\begin{align*}\lan \De{s\la,\la}\De{\mu,t\mu}, Pa_{\zeta}\ran
&=\delta(\zeta=s\la+\mu)
\bl\vphi(P)(u_{\la}\tensm u_{t\mu}),u_{s\la}\tensp u_{\mu}\br.
\end{align*}
If $Pa_\zeta=G^\low(\vphi(b))$ for $b\in B(\tU)$,
then
we have
$$\lan \De{s\la,\la}\De{\mu,t\mu},\vphi(G^\low(b))\ran
=\delta(\zeta=s\la+\mu)
\bl G^\low(b)(u_{\la}\tensm u_{t\mu}),u_{s\la}\tensp u_{\mu}\br.$$
The element
$G^\low(b)(u_{\la}\tensm u_{t\mu})$
vanishes or is a global basis of $V(\la)\tensm V(\mu)$
by Proposition~\ref{pro:gltens}.
Since $u_{s\la}\tensp u_{\mu}$ is a member of the upper global basis of
$V(\la)\tensp V(\mu)$,
 we have
$$\lan \De{s\la,\la}\De{\mu,t\mu},\vphi(G^\low(b))\ran
=\delta(\zeta=s\la+\mu)
\delta\bl \pi_{\la,t\mu}(b)=u_{s\la}\tens u_{\mu}\br.$$
Here $\pi_{\la,t\mu}\cl B(\tU a_{\la+t\mu})\to B(\la)\tens B(\mu)$
is the crystal morphism given in \eqref{eq: pi la wmu}.

Hence we obtain
$$\De{s\la,\la}\De{\mu,t\mu}=G^\up( \bg^{-1}( b)),$$
where $b\in B(\tU)$ is a unique element such that
$\bl G^\low(b)(u_{\la}\tensm u_{s\mu}), u_{s\la}\tensp u_{\mu}\br=1$.

On the other hand, we have
$G^\low(b_+)u_{t\mu}=u_{\mu}$ and
$G^{\low}(b_-)u_\la=u_{s\la}$.
The last equality implies $\vphi(G^{\low}(b_-))u_{s\la}=u_\la$ because
$(\vphi(G^{\low}(b_-))u_{s\la},u_\la)=(u_{s\la},G^{\low}(b_-)u_\la)=(u_{s\la},u_{s\la})=1$.
As seen in \eqref{eq:tUc}, we have
$$G^{\low}(b_-)G^\low(b_+)a_{\la+t\mu}-G^\low(b_-\tens t_{\la+t\mu}\tens b_+)
\in \Um_{>s\la-\la}\Up_{<\mu-t\mu}a_{\la+t\mu}.$$
Hence we obtain
\eqn
&&\bl G^\low(b_-\tens t_{\la+t\mu}\tens b_+)(u_{\la}\tensm u_{t\mu}),\;
u_{s\la}\tensp u_{\mu}\br\allowdisplaybreaks[1]\\
&&\hs{10ex}
=\bl G^{\low}(b_-)G^\low(b_+)(u_{\la}\tensm u_{t\mu}),\;
u_{s\la}\tensp u_{\mu}\br\allowdisplaybreaks[1]\\
&&\hs{20ex}
=\bl G^\low(b_+)(u_{\la}\tensm u_{t\mu}),\;
\vphi(G^\low(b_-))(u_{s\la}\tensp u_{\mu})\br
=1.
\eneqn
In the last equality, we used
$G^\low(b_+)(u_{\la}\tensm u_{t\mu})
=u_{\la}\tensm (G^\low(b_+)u_{t\mu})=u_{\la}\tensm u_\mu$
and
$\vphi(G^\low(b_-))(u_{s\la}\tensp u_{\mu})
=\bl\vphi(G^\low(b_-))u_{s\la})\tensp u_{\mu}=u_{\la}\tensp u_\mu$.

Hence we conclude that $b=b_-\tens t_{\la+t\mu}\tens b_+$.
\QED

Let
\eqn
&&\iota_{\la,\mu}\cl V(\la+\mu)\monoto V(\la)\tens V(\mu)\eneqn
be the canonical embedding and
\eqn
&&\oi_{\la,\mu}\cl B(\la+\mu)\monoto B(\la)\tens B(\mu)\eneqn
the induced crystal embedding.

\begin{lemma} \label{lem:oi la mu}
For $\lambda,\mu \in \wl^+$ and $x,y \in W$ such that $x \ge y$, we have
$$ u_{x\lambda} \otimes u_{y \mu} \in \overline{\iota}_{\lambda,\mu} (B(\lambda+\mu)) \subset B(\lambda) \otimes B(\mu).$$
\end{lemma}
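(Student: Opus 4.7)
The plan is to induct on $\ell(x)$. When $\ell(x)=0$, the hypothesis $y\le x$ forces $x=y=e$, and $u_\la\tens u_\mu=\oi_{\la,\mu}(u_{\la+\mu})$ by definition. For the inductive step, pick a simple reflection $s=s_i$ with $\ell(s_i x)<\ell(x)$ and set $x'=s_i x$. Then $(x')^{-1}\al_i$ is a positive root, so $n\seteq\lan h_i,x'\la\ran$ is a non-negative integer and in $B(\la)$ we have $\tf_i^{\,n} u_{x'\la}=u_{x\la}$; moreover $\lan h_i,x\la\ran=-n\le 0$, so $\ve_i(u_{x\la})=n$ and $\vph_i(u_{x\la})=0$. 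The lifting property of Bruhat order applied to $y\le x$ splits into two cases: Case 1, $s_iy>y$ and $y\le x'$; or Case 2, $s_iy<y$ and $y'\seteq s_iy\le x'$.

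In Case 1, $y^{-1}\al_i$ is positive, hence $\lan h_i,y\mu\ran\ge 0$ and $\ve_i(u_{y\mu})=0$. By induction $u_{x'\la}\tens u_{y\mu}\in\oi_{\la,\mu}(B(\la+\mu))$, and Kashiwara's tensor product rule makes each of the first $n$ applications of $\tf_i$ act on the left factor (since $\vph_i$ of the left stays strictly positive while $\ve_i$ of the right is $0$), producing $\tf_i^{\,n}(u_{x'\la}\tens u_{y\mu})=u_{x\la}\tens u_{y\mu}$. In Case 2, set $m\seteq\lan h_i,y'\mu\ran\ge 0$, so that $\tf_i^{\,m}u_{y'\mu}=u_{y\mu}$ in $B(\mu)$ with $\ve_i(u_{y'\mu})=0$, $\vph_i(u_{y'\mu})=m$. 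By induction $u_{x'\la}\tens u_{y'\mu}\in\oi_{\la,\mu}(B(\la+\mu))$. Tracking $(\vph_i,\ve_i)$ on each factor, the first $n$ applications of $\tf_i$ act on the left (turning $u_{x'\la}$ into $u_{x\la}$, while the right is untouched and remains with $\ve_i=0$); after that $\vph_i$ on the left equals $0$ and $\ve_i$ on the right starts from $0$ but becomes strictly positive once we begin acting there, so each of the next $m$ applications acts on the right and produces $u_{y\mu}$. Thus $\tf_i^{\,n+m}(u_{x'\la}\tens u_{y'\mu})=u_{x\la}\tens u_{y\mu}$. Since $\oi_{\la,\mu}$ is a crystal embedding, its image is closed under $\tf_i$, completing the induction in both cases.

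The only real subtlety is lining up the Bruhat-order dichotomy ($s_iy>y$ versus $s_iy<y$) with the tensor product dichotomy ($\vph_i$ of the left versus $\ve_i$ of the right), and verifying at each step that $\tf_i$ lands in the intended factor. Once one observes the sign conventions $\lan h_i,x\la\ran\le 0$ when $\ell(s_i x)<\ell(x)$, and $\lan h_i,y\mu\ran\le 0$ when $\ell(s_i y)<\ell(y)$, the two combinatorial rules interlock cleanly, and no input beyond the tensor product rule and the lifting property is required.
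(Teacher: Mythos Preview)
Your proof is correct and follows essentially the same approach as the paper's: induction on $\ell(x)$, choosing $i$ with $s_ix<x$, splitting into the two Bruhat-lifting cases, and connecting $u_{x\la}\otimes u_{y\mu}$ to the inductively known element via crystal operators. The only cosmetic difference is direction: the paper applies $\te_i^{\max}$ to $u_{x\la}\otimes u_{y\mu}$ to reach the smaller element, while you apply powers of $\tf_i$ to the smaller element to reach $u_{x\la}\otimes u_{y\mu}$; both arguments rest on the image of $\oi_{\la,\mu}$ being a connected component of $B(\la)\otimes B(\mu)$.
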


\begin{proof}
Let us show by induction on $\ell(x)$, the length of $x$ in $W$. We may assume that $x \ne 1$. Then there exists $i \in I$ such that
$s_i x < x$. If $s_iy<y$, then $s_ix \ge s_iy$ and $\tilde{e}_i^{\max}(u_{x\lambda} \otimes u_{y\mu})= u_{s_ix\lambda} \otimes u_{s_iy\mu}$.
If $s_iy>y$, then $s_ix\ge y$ and
$\tilde{e}_i^{\max}(u_{x\lambda} \otimes u_{y\mu})= u_{s_ix\lambda} \otimes u_{y\mu}$. In
both cases,
$u_{x\lambda} \otimes u_{y \mu}$  is connected with an element of $\overline{\iota}_{\lambda,\mu}(B(\lambda+\mu))$.
\end{proof}

\Lemma \label{lem:Deprod}
For $\la, \mu\in\Pd$ and $w\in W$, we have
$$\De{w\la,\la}\De{\mu,\mu}=
G^\up\bl\oi_{\la,\mu}^{-1}(u_{w\la}\tens u_\mu)\tens u_{\la+\mu}{}^\rt\br.$$
\enlemma
\Proof We have
\eqn
&&\bg(u_{w\la}\tens u_\la^\rt\br=b_{w\la}\tens t_{\la}\tens b_{-\infty},\\
&&\bg(u_{\mu}\tens u_\mu^\rt\br=b_{\infty}\tens t_{\mu}\tens b_{-\infty},
\eneqn
where $b_{w\la}\seteq\oi_\la(u_{w\la})$.
Hence Proposition~\ref{prop:Deprod}
implies that
$$\De{w\la,\la}\De{\mu,\mu}=G^\up\bl\bg^{-1}(b_{w\la}\tens t_{\la+\mu}\tens b_{-\infty})\br.$$
Then, $\bg\bl\oi_{\la,\mu}^{-1}(u_{w\la}\tens u_\mu)\tens u_{\la+\mu}{}^\rt\br
=b_{w\la}\tens t_{\la+\mu}\tens b_{-\infty}$
 gives the desired result.
\QED

\subsection{T-system} \label{subsubsec: T-system} In this subsection, we  recall the {\em $T$-system} among the (unipotent) quantum minors for later use
(see \cite{KNS11} for $T$-system).

\begin{prop} [{\cite[Proposition 3.2]{GLS}}] \label{prop: the ses}
Assume that the Kac-Moody algebra $\g$ is of symmetric type. Assume that $u,v \in W$ and $i \in I$ satisfy $u < us_i$ and $v <
vs_i$. Then
\begin{align*}
& \Delta(us_i\varpi_i,vs_i\varpi_i)\Delta(u\varpi_i,v\varpi_i) =
q^{-1}\Delta(u  s_i \varpi_i,v\varpi_i)\Delta( u\varpi_i, v s_i\varpi_i)
 +\Delta(u\lambda,v\lambda),   \\
& \Delta(u\varpi_i,v\varpi_i)\Delta(us_i\varpi_i,vs_i\varpi_i) =
q\Delta( u\varpi_i\ ,vs_i\varpi_i)\Delta(u  s_i  \varpi_i,v\varpi_i) +
 \Delta(u\lambda,v\lambda)
\end{align*}
and
\begin{align*}
& q^{(vs_i\varpi_i,v\varpi_i-u\varpi_i)}\D(us_i\varpi_i,vs_i\varpi_i)\D(u\varpi_i,v\varpi_i) \allowdisplaybreaks\\
& \hspace{5ex} = q^{-1+(v\varpi_i,vs_i\varpi_i-u\varpi_i)}\D(us_i\varpi_i,v\varpi_i)\D(u\varpi_i,vs_i\varpi_i)  +\D(u\lambda,v\lambda)
\allowdisplaybreaks\\
& \hspace{5ex} = q^{-1+(vs_i\varpi_i,v\varpi_i-us_i\varpi_i)}\D(u\varpi_i,vs_i\varpi_i)\D(us_i\varpi_i,v\varpi_i) + \D(u\lambda,v\lambda),
\allowdisplaybreaks\\
& q^{(v\varpi_i,vs_i\varpi_i-us_i\varpi_i)}\D(u\varpi_i,v\varpi_i)\D(us_i\varpi_i,vs_i\varpi_i) \allowdisplaybreaks\\
& \hspace{5ex} = q^{1+(vs_i\varpi_i,v\varpi_i-us_i\varpi_i)}\D(u\varpi_i,vs_i\varpi_i)\D(us_i\varpi_i,v\varpi_i) + \D(u\lambda,v\lambda)
\allowdisplaybreaks\\
& \hspace{5ex} = q^{1+(v\varpi_i,vs_i\varpi_i-u\varpi_i)}\D(us_i\varpi_i,v\varpi_i)\D(u\varpi_i,vs_i\varpi_i) + \D(u\lambda,v\lambda),
\end{align*}
where  $\lambda=s_i\varpi_i+\varpi_i$. 
\end{prop}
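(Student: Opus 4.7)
The plan is to first establish the $\Delta$-identity in $A_q(\g)$, then transfer to the $\D$-identity in $A_q(\n)$. The $\D$-identities in $A_q(\n)$ follow by applying the projection $p_\n$ of Definition~\ref{def: p_n} and using the twist formula $p_\n(\psi\theta) = q^{(\wt_\ri(\psi),\,\wt_\ri(\theta)-\wt_\li(\theta))}\,p_\n(\psi)p_\n(\theta)$ of Proposition~\ref{prop: p_n} to track $q$-prefactors; the three equivalent forms of each $\D$-identity with different $q$-prefactors on the RHS are related through the commutations \eqref{eq: quantum commuting factor}--\eqref{eq: quantum commuting factor 2} of Proposition~\ref{prop: BZ form}(ii). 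The two $\Delta$-identities are proved in parallel by the same method.

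To prove the first $\Delta$-identity, I would work in $V(\varpi_i)\tensp V(\varpi_i)$. Since $\langle h_i,\varpi_i\rangle = 1$ and $(\alpha_i,\alpha_i) = 2$, the weight space at $\lambda := s_i\varpi_i + \varpi_i = 2\varpi_i - \alpha_i$ is two-dimensional, spanned by $u_{s_i\varpi_i}\tensp u_{\varpi_i}$ and $u_{\varpi_i}\tensp u_{s_i\varpi_i}$. The weight $\lambda$ is dominant (for $j\ne i$ one has $\langle h_j,\lambda\rangle = -a_{ji}\ge 0$), and $V(\varpi_i)\tensp V(\varpi_i)$ contains $V(\lambda)$ as a direct summand alongside $V(2\varpi_i)$. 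A direct $\mathfrak{sl}_2$-computation using the coproduct \eqref{eq: comp} identifies the highest weight vector of the $V(\lambda)$-summand (up to normalization) as
\[
u_{s_i\varpi_i}\tensp u_{\varpi_i} - q^{-1}\,u_{\varpi_i}\tensp u_{s_i\varpi_i},
\]
verified by annihilation by $e_i$; the complementary direction is $f_i(u_{\varpi_i}\tensp u_{\varpi_i}) = q^{-1}u_{s_i\varpi_i}\tensp u_{\varpi_i} + u_{\varpi_i}\tensp u_{s_i\varpi_i}$, which lies in $V(2\varpi_i)$. An analogous computation in $V(\varpi_i)\tensm V(\varpi_i)$ using \eqref{eq: comm} yields the $V(\lambda)$- and $V(2\varpi_i)$-directions there. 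Next, transport these identities to weights $u\varpi_i, us_i\varpi_i$ and $v\varpi_i, vs_i\varpi_i$ by the Weyl group action on extremal weight vectors; the hypotheses $u<us_i$ and $v<vs_i$ provide the required relative positions. Finally, translate through the $\Uqg$-bimodule isomorphism $\Phi$ of \eqref{eq: Phi} using the multiplication formula $\Phi_V(v_1\tens v_1'^\ri)\Phi_V(v_2\tens v_2'^\ri) = \Phi_{V\tens V}\bl(v_1\tensp v_2)\tens (v_1'\tensm v_2')^\ri\br$, together with the fact that the pairing between $V\tensp V$ and $V\tensm V$ respects the $V(2\varpi_i)\oplus V(\lambda)$ decomposition so that cross terms vanish. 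The $V(\lambda)$-contribution yields $\Delta(u\lambda,v\lambda)$, and the $V(2\varpi_i)$-contributions cancel between the two sides of the identity.

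The main obstacle will be the precise bookkeeping of $q$-powers. The $q^{-1}$ in the first $\Delta$-identity and the $q$ in the second arise from the different coefficients obtained when expressing the tensor vectors $u_{us_i\varpi_i}\tensp u_{u\varpi_i}$ versus $u_{u\varpi_i}\tensp u_{us_i\varpi_i}$ (and similarly for the $\tensm$ side) as linear combinations of the $V(\lambda)$- and $V(2\varpi_i)$-directions; a direct linear algebra computation with these two-by-two change-of-basis matrices produces the stated coefficients. The further $q$-prefactors in the $\D$-identities, such as $q^{(vs_i\varpi_i,\,v\varpi_i-u\varpi_i)}$, follow by applying Proposition~\ref{prop: p_n} to each product and tracking weights. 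A useful consistency check throughout is bar-invariance of $\Delta(u\lambda,v\lambda)$ and $\D(u\lambda,v\lambda)$, which are members of the upper global basis by Lemma~\ref{lem: global element}, together with Corollary~\ref{cor:qcomm} giving bar-invariance of the appropriately rescaled products.
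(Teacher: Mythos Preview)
The paper does not supply its own proof of this proposition: it is quoted verbatim from \cite[Proposition~3.2]{GLS}, and the only remark following it is the observation that $\D(u\lambda,v\lambda)$ factors (up to a power of $q$) as $\prod_{j\ne i}\D(u\varpi_j,v\varpi_j)^{-a_{j,i}}$ via Corollary~\ref{cor:Duv}. So there is no in-paper argument to compare against.

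Your proposed strategy is the standard one and is essentially how the result is proved in \cite{GLS} (building on \cite{BZ05}): identify the two-dimensional weight space of $V(\varpi_i)\tensp V(\varpi_i)$ at $2\varpi_i-\alpha_i$, split it into its $V(2\varpi_i)$- and $V(\lambda)$-components, then push to general $u,v$ and read off the identity through $\Phi$; finally apply $p_\n$ with Proposition~\ref{prop: p_n} to get the $\D$-versions, and use Proposition~\ref{prop: BZ form}(ii) to pass between the alternate forms. Your $\mathfrak{sl}_2$-computations for the highest-weight vector and for $f_i(u_{\varpi_i}\tensp u_{\varpi_i})$ are correct.

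One step you should make more explicit is the ``transport by the Weyl group action on extremal weight vectors.'' What is actually applied is a product of divided powers $f_{j}^{(c)}$ from the left (for a reduced word of $u$) and $e_{j}^{(c)}$ from the right (for a reduced word of $v$), exactly as in the operators $S_{x,\mu}$ and $S_{z,\lambda}^*$ appearing in the proof of Proposition~\ref{prop:dMM}. The hypotheses $u<us_i$ and $v<vs_i$ guarantee that these operators send the four extremal vectors $u_{\varpi_i},u_{s_i\varpi_i}$ (in both tensor factors, on both sides) to $u_{u\varpi_i},u_{us_i\varpi_i}$ and $u_{v\varpi_i},u_{vs_i\varpi_i}$ without loss, and that they carry the $V(\lambda)$-highest-weight vector to the corresponding extremal vector $u_{u\lambda}$ (resp.\ $u_{v\lambda}$). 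Once this is spelled out, the remaining $q$-bookkeeping via Proposition~\ref{prop: p_n} is routine, and your bar-invariance check is a good sanity test.
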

 Note that the difference of $\la$ and $\displaystyle-\sum_{j \ne i}a_{j,i}\varpi_j$ are $W$-invariant.
Hence we have
$\displaystyle \D(u\lambda,v\lambda) = \prod_{j \ne i}\D(u\varpi_j,v\varpi_j)^{-a_{j,i}}$ from Corollary \ref{cor:Duv},
 by disregarding  a power of $q$.

\subsection{Revisit of crystal bases and global bases}

In order to prove Theorem~\ref{thm: DDD} below, we first investigate
the upper crystal lattice of
$\Dv V$ induced by an upper  crystal lattice of $V\in\Oint(\g)$.

Let $V$ be a $U_q(\g)$-module in $\Oint (\g)$. Let $L^\up$ be an upper crystal lattice of $V$. Then we have (see Lemma~\ref{lem:lowup})
$$ \soplus_{\xi \in \wl} q^{(\xi,\xi)/2}(L^\up)_\xi \text{ is a lower crystal lattice of $V$.}$$

Recall that, for $\lambda\in \wl^+$, the upper crystal lattice $L^\up(\lambda)$ and the lower crystal lattice $L^\low(\lambda)$ of $V(\lambda)$ are related by
\begin{align}
L^\up(\lambda) = \soplus_{\xi \in \wl} q^{((\lambda,\lambda)-(\xi,\xi))/2}L^\low(\lambda)_\xi \subset L^\low(\lambda).
\end{align}

Write
$$V \simeq \soplus_{\lambda \in \wl^+} E_\lambda \otimes V(\lambda)$$
with finite-dimensional $\Q(q)$-vector spaces $E_\lambda$ .
Accordingly, we have a canonical decomposition
$$L^\up \simeq \soplus_{\lambda\in \wl^+} C_\lambda \otimes_{\QA_0} L^\up(\lambda),$$
where $C_\lambda \subset E_\lambda$ is an $\QA_0$-lattice of $E_\lambda$.

On the other hand, we have
$$\Dv V \simeq\soplus_{\lambda\in \wl^+}  E^*_\lambda \otimes V(\lambda).$$

Note that we have
$$ \Phi_V \left( (a \otimes u) \otimes (b \otimes v)^\ri \right) = \lan a,b \ra \Phi_\lambda(u \otimes v^\ri)
\quad \text{for $u,v \in V(\lambda)$ and $a \in E_\lambda$, $b \in E^*_\lambda$.}$$

We define the induced upper crystal lattice $\Dv L^\up$ of $\Dv V$ by
$$\Dv L^\up \seteq \soplus_{\lambda \in \wl^+} C_\lambda^\vee \otimes_{\QA_0} L^\up(\lambda)
\subset \Dv V, $$
where $C_\lambda^\vee \seteq  \set{ u \in E_\lambda^*}{\lan u,C_\lambda \ra \subset \QA_0 }$. Then we have
$$ \Phi_V \left(L^\up \otimes (\Dv L^\up)^\ri \right) \subset L^\up(A_q(\g)).$$
Indeed, we have
$$ \Dv L^\up =\set{ u \in \Dv V }{\Phi_V(L^\up \otimes u^\ri) \subset L^\up(A_q(\g)) }. $$

Since $(L^\up(\lambda))^\vee = L^\low(\lambda)$, we have
$$ (L^\up)^\vee = \soplus_{\lambda \in \wl^+} C_\lambda^\vee \otimes_{\QA_0} L^\low(\lambda).$$

The properties $L^\up(\lambda) \subset L^\low(\lambda)$ and $L^\up(\lambda)_\lambda = L^\low(\lambda)_\lambda$ imply the following lemma.

\begin{lemma} \label{lem: largest upper crystal}
$\Dv L^\up$ is the largest upper crystal lattice of $\Dv V$ contained in the lower crystal lattice $(L^\up)^\vee$.
\end{lemma}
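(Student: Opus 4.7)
The plan is to proceed in two movements: first verify that $\Dv L^\up$ itself is an upper crystal lattice inside $(L^\up)^\vee$, and then establish maximality via a decomposition argument.

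For the first movement, I would directly check that $\Dv L^\up = \soplus_\lambda C_\lambda^\vee \tens_{\QA_0} L^\up(\lambda)$ inherits invariance under the upper Kashiwara operators from each $L^\up(\lambda)$, and is a weight-graded $\QA_0$-lattice because each $C_\lambda^\vee$ is. The containment $\Dv L^\up \subset (L^\up)^\vee$ then follows factor-by-factor from $L^\up(\lambda) \subset L^\low(\lambda)$, which is in turn immediate from the formula $L^\up(\lambda)_\xi = q^{((\lambda,\lambda) - (\xi,\xi))/2}L^\low(\lambda)_\xi$ together with the fact that $(\lambda,\lambda) \ge (\xi,\xi)$ for weights $\xi$ of $V(\lambda)$ with dominant $\lambda$.

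The substantive part is maximality. Let $M$ be an arbitrary upper crystal lattice of $\Dv V$ with $M \subset (L^\up)^\vee$. The key structural claim I would establish is that $M$ decomposes compatibly with the isotypic decomposition $\Dv V = \soplus_\lambda E_\lambda^* \tens V(\lambda)$, namely
\eqn
M = \soplus_\lambda D_\lambda \tens_{\QA_0} L^\up(\lambda)
\eneqn
for certain $\QA_0$-lattices $D_\lambda \subset E_\lambda^*$. To produce $D_\lambda$, set $D_\lambda \seteq \{u \in E_\lambda^* \mid u \tens u_\lambda \in M\}$. Since the vectors of weight $\lambda$ annihilated by every $\te_i^\up$ constitute exactly $E_\lambda^* \tens u_\lambda$, and since $M$ is an upper crystal lattice stable under $\te_i^\up$, an $\QA_0$-lattice argument on weight spaces (rescaling a given $v \tens u_\lambda \in E_\lambda^* \tens u_\lambda$ into $M$ yields $D_\lambda$ spanning $E_\lambda^*$ over $\Q(q)$) shows $D_\lambda$ is an $\QA_0$-lattice of $E_\lambda^*$. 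The inclusion $\soplus D_\lambda \tens L^\up(\lambda) \subset M$ then follows by applying the $\tf_i^\up$'s to the highest weight elements $v \tens u_\lambda$, since $L^\up(\lambda)$ is $\QA_0$-generated from $u_\lambda$ by the $\tf_i^\up$'s.

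Once the decomposition is in hand, maximality becomes transparent: restrict the hypothesis $M \subset (L^\up)^\vee = \soplus_\lambda C_\lambda^\vee \tens L^\low(\lambda)$ to the weight-$\lambda$ component inside the $V(\lambda)$-isotypic summand. Since $L^\up(\lambda)_\lambda = L^\low(\lambda)_\lambda = \QA_0 u_\lambda$, this restriction reads $D_\lambda \tens u_\lambda \subset C_\lambda^\vee \tens u_\lambda$, giving $D_\lambda \subset C_\lambda^\vee$ for every $\lambda$, and therefore $M \subset \Dv L^\up$.

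The main obstacle will be proving the reverse inclusion $M \subset \soplus D_\lambda \tens L^\up(\lambda)$ in the decomposition step, since a priori there is no reason an upper crystal lattice must respect the isotypic projection. I expect to handle this by showing that both sides have the same image in $M/qM$ using the characterization of $B(V)$ as a disjoint union of $B(\lambda)$'s (one for each highest weight vector in $D_\lambda/qD_\lambda$) combined with Nakayama-type reasoning on the weight-graded $\QA_0$-lattices; alternatively, by iteratively subtracting from an arbitrary $m \in M$ its highest-weight component and invoking induction on the weight.
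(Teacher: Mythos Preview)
Your proposal is correct and follows essentially the same approach as the paper, which reduces the lemma to the two facts $L^\up(\lambda) \subset L^\low(\lambda)$ and $L^\up(\lambda)_\lambda = L^\low(\lambda)_\lambda$. The only difference is that the paper treats the isotypic decomposition $M = \soplus_\lambda D_\lambda \tens_{\QA_0} L^\up(\lambda)$ of an arbitrary upper crystal lattice as a known consequence of the existence and uniqueness theorem for crystal lattices in $\Oint(\g)$, whereas you spell out an argument for it; once that decomposition is granted, your use of the highest-weight-space equality to conclude $D_\lambda \subset C_\lambda^\vee$ is exactly the paper's reasoning.
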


Let $\la, \mu\in\Pd$.
Then
$\bl L^\up(\la)\tensp L^\up(\mu)\br^\vee
=L^\low(\la)\tensm L^\low(\mu)$ is a lower crystal lattice of
$\Dv\bl V(\la)\tensp V(\mu)\br\simeq V(\la)\tensm V(\mu)$.
Let
$\Xi_{\la,\mu}\cl V(\la)\tensp V(\mu)\isoto V(\la)\tensm V(\mu)
\simeq \Dv\bl V(\la)\tensp V(\mu)\br$
be the $\U$-module isomorphism defined by
$$\Xi_{\la,\mu}(u\tensp v)=q^{(\la,\mu)-(\xi,\eta)}\bl u\tensm v\br
\quad\text{for $u\in V(\la)_\xi$ and $v\in V(\mu)_\eta$.}
$$

Then
\eqn
\tLt&\seteq&\Xi_{\la,\mu}\bl L^\up(\la)\tensp L^\up(\mu)\br\\
&\;=&\soplus_{\xi,\eta\in\Po}
q^{(\la,\mu)-(\xi,\eta)}L^\up(\la)_\xi\tensm L^\up(\mu)_\eta
\eneqn
is an upper crystal lattice of $V(\la)\tensm V(\mu)$.
Since we have $(\la,\mu)-(\xi,\eta)\ge0$
for any $\xi\in\wt\bl V(\la)\br$ and $\eta\in\wt\bl V(\mu)\br$,
Lemma~\ref{lem: largest upper crystal} implies that
\eq \label{eq:tildeL}
\tLt\subset \Dv\bl L^\up(\la)\tensp L^\up(\mu)\br.
\eneq

\begin{lemma}
Let $\lambda, \mu \in \wl^+$ and $x_1,x_2,y_1,y_2 \in W$ such that $x_k \ge y_k$
 $(k=1,2)$. Then we have
\begin{equation} \label{eq: multi}
\begin{aligned}
& q^{(\lambda,\mu)-(x_2\lambda,y_2\mu)} \Delta(x_1\lambda,x_2\lambda)\Delta(y_1\mu,y_2\mu) \\
& \hspace{10ex} \equiv G^\up( \oi^{-1}_{\lambda,\mu}(u_{x_1\lambda} \otimes u_{y_1 \mu}) \otimes  \oi^{-1}_{\lambda,\mu}(u_{x_2\lambda} \otimes u_{y_2 \mu})^\ri )
\quad {\rm mod} \ q L^\up(A_q(\g)).
\end{aligned}
\end{equation}
\end{lemma}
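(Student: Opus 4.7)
The strategy is to express the product of quantum minors as a value of $\Phi$ applied to a tensor element in $V\tens \Dv V$ with $V = V(\la)\tensp V(\mu)$, then extract the $V(\la+\mu)$-isotypic contribution using the crystal theory of tensor products.

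First, using the comultiplication $\comp$ (which defines the algebra structure on $\Ag$) and the natural bilinear pairing between $V(\la)\tensm V(\mu)$ and $V(\la)\tensp V(\mu)$ that identifies $\Dv V\simeq V(\la)\tensm V(\mu)$, I will show that
\[
\Delta(x_1\la,x_2\la)\Delta(y_1\mu,y_2\mu) = \Phi_V\bl(u_{x_1\la}\tensp u_{y_1\mu})\tens(u_{x_2\la}\tensm u_{y_2\mu})^\ri\br.
\]
This follows from $\Phi_V(u\tens\psi^\ri)(a)=\lan\psi^\ri,au\ran$ and the computation $(\Phi(u_1\tens v_1^\ri)\Phi(u_2\tens v_2^\ri))(a)=(v_1\tens v_2,\comp(a)(u_1\tens u_2))$, together with the invariance of the tensor-product pairing.

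Next, I identify the required $q$-power via the upper crystal lattices. The left factor $u_{x_1\la}\tensp u_{y_1\mu}$ lies in $L^\up(V) = L^\up(\la)\tensp L^\up(\mu)$ since each extremal vector $u_{x_k\la}$ and $u_{y_k\mu}$ belongs to both upper and lower crystal lattices. For the right factor, the inclusion \eqref{eq:tildeL} gives
\[
\tLt = \soplus_{\xi,\eta} q^{(\la,\mu)-(\xi,\eta)}L^\up(\la)_\xi\tensm L^\up(\mu)_\eta \subset \Dv L^\up(V),
\]
hence $q^{(\la,\mu)-(x_2\la,y_2\mu)}(u_{x_2\la}\tensm u_{y_2\mu})\in \tLt \subset \Dv L^\up(V)$. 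Applying $\Phi$ then yields $q^{(\la,\mu)-(x_2\la,y_2\mu)}\Delta(x_1\la,x_2\la)\Delta(y_1\mu,y_2\mu)\in L^\up(\Ag)$.

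Finally, I compute the crystal class modulo $qL^\up(\Ag)$. The rescaled right factor represents the crystal basis element $u_{x_2\la}\otimes u_{y_2\mu}\in B(\la)\otimes B(\mu)$ via the identification $\tLt/q\tLt\simeq (L^\up(\la)/qL^\up(\la))\otimes (L^\up(\mu)/qL^\up(\mu))$, and similarly the left factor represents $u_{x_1\la}\otimes u_{y_1\mu}$. By Lemma \ref{lem:oi la mu}, the hypothesis $x_k\ge y_k$ implies $u_{x_k\la}\otimes u_{y_k\mu}\in\oi_{\la,\mu}(B(\la+\mu))$, corresponding to $\tilde{b}_k := \oi^{-1}_{\la,\mu}(u_{x_k\la}\otimes u_{y_k\mu})\in B(\la+\mu)$. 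Since $\Phi$ annihilates off-diagonal summands $V(\nu)\tens V(\nu')^\vee$ for $\nu\ne\nu'$ by $\Uqg$-invariance, only the $V(\la+\mu)$-isotypic contribution survives modulo $qL^\up(\Ag)$, yielding the desired congruence to $G^\up(\tilde{b}_1\tens\tilde{b}_2^\ri)$. The main obstacle is this last step: rigorously making the crystal-theoretic decomposition across isotypic components and confirming that no additional $q$-power corrections arise beyond the one provided by \eqref{eq:tildeL}.
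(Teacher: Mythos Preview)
Your proposal is correct and follows essentially the same route as the paper: express the product as $\Phi_{V(\la)\tensp V(\mu)}\bl(u_{x_1\la}\tensp u_{y_1\mu})\tens(u_{x_2\la}\tensm u_{y_2\mu})^\ri\br$, rescale the right factor by $q^{(\la,\mu)-(x_2\la,y_2\mu)}$ to land in $\tLt\subset\Dv L^\up(V)$ via \eqref{eq:tildeL}, and then use Lemma~\ref{lem:oi la mu} to identify the crystal classes with elements of $B(\la+\mu)$.

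For the step you flagged as the main obstacle, the paper carries it out concretely rather than by an abstract isotypic argument: it picks lifts $v_k\in L^\up(\la+\mu)$ with $\iota_{\la,\mu}(v_k)\equiv u_{x_k\la}\tensp u_{y_k\mu}\bmod qL^\up(\la)\tensp L^\up(\mu)$, observes that $G^\up(\tilde b_1\tens\tilde b_2^{\,\ri})\equiv\Phi_{\la+\mu}(v_1\tens v_2^\ri)\bmod qL^\up(\Ag)$, and then uses the pairing identity $(v_2,u)=(\Xi_{\la,\mu}\iota_{\la,\mu}(v_2),\iota_{\la,\mu}(u))$ to rewrite $\Phi_{\la+\mu}(v_1\tens v_2^\ri)$ as $\Phi_{V(\la)\tensp V(\mu)}\bl\iota_{\la,\mu}(v_1)\tens(\Xi_{\la,\mu}\iota_{\la,\mu}(v_2))^\ri\br$. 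The congruences $\iota_{\la,\mu}(v_1)\equiv u_{x_1\la}\tensp u_{y_1\mu}$ in $L^\up(\la)\tensp L^\up(\mu)$ and $\Xi_{\la,\mu}\iota_{\la,\mu}(v_2)\equiv\Xi_{\la,\mu}(u_{x_2\la}\tensp u_{y_2\mu})$ in $\tLt\subset\Dv L^\up(V)$ then finish the comparison. This bypasses the need to argue directly about how the crystal bases of $\tLt$ and $\Dv L^\up(V)$ match up, which is the delicate point in your more abstract formulation.
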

\Proof
By the definition, we have
$$\De{x_1\la,x_2\la}\De{y_1\mu,y_2\mu}
=\Phi_{V(\la)\tensp V(\mu)}\bl
(u_{x_1\la}\tensp u_{y_1\mu})\tens (u_{x_2\la}\tensm u_{y_2\mu})^\rt\br.$$
Hence we have
\eqn
&&q^{(\la,\mu)-(x_2\la,y_2\mu)}
\De{x_1\la,x_2\la}\De{y_1\mu,y_2\mu}\\
&&\hs{3ex}=\Phi_{V(\la)\tensp V(\mu)}\bl
(u_{x_1\la}\tensp u_{y_1\mu})\tens q^{(\la,\mu)-(x_2\la,y_2\mu)}(u_{x_2\la}\tensm u_{y_2\mu})^\rt\br\\
&&\hs{6ex}=\Phi_{V(\la)\tensp V(\mu)}\bl
(u_{x_1\la}\tensp u_{y_1\mu})\tens
\bl\Xi_{\la,\mu}(u_{x_2\la}\tensp u_{y_2\mu})\br^\rt\br.
\eneqn
The right-hand side of \eqref{eq: multi} can be calculated as follows.
Let us take $v_k\in L^\up(\la+\mu)$ such that $\iota_{\la,\mu}(v_k)-u_{x_k\la}\tensp u_{y_k\mu}
\in qL^\up(\la)\tensp L^\up(\mu) $  for $k=1,2$.
Here $\iota_{\la,\mu}\cl V(\la+\mu)\to V(\la)\tensp V(\mu)$
denotes the canonical $\U$-module homomorphism and such a $v_k$ exists by Lemma \ref{lem:oi la mu}.

Then we have
\eqn
&&G^\up\Bigl(\oi_{\la,\mu}^{\ -1}(u_{x_1\la}\tens u_{y_1\mu})
\tens\bl  \oi_{\la,\mu}^{\ -1}(u_{x_2\la}\tens u_{y_2\mu})\br^\rt\Bigl)\\
&&\hs{5ex}\equiv \Phi_{\la+\mu}(v_1\tens v_2^\rt)\mod qL^\up(\Ag)\\
&&\hs{6ex}=\Phi_{V(\la)\tensp V(\mu)}\bl \iota_{\la,\mu}(v_1)\tens
\bl\Xi_{\la,\mu}\iota_{\la,\mu}(v_2))^\rt \br.
\eneqn
 The last equality follows from $(v_2, u) = (\Xi_{\la,\mu}\iota_{\la,\mu}(v_2), \iota_{\la,\mu} (u))$ for all $u \in V(\la+\mu)$.

On the other hand, we have
$$\iota_{\la,\mu}(v_1)\equiv
u_{x_1\la}\tensp u_{y_1\mu}\mod qL^\up(\la)\tensp L^\up(\mu)$$
and
$$\Xi_{\la,\mu}\bl\iota_{\la,\mu}(v_2)\br
\equiv \Xi_{\la,\mu}(u_{x_2\la}\tensp u_{y_2\mu})\mod q\tLt.
$$
Hence
\eqn
&&\Phi_{V(\la)\tensp V(\mu)}\bl
(u_{x_1\la}\tensp u_{y_1\mu})\tens \Xi_{\la,\mu}(u_{x_2\la}\tensp u_{y_2\mu})^\rt\br\\
&&\hs{10ex}\equiv
\Phi_{V(\la)\tensp V(\mu)}\bl (\iota_{\la,\mu}(v_1)\tens
(\Xi_{\la,\mu}\iota_{\la,\mu}(v_2))^\rt\br
\mod qL^\up(\Ag)
\eneqn
by \eqref{eq:tildeL}, as desired.
\QED

\begin{theorem} \label{thm: DDD}
Let $\lambda \in \wl^+$ and $x,y \in W$ such that $x \ge y$. Then we have
$$ \D(x\lambda,y\lambda)\D(y\lambda,\lambda) \equiv \D(x\lambda,\lambda) \quad {\rm mod} \ qL^\up(A_q(\n)).$$
\end{theorem}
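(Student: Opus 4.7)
The plan is to prove the congruence by lifting to $A_q(\g)$ and applying the multiplication formula \eqref{eq: multi} just established, then projecting back via $p_\n$. I would apply \eqref{eq: multi} with $\mu=\lambda$ and $(x_1,y_1,x_2,y_2)=(x,y,y,1)$; the hypotheses $x_k\ge y_k$ reduce to $x\ge y$ (given) and $y\ge 1$ (automatic). This produces
\begin{equation*}
q^{(\lambda,\lambda)-(y\lambda,\lambda)}\,\Delta(x\lambda,y\lambda)\Delta(y\lambda,\lambda)\equiv G^\up(\tilde b)\mod qL^\up(A_q(\g)),
\end{equation*}
where $\tilde b = \oi^{-1}_{\lambda,\lambda}(u_{x\lambda}\otimes u_{y\lambda})\otimes\oi^{-1}_{\lambda,\lambda}(u_{y\lambda}\otimes u_\lambda)^\ri\in B(A_q(\g))$. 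Then I would apply $p_\n$ to both sides, using Proposition \ref{prop: p_n} which contributes a factor $q^{(y\lambda,\lambda-y\lambda)}$ on the left. The resulting $q$-exponent is $(\lambda,\lambda)-(y\lambda,\lambda)+(y\lambda,\lambda)-(y\lambda,y\lambda)=(\lambda,\lambda)-(\lambda,\lambda)=0$, since $y\in W$ is an isometry, yielding
\begin{equation*}
\D(x\lambda,y\lambda)\D(y\lambda,\lambda)\equiv p_\n(G^\up(\tilde b))\mod qL^\up(A_q(\n)).
\end{equation*}

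It remains to identify $p_\n(G^\up(\tilde b))=\D(x\lambda,\lambda)$. By Proposition \ref{prop: nonzero elt}, writing $\bg(\tilde b)=b_-\otimes t_{y\lambda+\lambda}\otimes b_+\in B(\tU a_{y\lambda+\lambda})$, one has $p_\n(G^\up(\tilde b))=\delta_{b_+,b_{-\infty}}G^\up(b_-)$. For comparison, Lemma \ref{lem:Deprod} applied with $w=x$, $\mu=\lambda$ gives $\Delta(x\lambda,\lambda)\Delta(\lambda,\lambda)=G^\up(\tilde b')$ for $\tilde b'=\oi^{-1}_{\lambda,\lambda}(u_{x\lambda}\otimes u_\lambda)\otimes u_{2\lambda}^\ri$, and Proposition \ref{prop:Deprod} (with $s=x$, $\mu=\lambda$, $t=1$) yields $\bg(\tilde b')=\oi_\lambda(u_{x\lambda})\otimes t_{2\lambda}\otimes b_{-\infty}$, so that $p_\n(G^\up(\tilde b'))=G^\up(\oi_\lambda(u_{x\lambda}))=\D(x\lambda,\lambda)$. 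Our task is thus to establish the parallel identification $\bg(\tilde b)=\oi_\lambda(u_{x\lambda})\otimes t_{y\lambda+\lambda}\otimes b_{-\infty}$.

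The main obstacle is verifying that the $B(-\infty)$-component of $\bg(\tilde b)$ equals $b_{-\infty}$; once this is established, the $B(\infty)$-component $b_-$ is pinned down by the defining pairing $\langle G^\up(\tilde b),\varphi(G^\low(b_-\otimes t_{y\lambda+\lambda}\otimes b_{-\infty}))\rangle=1$, which translates via the bilinear form on $V(2\lambda)$ into the assertion that $G^\low(b_-)\cdot G^\up_{2\lambda}(\oi^{-1}_{\lambda,\lambda}(u_{y\lambda}\otimes u_\lambda))$ has $G^\low$-coefficient $1$ at $\oi^{-1}_{\lambda,\lambda}(u_{x\lambda}\otimes u_{y\lambda})$, forcing $b_-=\oi_\lambda(u_{x\lambda})$ by comparison with the $\tilde b'$ case. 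The triviality of the $B(-\infty)$-component should follow from the presence of $u_\lambda$ (the highest weight vector of $V(\lambda)$) as the right tensor factor of $\oi^{-1}_{\lambda,\lambda}(u_{y\lambda}\otimes u_\lambda)$, which via the bicrystal compatibility of $\bg$ prevents any non-trivial contribution in the $B(-\infty)$-direction, paralleling the fact that each individual $\Delta(\mu,\lambda)$ with $\mu\preceq\lambda$ has $\bg$-image with trivial $B(-\infty)$-component because $\D(\mu,\lambda)\neq 0$.
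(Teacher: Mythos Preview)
Your overall strategy is exactly the paper's: specialize \eqref{eq: multi} to $(x_1,y_1,x_2,y_2)=(x,y,y,1)$ with $\mu=\lambda$, apply $p_\n$, and use Proposition~\ref{prop: nonzero elt} to identify the result. Your computation of the $q$-exponent is correct, and you correctly recognize that everything hinges on the crystal identity
\[
\bg\Bigl(\oi^{-1}_{\lambda,\lambda}(u_{x\lambda}\otimes u_{y\lambda})\otimes\bigl(\oi^{-1}_{\lambda,\lambda}(u_{y\lambda}\otimes u_\lambda)\bigr)^\ri\Bigr)=\oi_\lambda(u_{x\lambda})\otimes t_{y\lambda+\lambda}\otimes b_{-\infty}.
\]

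The gap is precisely in this identification. Your argument for the $B(-\infty)$-component is only a heuristic: the presence of $u_\lambda$ as the right tensor factor in $u_{y\lambda}\otimes u_\lambda$ does not by itself force $b_+=b_{-\infty}$, because $\bg$ is the bicrystal embedding of $B(2\lambda)\otimes B(2\lambda)^\ri$, not of $B(\lambda)\otimes B(\lambda)^\ri$, and the element $\oi^{-1}_{\lambda,\lambda}(u_{y\lambda}\otimes u_\lambda)$ is a generic element of $B(2\lambda)$ (not an extremal weight vector unless $y=1$). Likewise, your argument for $b_-$ by ``comparison with the $\tilde b'$ case'' does not go through directly: in the $\tilde b'$ case the right factor is $u_{2\lambda}$, which is what makes Proposition~\ref{prop:Deprod} applicable, whereas here the right factor has weight $y\lambda+\lambda\neq 2\lambda$ in general.

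The paper fills this gap with a dedicated lemma (Lemma~\ref{lem: iotas}), proved by induction on $\ell(x)$. One takes $i$ with $s_ix<x$, treats separately the cases $s_iy>y$ and $s_iy<y$, and in each case applies suitable powers of $\tf_i$ and $\te_i^*$ to both sides, using the explicit tensor-product formulas for the star-crystal structure on $B(\infty)\otimes T_\zeta\otimes B(-\infty)$ together with $\eps_i^*(\oi_\lambda(u_{x\lambda}))=\langle h_i,\lambda\rangle$. This inductive crystal computation is the missing ingredient; once you supply it, your proof is complete and coincides with the paper's.
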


\begin{proof}
Applying $p_\n$ to \eqref{eq: multi}, we have
\begin{equation*}  \label{eq: DDD}
\begin{aligned}
& \D(x\lambda,y\lambda)\D(y\lambda,\lambda) \\
& \hspace{10ex} \equiv p_\n
\left( G^\up( \oi^{-1}_{\lambda,\lambda}(u_{x\lambda} \otimes u_{y \lambda})  \otimes \oi^{-1}_{\lambda,\lambda}(u_{y\lambda} \otimes u_{\lambda})^\ri ) \right)
\quad {\rm mod} \ q L^\up(A_q(\n)).
\end{aligned}
\end{equation*}
Hence the desired result follows from
Proposition~\ref{prop: nonzero elt}, Proposition \ref{prop: p_n} and  Lemma \ref{lem: iotas} below.
\end{proof}

\Lemma\label{lem: iotas}
Let $\la\in \Pd$ and $x,y\in W$ such that $x\ge y$.
Then we have
$$
\bg\Bigl(
\oi_{\la,\la}^{\ -1}(u_{x\la}\tens u_{y\la})
\tens\bl  \oi_{\la,\la}^{\ -1}(u_{y\la}\tens u_{\la})\br^\rt\Bigr)
=\oi_\la(u_{x\la})\tens t_{y\la+\la}\tens b_{-\infty}.$$
\enlemma

\Proof
We shall argue by induction on $\ell(x)$.
We set $b_{x\la}=\oi_\la(u_{x\la})$.
Since the case $x=1$ is obvious, assume that $x\not=1$.
Take $i\in I$ such that $x'\seteq s_ix<x$

\medskip
\noi
(a) First assume that $s_iy>y$. Then we have
$y\le  x'$.
Hence by the induction hypothesis,
\eq
&&\oi_\g\Bigl(
\oi_{\la,\la}^{\ -1}(u_{x'\la}\tens u_{y\la})
\tens\bl  \oi_{\la,\la}^{\ -1}(u_{y\la}\tens u_{\la})\br^\rt\Bigr)
=b_{x'\la}\tens t_{y\la+\la}\tens b_{-\infty}.\label{eq:agtu}
\eneq
We have $\vphi_i(u_{x'\la})=\lan h_i,x'\la\ran$ and
$\vphi_i(b_{x'\la}\tens t_{y\la+\la}\tens b_{-\infty}) =\vphi_i(b_{x'\la}\tens t_{y\la+\la})=\lan h_i,x'\la\ran
+\lan h_i,y\la\ran\ge\lan h_i,x'\la\ran$.
Hence, applying $\tf_i^{\lan h_i,x'\la\ran}$ to \eqref{eq:agtu}, we obtain
\eqn
&&\oi_\g\Bigl(
\oi_{\la,\la}^{\ -1}(u_{x\la}\tens u_{y\la})
\tens\bl  \oi_{\la,\la}^{\ -1}(u_{y\la}\tens u_{\la})\br^\rt\Bigr)
=b_{x\la}\tens t_{y\la+\la}\tens b_{-\infty}.
\eneqn

\medskip
\noi
(b) Assume that $y'\seteq s_iy<y$. Then we have
$y'\le x'$, and the induction hypothesis implies that
$$
\oi_\g\Bigl(
\oi_{\la,\la}^{\ -1}(u_{x'\la}\tens u_{y'\la})
\tens\bl  \oi_{\la,\la}^{\ -1}(u_{y'\la}\tens u_{\la})\br^\rt\Bigr)
=b_{x'\la}\tens t_{y'\la+\la}\tens b_{-\infty}.$$
 Apply $\te_i^{*\;\lan h_iy'\la\ran}\tf_i^{\lan h_i,x'\la+y'\la\ran}$
to the both sides.
Then the left-hand side yields
$$\oi_\g\Bigl(
\oi_{\la,\la}^{\ -1}(u_{x\la}\tens u_{y\la})
\tens\bl  \oi_{\la,\la}^{\ -1}(u_{y\la}\tens u_{\la})\br^\rt\Bigr).$$
Since $\vphi_i(b_{x'\la}\tens t_{y'\la+\la})=\lan h_i,x'\la\ran
+\lan h_i,y'\la+\la\ran\ge\lan h_i,x'\la+y'\la\ran$,
the right-hand side yields
\begin{align*}
\te_i^{*\;\lan h_i,y'\la\ran}\tf_i^{\lan h_i,x'\la+y'\la\ran}
\bl b_{x'\la}\tens t_{y'\la+\la}\tens b_{-\infty}\br
&=\te_i^{*\;\lan h_i,y'\la\ran}\bl(\tf_i^{\lan h_i,x'\la+y'\la\ran}
 b_{x'\la})\tens t_{y'\la+\la}\tens b_{-\infty}\br\\
&=\te_i^{*\;\lan h_i,y'\la\ran}\bl(\tf_i^{\lan h_i,y'\la\ran}
 b_{x\la})\tens t_{y'\la+\la}\tens b_{-\infty}\br.
\end{align*}
Since
$\eps_i^*(b_{x\la})=-\vphi_i(b_{x\la}) = \lan h_i, \la\ran$
and
$\tf_i^{\lan h_i,y'\la\ran}b_{x\la}=\tf_i^{*\;\lan h_i,y'\la\ran}b_{x\la}$,
we have
\begin{align*}\te_i^{*\;\lan h_i,y'\la\ran}\bl(\tf_i^{\lan h_i,y'\la\ran}
 b_{x\la})\tens t_{y'\la+\la}\tens b_{-\infty}\br
&=b_{x\la}\tens t_{y\la+\la}\tens b_{-\infty}.
 \qedhere
\end{align*}
\QED

\subsection{Generalized T-system}

The $T$-system in \S \ref{subsubsec: T-system} can be interpreted as  a system of equations among the three products of elements
in $\B^\up(A_q(\g))$ or $\B^\up(A_q(\n))$.
In this subsection, we introduce  another among the three products
of elements in $\B^\up(A_q(\g))$, called {\em a generalized $T$-system}.

\begin{prop} \label{prop: g T-system D}
Let $\mu \in W \varpi_i$ and set $b = \oi_{\varpi_i}(u_\mu) \in B(\infty)$. Then we have
\begin{equation}\label{eq: G t-sys}
\begin{aligned}
\Delta(\mu,s_i\varpi_i)\Delta(\varpi_i,\varpi_i) & = q_i^{-1}
G^\up\left( \oi^{-1}_{\varpi_i,\varpi_i}(u_\mu \otimes u_{\varpi_i})
\otimes \big( \oi^{-1}_{\varpi_i,\varpi_i}(u_{s_i\varpi_i} \otimes u_{\varpi_i}) \big)^\ri \right) \\
& \hspace{10ex} +G^\up \left( \oi_{\varpi_i+s_i\varpi_i}^{-1}(\widetilde{e^*_i}b) \otimes u^\ri_{\varpi_i+s_i\varpi_i} \right).
\end{aligned}
\end{equation}
\end{prop}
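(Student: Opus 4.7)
The strategy is to realize the left-hand side as a matrix coefficient on $V(\varpi_i) \otimes V(\varpi_i)$ and decompose it along the Clebsch--Gordan splitting $V(\varpi_i) \otimes V(\varpi_i) \simeq V(2\varpi_i) \oplus V(\mu_1)$, where $\mu_1 \seteq \varpi_i + s_i\varpi_i$ is dominant. Using the multiplicativity of $\Phi$ with respect to tensor products, one has
\[
\Delta(\mu, s_i\varpi_i) \Delta(\varpi_i, \varpi_i) = \Phi_W\bigl((u_\mu \otimes_+ u_{\varpi_i}) \otimes (u_{s_i\varpi_i} \otimes_- u_{\varpi_i})^\ri\bigr),
\]
where $W = V(\varpi_i)\otimes_+ V(\varpi_i)$; here one uses that the bilinear pairing realizing $\Phi$ on a tensor module pairs $V(\varpi_i)\otimes_- V(\varpi_i)$ with $V(\varpi_i)\otimes_+ V(\varpi_i)$. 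The highest weight vector of the $V(\mu_1)$-summand of either $V(\varpi_i)\otimes_\pm V(\varpi_i)$ is the explicit vector $u_{\varpi_i}\otimes u_{s_i\varpi_i} - q_i\, u_{s_i\varpi_i}\otimes u_{\varpi_i}$, as one verifies by solving the $e_j$-annihilation condition (nontrivial only for $j = i$), and solving a $2\times 2$ linear system then yields
\[
u_{s_i\varpi_i}\otimes_- u_{\varpi_i} = \tfrac{q_i^{-1}}{[2]_i}\iota^-_{2\varpi_i}(f_iu_{2\varpi_i}) - \tfrac{1}{[2]_i}\iota^-_{\mu_1}(u_{\mu_1}).
\]

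Orthogonality of the $\otimes_-/\otimes_+$ pairing across distinct isotypic summands gives a splitting $\Phi_W(\cdots) = \Phi_{V(2\varpi_i)}(\cdots) + \Phi_{V(\mu_1)}(\cdots)$. For the $V(2\varpi_i)$-piece, Lemma~\ref{lem:oi la mu} places the crystal class of $u_\mu\otimes u_{\varpi_i}$ inside the $B(2\varpi_i)$-component of $B(\varpi_i)\otimes B(\varpi_i)$, so this projection is identified---via a computation parallel to~\eqref{eq: multi} and Lemma~\ref{lem:Deprod}---with a scalar multiple of $G^\up\bigl(\oi^{-1}_{\varpi_i,\varpi_i}(u_\mu\otimes u_{\varpi_i}) \otimes \oi^{-1}_{\varpi_i,\varpi_i}(u_{s_i\varpi_i}\otimes u_{\varpi_i})^\ri\bigr)$. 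Tracking the decomposition coefficient $q_i^{-1}/[2]_i$ against the $V(2\varpi_i)$-norm $(f_iu_{2\varpi_i},f_iu_{2\varpi_i})_{V(2\varpi_i)} = [2]_i$ yields the coefficient $q_i^{-1}$.

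The main obstacle is the $V(\mu_1)$-piece: identifying the projection of $u_\mu\otimes_+ u_{\varpi_i}$ onto the $V(\mu_1)$-summand with the upper global basis element $G^\up\bigl(\oi^{-1}_{\mu_1}(\widetilde{e_i^*}b) \otimes u_{\mu_1}^\ri\bigr)$, and verifying that the overall coefficient is exactly $1$. This requires a combinatorial correspondence between the right $*$-Kashiwara operator $\widetilde{e_i^*}$ on $B(\infty)$ and the Clebsch--Gordan splitting of $B(\varpi_i)\otimes B(\varpi_i)$, specifically showing that the $V(\mu_1)$-component of $u_\mu\otimes u_{\varpi_i}$ corresponds via $\oi_{\mu_1}$ to $\widetilde{e_i^*}b$; in particular when $\widetilde{e_i^*}b = 0$ the second term vanishes, consistent with the formula. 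Once this crystal-level identification is pinned down, the coefficient $1$ is determined by comparing leading terms modulo $qL^\up(\Ag)$, using the bar-invariance of both sides together with the fact that the $V(\mu_1)$-pairing normalization $(\iota^-_{\mu_1}(u_{\mu_1}),\iota^+_{\mu_1}(u_{\mu_1})) = q_i[2]_i$ combines with the decomposition prefactor $-1/[2]_i$ to yield the required value.
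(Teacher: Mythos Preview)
Your approach is genuinely different from the paper's, and the ``main obstacle'' you flag is a real gap that your outline does not close. You propose to identify the $V(\mu_1)$-projection of $u_\mu\otimes_+ u_{\varpi_i}$ with $G^\up_{\mu_1}\bigl(\oi_{\mu_1}^{-1}(\te_i^*b)\bigr)$ via a ``crystal-level correspondence'' and then pin down the scalar using bar-invariance. But neither ingredient is available: the projection of a pure tensor onto an isotypic component has no reason to be an upper global basis element (global bases of tensor products are not compatible with isotypic decomposition in any simple way), and the left-hand side $\Delta(\mu,s_i\varpi_i)\Delta(\varpi_i,\varpi_i)$ is \emph{not} bar-invariant in $A_q(\g)$, since the bar-involution on $A_q(\g)$ is not a ring homomorphism. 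So the coefficient-matching argument you sketch cannot be carried out as stated.

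The paper circumvents the Clebsch--Gordan picture entirely. It sets
\[
u \;=\; \Delta(\mu,s_i\varpi_i)\Delta(\varpi_i,\varpi_i) - q_i^{-1}\,G^\up\bigl(\oi^{-1}_{\varpi_i,\varpi_i}(u_\mu\otimes u_{\varpi_i})\otimes \oi^{-1}_{\varpi_i,\varpi_i}(u_{s_i\varpi_i}\otimes u_{\varpi_i})^\ri\bigr)
\]
and checks directly that $u f_j = 0$ for all $j$: for $j\ne i$ this is immediate from the right weight $\wt_\ri(u)=\lambda=\varpi_i+s_i\varpi_i$, and for $j=i$ a short computation using Lemma~\ref{lem:Deprod} shows both terms have the same image under right multiplication by $f_i$. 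Hence $u=\Phi(v\otimes u_\lambda^\ri)$ for some $v\in V(\lambda)$, and $v$ is determined by $\iota_\lambda(v)=p_\n(u)$. The paper then computes $p_\n$ of each piece: $p_\n$ of the product is $\D(\mu,s_i\varpi_i)=G^\up(\te_i^*b)=\iota_\lambda\bigl(G^\up_\lambda(\oi_\lambda^{-1}(\te_i^*b))\bigr)$, while $p_\n$ of the $G^\up$ term vanishes by Proposition~\ref{prop: nonzero elt} after the crystal computation $\oi_\g\bigl((u_\mu\otimes u_{\varpi_i})\otimes(u_{s_i\varpi_i}\otimes u_{\varpi_i})^\ri\bigr)=b\otimes t_\lambda\otimes \te_i b_{-\infty}$, whose $B(-\infty)$-component is not $b_{-\infty}$. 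This yields $v=G^\up_\lambda(\oi_\lambda^{-1}(\te_i^*b))$ with coefficient exactly $1$, with no appeal to bar-invariance or to any identification of projections with global basis elements.
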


Note that if $\mu = \varpi_i$, then $b=1$ and the last term in \eqref{eq: G t-sys} vanishes. If $\mu \ne \varpi_i$, then
$\ve_i^*(b)=1$ \ and $\oi_{\varpi_i+s_i\varpi_i}^{-1}(\te_i^* b) \in B(\varpi_i+s_i\varpi_i)$,  $u_\mu \otimes u_{\varpi_i} \in
\oi_{\varpi_i,\varpi_i}B(2\varpi_i)$.

\begin{proof}
In the sequel, we omit $\oi^{-1}_{\varpi_i,\varpi_i}$ for the sake of simplicity. Set
$$  u = \Delta(\mu,s_i\varpi_i)\Delta(\varpi_i,\varpi_i)- q_i^{-1}
G^\up\left( (u_\mu \otimes u_{\varpi_i}) \otimes
(u_{s_i\varpi_i} \otimes u_{\varpi_i})^\ri  \right).$$
 Then $\wt_\ri(u)=\lambda\seteq\varpi_i+s_i\varpi_i$.

It is obvious that we have $uf_j=0$ for $j \ne i$. Since $\te_i(u_{s_i\varpi_i} \otimes u_{\varpi_i})= u_{\varpi_i} \otimes u_{\varpi_i}$,
we have
\begin{align*}
G^\up\left( (u_\mu \otimes u_{\varpi_i}) \otimes (u_{s_i\varpi_i} \otimes u_{\varpi_i})^\ri  \right)f_i  & =
G^\up\left( (u_\mu \otimes u_{\varpi_i}) \otimes (u_{\varpi_i} \otimes u_{\varpi_i})^\ri  \right) \\
& = \Delta(\mu,\varpi_i)\Delta(\varpi_i,\varpi_i) \\
& = G^\up(u_\mu \otimes u^\ri_{\varpi_i})G^\up(u_{\varpi_i} \otimes u^\ri_{\varpi_i}).
\end{align*}
Here the second equality follows from Lemma~\ref{lem:Deprod} and the third follows from Proposition \ref{prop:Agglobal}.
On the other hand, we have
\begin{align*}
\left( \Delta(\mu,s_i\varpi_i)\Delta(\varpi_i,\varpi_i)   \right) f_i & =  \left( \Delta(\mu,s_i\varpi_i)f_i \right)
\left(\Delta(\varpi_i,\varpi_i)t_i^{-1}\right) \\ & = q_i^{-1}\Delta(\mu,\varpi_i)\Delta(\varpi_i,\varpi_i).
\end{align*}
Hence we have $uf_i=0$. Thus, $u$ is a lowest weight vector of wight $\lambda$ with respect to the right action of
$U_q(\g)$. Therefore there exists some $v \in V(\lambda)$ such that
$$u=\Phi(v \otimes u_\lambda^\ri).$$
Hence we have $p_\n(u)=\iota_\lambda(v) \in A_q(\n)$. On the other hand, we have
\begin{align*}
p_\n\left( \Delta(\mu,s_i\varpi_i)\Delta(\varpi_i,\varpi_i) \right) & =
p_\n\left(\Delta(\mu,s_i\varpi_i) \right) p_\n\left(\Delta(\varpi_i,\varpi_i)\right) \\
& = \D(\mu,s_i\varpi_i)=G^{\up}(\te_i^*b) \\
& = \iota_\lambda\left(G_\lambda^\up \big(\oi_\lambda^{-1}(\te_i^* b) \big) \right)
\end{align*}
Note that since $\ve_i^*(\te_i^*b) =0$ and $\ve_j^*(\te_i^*b) \le -\lan h_j,\alpha_i \ra$ for $j \ne i$, we have
$\te_i^*b \in \oi_\lambda(B(\lambda))$.

Hence in order to prove our assertion, it is enough to show that
$$
p_\n\left( G^\up\big( (u_\mu \otimes u_{\varpi_i}) \otimes (u_{s_i\varpi_i} \otimes u_{\varpi_i})^\ri  \big) \right) =0.
$$
This follows from Proposition \ref{prop: nonzero elt} and
\begin{align} \label{eq: kernel}
\oi_\g\left( (u_\mu \otimes u_{\varpi_i}) \otimes (u_{s_i\varpi_i} \otimes u_{\varpi_i})^\ri \right) = b \otimes t_\lambda \otimes \te_ib_{-\infty}.
\end{align}

Let us prove \eqref{eq: kernel}. Since
$$
(u_\mu \otimes u_{\varpi_i}) \otimes (u_{s_i\varpi_i} \otimes u_{\varpi_i})^\ri
= \te_i^*
\bl(u_\mu \otimes u_{\varpi_i}) \otimes (u_{\varpi_i} \otimes u_{\varpi_i})^\ri\br,
$$
the left-hand side of \eqref{eq: kernel} is equal to
$$
\te_i^*\left(\oi_\g\big( (u_\mu \otimes u_{\varpi_i}) \otimes (u_{\varpi_i} \otimes u_{\varpi_i})^\ri \big)\right)
= \te_i^*(b \otimes t_{2\varpi_i}\otimes b_{-\infty}).
$$

Since $\ve_i^*(b)=1 < \lan h_i,2\varpi_i \ra=2$, we obtain
\begin{align*}
&\te_i^*\left(b \otimes t_{2\varpi_i}\otimes b_{-\infty}\right)
= b \otimes t_{2\varpi_i-\al_i}\otimes \te_i^* b_{-\infty}
=b \otimes t_\lambda \otimes \te_ib_{-\infty}.  \qedhere
\end{align*}
\end{proof}

\section{KLR algebras and their modules}

\subsection{Chevalley and Kashiwara operators}
 Let us recall the definition of several functors on modules over \KLRs \ which  are used to categorify $\Um_{\Z[q^\pm 1]}^\vee$.
\begin{definition} Let $\beta\in\rtl^+$.
\bnum
\item For $i \in I$ and $1 \le a \le |\beta|$, set
$$ e_a(i)=\sum_{\nu \in I^\beta,\nu_a=i}e(\nu)\in R(\beta).$$
\item We take conventions:
\begin{align*}
&E_iM=e_1(i)M, \allowdisplaybreaks\\
&E^*_iM=e_{|\beta|}(i)M,
\end{align*}
which are functors from $R(\beta) \gmod$ to $R(\beta-\al_i) \gmod$.
\item For a simple module $M$, we set
\begin{align*}
&\eps_i(M)=\max\set{n\in\Z_{\ge0}}{E_i^nM\not=0},\\
&\eps^*_i(M)=\max\set{n\in\Z_{\ge0}}{E_i^{*\;n}M\not=0},\\
& \widetilde{F}_iM = q_i^{\ve_i(M)} L(i) \hconv M, \allowdisplaybreaks\\
& \widetilde{F}^*_iM = q_i^{\ve^*_i(M)} M \hconv L(i), \allowdisplaybreaks\\
& \widetilde{E}_iM = q_i^{1-\ve_i(M)} \soc(E_iM) \simeq q_i^{\ve_i(M)-1} \hd(E_iM), \allowdisplaybreaks\\
& \tE^*_iM = q_i^{1-\ve^*_i(M)} \soc(E^*_iM) \simeq q_i^{\ve^*_i(M)-1}
\hd(E_i^*M),\\
&\tE_i^{\,\max} M=\tE_i^{\eps_i(M)}M\quad\text{and}\quad
\tE_i^{*\;\max} M=\tE_i^{*\;\eps^*_i(M)}M.
\end{align*}
\item For $i \in I$ and $n \in \Z_{\ge 0}$, we set
$$L(i^n)=q_i^{n(n-1)/2} \underbrace{L(i)\conv \cdots \conv L(i)}_n.$$
Here $L(i)$ denotes the $R(\al_i)$-module $R(\al_i)/R(\al_i)x_1$.
Then  $L(i^n)$ is a self-dual real simple  $R(n\alpha_i)$-module.
\ee
\end{definition}
 Note that, under the isomorphism in Theorem \ref{thm:categorification 1}, the functors $E_i$ and $E_i^*$ correspond to the linear operators $e_i$ and $e_i^*$  on
 $ \An_{\Z[q^{\pm 1}]} =  \iota(\Um_{\A}^\vee) \subset \An$, respectively.
Note also that,  for a simple $R(\beta)$-module $S$, we have
$\tE_i \tF_i S \simeq S$, and $\tF_i \tE_i S \simeq S$ if $\eps_i(M) >0$.

In the course of  proving the following propositions, we use the following notations.

\begin{align} \label{eq: overQ}
\overline{Q}_{i,j}(x_a,x_{a+1},x_{a+2}) \seteq
\dfrac{Q_{i,j}(x_a,x_{a+1})-Q_{i,j}(x_{a+2},x_{a+1})}{x_a-x_{a+2}}.
\end{align}
Then we have
$$\tau_{a+1}\tau_a\tau_{a+1}-\tau_{a}\tau_{a+1}\tau_{a}
=\sum_{i,j\in I}\overline{Q}_{i,j}(x_a,x_{a+1},x_{a+2})
e_a(i)e_{a+1}(j)e_{a+2}(i).$$
\begin{prop} \label{prop: La i}
Let $\beta \in \rl^+$ with $n=|\beta|$. Assume that an $R(\beta)$-module $M$ satisfies $E_iM=0$. Then the left $R(\al_i)$-module homomorphism $R(\alpha_i) \tens M \Lto
q^{(\alpha_i,\beta)}M \conv R(\alpha_i)$ given by
\begin{align} \label{eq: pre-form}
e(i) \tens u \longmapsto \tau_1 \cdots \tau_n(u \tens e(i))
\end{align}
extends uniquely to an $(R(\alpha_i+\beta),R(\alpha_i))$-bilinear homomorphism
\begin{align} \label{eq: ext-form}
R(\alpha_i) \conv M \Lto q^{(\alpha_i,\beta)}M \conv R(\alpha_i)
\end{align}
\end{prop}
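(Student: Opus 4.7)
The plan is to define $\phi\col R(\alpha_i)\conv M\to q^{(\alpha_i,\beta)}M\conv R(\alpha_i)$ by left $R(\alpha_i+\beta)$-linear extension from the prescription $\phi(e(\alpha_i,\beta)\otimes(e(i)\otimes u)):=\tau_1\cdots\tau_n\cdot(u\otimes e(i))$. First I would establish $\tau_1\cdots\tau_n\,e(\beta,\alpha_i)=e(\alpha_i,\beta)\,\tau_1\cdots\tau_n$ by iterating $\tau_ke(\nu)=e(s_k\nu)\tau_k$, so the right-hand side is a genuine element of $M\conv R(\alpha_i)$; the degree computation $\deg\bl\tau_1\cdots\tau_n\,e(\beta,\alpha_i)\br=-(\alpha_i,\beta)$ accounts for the grading shift. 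Uniqueness is immediate since $e(i)\otimes M$ generates $R(\alpha_i)\conv M$ as an $R(\alpha_i+\beta)$-module.

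\textbf{Well-definedness.} By Frobenius reciprocity, the assignment extends to $R(\alpha_i)\conv M$ if and only if it is $(R(\alpha_i)\otimes R(\beta))$-linear for the embedding of $R(\alpha_i)\otimes R(\beta)$ into $R(\alpha_i+\beta)$ with $R(\alpha_i)$ at position~$1$ and $R(\beta)$ at positions $2,\ldots,n+1$. The $R(\alpha_i)$-linearity follows tautologically from the construction; the content is $R(\beta)$-linearity. For each generator $s\in R(\beta)$, writing $\check{s}$ (resp.\ $\tilde{s}$) for the image of $s$ in $R(\alpha_i+\beta)$ at positions $1,\ldots,n$ (resp.\ $2,\ldots,n+1$), the requirement reduces to the congruence
$$\tilde{s}\,\tau_1\cdots\tau_n\ \equiv\ \tau_1\cdots\tau_n\,\check{s}\pmod{e_1(i)R(\alpha_i+\beta)}$$
after right multiplication by $e(\beta,\alpha_i)$. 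The hypothesis $E_iM=0$ reads $e_1(i)\cdot(u\otimes e(i))=0$ in $M\conv R(\alpha_i)$, so elements of the left ideal $e_1(i)R(\alpha_i+\beta)$ annihilate $u\otimes e(i)$; hence such a congruence suffices.

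\textbf{Key identities.} For $s=x_k$ (so $\check{s}=x_k$, $\tilde{s}=x_{k+1}$), iterating the straightening relation $\tau_j x_{j+1}-x_j\tau_j=\delta_j:=\sum_{\nu_j=\nu_{j+1}}e(\nu)$ gives
$$\tau_1\cdots\tau_n\,x_k = x_{k+1}\,\tau_1\cdots\tau_n - \tau_1\cdots\tau_{k-1}\,\delta_k\,\tau_{k+1}\cdots\tau_n.$$
For $s=\tau_k$ with $1\le k\le n-1$ (so $\check{s}=\tau_k$, $\tilde{s}=\tau_{k+1}$), a distant commutation of $\tau_k$ past $\tau_{k+2},\ldots,\tau_n$, a single braid move $\tau_k\tau_{k+1}\tau_k=\tau_{k+1}\tau_k\tau_{k+1}+E$ with $E=\sum_{a,b}\overline{Q}_{a,b}(x_k,x_{k+1},x_{k+2})e_k(a)e_{k+1}(b)e_{k+2}(a)$, and the distant commutation of $\tau_{k+1}$ past $\tau_1,\ldots,\tau_{k-1}$, yield
$$\tau_1\cdots\tau_n\,\tau_k = \tau_{k+1}\,\tau_1\cdots\tau_n + \tau_1\cdots\tau_{k-1}\,E\,\tau_{k+2}\cdots\tau_n.$$

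\textbf{Main obstacle.} The heart of the argument is the combinatorial verification that, after right multiplication by $e(\beta,\alpha_i)$, each error term lies in $e_1(i)R(\alpha_i+\beta)$. Using $\tau_a e(\nu)=e(s_a\nu)\tau_a$, the constraint $\nu_{n+1}=i$ imposed by $e(\beta,\alpha_i)$ is transported by $\tau_{k+1}\cdots\tau_n$ to position~$k+1$ in the $x$-case (respectively by $\tau_{k+2}\cdots\tau_n$ to position~$k+2$ in the $\tau$-case); the defect $\delta_k$ then forces $\nu_k=\nu_{k+1}=i$ (respectively the factor $e_{k+2}(a)$ in $E$ forces $a=i$ and hence $\nu_k=i$); finally $\tau_1\cdots\tau_{k-1}$ propagates $\nu_k=i$ to $\nu_1=i$. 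The resulting error sits in $e_1(i)R(\alpha_i+\beta)\,e(\beta,\alpha_i)$ and is killed on $u\otimes e(i)$ by $E_iM=0$, establishing the required congruence and hence the existence of the $(R(\alpha_i+\beta),R(\alpha_i))$-bilinear extension.
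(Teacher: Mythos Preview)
Your argument for the left $R(\alpha_i+\beta)$-linearity is essentially the paper's and is correct: the key identities you derive for $x_k$ and $\tau_k$, together with the transport of the idempotent constraint showing the error terms lie in $e_1(i)R(\alpha_i+\beta)\,e(\beta,\alpha_i)$, match parts (i) and (ii) of the paper's proof.

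However, you have omitted the verification of \emph{right} $R(\alpha_i)$-linearity, which is part (iii) of the paper's proof and is not covered by your Frobenius-reciprocity step. The statement asserts an $(R(\alpha_i+\beta),R(\alpha_i))$-\emph{bilinear} map. The right $R(\alpha_i)$-action on $R(\alpha_i)\conv M$ is by right multiplication of $x_1$ (position~$1$) on the $R(\alpha_i+\beta)$-coefficient, while on $M\conv R(\alpha_i)$ it is by $x_{n+1}$. Thus on the generator $e(i)\otimes u$ one must still check
\[
x_1\,\tau_1\cdots\tau_n\,(u\otimes e(i))\;=\;\tau_1\cdots\tau_n\,x_{n+1}\,(u\otimes e(i)),
\]
and this is not automatic: it again requires $E_iM=0$, walking $x$ from position~$1$ to position~$n+1$ via $\tau_1\cdots\tau_{a-1}\,x_a\,\tau_a\cdots\tau_n\equiv\tau_1\cdots\tau_a\,x_{a+1}\,\tau_{a+1}\cdots\tau_n$ modulo the same vanishing. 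Your identity $\tau_1\cdots\tau_n\,x_k\equiv x_{k+1}\,\tau_1\cdots\tau_n$ moves $x$ in the opposite direction (right of $\tau$'s to left of $\tau$'s) and does not chain to give this. The fix is short and uses the same mechanism, but it is a genuine missing step; your sentence ``the $R(\alpha_i)$-linearity follows tautologically'' only covers the \emph{left} $R(\alpha_i)$-factor in the Frobenius adjunction, not the right action.
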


\begin{proof} {\rm (i)} First note that, for $1 \le a \le n$,
\begin{equation} \label{eq: e_a(i) vanish}
\tau_1 \cdots \tau_{a-1}e_a(i)\tau_{a+1}\cdots\tau_n\big( u \tens e(i) \big) =
\tau_{a+1}\cdots\tau_n\big(e_1(i)\tau_1 \cdots \tau_{a-1}(u \tens e(i))  \big)=0
\end{equation}
since $E_iM =0$. \\
\noindent
{\rm (ii)} In order to see that \eqref{eq: ext-form} is a well-defined $R(\alpha_i+\beta)$-linear homomorphism, it is enough
to show that \eqref{eq: pre-form} is $R(\beta)$-linear. \\
\noindent
{\rm (a)} Commutation with $x_a\in R(\beta)$ $(1\le a \le n)$: We have
\begin{align*}
x_{a+1}\tau_1 \cdots \tau_n\big(u \tens e(i)\big) &=  \tau_1 \cdots \tau_{a-1}x_{a+1}\tau_{a} \cdots \tau_n\big(u \tens e(i)\big) \allowdisplaybreaks \\
 &=  \tau_1 \cdots \tau_{a-1}\big( \tau_{a} x_{a}+ e_a(i) \big) \tau_{a+1}\cdots \tau_n\big(u \tens e(i) \big) \allowdisplaybreaks \\
 &=  \tau_1 \cdots  \tau_n  x_{a}\big(u \tens e(i)\big)
\end{align*}
by \eqref{eq: e_a(i) vanish}.  \\
\noindent
{\rm (b)} Commutation with $\tau_a\in R(\beta)$ $(1\le a < n)$:
  We have
\begin{align*}
&\tau_{a+1}\tau_1 \cdots \tau_n\big(u \tens e(i)\big) \\
& \hspace{5ex} =  \tau_1 \cdots \tau_{a-1}(\tau_{a+1}\tau_{a}\tau_{a+1})\tau_{a+2} \cdots \tau_n\big(u \tens e(i)\big) \allowdisplaybreaks \\
& \hspace{5ex} =  \tau_1 \cdots \tau_{a-1}\big(\tau_{a}\tau_{a+1}\tau_{a}+\sum_{j}\overline{Q}_{i,j}(x_{a},x_{a+1},x_{a+2})e_a(i)e_{a+1}(j) \big)\tau_{a+2}\cdots \tau_n\big(u \tens e(i) \big) \allowdisplaybreaks \\
& \hspace{5ex} =  \tau_1 \cdots  \tau_n  \tau_{a}\big(u \tens e(i)\big) \allowdisplaybreaks \\
& \hspace{10ex} + \sum_{j}\tau_1 \cdots \tau_{a-1}\overline{Q}_{i,j}(x_{a},x_{a+1},x_{a+2})e_a(i)e_{a+1}(j)\tau_{a+2}\cdots \tau_n\big(u \tens e(i) \big).
\end{align*}
The last term vanishes because $E_iM=0$ implies
\begin{align*}
& \tau_1 \cdots \tau_{a-1}f(x_{a},x_{a+1})g(x_{a+2})e_a(i)\tau_{a+2}\cdots \tau_n\big(u \tens e(i) \big) \\
& \hspace{18ex} = g(x_{a+2})\tau_{a+2}\cdots \tau_n e_1(i)\tau_1 \cdots \tau_{a-1}f(x_{a},x_{a+1})\big(u \tens e(i) \big)=0
\end{align*}
for any polynomial $f(x_{a},x_{a+1})$ and $g(x_{a+2})$. \\
\noindent
{\rm (iii)} Now let us show that \eqref{eq: ext-form} is right $R(\alpha_i)$-linear. By \eqref{eq: e_a(i) vanish}, we have
\begin{align*}
 \tau_1 \cdots \tau_{a-1} x_a \tau_a \cdots \tau_{n}\big(u \tens e(i) \big) & =
\tau_1 \cdots \tau_{a-1} \big(\tau_a x_{a+1}-e_{a}(i) \big)\tau_{a+1}\cdots \tau_{n} \big(u \tens e(i) \big) \\
& = \tau_1 \cdots \tau_{a} x_{a+1} \tau_{a+1} \cdots \tau_{n}\big(u \tens e(i) \big)
\end{align*}
for $1 \le a \le n$. Therefore we have
$$ x_{1}\tau_1 \cdots \tau_n\big(u \tens e(i)\big) = \tau_1 \cdots \tau_n x_{n+1}\big(u \tens e(i)\big)=\tau_1 \cdots \tau_n \big(u \tens e(i)x_{1}\big). \qedhere$$
\end{proof}

 Recall that  for $m,n\in\Z_{\ge0}$,
we denote by $w[{m,n}]$ the element of $\sym_{m+n}$  defined by
\eq
&&w[{m,n}](k)=\begin{cases}k+n&\text{if $1\le k\le m$,}\\
k-m&\text{if $m<k\le m+n$}.\end{cases}
\eneq
Set $\tau_{w[{m,n}]}:=\tau_{i_1} \cdots \tau_{i_r}$, where $s_{i_1} \cdots s_{i_r}$ is a reduced expression
of $w[{m,n}]$.
Note that $\tau_{w[{m,n}]}$ does not depend on the choice of reduced expression (\cite[Corollary 1.4.3]{K^3}).

\begin{prop} Let $M \in R(\beta) \gmod$  and $N \in R(\gamma) \gmod$, and set $m=|\beta|$ and
$n=|\gamma|$. If $E_iM=0$ for any $i \in \supp(\gamma)$, then
\begin{align*}
v \tens u \longmapsto \tau_{w[m,n]}(u \tens v)
\end{align*}
gives a well-defined $R(\beta+\gamma)$-linear homomorphism $N \conv M \Lto q^{(\beta,\gamma)} M \conv N$.
\end{prop}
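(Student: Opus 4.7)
My plan is to prove this generalization of Proposition~10.1.2 by induction on $n=|\gamma|$. For the base case $n=1$, we have $\gamma=\al_i$ for some $i\in I$, and Proposition~10.1.2 provides an $(R(\al_i+\beta),R(\al_i))$-bilinear map $R(\al_i)\conv M\to q^{(\al_i,\beta)}M\conv R(\al_i)$. For a general $R(\al_i)$-module $N$, the canonical identification $N\conv M\simeq N\otimes_{R(\al_i)}(R(\al_i)\conv M)$ of left $R(\al_i+\beta)$-modules lets us extend scalars over $R(\al_i)$ to produce the desired map $N\conv M\to q^{(\al_i,\beta)}M\conv N$; tracing through the identifications yields the explicit formula $v\otimes u\mapsto \tau_1\cdots\tau_m(u\otimes v)=\tau_{w[m,1]}(u\otimes v)$.

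For the inductive step, I exploit the reduced factorization $w[m,n]=\sigma\cdot w[m,1]'$, where $w[m,1]'$ is the cycle acting on positions $\{1,\ldots,m+1\}$ that moves position $m+1$ to position~$1$, and $\sigma$ acts on positions $\{2,\ldots,m+n\}$ as the shuffle $w[m,n-1]$. The lengths satisfy $m+m(n-1)=mn=\ell(w[m,n])$, so the factorization is reduced and $\tau_{w[m,n]}=\tau_\sigma\cdot \tau_{w[m,1]'}$. I construct the desired map via Frobenius reciprocity: it suffices to produce an $R(\gamma)\otimes R(\beta)$-linear map $\psi\col N\otimes M\to q^{(\beta,\gamma)}e(\gamma,\beta)(M\conv N)$, where $R(\gamma)\otimes R(\beta)$ acts on the target through the block embedding $R(\gamma)\otimes R(\beta)\hookrightarrow R(\beta+\gamma)$ placing $R(\gamma)$ at positions $1,\ldots,n$ and $R(\beta)$ at positions $n+1,\ldots,n+m$. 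Taking $\psi(v\otimes u):=\tau_{w[m,n]}(u\otimes v)$, I verify its bilinearity in two stages matching the factorization: the $\tau_{w[m,1]'}$-stage handles the move of the first letter of $v$ past $M$ and is controlled by Proposition~10.1.2 using $E_iM=0$ for $i\in\supp(\gamma)$; the $\tau_\sigma$-stage handles the remaining $n-1$ letters and is controlled by the inductive hypothesis with $\gamma'=\gamma-\al_i$.

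The main obstacle is that $N$ does not itself factor as a convolution $R(\al_i)\conv N'$ in general, so the inductive hypothesis cannot be applied to $N$ directly. I address this by working with the $\cor$-linear idempotent decomposition $N=\bigoplus_{i\in\supp(\gamma)}e(\al_i,\gamma-\al_i)N$: each summand carries an $R(\al_i)\otimes R(\gamma-\al_i)$-action by restriction, and the bilinearity of $\psi$ can be verified on each summand using the two-stage factorization of $\tau_{w[m,n]}$. The most delicate check is the intertwining of $\psi$ with the generator $\tau_1\in R(\gamma)$ (acting on the first two positions of $v$), which mixes summands of this idempotent decomposition; this is handled using the braid and nil-Hecke relations in $R(\beta+\gamma)$, with the condition $E_iM=0$ for $i\in\supp(\gamma)$ killing all correction terms involving the polynomials $\overline{Q}_{i,j}$, exactly as in part~(ii)(b) of the proof of Proposition~10.1.2.
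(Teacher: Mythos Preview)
Your approach is correct in outline, but the paper's route is cleaner and sidesteps the obstacle you identify. Rather than inducting on $|\gamma|$ with $N$ arbitrary, the paper first treats the case $N=R(\gamma)$: iterating Proposition~10.1.2 one strand at a time immediately gives left $R(\beta+\gamma)$-linearity (and right $x_k$-linearity) of the map $R(\gamma)\conv M\to M\conv R(\gamma)$, so only right $\tau_k$-linearity remains; this is reduced to a single explicit $n=2$ computation. Once the map is $(R(\beta+\gamma),R(\gamma))$-bilinear, tensoring over $R(\gamma)$ on the right with an arbitrary $N$ yields the general statement --- no idempotent decomposition and no concern about whether $N$ factors as a convolution.

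One point of imprecision in your proposal: the $\tau_1\in R(\gamma)$ check you flag is \emph{not} ``exactly as in part~(ii)(b) of the proof of Proposition~10.1.2''. Part~(ii)(b) commutes the one-strand shuffle $\tau_1\cdots\tau_m$ with a generator $\tau_a\in R(\beta)$ acting inside $M$; what is needed here is to commute the \emph{two}-strand shuffle $\tau_{w[m,2]}=(\tau_2\tau_1)\cdots(\tau_{m+1}\tau_m)$ with the crossing of the two $N$-strands, namely
\[
\tau_1\,\tau_{w[m,2]}(u\otimes e(i)\otimes e(j))
=\tau_{w[m,2]}\,\tau_{m+1}(u\otimes e(i)\otimes e(j)).
\]
This is proved by telescoping the braid relation through each factor $(\tau_{a+1}\tau_a)$ for $a=m,m-1,\ldots,1$, with the $\overline{Q}_{j,i}(x_a,x_{a+1},x_{a+2})e_a(j)$-correction at each step killed by $E_jM=0$ for $j\in\supp(\gamma)$. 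The technique is the same as in~(ii)(b), but the identity is genuinely different and needs its own short argument; this is precisely the computation the paper carries out.
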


\begin{proof} The proceeding proposition implies that
$$ v \tens u \longmapsto \tau_{w[m,n]}(u \tens v) \quad \text{ for } u \in M, v \in R(\gamma)$$
gives a well-defined $R(\beta+\gamma)$-linear homomorphism $R(\gamma) \conv M \to M \conv R(\gamma)$. Hence
it is enough to show that it is right $R(\gamma)$-linear. Since we know that it commutes with
the right multiplication of $x_k$, it is enough to show that it commutes with the right
multiplication of $\tau_k$. For this, we may assume that $n = 2$ and $k = 1$. Set $\gamma = \alpha_i +\alpha_j$.

Thus we have reduced the problem to  the equality
\begin{align*}
\tau_1(\tau_2\tau_1)\cdots(\tau_{m+1}\tau_m)\big( u \tens e(i) \tens e(j) \big)
= (\tau_2\tau_1)\cdots(\tau_{m+1}\tau_m) \tau_{m+1}\big( u \tens e(i) \tens e(j) \big)
\end{align*}
for $u \in M$, which is a consequence of
\begin{equation*}
\begin{aligned}
& (\tau_2\tau_1)\cdots(\tau_{a}\tau_{a-1})\tau_a(\tau_{a+1}\tau_{a})\cdots(\tau_{m+1}\tau_m)\big( u \tens e(i) \tens e(j) \big) \allowdisplaybreaks \\
& \hs{5ex} = (\tau_2\tau_1)\cdots(\tau_{a+1}\tau_{a})
\tau_{a+1}(\tau_{a+2}\tau_{a+1})\cdots(\tau_{m+1}\tau_m)\big( u \tens e(i) \tens e(j) \big)
\end{aligned}
\end{equation*}
for $1 \le a \le m$. Note that
\begin{align*}
& \tau_a(\tau_{a+1}\tau_{a})\cdots(\tau_{m+1}\tau_m)\big( u \tens e(i) \tens e(j) \big) \allowdisplaybreaks \\
& \hspace{10ex} = \tau_a(\tau_{a+1}\tau_{a})e_{a+1}(i)e_{a+2}(j)(\tau_{a+2}\tau_{a+1})\cdots(\tau_{m+1}\tau_m)\big( u \tens e(i) \tens e(j) \big)
\end{align*}
and
\begin{align*}
& \tau_a(\tau_{a+1}\tau_{a})e_{a+1}(i)e_{a+2}(j) \allowdisplaybreaks \\
& \hspace{10ex} = (\tau_{a+1}\tau_{a})\tau_{a+1} e_{a+1}(i)e_{a+2}(j)-\overline{Q}_{ji}(x_{a},x_{a+1},x_{a+2})e_{a}(j)e_{a+1}(i)e_{a+2}(j).
\end{align*}
Hence it is enough to show
\begin{align*}
& (\tau_2\tau_1)\cdots(\tau_{a}\tau_{a-1})\overline{Q}_{j,i}(x_{a},x_{a+1},x_{a+2})
e_a(j)
\allowdisplaybreaks\\
& \hspace{20ex}(\tau_{a+2}\tau_{a+1})\cdots (\tau_{m+1}\tau_{m})\big( u \tens e(i) \tens e(j) \big)=0.
\end{align*}
This follows from
\begin{align*}
&(\tau_2\tau_1)\cdots(\tau_{a}\tau_{a-1})f(x_a)g(x_{a+1},x_{a+2}) e_a(j)
(\tau_{a+2}\tau_{a+1})\cdots(\tau_{m+1}\tau_m)
\big( u\tens e(i)\tens e(j)\big)\allowdisplaybreaks\\
& \hspace{10ex}=(\tau_2\cdots\tau_a)
(\tau_1\cdots\tau_{a-1})f(x_a)g(x_{a+1},x_{a+2}) e_a(j) \allowdisplaybreaks\\
&\hspace{35ex}(\tau_{a+2}\tau_{a+1})\cdots(\tau_{m+1}\tau_m)
\big( u\tens e(i)\tens e(j)\big)\allowdisplaybreaks\\
&\hspace{10ex}=(\tau_2\cdots\tau_a)
g(x_{a+1},x_{a+2})(\tau_{a+2}\tau_{a+1})\cdots(\tau_{m+1}\tau_m)\allowdisplaybreaks\\
&\hspace{35ex}
e_1(j)(\tau_1\cdots\tau_{a-1})f(x_a)\big( u\tens e(i)\tens e(j)\big)\allowdisplaybreaks\\
&\hspace{10ex}=0
\end{align*}
for $1 \le a \le m$ and $f(x_a) \in \ko[x_{a}]$, $g(x_{a+1},x_{a+2}) \in \ko[x_{a+1},x_{a+2}]$.
\end{proof}

Let $P(i^n)$ be a projective cover of
$L(i^n)$.
 Define  the functor
$$E_i^{(n)} : R(\beta) \Mod \to R(\beta - n \alpha_i) \Mod$$
 by
$$E_i^{(n)}(M) := P(i^n)^\psi \tens_{R(n\al_i)}E_i^nM, $$
where
$P(i^n)^\psi $ denotes the right $R(n\al_i)$-module obtained from the left
$R(\beta)$-module $P(i^n)$  via the anti-automorphism $\psi$.
 We define the functor $E_i^{*\,(n)}$ in a similar way.
 Note that $$E_i^n\simeq  [n]_i ! E_i^{(n)}.$$

\Cor Let $R$ be a symmetric \KLR.
Let $i\in I$ and $M$ a simple module.
Then we have
\eqn
&&\tLa\bl L(i),M\br=\eps_i(M),\\
&&\La\bl L(i),M\br=2\eps_i(M)+\lan h_i,\wt(M)\ran=\eps_i(M)+\vphi_i(M).\\
\eneqn
\encor
\Proof
Set $n=\eps_i(M)$ and $M_0=E_i^{(n)}(M)$.
Then the preceding proposition implies
$\La(L(i),M_0)=\bl \al_i,\wt(M_0)\br$.
Hence we have $\tLa(L(i),M_0)=0$, which implies
$$\tLa(L(i),M)=\tLa(L(i), L(i^n)\circ M_0)
=\tLa\bl L(i), L(i^n)\br+\tLa(L(i),M_0)=n.$$
\QED

\begin{prop}\label{prop: Econv} Let $M$, $N$ be modules
and $m,n \in \Z_{\ge 0}$.
\begin{enumerate}
\item[{\rm (i)}] If $E_i^{m+1}M=0$ and $E_i^{n+1}N=0$, then we have
\begin{align*} 
E_i^{(m+n)} (M \conv N) \simeq q^{mn+n \lan h_i, \wt(M) \ran}
 E_i^{(m)} M \conv  E_i^{(n)}N.
\end{align*}
\item[{\rm (ii)}] If $E_i^{*\,m+1}M=0$ and $E_i^{*\,n+1}N=0$, then we have
\begin{align*} 
E_i^{*\,(m+n)} (M \conv N) \simeq
q^{mn+m \lan h_i, \wt(N) \ran} E_i^{*\,(m)} M \conv E_i^{*\,(n)} N.
\end{align*}
\end{enumerate}
\end{prop}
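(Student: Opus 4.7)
My plan is to prove (i) first and then deduce (ii) by a duality argument using the $\cor$-algebra anti-involution $\psi$ of the KLR algebra. For (i), I would handle the base case $m = 0$ (i.e.\ $E_iM = 0$) first and then reduce the general case to it via a Mackey-type filtration.

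For the base case, the preceding Proposition \ref{prop: La i} is tailor-made: when $E_iM = 0$, it produces an $R$-bilinear homomorphism $R(\al_i)\conv M \to q^{(\al_i,-\wt M)}M\conv R(\al_i)$. Iterating $n$ times and restricting to the idempotent $e_1(i)\cdots e_n(i)$ will yield an isomorphism $E_i^n(M\conv N) \simeq q^{n\lan h_i,\wt M\ran} M\conv E_i^n N$, and passing to divided powers then gives the $m=0$ case of (i). For the general case, I would iterate the short exact sequence
\begin{equation*}
0 \to q^{a_1}(E_iM)\conv N \to E_i(M\conv N) \to q^{a_2}M\conv E_iN \to 0,
\end{equation*}
which categorifies the $q$-boson relation $e_i(xy) = (e_ix)y + q^{(\al_i,\wt x)}x(e_iy)$ and is itself a consequence of Proposition \ref{prop: La i} together with the shuffle structure of $M\conv N$. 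The resulting filtration of $E_i^{m+n}(M\conv N)$ has subquotients that are shifts of $E_i^kM\conv E_i^{m+n-k}N$ with multiplicities $\binom{m+n}{k}_i$. Under the hypothesis $E_i^{m+1}M = E_i^{n+1}N = 0$ only the $k=m$ subquotient survives, and after passing to divided powers the $q$-binomial factor $\binom{m+n}{m}_i$ is absorbed into the normalisation $[m+n]_i! = [m]_i![n]_i!\binom{m+n}{m}_i$, leaving the claimed isomorphism. The $q$-shift $mn + n\lan h_i,\wt M\ran$ splits as $n\lan h_i,\wt M\ran$ from the base-case shuffle plus $mn$ from the identity $L(i^{m+n}) \simeq q_i^{mn}\,L(i^m)\conv L(i^n)$, which follows immediately from the normalising convention $L(i^k) = q_i^{k(k-1)/2}\,L(i)\conv\cdots\conv L(i)$ ($k$ factors).

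For (ii), I would apply the anti-involution $\psi$, which interchanges $e_1(i)$ and $e_{|\beta+\gamma|}(i)$ and hence interchanges $E_i$ and $E_i^*$. Combined with the duality relation \eqref{eq:dualconv}, applying (i) to the pair $(N^*, M^*)$ and dualising yields (ii), with the exponent $mn+m\lan h_i,\wt N\ran$ arising from the symmetry between the roles of the two factors under reversal.

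The main obstacle will be realising the Mackey filtration precisely at the module level (rather than only in the Grothendieck group) and tracking the grading shifts accurately through each step; in particular, verifying that the single surviving subquotient under the vanishing hypothesis carries exactly the stated $q$-shift requires careful bookkeeping with the shuffle structure of convolution products.
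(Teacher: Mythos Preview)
Your core approach—the Mackey-type filtration of $E_i^{m+n}(M\conv N)$ with subquotients shifts of $E_i^kM\conv E_i^{m+n-k}N$, of which only $k=m$ survives under the vanishing hypotheses—is exactly what the paper invokes: its entire proof is the single sentence ``Our assertions follow from the shuffle lemma (\cite[Lemma 2.20]{KL09})'', and that lemma \emph{is} this filtration. Your separate treatment of the base case $m=0$ via Proposition~\ref{prop: La i} is redundant once the shuffle lemma is in hand, but harmless.

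There is one genuine slip in your reduction of (ii) to (i). The anti-involution $\psi$ fixes each idempotent $e(\nu)$ and hence fixes every $e_a(i)$; it does \emph{not} interchange $e_1(i)$ with $e_{|\beta+\gamma|}(i)$, so duality via $\psi$ does not swap $E_i$ with $E_i^*$ (indeed $E_i(M^*)\simeq(E_iM)^*$). The symmetry you actually want is the algebra \emph{automorphism} that reverses the strand order, $e(\nu_1,\ldots,\nu_n)\mapsto e(\nu_n,\ldots,\nu_1)$, $x_k\mapsto x_{n+1-k}$, $\tau_k\mapsto\tau_{n-k}$; this does exchange $E_i$ and $E_i^*$ and carries $M\conv N$ to a twist of $N\conv M$. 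Alternatively—and this is implicitly what the paper does—the shuffle lemma applies just as well to restriction on the right, so (ii) follows from it directly without any reduction to (i).
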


\begin{proof}
 Our assertions follow from the shuffle lemma (\cite[Lemma 2.20]{KL09}).
\end{proof}

The following corollaries are  immediate consequences of Proposition~\ref{prop: Econv}.
\Cor\label{cor:Ereal} Let $i\in I$ and let $M$ be a real simple module.
Then $\tE_i^{\,\max}M$ is also real simple.
\encor

\Cor
Let $i\in I$ and let $M$ be a  simple module with $\eps_i(M)=m$ .
Then we have $\tE_i^{m} M \simeq E_i^{(m)} M$.
\encor

\begin{prop} \label{prop: acted on left only}
Let $M$ and $N$ be simple modules.
 We assume that one of them is real.
If $\ve_i(M \hconv N)=\ve_i(M)$,
then we have an isomorphism in $R\gmod$
$$ \widetilde{E}^{\,\max}_i (M \hconv N) \simeq (\widetilde{E}^{\,\max}_i M) \hconv N.$$
Similarly, if  $\ve^*_i(N \hconv M)=\ve^*_i(M)$, then we have
$$ \widetilde{E}^{*\max}_i (N \hconv M) \simeq (N \hconv \widetilde{E}^{*\max}_i M).$$
\end{prop}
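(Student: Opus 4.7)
The first isomorphism is the module-theoretic lift of the crystal rule $\tE_i(b_1 \otimes b_2) = (\tE_i b_1) \otimes b_2$, which holds precisely under the hypothesis $\eps_i(b_1 \otimes b_2) = \eps_i(b_1)$. My strategy is to exploit the exactness of the divided-power functor $E_i^{(m)}$ together with the preceding Corollary, which identifies $\tE_i^m S$ with $E_i^{(m)}S$ as graded modules whenever $m = \eps_i(S)$, and to compare this with the shuffle filtration of $E_i^{(m)}$ applied to a convolution.

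Set $m = \eps_i(M)$ and $M' := \tE_i^{\max} M$, a self-dual simple module with $\eps_i(M') = 0$. Apply the exact functor $E_i^{(m)}$ to the canonical surjection $M \conv N \epito M \hconv N$ to obtain
\[
  E_i^{(m)}(M \conv N) \epito E_i^{(m)}(M \hconv N).
\]
By the hypothesis $\eps_i(M \hconv N) = m$ and the Corollary cited above, the right-hand side is identified in $R\gmod$ with $\tE_i^{\max}(M \hconv N)$. On the left-hand side, the shuffle lemma produces a natural filtration of $E_i^{(m)}(M \conv N)$ whose subquotients have the form $E_i^{(a)}M \conv E_i^{(m-a)}N$ for $0 \le a \le m$; since $\eps_i(M) = m$, the top piece $a=m$ gives a surjection
\[
  E_i^{(m)}(M \conv N) \epito E_i^{(m)}M \conv N \simeq M' \conv N
\]
with a computable grading shift (extracted by iterating Proposition \ref{prop: Econv}). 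I will combine these two surjections to produce a nonzero map $M' \conv N \epito \tE_i^{\max}(M \hconv N)$. Since the target is simple, this map factors through the simple head $\hd(M' \conv N) = M' \hconv N$, yielding an isomorphism $M' \hconv N \isoto \tE_i^{\max}(M \hconv N)$. The grading match is then fixed on the nose by the self-duality of both sides.

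The main obstacle is the factoring step: I must show that the composite $E_i^{(m)}(M \conv N) \epito \tE_i^{\max}(M \hconv N)$ annihilates the non-top subquotients $E_i^{(a)}M \conv E_i^{(m-a)}N$ with $a < m$ of the shuffle filtration. This is precisely where the hypothesis $\eps_i(M \hconv N) = \eps_i(M)$ plays an essential role, as it is the module-theoretic manifestation of the crystal rule that localizes $\tE_i^{\max}$ on the left factor: it forbids $\tE_i^{\max}$ from drawing $i$-strands from $N$, forcing the lower-layer pieces to map trivially into the simple target. Concretely, one would argue that a nonzero map from such a lower piece into $\tE_i^{\max}(M \hconv N)$ would contradict either the vanishing of $E_i$ on $\tE_i^{\max}(M \hconv N)$ (its $\eps_i$ is zero) or the identity $\eps_i(M \hconv N) = m$, by applying $E_i^{m+1}$ to both sides and invoking Proposition \ref{prop: Econv}.

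Finally, the second statement $\tE_i^{*\max}(N \hconv M) \simeq N \hconv \tE_i^{*\max}M$ follows by the symmetric argument with $E_i^*$ and the right-sided Kashiwara operators in place of their left-sided counterparts; equivalently, one may apply the KLR anti-involution $\psi$, which reverses the order of convolution factors and exchanges the roles of $E_i$ and $E_i^*$, to the first statement.
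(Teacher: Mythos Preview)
Your overall strategy matches the paper's: produce a nonzero map from $M'\conv N$ to $\tE_i^{\max}(M\hconv N)$, then use that $M'\conv N$ has a simple head (one of $M'$, $N$ is real by Corollary~\ref{cor:Ereal}) to conclude. The gap is in how you produce that map.

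You want the surjection $E_i^{(m)}(M\conv N)\epito E_i^{(m)}(M\hconv N)\simeq\tE_i^{\max}(M\hconv N)$ to factor through the shuffle-filtration piece $E_i^{(m)}M\conv N=M'\conv N$, and you flag this as the ``main obstacle.'' Your proposed resolution---that a nonzero map from a piece $E_i^{(a)}M\conv E_i^{(m-a)}N$ with $a<m$ into $\tE_i^{\max}(M\hconv N)$ would contradict $\eps_i=0$ on the target ``by applying $E_i^{m+1}$''---does not work. The modules $E_i^{(a)}M$ for $0<a<m$ are typically not simple (already $E_iL(i^2)$ has length two), so these pieces are not of the form ``simple $\conv$ simple'' and may well have simple quotients with $\eps_i=0$; applying $E_i$ to a surjection onto such a quotient gives a surjection onto $0$, which is no contradiction. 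Moreover, in the Mackey filtration $E_iM\conv N$ sits as a \emph{submodule} of $E_i(M\conv N)$ (the first strand of $M\conv N$ comes from $M$ without crossings), so $M'\conv N$ is at the bottom rather than the top; you would then need to show that this submodule is not contained in $E_i^{(m)}\bl\ker(M\conv N\to M\hconv N)\br$, which is equally non-obvious.

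The paper bypasses the filtration entirely. Since $M$ is the head of $L(i^n)\conv M_0$ (with $n=\eps_i(M)$ and $M_0=\tE_i^{\max}M$), one has a surjection $L(i^n)\conv M_0\conv N\epito M\hconv N$; by adjunction this gives a nonzero map $L(i^n)\otimes M_0\otimes N\to E_i^n(M\hconv N)$ of modules over $R(n\al_i)\otimes R(\beta-n\al_i)\otimes R(\gamma)$. The source is \emph{simple}, so this map is injective. Combined with $E_i^n(M\hconv N)\simeq L(i^n)\otimes\tE_i^{\max}(M\hconv N)$ (which uses the hypothesis $\eps_i(M\hconv N)=n$), one reads off a nonzero map $M_0\otimes N\to\tE_i^{\max}(M\hconv N)$, hence a surjection $M_0\conv N\epito\tE_i^{\max}(M\hconv N)$. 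Working with a simple source makes the nonvanishing automatic---precisely what your filtration argument cannot deliver without further input.
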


\begin{proof}
Set $n=\ve_i(M \hconv N) = \ve_i(M)$ and $M_0=\widetilde{E}^{\,\max}_i M$.
Then $M_0$ or $N$ is real.
Now we have
$$ L(i^n) \otimes M_0 \otimes N \monoto
E_i^n(M\hconv N)\simeq L(i^n) \otimes \widetilde{E}^{\,\max}_i (M \hconv N),$$
which induces a non-zero map $M_0 \otimes N \to
\widetilde{E}^{\,\max}_i (M \hconv N)$. Hence we have a surjective map
$$M_0 \conv N \twoheadrightarrow \widetilde{E}^{\,\max}_i (M \hconv N).$$
Since $M_0$ or $N$ is real by Corollary~\ref{cor:Ereal}, $M_0\conv N$ has a simple head
and we obtain the desired result.  A similar proof works for the second statement.
\end{proof}

\subsection{Determinantial modules and T-system}
We will use the materials in \S\,\ref{sec:Quantum minors and T-systems} to obtain properties on the determinantial modules.

In the rest of this paper,
we assume  that {\em  $R$ is symmetric and the base field $\ko$ is of characteristic $0$.}
Under this condition, the family of self-dual simple $R$-modules
corresponds to the upper global basis of $\An$
by Theorem \ref{thm:categorification 2}.

Let $\ch$ be the map from $K(R \gmod)$ to $\An$ obtained by composing $\iota$ and the isomorphism  \eqref{eq:KLRU} in Theorem \ref{thm:categorification 1}.

\begin{definition}
For $\lambda \in \wl^+$ and $\mu,\zeta \in W \lambda$ such that $\mu \preceq \zeta$, let $\M(\mu,\zeta)$ be a simple $R(\zeta-\mu)$-module
such that $\ch(\M(\mu,\zeta))=\D(\mu,\zeta)$.
\end{definition}
Since $\D(\mu,\zeta)$ is a member of the upper global basis,
such a module exists uniquely due to Theorem \ref{thm:categorification 2}.
The module $\M(\mu,\zeta)$ is self-dual and we call it {\em the determinantial module}.

\begin{lemma}
$\M(\mu,\zeta)$ is a real simple module.
\end{lemma}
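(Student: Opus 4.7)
The plan is to reduce the realness of $\M(\mu,\zeta)$ to the fact that the square of the corresponding unipotent quantum minor is again (up to a power of $q$) a unipotent quantum minor, hence a member of the upper global basis.

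First, I would write $\mu = w\lambda$ and $\zeta = u\lambda$ with $u,w \in W$; since $\mu \preceq \zeta$, by Lemma~\ref{lem: D-property} we may take $u \le w$ in the Bruhat order. Applying Corollary~\ref{cor:Duv} with both dominant-weight parameters equal to $\lambda$ yields
\[
\D(\mu,\zeta)\,\D(\mu,\zeta) \;=\; q^{-(u\lambda,\,u\lambda-w\lambda)}\,\D(2\mu,\,2\zeta).
\]
Since $u \le w$, also $2\mu \preceq 2\zeta$, so by Lemma~\ref{lem: global element} the right-hand side $\D(2\mu,2\zeta)$ is a nonzero element of the upper global basis $\B^\up(\An)$.

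Next I would decategorify. Under the composed isomorphism of Theorem~\ref{thm:categorification 1} combined with Theorem~\ref{thm:categorification 2} (which requires the standing assumption that $R$ is symmetric and $\cor$ has characteristic $0$), self-dual simple modules correspond bijectively to elements of $\B^\up(\An)$. By definition of the determinantial module, $[\M(\mu,\zeta)]$ corresponds to $\D(\mu,\zeta)$ and $[\M(2\mu,2\zeta)]$ to $\D(2\mu,2\zeta)$. Therefore, in $K(R\gmod)$,
\[
\bigl[\M(\mu,\zeta)\conv \M(\mu,\zeta)\bigr] \;=\; q^{c}\,\bigl[\M(2\mu,2\zeta)\bigr], \qquad c=-(u\lambda,u\lambda-w\lambda).
\]

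Finally, since $K(R\gmod)$ has a $\Z[q^{\pm1}]$-basis consisting of the classes of self-dual simple modules, the identity above shows that the class of $\M(\mu,\zeta)\conv \M(\mu,\zeta)$ is a single monomial $q^c$ times a basis element; equivalently, its Jordan-H\"older series in $R\gmod$ has length $1$. Hence $\M(\mu,\zeta)\conv \M(\mu,\zeta) \simeq q^c\,\M(2\mu,2\zeta)$ is simple, proving that $\M(\mu,\zeta)$ is real. There is no real obstacle here: the argument is a direct translation of the multiplicative identity $\D(\mu,\zeta)^2 \in q^{\Z}\B^\up(\An)$ through the categorification, and the mild bookkeeping concerns only the $q$-shift $c$, which plays no role in simplicity.
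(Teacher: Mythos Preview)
Your proof is correct and follows essentially the same approach as the paper: both use Corollary~\ref{cor:Duv} to write $\D(\mu,\zeta)^2=q^{-(\zeta,\zeta-\mu)}\D(2\mu,2\zeta)$, observe this is a member of $\B^\up(\An)$ up to a power of $q$, and conclude via the categorification that $\M(\mu,\zeta)\conv\M(\mu,\zeta)$ is simple. Your write-up is just more explicit about the bookkeeping (choosing $u\le w$, noting $2\mu\preceq2\zeta$, and spelling out why a single basis term in $K(R\gmod)$ forces simplicity).
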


\begin{proof}
It follows from $\ch(\M(\mu,\zeta) \conv \M(\mu,\zeta)) =\ch\big( \M(\mu,\zeta) \big)^2=q^{-(\zeta,\zeta-\mu)}\D(2\mu,2\zeta)$ which is a member of the upper global
basis up to a power of $q$.
Here the last equality follows from Corollary~\ref{cor:Duv}.
\end{proof}

\begin{prop} \label{prop: commute 1}
Let $\lambda,\mu \in \wl^+$, and $s,s',t,t' \in W$ such that $\ell(s's)=\ell(s')+\ell(s)$,
$\ell(t't)=\ell(t')+\ell(t)$, $s's \lambda \preceq t'\lambda $ and
$s'\mu \preceq t't\mu$. Then
\bnum
\item $\M(s's\lambda,t'\lambda)$ and $\M(s'\mu,t't\mu)$ commute,
\item $\La\bl\M(s's\lambda,t'\lambda),\M(s'\mu,t't\mu)\br
= (s's\lambda+t'\lambda,\;t't\mu-s'\mu)$,
\item $\tLa\bl\M(s's\lambda,t'\lambda),\M(s'\mu,t't\mu)\br
=(t'\lambda,\;t't\mu-s'\mu)$, \\
$\tLa\bl\M(s'\mu,t't\mu),\M(s's\lambda,t'\lambda)\br
=(s'\mu-t't\mu,\;s's\lambda)$.
\end{enumerate}
\end{prop}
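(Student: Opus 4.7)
The plan is to deduce everything from the commutation relation of unipotent quantum minors \eqref{eq: quantum commuting factor} in Proposition~\ref{prop: BZ form}~(ii), by transporting it to the Grothendieck ring via the categorification isomorphism $\ch$ and then invoking the Leclerc-type rigidity result, Corollary~\ref{cor:qcomm_module}. Concretely, set $M\seteq\M(s's\lambda,t'\lambda)$ and $N\seteq\M(s'\mu,t't\mu)$. By the preceding lemma, both $M$ and $N$ are real simple, and by construction $\ch([M])=\D(s's\lambda,t'\lambda)$ and $\ch([N])=\D(s'\mu,t't\mu)$. The identity \eqref{eq: quantum commuting factor} then translates into
\[
[M]\,[N]=q^{(s's\lambda+t'\lambda,\,s'\mu-t't\mu)}\,[N]\,[M]
\quad\text{in }K(R\gmod),
\]
so the classes $q$-commute.

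Since one of $M,N$ is real, Corollary~\ref{cor:qcomm_module} forces $\de(M,N)=0$, which is exactly assertion~(i). For~(ii), Lemma~\ref{lem:commute_equiv} then tells us that $\rmat{M,N}$ is an isomorphism $M\conv N\isoto q^{-\La(M,N)}N\conv M$, hence in $K(R\gmod)$ we have $[M][N]=q^{-\La(M,N)}[N][M]$. Comparing with the $q$-commutation exponent above yields
\[
\La(M,N)=-\bl s's\la+t'\la,\,s'\mu-t't\mu\br=\bl s's\la+t'\la,\,t't\mu-s'\mu\br,
\]
which is the formula claimed in~(ii).

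For~(iii), I just unwind the definition $\tL(X,Y)=\tfrac12\bl\La(X,Y)+(\wt X,\wt Y)\br$. Here $\wt M=s's\la-t'\la$ and $\wt N=s'\mu-t't\mu$, so
\[
2\tL(M,N)=\bl s's\la+t'\la,\,t't\mu-s'\mu\br+\bl s's\la-t'\la,\,s'\mu-t't\mu\br=2\bl t'\la,\,t't\mu-s'\mu\br,
\]
and similarly $2\tL(N,M)=-\La(M,N)+(\wt M,\wt N)=2\bl s'\mu-t't\mu,\,s's\la\br$, since the cross terms cancel.

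In this argument no serious obstacle remains: the deep input is Corollary~\ref{cor:qcomm_module} (which rests on Theorem~\ref{thm:divisible} and hence ultimately on Theorem~\ref{th:leclerc} and Theorem~\ref{th:head}), together with the already-established BZ commutation \eqref{eq: quantum commuting factor}. The only mild care needed is to keep track of the normalization: one must use the product on $\An$ defined in this paper (not that of \cite{GLS}, which differs by a power of $q$) so that the isomorphism $\ch\cl K(R\gmod)\isoto A_q(\mathfrak n)_{\Z[q^{\pm 1}]}^{\vee}$ is a ring map and the scalar $q^{(s's\la+t'\la,\,s'\mu-t't\mu)}$ transports correctly into the Grothendieck ring.
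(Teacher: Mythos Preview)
Your proof is correct and follows exactly the route the paper takes: it cites Proposition~\ref{prop: BZ form}~(ii) and Corollary~\ref{cor:qcomm_module}, and you have simply written out the details of extracting $\La$ and $\tLa$ from the $q$-commutation exponent. One small inaccuracy in your commentary: Corollary~\ref{cor:qcomm_module} rests on Theorem~\ref{thm:divisible} and hence on Theorem~\ref{th:leclerc}, but not on Theorem~\ref{th:head}; the latter (and the characteristic-zero assumption) is not needed for this step.
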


\begin{proof}
It is a consequence of Proposition \ref{prop: BZ form}~(ii) and
Corollary \ref{cor:qcomm_module}.
\end{proof}

\begin{prop} \label{prop: weyl left right}
Let $\lambda \in \wl^+$, $\mu,\zeta \in W \lambda$
such that $\mu \preceq \zeta$ and $i \in I$.
\begin{enumerate}
\item[{\rm (i)}] If $n \seteq \lan h_i,\mu \ra \ge 0$, then
$$ \ve_i(\M(\mu,\zeta))=0 \ \ \text{ and } \ \ \M(s_i\mu,\zeta) \simeq \widetilde{F}_i^n\M(\mu,\zeta) \simeq L(i^n) \hconv \M(\mu,\zeta)
\ \text{in $R\gmod$.}$$
\item[{\rm (ii)}] If $\lan h_i,\mu \ra \le 0$ and $s_i\mu \preceq \zeta$, then $\ve_i(\M(\mu,\zeta))=-\lan h_i,\mu \ra$.
\item[{\rm (iii)}] If $m \seteq -\lan h_i,\zeta \ra  \ge  0$, then
$$ \ve^*_i(\M(\mu,\zeta))=0 \ \ \text{ and } \ \ \M(\mu,s_i\zeta) \simeq
\tF_i^{*\,m}\M(\mu,\zeta) \simeq \M(\mu,\zeta) \hconv L(i^m )
\ \text{ in $R\gmod$.}$$
\item[{\rm (iv)}] If $\lan h_i,\zeta \ra \ge 0$ and $\mu \preceq s_i\zeta$, then $\ve^*_i(\M(\mu,\zeta))=\lan h_i,\zeta \ra$.
\end{enumerate}
\end{prop}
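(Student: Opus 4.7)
The plan is to derive the module-theoretic statements of this proposition from their decategorified counterparts in Lemma~\ref{lem: weyl left right} by invoking the categorification Theorem~\ref{thm:categorification 2}. Recall that over a base field of characteristic $0$ with $R$ symmetric, the isomorphism $\mathrm{ch}\cl K(R\gmod)\isoto \An_{\A}$ identifies the set of isomorphism classes of self-dual simple $R$-modules with the upper global basis $\B^\up(\An)$, and intertwines the module-theoretic operations $\ve_i$, $\ve_i^*$, $\tE_i$, $\tF_i$, $\tE_i^*$, $\tF_i^*$ with the crystal operators $\eps_i$, $\eps_i^*$, $\te_i$, $\tf_i$, $\te_i^*$, $\tf_i^*$ on $B(\An)\simeq B(\infty)$. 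By construction, $\M(\mu,\zeta)$ is the self-dual simple module with $\mathrm{ch}(\M(\mu,\zeta))=\D(\mu,\zeta)$.

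For (i): Lemma~\ref{lem: weyl left right}(i) yields $\eps_i(\D(\mu,\zeta))=0$, and Lemma~\ref{lem: weyl left right}(ii) (applied at $s_i \mu$ in place of $\mu$) gives $\eps_i(\D(s_i\mu,\zeta))=n$; hence the categorification delivers $\ve_i(\M(\mu,\zeta))=0$ and $\ve_i(\M(s_i\mu,\zeta))=n$. The relation $e_i^{(n)}\D(s_i\mu,\zeta)=\D(\mu,\zeta)$ translates at the crystal level to $\tilde f_i^{\,n} b(\D(\mu,\zeta))=b(\D(s_i\mu,\zeta))$, and therefore, comparing self-dual simple modules, $\tF_i^{\,n}\M(\mu,\zeta)\simeq \M(s_i\mu,\zeta)$ in $R\gmod$. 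It remains to identify $\tF_i^{\,n}\M(\mu,\zeta)$ with $L(i^n)\hconv \M(\mu,\zeta)$. I will argue by induction on $k\le n$, setting $M_k\seteq \tF_i^{\,k}\M(\mu,\zeta)$: since $\ve_i(M_k)=k$ at each step, the definition $\tF_i M_k=q_i^{\ve_i(M_k)}L(i)\hconv M_k$ accumulates a grading shift $q_i^{\,0+1+\cdots+(n-1)}=q_i^{n(n-1)/2}$, which cancels with the shift in the definition $L(i^n)=q_i^{n(n-1)/2}L(i)^{\conv n}$, so that $\tF_i^{\,n}\M(\mu,\zeta)\simeq L(i^n)\hconv \M(\mu,\zeta)$ in $R\gmod$.

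Statement (ii) is an immediate categorification of Lemma~\ref{lem: weyl left right}(ii): we simply transport the equality $\eps_i(\D(\mu,\zeta))=-\lan h_i,\mu\ran$ across the bijection, using that $s_i\mu\preceq \zeta$ ensures $\D(s_i\mu,\zeta)\neq 0$, hence $\M(s_i\mu,\zeta)$ exists. The parts (iii) and (iv) are proved exactly in parallel with (i) and (ii), replacing the left Chevalley/Kashiwara operators by their starred counterparts and using the right $R(\alpha_i)$-action; concretely, one applies Lemma~\ref{lem: weyl left right}(iii)--(iv), translates $\eps_i^*$ into $\ve_i^*$, and uses the analogous identity $\tF_i^{*\,m}N=N\hconv L(i^m)$ in $R\gmod$ whenever $\ve_i^*(N)=0$, obtained by the same inductive grading-shift accounting as above.

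The only genuine technical point — and what I would treat most carefully — is the bookkeeping of gradings in the claim $\tF_i^{\,n}\M(\mu,\zeta)\simeq L(i^n)\hconv \M(\mu,\zeta)$ in $R\gmod$: one must verify that $\ve_i(M_k)=k$ for each $k<n$ (so that none of the intermediate $\tF_i$-applications introduce extra shifts from larger $\ve_i$), which is guaranteed by the crystal-theoretic relation $\eps_i(\tilde f_i^{\,k} b(\D(\mu,\zeta)))=k$ valid exactly because $\eps_i(b(\D(\mu,\zeta)))=0$. Once this is in place, the identification with $\M(s_i\mu,\zeta)$ follows from uniqueness of the self-dual simple module in the given weight space corresponding to a prescribed element of $\B^\up(\An)$.
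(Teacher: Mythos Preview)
Your proposal is correct and follows essentially the same approach as the paper: the paper's proof is the one-liner ``It is a consequence of Lemma~\ref{lem: weyl left right},'' and your argument simply unpacks how that consequence is drawn via the categorification Theorem~\ref{thm:categorification 2}. The extra detail you provide on the grading-shift bookkeeping for $\tF_i^{\,n}\M(\mu,\zeta)\simeq L(i^n)\hconv\M(\mu,\zeta)$ is accurate and useful, but not a departure from the paper's line of reasoning.
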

\Proof
It is a consequence of Lemma~\ref{lem: weyl left right}.
\QED

\begin{prop}\label{prop:T}
Assume that $u,v \in W$ and $i \in I$ satisfy $u<us_i$ and $v<vs_i \le u$.
\bnum
\item
We have exact sequences
\begin{equation} \label{eq: deteminatial seq1}
\begin{aligned}
& 0 \Lto \M(u\lambda,v\lambda) \Lto q^{(vs_i\varpi_i,v\varpi_i-u\varpi_i)}\M(us_i\varpi_i,vs_i\varpi_i) \conv \M(u\varpi_i,v\varpi_i) \\
& \hspace{14ex} \Lto q^{-1+(v\varpi_i,vs_i\varpi_i-u\varpi_i)}\M(us_i\varpi_i,v\varpi_i) \conv \M(u\varpi_i,vs_i\varpi_i) \Lto 0,
\end{aligned}
\end{equation}
and
\begin{equation} \label{eq: deteminatial seq2}
\begin{aligned}
& 0 \Lto  q^{1+(v\varpi_i,vs_i\varpi_i-u\varpi_i)}  \M(us_i\varpi_i,v\varpi_i) \conv \M(u\varpi_i,vs_i\varpi_i)  \\
& \hspace{3ex} \Lto   q^{(v\varpi_i,vs_i\varpi_i-us_i\varpi_i)}  \M(u\varpi_i,v\varpi_i)\conv \M(us_i\varpi_i,vs_i\varpi_i) \Lto \M(u\lambda,v\lambda)  \Lto 0,
\end{aligned}
\end{equation}
where  $\lambda = s_i\varpi_i+\varpi_i$. 
\item
$\de\bl \M(u\varpi_i,v\varpi_i), \M(us_i\varpi_i,vs_i\varpi_i)\br=1$.
\ee
\end{prop}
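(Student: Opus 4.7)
The plan is to categorify the $T$-system identities of Proposition~\ref{prop: the ses} by invoking Lemma~\ref{lem:short}. Via the isomorphism $K(R\gmod)\simeq \An_{\Z[q^{\pm 1}]}$ obtained by composing Theorem~\ref{thm:categorification 1} with $\iota$, the $T$-system becomes an identity in $K(R\gmod)$ among the classes of five determinantial modules: $M_1 = \M(us_i\varpi_i,vs_i\varpi_i)$, $M_2 = \M(u\varpi_i,v\varpi_i)$, $M_3 = \M(us_i\varpi_i,v\varpi_i)$, $M_4 = \M(u\varpi_i,vs_i\varpi_i)$, and $M_0 = \M(u\lambda,v\lambda)$. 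To apply Lemma~\ref{lem:short}, I need the right-hand side of the $T$-system to be a $\Z[q^{\pm 1}]$-linear combination of classes of self-dual simple modules; this reduces to showing that $M_3$ and $M_4$ commute, so that $M_3 \conv M_4$ is simple up to a grading shift.

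The commutativity follows from Proposition~\ref{prop: commute 1} applied with $\lambda = \mu = \varpi_i$, $s = s_i$, $s' = u$, $t = s_i$, $t' = v$. The length conditions $\ell(us_i) = \ell(u) + 1$ and $\ell(vs_i) = \ell(v) + 1$ hold by hypothesis, while the partial-order conditions $us_i\varpi_i \preceq v\varpi_i$ and $u\varpi_i \preceq vs_i\varpi_i$ both reduce, via Lemma~\ref{lem: D-property}(vii), to the single inequality $vs_i \le u$, using the chain $v \le vs_i \le u < us_i$. Proposition~\ref{prop: commute 1}(iii) moreover yields $\tL(M_3, M_4) = (v\varpi_i, vs_i\varpi_i - u\varpi_i)$, so that $M_3 \nconv M_4 = q^{(v\varpi_i, vs_i\varpi_i - u\varpi_i)} M_3 \conv M_4$ is self-dual simple by Lemma~\ref{lem:selfdual}.

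Substituting this into the first displayed form of the $T$-system in Proposition~\ref{prop: the ses}(ii) and writing $c_1 = (vs_i\varpi_i, v\varpi_i - u\varpi_i)$, the identity takes the form
\[
[M_1 \conv M_2] \;=\; q^{-c_1}\,[M_0] \;+\; q^{-1-c_1}\,[M_3 \nconv M_4]
\]
in $K(R\gmod)$. Since $M_0$ and $M_3\nconv M_4$ are self-dual simple and the two exponents differ by $1$, Lemma~\ref{lem:short} produces a short exact sequence
\[
0 \;\to\; q^{-c_1}M_0 \;\to\; M_1\conv M_2 \;\to\; q^{-1-c_1}(M_3\nconv M_4) \;\to\; 0;
\]
tensoring by $q^{c_1}$ and unfolding $M_3\nconv M_4$ as a grading-shifted convolution reproduces \eqref{eq: deteminatial seq1}. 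The assertion $\de(M_1, M_2) = 1$ of part (ii) is precisely the content of Lemma~\ref{lem:short}(i). The sequence \eqref{eq: deteminatial seq2} is obtained by applying the identical argument to the last displayed form of the $T$-system in Proposition~\ref{prop: the ses}(ii), which controls $M_2 \conv M_1$ rather than $M_1 \conv M_2$.

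The main obstacle is bookkeeping: tracking the various bilinear-form expressions such as $(vs_i\varpi_i, v\varpi_i - u\varpi_i)$, $(v\varpi_i, vs_i\varpi_i - u\varpi_i)$, and $(v\varpi_i, vs_i\varpi_i - us_i\varpi_i)$, and verifying that after the successive $q$-shifts they reproduce the exact exponents displayed in \eqref{eq: deteminatial seq1} and \eqref{eq: deteminatial seq2}. No new conceptual input is required once the commutativity of $M_3$ and $M_4$ and the self-duality of the relevant simple modules have been established by the cited results.
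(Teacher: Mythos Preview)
Your proposal is correct and follows essentially the same approach as the paper: translate the $T$-system identity of Proposition~\ref{prop: the ses} into $K(R\gmod)$ and apply Lemma~\ref{lem:short}, from which both the exact sequences and the statement $\de=1$ drop out. You are somewhat more explicit than the paper in justifying why $M_3\conv M_4$ is simple and self-dual (via Proposition~\ref{prop: commute 1}), whereas the paper simply asserts this, but the underlying argument is the same.
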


\begin{proof}
Since the proof of \eqref{eq: deteminatial seq1} is similar,
let us only prove \eqref{eq: deteminatial seq2}.
(Indeed, they are dual to each other.)

Set
\eqn
&&X=q^{(v\varpi_i,vs_i\varpi_i-u\varpi_i)}  \M(us_i\varpi_i,v\varpi_i) \conv \M(u\varpi_i,vs_i\varpi_i) ,\\
&&Y=q^{(v\varpi_i,vs_i\varpi_i-us_i\varpi_i)}  \M(u\varpi_i,v\varpi_i)\conv \M(us_i\varpi_i,vs_i\varpi_i),\\
&&Z=\M(u\lambda,v\lambda).
\eneqn
Then Proposition \ref{prop: the ses} implies that
$$\ch(Y)=\ch(qX)+\ch(Z).$$
Since $X$ and $Z$ are simple and self-dual,
 our assertion follows from
Lemma~\ref{lem:short}.
\QED

\subsection{Generalized T-system on determinantial module}

\begin{theorem} \label{thm: canonical surjection}
Let $\lambda \in \wl^+$ and $\mu_1,\mu_2,\mu_3 \in W \lambda$ such that $\mu_1 \preceq \mu_2 \preceq \mu_3$. Then there exists a canonical epimorphism
$$ \M(\mu_1,\mu_2) \conv \M(\mu_2,\mu_3) \twoheadrightarrow \M(\mu_1,\mu_3),$$
which is equivalent to saying that $\M(\mu_1,\mu_2) \hconv \M(\mu_2,\mu_3) \simeq \M(\mu_1,\mu_3)$.

In particular, we have
$$\tLa(\M(\mu_1,\mu_2),\M(\mu_2,\mu_3))=0\quad\text{and}\quad
\La(\M(\mu_1,\mu_2),\M(\mu_2,\mu_3)) = -(\mu_1-\mu_2,\mu_2-\mu_3). $$
\end{theorem}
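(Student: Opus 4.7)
The plan is to translate the surjection into an isomorphism of simple heads and then proceed by induction on the length of $\mu_3$ inside the $W$-orbit $W\lambda$. First, note that since $\M(\mu_1,\mu_3)$ is self-dual with weight $\mu_1-\mu_3 = \mathrm{wt}(\M(\mu_1,\mu_2)\conv\M(\mu_2,\mu_3))$, an epimorphism $\M(\mu_1,\mu_2)\conv\M(\mu_2,\mu_3)\twoheadrightarrow\M(\mu_1,\mu_3)$ is equivalent to an isomorphism $\M(\mu_1,\mu_2)\hconv\M(\mu_2,\mu_3)\simeq\M(\mu_1,\mu_3)$ in $R\gmod$ (with $\tLa=0$), after which $\La=-(\mu_1-\mu_2,\mu_2-\mu_3)$ follows from Lemma~\ref{lem:selfdual}. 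The simple head on the left exists because $\M(\mu_1,\mu_2)$ is a real simple module, by Theorem~\ref{thm:simplicity}.

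For the base case $\mu_3=\lambda$, Theorem~\ref{thm: DDD} reads $\D(\mu_1,\mu_2)\D(\mu_2,\lambda)\equiv\D(\mu_1,\lambda)\mod qL^\up(A_q(\n))$. Translated through the categorification isomorphism, this says $[\M(\mu_1,\mu_2)\conv\M(\mu_2,\lambda)]=[\M(\mu_1,\lambda)]+\sum_k q^{c_k}[S_k]$ with $c_k>0$ and $S_k$ self-dual simple. Since $\M(\mu_1,\mu_2)$ is real, Theorem~\ref{thm:simplicity} gives a simple head, and Theorem~\ref{th:head} identifies this head as the self-dual simple module attached to the term of minimal $q$-power, namely $\M(\mu_1,\lambda)$ with grading shift $0$.

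For the inductive step, choose $i\in I$ with $m\seteq-\langle h_i,\mu_3\rangle>0$ and set $\mu_3'\seteq s_i\mu_3=\mu_3+m\alpha_i$. Then $\mu_j\preceq\mu_3\preceq\mu_3'$ for $j=1,2$, so Proposition~\ref{prop: weyl left right}(iii) gives $\M(\mu_j,\mu_3')\simeq\M(\mu_j,\mu_3)\hconv L(i^m)$ up to grading. By the induction hypothesis applied to the triple $(\mu_1,\mu_2,\mu_3')$, we have $\M(\mu_1,\mu_2)\hconv\M(\mu_2,\mu_3')\simeq\M(\mu_1,\mu_3')$. Writing $X=\M(\mu_1,\mu_2)\hconv\M(\mu_2,\mu_3)$, both $X\hconv L(i^m)$ and $\M(\mu_1,\mu_3)\hconv L(i^m)\simeq\M(\mu_1,\mu_3')$ are simple quotients of the triple convolution $\M(\mu_1,\mu_2)\conv\M(\mu_2,\mu_3)\conv L(i^m)$: the first via $\M(\mu_1,\mu_2)\conv\M(\mu_2,\mu_3)\twoheadrightarrow X$ followed by $X\conv L(i^m)\twoheadrightarrow X\hconv L(i^m)$, the second via the surjection $\M(\mu_2,\mu_3)\conv L(i^m)\twoheadrightarrow\M(\mu_2,\mu_3')$ followed by the induction hypothesis. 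Provided the triple convolution has a simple head, these two quotients coincide, giving $X\hconv L(i^m)\simeq\M(\mu_1,\mu_3')$.  Finally, since $\ve_i^*\bl\M(\mu_1,\mu_3')\br=m=\ve_i^*(L(i^m))$, the dual form of Proposition~\ref{prop: acted on left only} applies to extract the right factor, yielding $X\simeq\tE_i^{*\,m}\bl X\hconv L(i^m)\br\simeq\tE_i^{*\,m}\M(\mu_1,\mu_3')\simeq\M(\mu_1,\mu_3)$.

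The main obstacle will be verifying the simplicity of the head of the triple convolution $\M(\mu_1,\mu_2)\conv\M(\mu_2,\mu_3)\conv L(i^m)$, which is needed to identify the two simple quotients above. This should be handled by Proposition~\ref{prop:3simple}, via establishing the additivity $\La(\M(\mu_2,\mu_3)\hconv L(i^m),\,\M(\mu_1,\mu_2))=\La(\M(\mu_2,\mu_3),\M(\mu_1,\mu_2))+\La(L(i^m),\M(\mu_1,\mu_2))$ or an analogous $\de$-additivity; this in turn can be checked using Proposition~\ref{prop: commute 1} to compute $\La\bl\M(\mu_2,\mu_3),\M(\mu_1,\mu_2)\br$ and  Proposition~\ref{prop: weyl left right} to handle the contributions involving $L(i)$. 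Once this technical point is resolved, the induction carries the proof through to all $\mu_3\in W\lambda$ and establishes the canonical surjection together with the asserted values of $\tLa$ and $\La$.
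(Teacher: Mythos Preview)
Your base case matches the paper exactly, and the skeleton of your inductive step (pick $i$ with $\langle h_i,\mu_3\rangle<0$, pass to $s_i\mu_3$, apply induction) is also the same. But your inductive step is routed through the triple convolution $\M(\mu_1,\mu_2)\conv\M(\mu_2,\mu_3)\conv L(i^m)$, and the ``main obstacle'' you flag is genuine and not resolved by your suggestion. Proposition~\ref{prop: commute 1} only computes $\La$ for \emph{commuting} pairs of determinantial modules, and $\M(\mu_1,\mu_2)$, $\M(\mu_2,\mu_3)$ need not commute (see Proposition~\ref{prop:deMM} for an explicit case with $\de=1$). Likewise Proposition~\ref{prop:Lambda decomp} gives the needed $\La$-additivity only when one of the factors commutes with the outer module, which again you do not have. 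So the simple-head claim for the triple convolution remains unproved, and with it your identification $X\hconv L(i^m)\simeq\M(\mu_1,\mu_3')$.

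The paper sidesteps all of this: apply Proposition~\ref{prop: acted on left only} directly with $N=\M(\mu_1,\mu_2)$ and $M=\M(\mu_2,s_i\mu_3)$. By induction $N\hconv M\simeq\M(\mu_1,s_i\mu_3)$, and Proposition~\ref{prop: weyl left right}(iv) gives $\ve_i^*(N\hconv M)=\ve_i^*(\M(\mu_1,s_i\mu_3))=-\langle h_i,\mu_3\rangle=\ve_i^*(\M(\mu_2,s_i\mu_3))=\ve_i^*(M)$. Hence
\[
\M(\mu_1,\mu_3)\simeq\tE_i^{*\max}(N\hconv M)\simeq N\hconv\tE_i^{*\max}M\simeq\M(\mu_1,\mu_2)\hconv\M(\mu_2,\mu_3),
\]
with no triple convolution and no additivity check needed. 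In short: you are applying $\tE_i^{*\max}$ to the wrong pair --- rather than peeling $L(i^m)$ off of $X\hconv L(i^m)$ (which requires first knowing $X\hconv L(i^m)\simeq\M(\mu_1,\mu_3')$), apply $\tE_i^{*\max}$ directly to the inductive isomorphism $\M(\mu_1,\mu_2)\hconv\M(\mu_2,s_i\mu_3)\simeq\M(\mu_1,s_i\mu_3)$, where both $\ve_i^*$-values are known to agree.
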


\begin{proof}
(a)\ Our assertion follows from Theorem \ref{thm: DDD} and Theorem \ref{th:head}
 when $\mu_3=\lambda$.

\medskip\noi
(b) We shall prove the general case by induction on $|\la-\mu_3|$.
By (a), we may assume that $\mu_3 \ne \lambda$. Then there exists $i$ such that $\lan h_i,\mu_3\ra <0$. The induction hypothesis  yields that
$$\M(\mu_1,\mu_2) \hconv \M(\mu_2,s_i \mu_3) \simeq \M(\mu_1,s_i \mu_3).$$
Since $\mu_1\preceq\mu_2\preceq\mu_3\preceq s_i\mu_3$,
Proposition~\ref{prop: weyl left right} (iv)  gives
$$\eps_i^*\bl\M(\mu_2,s_i\mu_3)\br=\eps_i^*\bl\M(\mu_1,s_i\mu_3)\br
=-\lan h_i,\mu_3\ran.$$
Then Proposition~\ref{prop: acted on left only} implies that
$$\tEs_i{}^\max\bl \M(\mu_1,\mu_2)\hconv \M(\mu_2,s_i\mu_3)\br
\simeq
\M(\mu_1,\mu_2)\hconv\bl \tEs_i{}^\max \M(\mu_2,s_i\mu_3)\br,$$
from which we obtain
$$\M(\mu_1,\mu_3)\simeq \M(\mu_1,\mu_2)\hconv  \M(\mu_2,\mu_3)$$

\medskip\noi
By  Lemma \ref{lem:selfdual},  we have
$\tLa\bl\M(\mu_1,\mu_2),\M(\mu_2,\mu_3)\br=0$. Hence we  obtain
\begin{align*}
&\La\bl\M(\mu_1,\mu_2),\M(\mu_2,\mu_3)\br
=-\bl\wt(\M(\mu_1,\mu_2),\wt(\M(\mu_2,\mu_3))\br.
\qedhere
\end{align*}
\end{proof}

\begin{prop}\label{prop:dMM}
Let $i \in I$ and $x,y,z \in W$.
\begin{enumerate}
\item[{\rm (i)}] If $\ell(xy)=\ell(x)+\ell(y)$, $zs_i>z$,
$xy\ge zs_i$
and $x \ge z$, then we have
$$ \de(\M(xy\varpi_i,zs_i\varpi_i),\M(x\varpi_i,z\varpi_i)) \le 1.$$
\item[{\rm (ii)}] If $\ell(zy)=\ell(z)+\ell(y)$, $xs_i >  x$, $xs_i\ge zy$ and $x \ge z$, then we have
$$ \de(\M(xs_i\varpi_i,zy\varpi_i),\M(x\varpi_i,z\varpi_i)) \le 1.$$
\end{enumerate}
\end{prop}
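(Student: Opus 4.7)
We prove (i); statement (ii) is proved by a symmetric argument, using the $\vphi^\ast$-involution which interchanges the roles of left and right indices in $\M(\mu,\zeta)$. Set $L\seteq \M(x\varpi_i,z\varpi_i)$. The strategy is an induction on $\ell(y)$, with the main ingredients being Theorem~\ref{thm: canonical surjection} (to factor a determinantial module through an intermediate dominance-comparable element) and Proposition~\ref{pro:subquotient} (to bound $\de$ of a subquotient by the sum of the factor $\de$'s).

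\textbf{Base case} $\ell(y)=0$: Here $xy=x$ and the hypotheses give the dominance chain $x\varpi_i\preceq zs_i\varpi_i\preceq z\varpi_i$. By Theorem~\ref{thm: canonical surjection},
$$ L \simeq \M(x\varpi_i, zs_i\varpi_i) \hconv \M(zs_i\varpi_i, z\varpi_i). $$
Depending on whether $xs_i>x$ or $xs_i<x$, one applies Proposition~\ref{prop:T}(ii) to a suitable pair $(u,v)$ (namely $(u,v)=(x,z)$ in the first case, $(u,v)=(xs_i,z)$ in the second; the condition $v<vs_i\le u$ is ensured by $x\ge zs_i$ together with the lifting property of Bruhat order). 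This produces a $\de=1$ relationship for a T-system pair. One then identifies $\M(x\varpi_i,zs_i\varpi_i)$ as a Kashiwara twist of $L$ via Proposition~\ref{prop: weyl left right}(iii), and combines these inputs through Proposition~\ref{pro:subquotient} to conclude $\de(\M(x\varpi_i,zs_i\varpi_i),L)\le 1$.

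\textbf{Inductive step} $\ell(y)\ge 1$: Choose a right descent $s_k$ of $y$, so that $y=y's_k$ with $\ell(y)=\ell(y')+1$. If $k\ne i$, then $s_k$ fixes $\varpi_i$, hence $\M(xy\varpi_i,zs_i\varpi_i)=\M(xy'\varpi_i,zs_i\varpi_i)$, and (after verifying $xy'\ge zs_i$ by Bruhat-subword analysis) the claim follows immediately from the inductive hypothesis applied to $(x,y',z,i)$. If $k=i$, Theorem~\ref{thm: canonical surjection} applied to the chain $xy\varpi_i\preceq xy'\varpi_i\preceq zs_i\varpi_i$ yields
$$ \M(xy\varpi_i, zs_i\varpi_i) \simeq \M(xy\varpi_i, xy'\varpi_i) \hconv \M(xy'\varpi_i, zs_i\varpi_i), $$
so by Proposition~\ref{pro:subquotient},
$$ \de(\M(xy\varpi_i, zs_i\varpi_i),L) \le \de(\M(xy\varpi_i, xy'\varpi_i),L) + \de(\M(xy'\varpi_i, zs_i\varpi_i),L). $$
The second summand is $\le 1$ by the inductive hypothesis. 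The first summand is $0$: we show $\M(xy\varpi_i, xy'\varpi_i)$ commutes with $L$ via Proposition~\ref{prop: commute 1} applied with $\la=\mu=\varpi_i$ and with parameters $s'=x$, $s=y$, $t'=xy'$ together with a reduced factorization $z=t't$ compatible with the weak-order structure beneath $xy'$.

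\textbf{Main obstacle.} The delicate point is the commutation step: matching the Weyl-group data so that the two length conditions $\ell(s's)=\ell(s')+\ell(s)$ and $\ell(t't)=\ell(t')+\ell(t)$ in Proposition~\ref{prop: commute 1} hold simultaneously for the natural identification. For the choice $t'=xy'$, one needs $xy'$ to be a left-weak-order predecessor of $z$, which is not automatic from $x\ge z$; a refined case analysis (or an auxiliary commutation lemma derived from the T-system exact sequence and Proposition~\ref{Prop: l2}, tracking Jordan--H\"older filtration lengths) is needed to conclude the commutation in general.
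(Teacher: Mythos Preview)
Your inductive approach has a genuine gap that you yourself flag: the commutation $\de\bl\M(xy\varpi_i, xy'\varpi_i), \M(x\varpi_i, z\varpi_i)\br = 0$ in the case $k=i$ cannot be obtained from Proposition~\ref{prop: commute 1} with the parameters you suggest. That proposition requires $\ell(t't)=\ell(t')+\ell(t)$, and with $t'=xy'$ and $t't$ representing $z$ on $\varpi_i$ this would force $\ell(z)\ge\ell(xy')$, contrary to $z\le x$ and $\ell(xy')\ge\ell(x)$. There is no evident repair: this commutation is essentially as deep as the proposition itself. The base case is also shakier than your sketch suggests; the T-system pair in Proposition~\ref{prop:T} is $\bl\M(u\varpi_i,v\varpi_i),\M(us_i\varpi_i,vs_i\varpi_i)\br$, not the pair $\bl\M(x\varpi_i,zs_i\varpi_i),\M(x\varpi_i,z\varpi_i)\br$ you need, and the passage via Proposition~\ref{prop: weyl left right}(iii) does not bridge this gap.

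The paper's proof is entirely different and avoids induction on $\ell(y)$. It works upstairs in $A_q(\g)$, starting from the generalized T-system identity of Proposition~\ref{prop: g T-system D},
\[
\Delta(y\varpi_i,s_i\varpi_i)\Delta(\varpi_i,\varpi_i)=q_i^{-1}G^\up(\,\cdot\,)+G^\up(\,\cdot\,),
\]
a two-term expansion in the upper global basis. One then applies, in one shot, the operator $S^*_{z,\lambda}$ (a product of divided powers of $e_j$'s acting on the right along a reduced word for $z$) and then $S_{x,\mu}$ (divided powers of $f_j$'s on the left along a reduced word for $x$, using that the left-hand side is $x$-highest). These operators send upper global basis elements to upper global basis elements or zero, and transform the left-hand side into $\Delta(xy\varpi_i,zs_i\varpi_i)\Delta(x\varpi_i,z\varpi_i)$. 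Applying $p_\n$ yields an equation of the shape
\[
q^c\,\D(xy\varpi_i,zs_i\varpi_i)\,\D(x\varpi_i,z\varpi_i)=q^{-1}[\text{simple or }0]+[\text{simple or }0]
\]
in $A_q(\n)$, and Lemma~\ref{lem:short} gives $\de\le 1$ directly. Part (ii) is obtained by first applying $\vphi^*$ to the generalized T-system identity (Proposition~\ref{prop:phimul}) and repeating the argument. The key idea you are missing is that the passage from the ``base'' configuration to the general one is achieved not by induction and commutation lemmas in $R\gmod$, but by a single application of global-basis-preserving Demazure-type operators on $A_q(\g)$.
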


\begin{proof}
In the course of proof, we omit $\oi_{\vpi,\vpi}^{-1}$ for the sake of simplicity.
If $y\vpi = \vpi$, then the assertion follows from Proposition \ref{prop: commute 1} (i). Hence we may assume that
 $y' \seteq ys_i < y$.

\medskip

Let us show {\rm (i)}. By Proposition \ref{prop: g T-system D}, we have
\begin{equation} \label{eq: ac le 1}
\begin{aligned}
\Delta(y\vpi,s_i\vpi)\Delta(\vpi,\vpi) & =q^{-1}G^\up\left( (u_{y\vpi} \otimes u_{\vpi}) \otimes (u_{s_i\vpi} \otimes u_{\vpi})^\ri \right) \\
& \hspace{10ex} +G^\up(\oi_\lambda^{-1}(\te_i^*b) \otimes u_\lambda^\ri),
\end{aligned}
\end{equation}
where $\lambda=\vpi+s_i\vpi$ and $b=\oi_\vpi(u_{y\vpi}) \in B(\infty)$.
Let  $S_{z,\la}^* $  be the operator on $\Ag$ given by
the application of $e_{j_1}^{(a_1)}\cdots e_{j_t}^{(a_t)}$ from the right,
where $z=s_{j_t}\cdots s_{j_1}$ is a reduced expression of $z$ and
$a_k=\lan h_{j_k},s_{j_{k-1}}\cdots s_{j_1}\la\ran$.
Then applying  $S_{z,\la}^*$   to \eqref{eq: ac le 1}, we obtain
\begin{align*}
\Delta(y\vpi,zs_i\vpi)\Delta(\vpi,z\vpi) &  =q^{-1}G^\up\left( (u_{y\vpi} \otimes u_{\vpi} ) \otimes (u_{zs_i\vpi} \otimes u_{z\vpi})^\ri \right) \\
& \hspace{10ex} +G^\up(\oi_\lambda^{-1}(\te_i^*b) \otimes u_{z\lambda}^\ri).
\end{align*}

Recall that $\mu\in\Po$ is called {\em $x$-dominant}
if $c_k\ge0$.
Here $x=s_{i_r} \cdots s_{i_1}$ is a reduced expression of $x$ and
 $c_k\seteq \lan h_{i_k}, s_{i_{k-1}} \cdots s_{i_1} \mu \ra$ ($1\le k\le r$).
Recall that an element $v\in\Ag$ with $\wtl(v)=\mu$ is called
{\em $x$-highest}
if $\mu$ is $x$-dominant and
$$\text{$f_{i_k}^{1+c_k}f_{i_{k-1}}^{(c_{k-1})}\cdots f_{i_1}^{(c_{1})}v=0$ for any $k$
($1\le k\le r$).}$$
If $v$ is $x$-highest, then
$v$ is a linear combination of
$x$-highest $G^\up(b)$'s.
Moreover, $S_{x,\mu}G^\up(b)\seteq f^{(c_r)}_{i_r}\cdots f^{(c_1)}_{i_1} G^\up(b)$ is either a member of the upper global basis or zero.
Since $\Delta(y\vpi,zs_i\vpi)\Delta(\vpi,z\vpi)$ is $x$-highest
of weight $\mu\seteq  y\vpi+\vpi$,
we obtain
\begin{align*}
\Delta(xy\vpi,zs_i\vpi)\Delta(x\vpi,z\vpi) & =q^{-1}G^\up\left( (u_{xy\vpi} \otimes u_{x\vpi} ) \otimes (u_{zs_i\vpi} \otimes u_{z\vpi})^\ri \right) \\
& \hspace{10ex} +S_{x,\mu}G^\up(\oi_\lambda^{-1}(\te_i^*b) \otimes u_{z\lambda}^\ri).
\end{align*}

Applying $p_\n$, we obtain
\begin{align*}
q^c \D(xy\vpi,zs_i\vpi)\D(x\vpi,z\vpi) & =q^{-1}p_\n G^\up\left( (u_{xy\vpi} \otimes u_{x\vpi} ) \otimes (u_{zs_i\vpi} \otimes u_{z\vpi})^\ri \right) \\
& \hspace{10ex} +p_\n S_{x,\mu}G^\up(\oi_\lambda^{-1}(\te_i^*b) \otimes u_{z\lambda}^\ri)
\end{align*}
for some integer $c$. Hence we obtain {\rm (i)} by
Lemma~\ref{lem:short} (i).

\medskip\noi
{\rm (ii)} is proved similarly. By applying $\varphi^*$  to \eqref{eq: ac le 1}, we obtain
\begin{align*}
q^{(s_i\vpi_i,\vpi_i)-(y\vpi_i,\vpi_i)}\Delta(s_i\vpi,y\vpi)\Delta(\vpi,\vpi) & =q^{-1}G^\up\left( (u_{s_i\vpi} \otimes u_{\vpi}) \otimes (u_{y\vpi} \otimes u_{\vpi})^\ri \right) \\
& \hspace{10ex} +G^\up(u_\lambda  \otimes (\oi_\lambda^{-1}\te_i^*b)^\ri).
\end{align*}
 Here we used Proposition~\ref{prop:phimul}.
Then the similar arguments as above show {\rm (ii)}.
\end{proof}

\begin{prop}\label{prop:deMM}
Let $x \in W$ such that $xs_i > x$ and $x \vpi \ne \vpi$. Then we have
$$\de(\M(xs_i\vpi,x\vpi),\M(x\vpi,\vpi)) =1.$$
\end{prop}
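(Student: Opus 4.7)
The plan is to sandwich $\de(\M(xs_i\varpi_i,x\varpi_i),\M(x\varpi_i,\varpi_i))$ between $1$ and $1$. The upper bound $\de\le 1$ will come immediately from Proposition~\ref{prop:dMM}(ii) with $z=1$ and $y=x$: the four hypotheses there reduce either to triviality or to $xs_i>x$. For the lower bound my goal is to transfer the equality
\[
\de\bigl(\M(x\varpi_i,\varpi_i),\M(xs_i\varpi_i,s_i\varpi_i)\bigr)=1,
\]
which is a direct instance of the T-system of Proposition~\ref{prop:T}(ii) taken at $u=x$, $v=1$, onto the target pair.

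First I would record the Bruhat input that powers everything: the hypothesis $x\varpi_i\ne\varpi_i$ places $x$ outside the stabilizer $\langle s_j\mid j\ne i\rangle$ of $\varpi_i$, and the subword characterization of Bruhat order then yields $s_i\le x$; this validates the hypothesis $vs_i=s_i\le u=x$ of Proposition~\ref{prop:T}(ii) and supplies the displayed equality above. Next I would exploit factorization. From $s_i\le x\le xs_i$ one gets the chain $xs_i\varpi_i\preceq x\varpi_i\preceq s_i\varpi_i$, so Theorem~\ref{thm: canonical surjection} identifies
\[
\M(xs_i\varpi_i,s_i\varpi_i)\simeq \M(xs_i\varpi_i,x\varpi_i)\hconv\M(x\varpi_i,s_i\varpi_i)
\]
as the simple head of the corresponding convolution. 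The $\de$-subadditivity of Proposition~\ref{pro:subquotient}, applied with $L=\M(x\varpi_i,\varpi_i)$, $M=\M(xs_i\varpi_i,x\varpi_i)$, $N=\M(x\varpi_i,s_i\varpi_i)$ and combined with the symmetry of $\de$, then produces
\[
1\le \de\bigl(\M(xs_i\varpi_i,x\varpi_i),\M(x\varpi_i,\varpi_i)\bigr)+\de\bigl(\M(x\varpi_i,s_i\varpi_i),\M(x\varpi_i,\varpi_i)\bigr).
\]

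Finally I would kill the second summand by Proposition~\ref{prop: commute 1} with $\lambda=\mu=\varpi_i$, $s'=x$, $s=1$, $t'=1$, $t=s_i$: the length requirements are automatic and the two Bruhat conditions $x\varpi_i\preceq\varpi_i$ and $x\varpi_i\preceq s_i\varpi_i$ again collapse onto $s_i\le x$. This forces the first summand to be at least $1$, completing the proof. The main obstacle is conceptual rather than computational: choosing the right instances of the T-system (Proposition~\ref{prop:T}), of the factorization (Theorem~\ref{thm: canonical surjection}), and of the commutation criterion (Proposition~\ref{prop: commute 1}) so that the known $\de=1$ can be transferred, via subadditivity, onto the target pair. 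Once those choices are fixed, all Bruhat bookkeeping reduces to the single observation $s_i\le x$ extracted from the hypothesis $x\varpi_i\ne\varpi_i$.
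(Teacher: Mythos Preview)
Your argument is correct. Both you and the paper obtain the upper bound $\de\le 1$ from Proposition~\ref{prop:dMM}(ii), but the lower bound is handled quite differently.

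The paper argues by contradiction: assuming $\de=0$, the two modules commute, so their convolution is simple and, by Theorem~\ref{thm: canonical surjection}, coincides with $\M(xs_i\varpi_i,\varpi_i)$. One then compares $\eps_j^*$ of both sides: since $xs_i\varpi_i\preceq x\varpi_i\preceq s_i\varpi_i$, Proposition~\ref{prop: weyl left right}(iv) gives $\eps_j^*\bigl(\M(xs_i\varpi_i,\varpi_i)\bigr)=\eps_j^*\bigl(\M(x\varpi_i,\varpi_i)\bigr)=\langle h_j,\varpi_i\rangle$, forcing $\eps_j^*\bigl(\M(xs_i\varpi_i,x\varpi_i)\bigr)=0$ for all $j$, which is impossible for a nontrivial module.

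Your route instead transports the equality $\de\bigl(\M(x\varpi_i,\varpi_i),\M(xs_i\varpi_i,s_i\varpi_i)\bigr)=1$ from the T-system (Proposition~\ref{prop:T}(ii) with $u=x$, $v=1$) onto the target pair, via the factorization $\M(xs_i\varpi_i,s_i\varpi_i)\simeq\M(xs_i\varpi_i,x\varpi_i)\hconv\M(x\varpi_i,s_i\varpi_i)$, the subadditivity of $\de$ on subquotients (Proposition~\ref{pro:subquotient}), and the vanishing $\de\bigl(\M(x\varpi_i,s_i\varpi_i),\M(x\varpi_i,\varpi_i)\bigr)=0$ supplied by Proposition~\ref{prop: commute 1}. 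This stays entirely at the level of the $\de$-calculus developed in \S3 and avoids the crystal operator computation; the paper's argument, on the other hand, does not invoke the T-system at all and is somewhat shorter once one accepts the additivity of $\eps_j^*$ on a simple convolution. Both proofs ultimately rest on the single Bruhat observation $s_i\le x$ extracted from $x\varpi_i\ne\varpi_i$.
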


\begin{proof}
 By Proposition~\ref{prop:dMM} (ii),
 we have $\de(\M(xs_i\vpi,x\vpi),\M(x\vpi,\vpi)) \le1$.
Assuming $\de(\M(xs_i\vpi,x\vpi),\M(x\vpi,\vpi))=0$, let us derive a contradiction.

By Theorem~\ref{thm: canonical surjection} and the assumption, we have
$$\M(xs_i\vpi,x\vpi) \conv \M(x\vpi,\vpi) \simeq \M(xs_i\vpi,\vpi).$$
Hence we have
$$ \ve_j^*(\M(xs_i\vpi,\vpi))= \ve_j^*(\M(xs_i\vpi,x\vpi))+\ve_j^*(\M(x\vpi,\vpi))$$
for any $j \in I$. Since $xs_i\vpi \preceq x\vpi \preceq s_i\vpi$, Proposition \ref{prop: weyl left right} implies that
$$ \ve_j^*(\M(xs_i\vpi,\vpi))=\ve_j^*(\M(x\vpi,\vpi))=\lan h_j,\vpi \ra.$$
It implies that
$$\ve_j^*(\M(xs_i\vpi,x\vpi))=0 \quad \text{ for any } j \in I.$$
It is a contradiction  since $\wt\bl\M(xs_i\vpi,x\vpi)\br=xs_i\vpi-x\vpi$ does not vanish.
\end{proof}

\section{Monoidal categorification of $A_q(\n(w))$}

\subsection{Quantum cluster algebra structure on $A_q(\n(w))$} \label{subsubsec: Q cluster Anw}
In this subsection, we shall consider the Kac-Moody algebra
$\g$ associated with a symmetric Cartan matrix $\cmA=(a_{i,j})_{i,j\in I}$.
We shall recall briefly the definition of the subalgebra $A_q(\n(w))$ of $A_q(\g)$ and its quantum cluster algebra structure by using
the results of \cite{GLS} and \cite{Kimu12}.
Remark that we bring the results in \cite{GLS}
through the  isomorphism \eqref{eq:isoAqn}.
\medskip

For a given $w \in W$, fix a reduced expression $\widetilde{w}=s_{i_r}\cdots s_{i_1}$.

For $s \in \{ 1,\ldots,r \}$ and $j \in I$, we set
\begin{equation*} \label{eq: s+ s-}
\begin{aligned}
s_+&\seteq\min( \{k \ | \ s<k\le r,\; i_k=i_s \}\cup\{r+1\}),\\
s_-&\seteq\max(\{k \ | \  1\le k<s, \; i_k=i_s \}\cup\{0\}),\\
s^-(j)&\seteq\max(\{k \ | \ 1\le k<s,\; i_k=j \}\cup\{0\}).
\end{aligned}
\end{equation*}

We set
\eq \text{$u_k:=s_{i_1}\cdots s_{i_k}$ for $0\le k \le r$}, \label{eq:lau}
\eneq
and
\eqn
\text{$\la_k\seteq u_k\varpi_{i_k}$ for $1\le k \le r$}.
\eneqn

Note that $\la_{k_- }=u_{k-1}\varpi_{i_k}$,  if $k_- >0$ .
For $0\le t\le s\le r$, we set
\begin{align*} 
\D(s,t)&=
\bc
\D(\lambda_s,\lambda_t) & \text{if $0<t$,}\\
\D(\lambda_s,\varpi_{i_s})& \text{if $0=t<s\le r$,}\\
\one&\text{if $t=s=0$.}
\ec
\end{align*}

 The $\Q(q)$-subalgebra of  $A_q(\n)$ generated by
$\D(i,i_-)$ ($1\le i\le r$) is independent of the choice of a reduced expression of $w$. We denote it by $A_q(\n(w))$.
Then every $\D(s,t)$ ($0\le t\le s\le r$) is contained in $A_q(\n(w))$
(\cite[Corollary 12.4]{GLS}).
The set $\B^\up\bl A_q(\n(w))\br\seteq\B^\up\bl A_q(\g)\br\cap A_q(\n(w))$
is a basis of $A_q(\n(w))$ as a $\Q(q)$-vector space
(\cite[Theorem 4.2.5]{Kimu12}).
We call it the \emph{upper global basis} of $A_q(\n(w))$.
 We denote by $A_q(\n(w))_\A$ the $\A$-module generated by $\B^\up\bl A_q(\n(w))$.
Then it is a $\A$-subalgebra of $A_q(\n(w))$ (\cite[\S 4.7.2]{Kimu12}).
We set $A_{q^{1/2}}(\n(w)) \seteq \Q(q^{1/2})  \otimes_{\Q(q)}  A_{q}(\n(w))$.

\medskip
 Let $\K=\{1,\ldots,r\}$,
$\Kfr\seteq\set{k\in\K}{k_+=r+1}$ and $\Kex\seteq\K\setminus\Kfr$.
\begin{definition} We define the quiver $Q$
with the set of vertices $Q_0$ and the set of
arrows $Q_1$ as follows:
\begin{enumerate}
\item[$(Q_0)$]
$Q_0=\K=\{1,\ldots,r\}$,
\item[$(Q_1)$] There are two types of arrows:
\eqn
&&\ba{lccl}\text{{\em ordinary arrows}}&:&\text{$s\To[\ |a_{i_s,i_t}|\ ] t$}
&\text{if $1\le s<t<s_+<t_+\le r+1$,}\\[1ex]
\text{{\em horizontal arrows}}&:&
\text{$s\longrightarrow s_-$}&
\text{if $1\le s_-<s\le r$.}
\ea
\eneqn
\end{enumerate}
Let $\widetilde B = (b_{i,j})$ be the integer-valued $\K\times\Kex$-matrix associated to the quiver $Q$ by \eqref{eq: bij}.
\end{definition}

\begin{lemma}
Assume that $0 \le d\le b\le a\le c \le r$ and
\begin{itemize}
\item $i_b=i_a$  when   $b \ne 0$,
\item $i_d=i_c$  when  $d \ne 0$.
\end{itemize}
Then $\D(a,b)$ and $\D(c,d)$ $q$-commute; that is, there exists $\lambda \in \Z$ such that
$$ \D(a,b)\D(c,d)= q^{\lambda}\D(c,d)\D(a,b).$$
\end{lemma}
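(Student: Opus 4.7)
The plan is to reduce the lemma to Proposition~\ref{prop: BZ form}~(ii), by identifying $\D(c,d)$ with the ``$\la$-minor'' $\D(s's\la,t'\la)$ and $\D(a,b)$ with the ``$\mu$-minor'' $\D(s'\mu,t't\mu)$ of that proposition, for a suitably chosen data $(\la,\mu,s,s',t,t')$.

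Concretely, I set $\la\seteq \varpi_{i_c}$ and $\mu\seteq \varpi_{i_a}$, and take
\[
s'\seteq u_a,\qquad s\seteq s_{i_{a+1}}s_{i_{a+2}}\cdots s_{i_c},\qquad t'\seteq u_d,\qquad t\seteq s_{i_{d+1}}s_{i_{d+2}}\cdots s_{i_b}.
\]
Since $\wtd w=s_{i_r}\cdots s_{i_1}$ is a reduced expression, every sub-word of the form $s_{i_k}\cdots s_{i_{k'}}$ ($k\le k'$) is reduced; so $s's=u_c$ and $t't=u_b$ are reduced factorizations, which yields the length conditions $\ell(s's)=\ell(s')+\ell(s)$ and $\ell(t't)=\ell(t')+\ell(t)$ required by the proposition. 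Using the hypothesis $i_b=i_a$ when $b>0$ and $i_d=i_c$ when $d>0$, together with the definitional convention $\D(k,0)=\D(\la_k,\varpi_{i_k})$ (and $u_0=e$), one verifies
\[
s's\la=u_c\varpi_{i_c},\quad t'\la=u_d\varpi_{i_c},\quad s'\mu=u_a\varpi_{i_a},\quad t't\mu=u_b\varpi_{i_a},
\]
so that $\D(s's\la,t'\la)=\D(c,d)$ and $\D(s'\mu,t't\mu)=\D(a,b)$. The Bruhat inequalities $u_d\le u_c$ and $u_b\le u_a$ give the required conditions $s's\la\preceq t'\la$ and $s'\mu\preceq t't\mu$. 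Proposition~\ref{prop: BZ form}~(ii) then delivers
\[
\D(c,d)\,\D(a,b)\;=\;q^{(u_c\varpi_{i_c}+u_d\varpi_{i_c},\ u_a\varpi_{i_a}-u_b\varpi_{i_a})}\,\D(a,b)\,\D(c,d),
\]
which is exactly the desired $q$-commutation.

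The one point to watch out for is the role assignment in Proposition~\ref{prop: BZ form}: a naive attempt that places $\D(a,b)$ in the $\la$-slot and $\D(c,d)$ in the $\mu$-slot forces a common $s'$ to satisfy simultaneously $s'=u_b$ and $s'\varpi_{i_c}=u_c\varpi_{i_c}$, which is incompatible in general, and additionally flips the $\preceq$ conditions to the wrong direction (because $c\ge d$ gives $\la_c\preceq\la_d$, not $\la_d\preceq\la_c$). After the swap above, the inequalities $s'=u_a\le u_c=s's$ and $t'=u_d\le u_b=t't$ follow automatically from $a\le c$ and $d\le b$, and the weight identifications together with the Bruhat-order conditions line up cleanly.
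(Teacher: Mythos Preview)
Your proof is correct and follows essentially the same route as the paper's: both reduce to Proposition~\ref{prop: BZ form}~(ii) with the identical choice $s'=u_a$, $s=u_a^{-1}u_c$, $t'=u_d$, $t=u_d^{-1}u_b$ (you have simply written $s$ and $t$ out as explicit products of simple reflections), and with $\lambda=\varpi_{i_c}$, $\mu=\varpi_{i_a}$. Your argument is in fact more explicit than the paper's in verifying the length and $\preceq$ hypotheses; the only trivial omission is the degenerate case $a=0$ (forcing $b=d=0$ and $\D(a,b)=\one$), which the paper handles with ``We may assume $a>0$''.
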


\begin{proof}
We may assume $a>0$.
Let $u_k$ be as in \eqref{eq:lau}.
Take $s'=u_a$, $s=u_a^{-1}u_c$, $t'=u_d$ and $t=u_d^{-1}u_b$. Then we have
$$ \D(s'\varpi_{i_a},t't\varpi_{i_a})=\D(a,b)\quad \text{and}\quad
  \D(s's\varpi_{i_c},t'\varpi_{i_c})=\D(c,d).$$
From Proposition \ref{prop: BZ form}, our assertion follows.
\end{proof}

Hence we have an integer-valued skew-symmetric matrix $L=(\lambda_{i,j})_{1 \le i,j \le r}$ such that
$$ \D(i,0)\D(j,0)=q^{\lambda_{i,j}}\D(j,0)\D(i,0).$$

\begin{prop} [{\cite[Proposition 10.1]{GLS}}] The pair $(L,\wB)$ is compatible with $d=2$ in \eqref{eq:compatible}.
\end{prop}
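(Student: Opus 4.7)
The identity to verify is
\eq &&\sum_{k=1}^r \la_{i,k}\, b_{k,j} = 2\,\delta_{i,j} \qquad \text{for all } i \in \K,\ j \in \Kex. \label{eq:compat-goal} \eneq
The plan is to first reduce to \cite[Proposition 10.1]{GLS} via a convention check, and then to sketch a self-contained derivation based on the T-system.

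For the convention check: the algebra product used in \cite{GLS} is related to ours by the $\Q(q)$-algebra isomorphism \eqref{eq:isoAqn}, which rescales each weight space $A_q(\n)_\beta$ by the scalar $q^{-(\beta,\beta)/2}$. Such a weight-diagonal rescaling preserves every $q$-commutation exponent, so the matrix $L$ is identical in both conventions. The matrix $\widetilde{B}$ is defined combinatorially from the quiver $Q$ associated with $\widetilde{w}$ and is therefore also unchanged. Hence \eqref{eq:compat-goal} is equivalent to the corresponding statement in \cite{GLS}.

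For a direct derivation, fix $j \in \Kex$ and apply Proposition~\ref{prop: the ses} with $u = u_{j_+-1}$, $v = u_{j-1}$, and reflecting index $i_j$. Using that $s_{i_k}$ fixes $\varpi_{i_j}$ whenever $i_k \neq i_j$, together with $s_{i_j}\varpi_{i_j}+\varpi_{i_j} = \sum_{k \neq i_j}(-a_{k,i_j})\varpi_k$ and Corollary~\ref{cor:Duv}, the T-system identity rewrites as
\eqn &&
\D(j_+, j)\,\D(j, j_-) = q^{a}\,\D(j_+, j_-) + q^{b}\prod_{k \neq i_j} \D(u_{j_+-1}\varpi_k,\, u_{j-1}\varpi_k)^{-a_{k,i_j}}
\eneqn
for suitable integers $a, b$. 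A case analysis on the positions of the $i_k$'s in $\widetilde{w}$ between $j$ and $j_+$ identifies the nontrivial factors of the second monomial as $\D(t, t_-)$ for exactly those $t$ with $j < t < j_+$ and $t_+ > j_+$; combined with the horizontal boundary factors, these correspond precisely to the neighbors of $j$ in $Q$ with multiplicities $|b_{k,j}|$. Consequently the identity reads, up to $q$-powers, as the quantum exchange relation whose two right-hand monomials are $\prod_{b_{k,j}>0}\D(k,0)^{b_{k,j}}$ and $\prod_{b_{k,j}<0}\D(k,0)^{-b_{k,j}}$. Taking the $q$-commutation of both sides with $\D(i,0)$ via Proposition~\ref{prop: BZ form}(ii) and matching the two exponents on the right-hand side yields \eqref{eq:compat-goal}: when $i \neq j$, equality forces $\sum_k \la_{i,k}b_{k,j} = 0$, and when $i = j$, the weight shift $2\alpha_{i_j}$ between the two monomials contributes the factor $2$.

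The main obstacle is the combinatorial bookkeeping: rewriting each factor $\D(u_{j_+-1}\varpi_k,\,u_{j-1}\varpi_k)$ as a product of $\D(k,0)$'s up to $q$-powers, and then matching these exponents to the entries $b_{k,j}$, requires careful case analysis for ordinary versus horizontal arrows and for the boundary cases ($j_- = 0$ or $i_k$ absent between $j$ and $j_+$). These are all handled uniformly by the convention $\D(s, 0) = \D(\la_s, \varpi_{i_s})$, exactly as in the proof of \cite[Proposition 10.1]{GLS}.
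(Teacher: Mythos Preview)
The paper gives no proof here; it simply quotes \cite[Proposition 10.1]{GLS}. Your first paragraph---the convention check---is correct and is all that is needed: the isomorphism \eqref{eq:isoAqn} is diagonal on weight spaces, hence preserves every $q$-commutation exponent, and $\widetilde B$ is purely combinatorial, so compatibility transports from \cite{GLS} unchanged. This matches the paper's approach.

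Your sketched direct derivation, however, does not work as written. The T-system of Proposition~\ref{prop: the ses} with $u=u_{j_+-1}$, $v=u_{j-1}$ relates $\D(j_+,j)\,\D(j,j_-)$ to $\D(j_+,j_-)$ and a product of minors $\D\bl(j_+)^-(y),\,j^-(y)\br$. None of these is an initial cluster variable $\D(k,0)$ unless the lower index happens to be $0$, and your claimed rewriting of each such factor ``as a product of $\D(k,0)$'s up to $q$-powers'' is false: a middle minor $\D(a,b)$ with $b>0$ is not a Laurent monomial in the $\D(k,0)$'s. Thus the T-system is \emph{not} the exchange relation for the initial seed at $j$; compare the actual construction of the first mutation in the proof of Theorem~\ref{thm: main}, where the mutated object is $\M(s_+,s)\hconv A$ and the exchange sequences are assembled from the T-system only in combination with Theorem~\ref{thm: canonical surjection} and Lemma~\ref{lem:MN}. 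Even granting the correct exchange relation, your ``matching exponents'' step for $i\neq j$ would presuppose that the mutated variable $q$-commutes with each $\D(i,0)$, which is precisely what compatibility is used to establish, so the argument becomes circular. I would drop the sketch and keep only the reduction to \cite{GLS}.
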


\begin{theorem}[{\cite[Theorem 12.3]{GLS}}] Let
$\mathscr{A}_{q^{1/2}}([\mathscr{S}])$ be the quantum cluster algebra associated to the initial quantum seed
$[\mathscr{S}]\seteq( \{ q^{-(d_s,d_s)/4}\D(s,0) \}_{1 \le s \le r}, L, \wB )$.
Then we have an  isomorphism of  $\Q(q^{1/2})$-algebras
$$ \Q(q^{1/2})  \otimes_{\Z[q^{\pm 1/2}]} \mathscr{A}_{q^{1/2}}([\mathscr{S}]) \simeq   A_{q^{1/2}}(\n(w)),$$
where $d_s \seteq \lambda_s-\varpi_{i_s}=\wt(D(s,0))$ and
$A_{q^{1/2}}(\n(w)) \seteq \Q(q^{1/2})  \otimes_{\Q(q)}  A_{q}(\n(w))$.
\end{theorem}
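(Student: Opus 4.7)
The plan is to promote the compatibility of $(L,\widetilde B)$, together with the $q$-commutation and $T$-system relations already established for the unipotent quantum minors $D(s,t)$, into an algebra isomorphism. First I would verify that $[\Seed]$ is a genuine quantum seed inside the skew field of fractions of $A_{q^{1/2}}(\n(w))$: the family $\{D(s,0)\}_{1\le s\le r}$ is a $q$-commuting family by Proposition~\ref{prop: BZ form}, and its algebraic independence over $\Q(q^{1/2})$ follows from the fact that distinct products $\prod_s D(s,0)^{a_s}$ have distinct weights at which their leading $\B^{\up}$-expansions become essentially monomial (using that each $D(s,0)$ is a member of the upper global basis). Given that the exchange matrix $\wB$ is already known to give a compatible pair with $d=2$, the quadruple $[\Seed]$ defines the initial seed of the target cluster algebra inside $\on{Frac}(A_{q^{1/2}}(\n(w)))$.

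The key ingredient is the $T$-system identities for quantum minors supplied by Proposition~\ref{prop: the ses}. For each exchangeable index $k\in\Kex$, the $T$-system expresses $D(k,0) \cdot D(k_+,0)$ as a sum of two $q$-commuting monomials in the remaining $D(j,0)$'s. Reading this identity under the mutation rule \eqref{eq:quantum mutation} tells us exactly that the element
\[
\mu_k(X)_k \seteq \frac{q^{m}\prod_{b_{ik}>0} D(i,0)^{b_{ik}} + q^{m'}\prod_{b_{ik}<0} D(i,0)^{-b_{ik}}}{D(k,0)}
\]
(for appropriate half-integer shifts $m,m'$) coincides, up to a power of $q^{1/2}$, with $D(k_+,0)$, which lies in $A_{q^{1/2}}(\n(w))$. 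Consequently every single mutation of $[\Seed]$ in an exchangeable direction produces an element of $A_{q^{1/2}}(\n(w))$ that is again a unipotent quantum minor.

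Iterating this, one defines a $\Q(q^{1/2})$-algebra homomorphism
\[
\Phi\col\Q(q^{1/2})\tens_{\Z[q^{\pm1/2}]}\mathscr A_{q^{1/2}}([\Seed])\To A_{q^{1/2}}(\n(w))
\]
by sending each initial cluster variable to the corresponding $D(s,0)$. Injectivity of $\Phi$ is automatic from the algebraic independence observed above, combined with the quantum Laurent phenomenon which embeds the cluster algebra into the quantum torus of the initial seed. Surjectivity is the substantive point: one must show that $A_{q^{1/2}}(\n(w))$ is generated by the $D(i,i_-)$ for $1\le i\le r$, which is exactly the definition of $A_q(\n(w))$, and then verify that these generators are themselves obtained from $[\Seed]$ by an explicit finite sequence of mutations. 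Indeed, using the $T$-system inductively, starting from the initial minors $D(i,0)$ one reaches every $D(s,t)$ with $0\le t\le s\le r$, and in particular all the $D(i,i_-)$, inside $\mathscr A_{q^{1/2}}([\Seed])$.

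The main obstacle will be the surjectivity step: one has to organize the $T$-system identities into a coherent sequence of mutations along the reduced expression $\widetilde w=s_{i_r}\cdots s_{i_1}$, and show inductively that every unipotent quantum minor of the form $D(s,t)$ arises as a cluster variable of some seed reached from $[\Seed]$. This requires keeping track of the compatibility between the combinatorial quiver $Q$ (whose ordinary and horizontal arrows encode exactly the $T$-system data) and the weight bookkeeping $d_s=\lambda_s-\varpi_{i_s}$, so that the half-integer $q$-shifts on both sides of $\Phi$ match. Once this surjectivity is in place, the isomorphism follows, and $[\Seed]$ becomes the initial quantum seed of $A_{q^{1/2}}(\n(w))$ as stated.
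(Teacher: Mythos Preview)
This theorem is not proved in the present paper; it is simply quoted from \cite[Theorem 12.3]{GLS}, and the paper proceeds to use it as a black box. So there is no ``paper's own proof'' to compare with, and your sketch is an attempt to reprove an external result.

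That said, your sketch contains a concrete error. You write that for $k\in\Kex$ the $T$-system expresses $D(k,0)\cdot D(k_+,0)$ as a sum of two monomials in the remaining $D(j,0)$'s, and hence that the mutated variable $\mu_k(X)_k$ equals $D(k_+,0)$ up to a power of $q^{1/2}$. This cannot be right: $D(k_+,0)$ is already an \emph{initial} cluster variable (it is $X_{k_+}$), and a mutation never replaces $X_k$ by another variable of the same cluster. The exchange relation for the initial seed $[\Seed]$ in direction $k$ is not a $T$-system identity between two minors of the form $D(\,\cdot\,,0)$; the new variable is genuinely new. In the categorical language used later in the paper (see the proof of Theorem~\ref{thm: main}), the module $M'_k$ is $\M(k_+,k)\hconv A$ with $A=\conv_{t<k<t_+<k_+}\M(t,0)^{\circ|a_{i_k,i_t}|}$, and its class is not $D(k_+,0)$. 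The single $T$-system of Proposition~\ref{prop: the ses} does \emph{not} coincide with the first-step exchange relation of $[\Seed]$; one needs the additional convolution with $A$ and the generalized $T$-system machinery of \S\ref{sec:Quantum minors and T-systems} to produce the correct short exact sequence.

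Because of this, your ``iteration'' step collapses: you have not shown that even a single mutation of $[\Seed]$ stays in $A_{q^{1/2}}(\n(w))$, let alone all of them. The actual proof in \cite{GLS} does not proceed by directly checking exchange relations from the $T$-system; it goes through the additive categorification by modules over the preprojective algebra and then lifts the resulting cluster structure to the quantum setting via a general quantization procedure. Your idea of recovering the generators $D(i,i_-)$ as cluster variables by a specific mutation sequence is in fact part of that story, but it requires first identifying the correct mutated variables, which is exactly where your argument breaks.
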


\subsection{Admissible seeds in the monoidal category $\shc_w$}

 For $0\le t\le s\le r$, we set
$\M(s,t)=\M(\la_s,\la_t)$.
It is a real simple module with $\ch(\M(s,t))=D(s,t)$.

\begin{definition} \label{def:cw}
For $w \in W$, let $\shc_w$ be the smallest monoidal abelian full subcategory of $R \gmod$ satisfying the following properties:
\begin{enumerate}
\item[{\rm (i)}] $\shc_w$ is stable under the subquotients, extensions and grading shifts,
\item[{\rm (ii)}] $\shc_w$ contains $\M(s,s_-)$ for all $1 \le s \le \ell(w)$.
\end{enumerate}
\end{definition}

Then by \cite{GLS}, $M\in R\gmod$ belongs to $\shc_w$ if and only if $\ch(M)$
belongs to $A_q(\n(w))$. Hence we have  a $\A$-algebra isomorphism
$$K(\shc_w)\simeq A_q(\n(w))_\A.$$

We set
$$\Lambda \seteq (\La(\M(i,0),\M(j,0)))_{1 \le i,j\le r} \quad \text{ and }
\quad D= (d_i)_{1 \le i \le r} \seteq (\wt(\M(i,0)))_{1 \le i \le r}.$$

Then, by Proposition~\ref{prop: commute 1},
$\Seed \seteq ( \{ \M(k,0) \}_{1 \le k \le r}, -\Lambda, \wB, D )$
is a quantum monoidal seed in $\shc_w$.
We are now ready to state the main theorem in this section:

\begin{theorem} \label{thm: main}
The pair
$\bl \{ \M(k,0) \}_{1 \le k \le r}, \wB\br$ is admissible.
\end{theorem}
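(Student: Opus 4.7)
My plan is to verify the three conditions of Definition~\ref{def:admissible} for the pair $\bl\{\M(k,0)\}_{1\le k\le r},\wB\br$. Condition (i) -- that $\{\M(k,0)\}$ forms a commuting family of real self-dual simple modules -- follows immediately from Proposition~\ref{prop: commute 1} applied with $\lambda=\varpi_{i_k}$, $\mu=\varpi_{i_l}$ and $s,s',t,t'$ read off from the reduced expression $\widetilde w$, together with the already established reality and self-duality of each $\M(k,0)$. Condition (ii) -- that $\wB$ has skew-symmetric principal part -- is built into the quiver $Q$. The entire content lies in condition (iii): for each $k\in\Kex$, producing a self-dual real simple module $\M(k,0)'$ together with the exchange short exact sequence and the commutation with $\M(i,0)$ for $i\neq k$.

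For the construction, I would derive the exchange sequence from the primitive T-system of Proposition~\ref{prop:T} applied with $u=u_{k_+-1}$, $v=u_{k-1}$, $i=i_k$. Since $i_{k_+}=i_k$ and $i_s\neq i_k$ for $k<s<k_+$, we get $us_i=u_{k_+}$, $vs_i=u_k$, and the inequality $vs_i\le u$ follows from the subword property of $\widetilde w$. Identifying $us_i\varpi_i=\lambda_{k_+}$, $u\varpi_i=vs_i\varpi_i=\lambda_k$, $v\varpi_i=\lambda_{k_-}$, and decomposing the rightmost term attached to $\lambda=\varpi_{i_k}+s_{i_k}\varpi_{i_k}$ into a commuting product $\sodot_{j\neq i_k}\M(L^+_j,L^-_j)^{\snconv(-a_{j,i_k})}$ via the remark after Proposition~\ref{prop: the ses} (where $L^\pm_j$ denotes the largest index $\le k_+-1$, resp.\ $\le k-1$, with $i_s=j$, or $0$ if none exists), the T-system reads
\begin{equation*}
0\to q^*\M(k_+,k_-)\to q^*\M(\lambda_k,\lambda_{k_-})\conv\M(\lambda_{k_+},\lambda_k)\to q^*\sodot_{j\neq i_k}\M(L^+_j,L^-_j)^{\snconv(-a_{j,i_k})}\to 0.
\end{equation*}

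Next I would convolve this short exact sequence on the right with the commuting product $\M(k_-,0)\nconv\sodot_{j\neq i_k}\M(L^-_j,0)^{\snconv(-a_{j,i_k})}$. Since convolution is exact, the result is again a short exact sequence; iterated application of Theorem~\ref{thm: canonical surjection} then rewrites the left (sub) and right (quotient) terms as the positive product $q\cdot\M(k_+,0)\nconv\sodot_{j\neq i_k}\M(L^-_j,0)^{\snconv(-a_{j,i_k})}$ and the negative product $\M(k_-,0)\nconv\sodot_{j\neq i_k}\M(L^+_j,0)^{\snconv(-a_{j,i_k})}$, which are exactly the products prescribed by the arrows of $Q$ incident to $k$. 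The central term is forced to be a $q$-shift of $\M(k,0)\conv\M(k,0)'$, where
\begin{equation*}
\M(k,0)'\seteq\M(\lambda_{k_+},\lambda_k)\nconv\sodot_{j\neq i_k}\M(L^-_j,0)^{\snconv(-a_{j,i_k})}.
\end{equation*}
Simplicity, reality and self-duality of $\M(k,0)'$ follow from Corollary~\ref{cor:qcomm_module}, Proposition~\ref{prop:real simple}, and Lemma~\ref{lem:multipro}, once the pairwise $q$-commutation of the factors is established via Proposition~\ref{prop: BZ form}~(i) applied to appropriate decompositions in $W$. The commutation of $\M(k,0)'$ with $\M(i,0)$ for $i\neq k$ follows by the same device.

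The main obstacle is the precise $q$-shift bookkeeping: matching the exact grading shift $\tL(\M(k,0),\M(k,0)')$ on the central term and the extra $q$ on the sub-term of \eqref{eq:ses graded mutation KLR}. This reduces to the compatibility of the pair $(L,\wB)$ with the symmetric bilinear form via \eqref{eq:compatible}, and is where the bulk of the computational work lies. A secondary obstacle is to verify the nontrivial $q$-commutation of $\M(\lambda_{k_+},\lambda_k)$ with each $\M(L^-_j,0)$ via a choice of decomposition in $W$ compatible with Proposition~\ref{prop: BZ form}~(i); this cannot be done directly from the most obvious decomposition, and instead requires exploiting the stabilizer of $\varpi_j$ together with the subword structure of $\widetilde w$.
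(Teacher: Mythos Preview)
Your strategy has a genuine gap at the step where you convolve the T-system short exact sequence on the right by $C=\M(k_-,0)\nconv\sodot_{j}\M(L^-_j,0)^{\snconv(-a_{j,i_k})}$ and then ``rewrite'' the three terms via Theorem~\ref{thm: canonical surjection}.  That theorem only supplies \emph{epimorphisms} $\M(\mu_1,\mu_2)\conv\M(\mu_2,\mu_3)\twoheadrightarrow\M(\mu_1,\mu_3)$, not isomorphisms; the simplification you want requires the relevant determinantial modules to \emph{commute}, and in the crucial case they do not.  Concretely, when $k_->0$ the module $\M(k,k_-)=\M(u_{k-1}s_{i_k}\varpi_{i_k},u_{k-1}\varpi_{i_k})$ satisfies $\de\bl\M(k,k_-),\M(k_-,0)\br=1$ by Proposition~\ref{prop:deMM}, so $\M(k,k_-)\conv\M(k_-,0)$ has length~$2$ and is \emph{not} isomorphic to $\M(k,0)$.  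Hence the middle term $\M(k,k_-)\conv\M(k_+,k)\conv C$ of your convolved sequence is strictly larger (in the Grothendieck group) than $\M(k,0)\conv\M(k,0)'$, and your identification fails.  The same obstruction prevents the left term $\M(k_+,k_-)\conv C$ from collapsing to the desired positive product.  Your ``secondary obstacle''---the commutation of $\M(k_+,k)$ with each $\M(L^-_j,0)$, needed even to make sense of $\M(k,0)'$ as a $\nconv$-product---is in fact the same phenomenon and is not resolved by stabilizer tricks alone.

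The paper circumvents all of this by \emph{not} attempting to realize the mutation module as a full convolution.  It defines $X\seteq\M(s_+,s)\hconv A$ as a simple \emph{head}, then constructs the two required surjections $X\conv\M(s,0)\twoheadrightarrow(\text{positive product})$ and $\M(s,0)\conv X\twoheadrightarrow(\text{negative product})$ separately, as explicit compositions whose nonvanishing is forced by Lemma~\ref{lem:composition}~(i); the T-system enters only in producing the second surjection (Lemma~\ref{lem: step1}), and the factorization through $\M(s,0)\conv X$ uses Proposition~\ref{prop:3simple}~(iii) rather than any commutation hypothesis.  The equality $\de(X,\M(s,0))=1$ is obtained from Proposition~\ref{prop:deMM} together with the primality of $\D(k,0)$ at $q=1$ (\cite{GLS13}), and the commutation of $X$ with the remaining $\M(k,0)$ is deduced \emph{a posteriori} from the compatibility of $(L,\wB)$, not from Proposition~\ref{prop: BZ form}.
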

As we already explained,  combined with Theorem  \ref{th:main} and Corollary \ref{cor:main}, this theorem implies
the following  theorem.
\begin{theorem}
The category $\shc_w$ is a monoidal categorification of the quantum cluster algebra $A_{q^{1/2}}(\n(w))$.
\end{theorem}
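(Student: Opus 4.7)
The plan is to verify the three conditions of Definition \ref{def:admissible}. Conditions (i) and (ii) are routine. For (i), each $\M(k,0)=\M(\lambda_k,\varpi_{i_k})$ is a self-dual real simple object of $\shc_w$ by construction, and pairwise commutation of the family $\{\M(k,0)\}_{1\le k\le r}$ follows from Proposition \ref{prop: commute 1}, applied to the factorization $u_t = u_s \cdot (u_s^{-1}u_t)$ (of length-additive type) for $s < t$. Condition (ii) is immediate from the construction of the GLS quiver $Q$, whose principal part is skew-symmetric.

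The heart of the proof is condition (iii): for each $k\in\Kex$ produce a self-dual simple object $\M(k,0)'\in\shc_w$ together with the required short exact sequence \eqref{eq:ses graded mutation KLR}, and show that $\M(k,0)'$ commutes with every $\M(j,0)$ for $j\neq k$. The candidate $\M(k,0)'$ is the determinantial module corresponding to the mutated quantum cluster variable in the GLS quantum cluster algebra, which is an explicit unipotent quantum minor. To produce the exact sequence, I will apply Proposition \ref{prop:T} (the T-system for determinantial modules) with $i = i_k$ and Weyl elements $u,v$ dictated by the positions of the arrows in $Q$ incident to $k$; the T-system yields an exact sequence whose left term $\M(u\lambda,v\lambda)$, with $\lambda=s_i\varpi_i+\varpi_i$, decomposes via Corollary \ref{cor:Duv} into a $\snconv$-product over $j\neq i$ of the $\M(u\varpi_j,v\varpi_j)$ with multiplicities $-a_{j,i}$. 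Theorem \ref{thm: canonical surjection}, together with its telescoping identity $\M(\mu_1,\mu_2)\hconv \M(\mu_2,\mu_3)\simeq \M(\mu_1,\mu_3)$, then regroups these factors into $\sodot_{b_{jk}>0}\M(j,0)^{\snconv b_{jk}}$, and symmetrically the rightmost term of the T-system into $\sodot_{b_{jk}<0}\M(j,0)^{\snconv(-b_{jk})}$. Applying Theorem \ref{thm: canonical surjection} once more to the middle term $\M(u\varpi_i,v\varpi_i)\conv \M(us_i\varpi_i,vs_i\varpi_i)$ presents it as $\M(k,0)\conv\M(k,0)'$ up to grading shift, which intrinsically defines $\M(k,0)'$. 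Finally, the $q$-exponents in \eqref{eq:ses graded mutation KLR}, including the degree $\tLa(\M(k,0),\M(k,0)')$, are pinned down by Lemma \ref{lem:short} applied to the decategorified identity in $\An_{\A}$, which is already a valid relation among upper global basis elements by Proposition \ref{prop: the ses}.

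The commutation $\de(\M(k,0)',\M(j,0))=0$ for $j\neq k$ reduces via Corollary \ref{cor:qcomm_module} to the $q$-commutation of the corresponding unipotent quantum minors, which is Proposition \ref{prop: BZ form}(ii) applied to appropriate Weyl group factorizations. The principal obstacle I foresee is the combinatorial matching between the quiver data $(b_{jk})_{j}$ and the exponents appearing after the T-system decomposition: the ordinary arrows of $Q$ correspond to the Cartan entries $-a_{j,i_k}$ and the horizontal arrows to the $k_\pm$ contributions, and assembling all contributions with the correct $q$-grading (controlled by the compatibility of $L$ and $\widetilde B$ with $d=2$) requires careful bookkeeping, for which Theorem \ref{th:head} (under the characteristic-zero assumption) provides the key refinement by uniquely determining the $q$-shifts from short exact sequences of self-dual simple modules. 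This is the quantum enhancement of the classical exchange relation established in \cite{GLS}.
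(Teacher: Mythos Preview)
Your proposal has a genuine gap at its core. You assume that the first mutation $\M(k,0)'$ is itself a determinantial module, i.e.\ corresponds to an explicit unipotent quantum minor. This is not true in general: after one mutation the new cluster variable is typically not a minor. In the paper the candidate module is defined as
\[
X \;=\; \M(s_+,s)\ \hconv\ A,\qquad A=\displaystyle\mathop{\conv}_{y\in I_s}\M(s^-(y),0)^{\circ\,|a_{i_s,y}|},
\]
which is the head of a convolution of a determinantial module with a product of several others, and is nowhere shown to be a single determinantial module.

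This breaks the rest of your plan in two places. First, Theorem~\ref{thm: canonical surjection} only gives identities of the form $\M(\mu_1,\mu_2)\hconv \M(\mu_2,\mu_3)\simeq \M(\mu_1,\mu_3)$; it cannot ``present'' the middle term of the T-system as $\M(k,0)\conv \M(k,0)'$. The T-system (Proposition~\ref{prop:T}) with the natural choice $u=u_{s_+-1}$, $v=u_{s-1}$, $i=i_s$ yields the middle term $\M(s,s_-)\conv\M(s_+,s)$, not $\M(s,0)\conv X$. The paper must compose this with the monomorphism $\M(s,0)\rightarrowtail \M(s_-,0)\conv \M(s,s_-)$ and the epimorphisms $\M((s_+)^-(y),s^-(y))\conv \M(s^-(y),0)\twoheadrightarrow \M((s_+)^-(y),0)$ coming from Theorem~\ref{thm: canonical surjection}, and then prove separately (Proposition~\ref{prop: step2}, using Proposition~\ref{prop:3simple}) that the resulting map factors through $\M(s,0)\conv X$. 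Second, your commutation argument via Proposition~\ref{prop: BZ form}(ii) requires $\M(k,0)'$ to be a minor; since it is not, this route is unavailable. The paper instead computes directly $2\de(\M(j,0),X)=-\sum_t \La(\M(j,0),\M(t,0))b_{t,s}=2\delta_{j,s}$ from the compatibility of $(-\La,\wB)$. Finally, the equality $\de(X,\M(s,0))=1$ is not automatic from the T-system; the paper proves it via Proposition~\ref{prop:deMM} together with a primality argument using \cite{GLS13}.
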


In the course of  proving Theorem~\ref{thm: main},
{\em we omit grading shifts} if there is no  danger of confusion.

We shall start the proof of Theorem~\ref{thm: main}
by proving that, for each $s\in\Kex$,  there exists a simple module $X$ such that
\begin{eqnarray}&&\left\{
\parbox{70ex}{ \bnam
\item there exists a surjective homomorphism (up to a grading shift)
$$  X \conv \M(s,0) \twoheadrightarrow \conv_{t;\; b_{t,s}> 0 } \M(t,0)^{\circ b_{t,s} },$$
\item there exists a surjective homomorphism (up to a grading shift)
$$ \M(s,0) \conv X \twoheadrightarrow \conv_{t;\;\ b_{t,s}<0} \M(t,0)^{\circ -b_{t,s}},$$
\item $\de(X,\M(s,0))=1$.
\ee }\right. \label{eq: conditions}
\end{eqnarray}

We set
\eqn
&&x\seteq i_s\in I,\\
&&I_s\seteq\set{i_k}{s<k<s_+}\subset I\setminus\{x\},\\
&&A\seteq
  \displaystyle \mathop{\conv}\limits_{t<s < t_+ < s_+ }
\M(t,0)^{\circ |a_{i_s,i_t}| }
= \displaystyle\mathop{\conv} \limits_{y\in I_s}
\M(s^-(y),0)^{\circ |a_{x,y}| }.
\eneqn
Then $A$ is a real simple module.

Now we claim that the following simple module $X$ satisfies
the conditions in \eqref{eq: conditions}:
$$X \seteq \M(s_+,s) \hconv  A.$$

Let us show \eqref{eq: conditions} (a).
The incoming arrows to $s$ are
\begin{itemize}
\item $t \To[{|a_{x,i_t}|}]s $ for $1\le t<s<t_+<s_+$,
\item $s_+ \To s$.
\end{itemize}
Hence we have
$$\conv_{t;\;  b_{t,s} > 0 } \M(t,0)^{\circ b_{t,s}}
\simeq A\conv M(s_+,0).$$

\medskip

Then the morphism in (a)
is obtained  as the composition:
\begin{align} \label{eq: surjection 1}
X \conv \M(s,0)  \rightarrowtail A \conv \M(s_+,s) \conv \M(s,0) \twoheadrightarrow A \conv \M(s_+,0).
\end{align}
Here the second epimorphism
is given in Theorem \ref{thm: canonical surjection},  and
Lemma \ref{lem:composition} asserts that the composition
\eqref{eq: surjection 1} is non-zero
and hence an epimorphism.

\medskip

Let us show \eqref{eq: conditions} (b).
The outgoing arrows from $s$ are
\begin{itemize}
\item $s \To[{\;|a_{x,i_t}|\;}]t$\quad for $s<t<s_+<t_+\le r+1$.
\item $s \longrightarrow s_-$\quad if $s_- > 0$.
\end{itemize}
Hence we have
\eq
&&\mathop{\conv}_{t; b_{t,s}<0 } \M(t,0)^{\circ -b_{t,s}}
\simeq\M(s_-,0)\conv \left(\mathop{\conv}_{y\in I_s} \M((s_+)^-(y),0)^{\circ -a_{x,y}}\right).
\eneq

\begin{lemma} \label{lem: step1}
There exists an epimorphism  \ro up to a grading\rf
\begin{align*} 
\Omega: \M(s,0) \conv \M(s_+,s) \conv A \twoheadrightarrow \conv_{t;b_{t,s}<0} \M(t,0)^{\circ -b_{t,s}}.
\end{align*}
\end{lemma}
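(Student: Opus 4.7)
My plan is to derive $\Omega$ by combining the T-system of Proposition \ref{prop:T}(ii) with iterated canonical surjections (Theorem \ref{thm: canonical surjection}) aggregated via Lemma \ref{lem:MN}, and a final Grothendieck-ring argument to convert the leading factor. First, I will apply Proposition \ref{prop:T}(ii) with $u = u_{s_+-1}$, $v = u_{s-1}$ (setting $u_0 := 1$), and $i = x = i_s$. Using that $i_k \neq x$ strictly between consecutive occurrences of $x$ in the fixed reduced expression of $w$, one computes $us_i\varpi_x = \la_{s_+}$, $u\varpi_x = \la_s$, $vs_i\varpi_x = \la_s$, $v\varpi_x = \la_{s_-}$. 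Since $\la = \varpi_x + s_x\varpi_x = \sum_{y\neq x}|a_{yx}|\varpi_y$, Corollary \ref{cor:Duv} expands $\M(u\la, v\la) \simeq C := \sodot_{y\in I_s}\M((s_+)^-(y), s^-(y))^{\circ|a_{xy}|}$ up to grading shift (with trivial factors for $y \notin I_s\cup\{x\}$, since $(s_+)^-(y) = s^-(y)$ in that range), yielding the exact sequence
\[
0 \to \M(s_+, s_-) \to \M(s, s_-)\conv \M(s_+, s) \to C \to 0.
\]

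For each $y \in I_s$, Theorem \ref{thm: canonical surjection} applied to the chain $\la_{(s_+)^-(y)}\preceq\la_{s^-(y)}\preceq\varpi_y$ gives $\M((s_+)^-(y), s^-(y))\conv\M(s^-(y), 0)\twoheadrightarrow\M((s_+)^-(y), 0)$. The hypotheses of Lemma \ref{lem:MN} are then satisfied for the families $\{M_i\} = \{\M((s_+)^-(y), s^-(y))\}_{y\in I_s}$ and $\{N_i\} = \{\M(s^-(y), 0)\}_{y\in I_s}$ (each with multiplicity $|a_{xy}|$): the $M$-family commutes by Corollary \ref{cor:Duv} and Corollary \ref{cor:qcomm_module}; the $N$-family, the family $\{M_i\hconv N_i\} = \{\M((s_+)^-(y), 0)\}$, and the cross-terms $\M((s_+)^-(y), 0)$ with $\M(s^-(z), 0)$ all commute by Proposition \ref{prop: commute 1}. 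Thus Lemma \ref{lem:MN} gives $C\conv A\twoheadrightarrow A'$, and composing with the first step yields $\M(s, s_-)\conv\M(s_+, s)\conv A\twoheadrightarrow A'$. Tensoring on the right with $\M(s_-, 0)$ and rearranging using commutativity of $\M(s_-, 0)$ with $A$ and $A'$ (Proposition \ref{prop: commute 1}) produces
\[
\M(s, s_-)\conv\M(s_+, s)\conv\M(s_-, 0)\conv A \twoheadrightarrow \M(s_-, 0)\conv A'
\]
up to grading shift.

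The main obstacle is the final step, which converts the leading factor $\M(s, s_-)\conv\M(s_-, 0)$ in the source above into $\M(s, 0)$. Since the canonical surjection $\M(s, s_-)\conv\M(s_-, 0)\twoheadrightarrow\M(s, 0)$ from Theorem \ref{thm: canonical surjection} runs in the opposite direction, I would pass to the Grothendieck ring: by Proposition \ref{prop:deMM} we have $\de(\M(s, s_-), \M(s_-, 0)) = 1$, hence Proposition \ref{Prop: l2} yields a length-two decomposition $[\M(s, s_-)][\M(s_-, 0)] = q^a[\M(s, 0)] + q^b[S]$ for some simple $S$ and integers $a, b$, which when combined with the T-system identity and the canonical-surjection identities derived above produces an expansion $[\M(s, 0)][\M(s_+, s)][A] = q^c[\M(s_-, 0)\conv A'] + \text{other terms}$ in $K(R\gmod)$. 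A careful tracking of $q$-degrees in this expansion, together with Theorem \ref{th:head}, then identifies $\M(s_-, 0)\conv A'$ as the simple head of $\M(s, 0)\conv\M(s_+, s)\conv A$, yielding the desired epimorphism $\Omega$.
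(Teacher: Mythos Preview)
Your setup through the T-system surjection $\M(s,s_-)\conv\M(s_+,s)\twoheadrightarrow C$ and the application of Lemma~\ref{lem:MN} to obtain $C\conv A\twoheadrightarrow A'$ is correct and matches the paper. The gap is exactly where you flag it: the ``conversion'' of the leading factor. After tensoring with $\M(s_-,0)$ your source is $\M(s,s_-)\conv\M(s_+,s)\conv\M(s_-,0)\conv A$, where $\M(s,s_-)$ and $\M(s_-,0)$ are not even adjacent; there is no available commutativity of $\M(s_-,0)$ with $\M(s_+,s)$ to bring them together, and in any case the canonical surjection $\M(s,s_-)\conv\M(s_-,0)\twoheadrightarrow\M(s,0)$ goes the wrong way for composing with an epimorphism on its right. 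The proposed Grothendieck-ring repair is not complete: invoking Theorem~\ref{th:head} to read off the head from the lowest $q$-power term presupposes that $\M(s,0)\conv\M(s_+,s)\conv A$ has a simple head, a fact the paper establishes only \emph{after} Lemma~\ref{lem: step1} (in Proposition~\ref{prop: step2}, using $\de(\M(s,0),\M(s_+,s)\hconv A)=1$ and Proposition~\ref{prop:3simple}). So the argument as written is circular, and the ``careful tracking of $q$-degrees'' is not actually carried out.

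The paper avoids this entirely by a single observation you are missing: the \emph{dual} of Theorem~\ref{thm: canonical surjection} gives a monomorphism
\[
\varphi_1\colon \M(s,0)\;\rightarrowtail\;\M(s_-,0)\conv\M(s,s_-)
\]
(up to grading). One then applies Lemma~\ref{lem:composition}(i) with $L=\M(s,0)$, $M_1=\M(s_-,0)$, $M_2=\M(s,s_-)$, $M_3=\M(s_+,s)\conv A$, and $\varphi_2$ the T-system surjection (convolved with $A$) to conclude that the composite
\[
\M(s,0)\conv\M(s_+,s)\conv A\;\monoTo\;\M(s_-,0)\conv\M(s,s_-)\conv\M(s_+,s)\conv A\;\epiTo\;\M(s_-,0)\conv C\conv A
\]
is nonzero. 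A second application of Lemma~\ref{lem:composition}(i), now with $M_2=C$ (simple) and the map $C\conv A\twoheadrightarrow A'$ from Lemma~\ref{lem:MN}, shows that the further composite to $\M(s_-,0)\conv A'$ is still nonzero. Since the target is simple, nonzero implies surjective. No Grothendieck-ring computation or simple-head hypothesis is needed.
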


\begin{proof}
By the dual of Theorem \ref{thm: canonical surjection} and the $T$-system
\eqref{eq: deteminatial seq2}
with $i=i_s$, $u=u_{s_+-1}$ and $v=u_{s-1}$, we have morphisms
\eqn
&& \M(s,0) \rightarrowtail \M(s_-,0) \conv \M(s,s_-), \label{eq: 1} \\
&&\M(s,s_-)\conv \M(s_+,s) \twoheadrightarrow
\displaystyle \conv_{y \in I\setminus\{x\}} \M((s_+)^-(y),s^-(y))^{\circ -a_{x,y}}\\\label{eq: 2}
&&\hs{35ex}\simeq \displaystyle \conv_{y \in I_s}
\M((s_+)^-(y),s^-(y))^{\circ -a_{x,y }}.  \nn
\eneqn
Here the last isomorphism follows from the fact that
$(s_+)^-(y)=s^-(y)$ for any $y\not\in \{x\}\cup I_s=\set{i_k}{s\le k<s_+}$.

Thus we have a sequence of morphisms
\eqn
&&\M(s,0) \conv \M(s_+,s) \conv A\; \xymatrix{\ar@{>->}[r]^-{\varphi_1}&}
 \M(s_-,0) \conv \M(s,s_-) \conv \M(s_+,s) \conv A  \\
&&\hs{25ex}\xymatrix@C=6ex{\ar@{->>}[r]^-{\varphi_2} &}\M(s_-,0) \conv
\left(\conv_{y \in I_s} \M((s_+)^-(y),s^-(y))^{\circ -a_{x,y}}\right) \conv A.
\eneqn
By Lemma \ref{lem:composition} (i), the composition
$\varphi \seteq \varphi_2 \circ \varphi_1$ is non-zero.

Since
$A=  \displaystyle\conv_{ \substack{y\in I_s} }  \M(s^-(y),0)^{\circ -a_{x,y} }$,
Theorem \ref{thm: canonical surjection} gives the  morphisms
\begin{align*}
 \M(s,0) \conv \M(s_+,s) \conv A   & \xymatrix@C=4ex{\ar[r]^-{\varphi}&}
   \M(s_-,0) \conv
\left(\conv_{y\in I_s} \M((s_+)^-(y),s^-(y))^{\circ -a_{x,y}}\right) \conv A  \\
&\xymatrix@C=4ex{\ar@{->>}[r]^-{\phi} &}\M(s_-,0)\conv \left(\conv_{y\in I_s} \M((s_+)^-(y),0)^{\circ -a_{x,y}}\right)\simeq \conv_{t; b_{t,s}<0} \M(t,0)^{\circ -b_{t,s}}.
\end{align*}
Here we have used Lemma~\ref{lem:MN} to obtain the morphism $\phi$.
Note that  the module $\conv_{y \in I_s} \M((s_+)^-(y),s^-(y))^{\circ -a_{x,y}}$
is  simple.
By applying Lemma \ref{lem:composition}
 once again, $\phi \circ \varphi$ is non-zero, and hence it is an epimorphism.
\end{proof}

\Lemma\label{lem: simply linked}
We have $\de(X,\M(s,0))=1$.
\enlemma

\begin{proof}
Since $A$ and $\M(s,0)$ commute and
$\de\big(\M(s_+,s),\M(s,0)\big)=1$ by Proposition~\ref{prop:deMM}, we have
$$\de\big( X,\M(s,0) \big) \le\de\big(\M(s_+,s),\M(s,0)\big)+
\de\big(A,\M(s,0)\big)\le1$$
by Proposition \ref{pro:subquotient}
and Lemma \ref{lem:commute_equiv}.
If $X$ and $\M(s,0)$ commute, then \eqref{eq: conditions} (a)
 would imply
that ${\rm ch} \left(\conv_{t;\ b_{t,s}>0}  M(t,0)^{\circ b_{t,s}} \right)$
belongs to $K(R \gmod)\,{\rm ch}(\M(s,0))$.
It contradicts the result in \cite{GLS13} that all the ${\rm ch}(\M(k,0))$'s are
prime at $q=1$.
\end{proof}

\begin{prop} \label{prop: step2}
The map  $\Omega$
factors through $\M(s,0) \conv X$; that is,
\begin{align*} \label{eq: surjection 2}
\xymatrix{
 \M(s,0) \conv \M(s_+,s) \conv A  \ar@{->>}[drr]^{\tau}\ar@{->>}[rrrr]^{\Omega} &&&&  \displaystyle \conv_{t; b_{t,s}<0} \M(t,0)^{\circ  -b_{t,s}}. \\
 && \M(s,0) \conv X \ar@{->>}[urr]^{\overline{\Omega}}
}
\end{align*}
Here $\tau$ is the canonical surjection.
\end{prop}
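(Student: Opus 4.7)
The plan is to show that $\M(s,0) \conv \M(s_+,s) \conv A$ has simple head isomorphic (up to grading shift) to $T$; then $\Omega$ agrees with the canonical projection to this simple head up to a non-zero scalar, and this projection factors through $\M(s,0) \conv X$. Writing $\tau = \id_{\M(s,0)} \conv \tau_0$ where $\tau_0 \colon \M(s_+,s) \conv A \twoheadrightarrow X$ is the canonical projection to the head, the desired factorization is equivalent, by exactness of convolution, to the containment $\ker(\tau) = \M(s,0)\conv\ker(\tau_0) \subset \ker(\Omega)$.

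The key input is the $\de$-decomposition
\[
\de(\M(s,0),\,X) = \de(\M(s,0),\,\M(s_+,s)) + \de(\M(s,0),\,A).
\]
The left-hand side is $1$ by Lemma \ref{lem: simply linked}. For the first summand on the right, Proposition \ref{prop:deMM} applied with $i=i_s$ and $x = u_{s_+-1}$ yields $\de(\M(s_+,s),\M(s,0))=1$: indeed $xs_i = u_{s_+-1}s_{i_{s_+}} = u_{s_+}$, and $u_{s_+-1}\vpi_{i_s} = u_s\vpi_{i_s} = \la_s$ since $i_k \ne i_s$ for $s < k < s_+$. For the second summand, $\M(s,0)$ and each $\M(s^-(y),0)$ $q$-commute in $K(\shc_w) \simeq A_q(\n(w))_\A$ as members of the initial cluster; since both are real simple, they commute as modules by Corollary \ref{cor:qcomm_module}, whence $\M(s,0)$ commutes with the real simple module $A$ and $\de(\M(s,0),A)=0$.

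Combining the above with Proposition \ref{prop:Lambda decomp} (ii) applied to the commuting pair $\M(s,0)$, $A$, and the identity $\La(M,N)+\La(N,M)=2\,\de(M,N)$, a short calculation gives
\[
\La(\M(s,0),\,X) = \La(\M(s,0),\,\M(s_+,s)) + \La(\M(s,0),\,A) = \La(\M(s,0),\,\M(s_+,s)\conv A),
\]
the last equality being Lemma \ref{lem:composition} (ii). Hence by Proposition \ref{prop:inequalities} (i) the square in the hypothesis of Proposition \ref{prop:hdMN} commutes, and that proposition applied with $M = \M(s,0)$ (real simple) and $N = \M(s_+,s)\conv A$ identifies the simple head of $\M(s,0)\conv\M(s_+,s)\conv A$ with $\M(s,0)\hconv X$.

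Finally, $T$ is simple by Proposition \ref{prop:real simple}, being a convolution of pairwise commuting real simple determinantial modules. Thus $\Omega$ forces $T \simeq \M(s,0)\hconv X$ up to grading shift, and the composite
\[
\M(s,0)\conv\M(s_+,s)\conv A \xrightarrow{\;\id\conv\tau_0\;} \M(s,0)\conv X \twoheadrightarrow \M(s,0)\hconv X
\]
is another surjection onto the same simple head; it therefore agrees with $\Omega$ up to a non-zero scalar. This gives $\ker(\tau) \subset \ker(\Omega)$ and produces the required $\overline{\Omega}$. The main technical obstacle is establishing the $\de$-decomposition and the resulting $\La$-decomposition; once in hand, the factorization is formal.
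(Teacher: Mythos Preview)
Your proof is correct and follows essentially the same approach as the paper. The paper's proof is more compressed: having established $\de(\M(s,0),X)=1$ and $\de(\M(s,0),\M(s_+,s))+\de(\M(s,0),A)=1$, it invokes Proposition~\ref{prop:3simple}\,(iii) directly to conclude that $\M(s,0)\conv\M(s_+,s)\conv A$ has a simple head, from which the factorization follows (any surjection onto a simple module has kernel equal to the radical, hence contains $\ker\tau$). Your argument instead unpacks the proof of Proposition~\ref{prop:3simple}: you derive the $\La$-decomposition via Proposition~\ref{prop:Lambda decomp}\,(ii) and the $\de$-identity, then apply Proposition~\ref{prop:hdMN} to identify the simple head explicitly as $\M(s,0)\hconv X$. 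This is exactly how Proposition~\ref{prop:3simple}\,(iii) is proved in the paper, so the two arguments coincide once one traces through the references.
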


\begin{proof}
We have $1=\de\big(\M(s,0), \M(s_+,s)\hconv A\big)$
by Lemma~\ref{lem: simply linked}, and
$$\de\big(\M(s,0),\M(s_+,s)\big)+
\de\big(\M(s,0),A\big)=1$$
by Proposition~\ref{prop:deMM} with $x=u_{s_+ -1}, \ i = i_s$.
Hence $ \M(s,0) \conv \M(s_+,s) \conv A
$ has a simple head by Proposition~\ref{prop:3simple} (iii).
\end{proof}

\begin{proof}[End of the proof of Theorem \ref{thm: main}]
By the   above arguments, we have proved the existence of $X$ which satisfies
\eqref{eq: conditions}.
By Proposition \ref{Prop: l2} and \eqref{eq: conditions} (c),
$\M(s,0) \conv X$ has composition
length $2$. Moreover, it has a simple socle and simple head.
On the other hand, taking the dual of \eqref{eq: conditions} (a),
we obtain a monomorphism
$$\sodot_{t; b_{t,s}>0} \M(t,0)^{\snconv  b_{t,s}} \monoto \M(s,0)\conv X$$
in $R\smod$.
Together with \eqref{eq: conditions} (b),
there exists a short exact sequence in $R\gmod$:
$$ 0 \to q^c\sodot_{t;b_{t,s}>0} \M(t,0)^{\snconv b_{t,s}} \to q^{\tLa(\M(s,0), X)}  \M(s,0) \conv X \to \sodot_{t;b_{t,s}<0} \M(t,0)^{\snconv(-b_{t,s})} \to 0$$
for some $c\in \Z$. By Lemma \ref{lem:crde}
$c$
must be equal to $1$.

It remains to prove that $X$ commutes with $\M(k,0)$ ($k\not=s$).
For any $k\in\K$, we have
\begin{align*}
\La(\M(k,0),X)&=\La(\M(k,0),\M(s,0)\hconv X)-\La(\M(k,0), \M(s,0))\\
&=\sum_{t;\;b_{t,s}<0}\La(\M(k,0),\M(t,0))(-b_{t,s})-\La(\M(k,0), \M(s,0))
\end{align*}
and
\begin{align*}
\La(X,\M(k,0))&=\La(X\hconv\M(s,0),\M(k,0))-\La(\M(s,0), \M(k,0))\\
&=\sum_{t;\;b_{t,s}>0}\La(\M(t,0),\M(k,0))b_{t,s}-\La(\M(s,0), \M(k,0)).
\end{align*}
Hence we have
\begin{align*}
2\de(\M(k,0),X)&  =-2\de(\M(k,0),\M(s,0)) -
\sum_{t;\;b_{t,s}<0}\La(\M(k,0),\M(t,0))b_{t,s} \\
& \hspace{23.5ex}-\sum_{t;\;b_{t,s}>0}\La(\M(k,0),\M( t  ,0))b_{t,s} \\
& =- \sum_{1 \le t \le r}\La(\M(k,0),\M(t,0))b_{t,s} \\ & = 2\delta_{k,s},
\end{align*}
We conclude that $X$  commutes with $M(k,0)$ if $k\not=s$.
Thus we complete the proof of Theorem~\ref{thm: main}.
\end{proof}

As a corollary, we  prove the following conjecture
on the cluster monomials.
\Th  {\rm(}{\cite[Conjecture 12.9]{GLS}, \cite[Conjecture 1.1(2)]{Kimu12}} {\rm )}
 Every cluster variable in  $A_{q}(\n(w))$ is a member of the upper global basis  up to a power of $q^{1/2}$.
\enth
Theorem \ref{thm: main} also implies \cite[Conjecture 12.7]{GLS} in the refined form as follows:

\begin{corollary}
$\Z[q^{\pm1/2}]\tens_{\Z[q^{\pm1}]} A_{q}(\n(w))_{\Z[q^{\pm1}]}$ has a quantum cluster algebra structure associated with the initial quantum seed
$[\mathscr{S}]=( \{ q^{-(d_i,d_i)/4}\D(i,0) \}_{1 \le i \le r}, L, \wB )$; i.e.,
$$\Z[q^{\pm1/2}]\tens_{\Z[q^{\pm1}]} A_{q}(\n(w))_{\Z[q^{\pm1}]} \simeq \mathscr{A}_{q^{1/2}}([\Seed]).$$
\end{corollary}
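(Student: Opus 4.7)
The proof will be essentially a repackaging of material already assembled in the paper. The plan is to feed the admissibility of the pair in Theorem~\ref{thm: main} into the abstract machinery of Section~7 (Corollary~\ref{cor:main}) and then translate the conclusion from the Grothendieck ring $K(\shc_w)$ back to the integral form $A_q(\n(w))_{\Z[q^{\pm1}]}$ via the Varagnolo--Vasserot / Rouquier categorification (Theorem~\ref{thm:categorification 2}) combined with the description of $\shc_w$ given in Section~11.2.

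First I would check that hypothesis \eqref{eq:Cluster} of Theorem~\ref{th:main} is satisfied in the present setting, i.e.\ that
$$\Q(q^{1/2})\tens_{\Z[q^{\pm1}]}K(\shc_w)\;\simeq\;\Q(q^{1/2})\tens_{\Z[q^{\pm1}]}\mathscr{A}_{q^{1/2}}([\Seed]).$$
For this, I would combine two identifications: on the left, $K(\shc_w)\simeq A_q(\n(w))_{\Z[q^{\pm1}]}$ (consequence of Theorems~\ref{thm:categorification 1}--\ref{thm:categorification 2} together with the defining property of $\shc_w$, since the self-dual simple modules of $\shc_w$ are indexed precisely by $\B^\up(A_q(\n(w)))$); on the right, the Gei\ss--Leclerc--Schr\"oer isomorphism (\cite[Theorem 12.3]{GLS}, already cited just before the corollary) which provides
$\Q(q^{1/2})\tens\mathscr{A}_{q^{1/2}}([\Seed])\simeq A_{q^{1/2}}(\n(w))$.
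Both sides then equal $A_{q^{1/2}}(\n(w))$, so \eqref{eq:Cluster} holds.

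Next I would invoke Theorem~\ref{thm: main}, which supplies the admissibility of $(\{\M(k,0)\}_{1\le k\le r},\wB)$ in $\shc_w$, and then apply Corollary~\ref{cor:main}. This yields that $\shc_w$ is a monoidal categorification of the quantum cluster algebra $\mathscr{A}_{q^{1/2}}([\Seed])$. Unfolding clause (i) of the definition of monoidal categorification, this is exactly the isomorphism of $\Z[q^{\pm1/2}]$-algebras
$$\Z[q^{\pm1/2}]\tens_{\Z[q^{\pm1}]}K(\shc_w)\;\simeq\;\mathscr{A}_{q^{1/2}}([\Seed]),$$
and substituting $K(\shc_w)\simeq A_q(\n(w))_{\Z[q^{\pm1}]}$ on the left gives the statement of the corollary.

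There is no genuine obstacle at this stage: all the difficulty has been absorbed into the construction of the admissible pair in Theorem~\ref{thm: main}, which in turn depended on the T-system and generalized T-system identities of Section~\ref{sec:Quantum minors and T-systems} and the fine structure of determinantial modules in Section~10. The only point worth emphasizing in the write-up is the bookkeeping that the isomorphism provided by Corollary~\ref{cor:main} is genuinely over $\Z[q^{\pm1/2}]$ and not merely over $\Q(q^{1/2})$---this is precisely the refinement of \cite[Conjecture 12.7]{GLS} that is being asserted, and it comes for free from the definition of monoidal categorification once admissibility and \eqref{eq:Cluster} are in place.
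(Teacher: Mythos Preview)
Your proposal is correct and follows exactly the route the paper intends. The paper itself offers no proof for this corollary beyond the one-line remark that it follows from Theorem~\ref{thm: main}; your write-up simply makes explicit the passage through Corollary~\ref{cor:main} together with the identification $K(\shc_w)\simeq A_q(\n(w))_{\Z[q^{\pm1}]}$, which is precisely what is implicit in the paper's presentation.
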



\begin{thebibliography}{99}

%


\bibitem{BBDG}
A. A. Beilinson, J. Bernstein and P. Deligne, \emph{Faisceaux pervers},
Ast\'erisque {\bf100}, Soc. Math. France, Paris, 1982.


\bibitem{BZ93}
 A. Berenstein and A. Zelevinsky,
{\em String bases for quantum groups of type $A_r$},
I. M. Gel'fand Seminar,  51--89, Adv. Soviet Math. {\bf16},
Part 1, Amer. Math. Soc., Providence, RI, 1993.



\bibitem{BZ05}
\bysame,
 {\em  Quantum cluster algebras}, Adv. Math. {\bf195} (2005), no. 2, 405--455.

\bibitem{CKLP12}
G. Cerulli Irelli, B. Keller, D. Labardini-Fragoso and P.
Plamondon, {\em Linear independence of cluster monomials for
skew-symmetric cluster algebras},
 Compos. Math.  {\bf 149}  (2013),  no. 10, 1753--1764.

\bibitem{DMSS}
B. Davison, D. Maulik, J. Schuermann, and B. Szendroi, \emph{Purity for graded potentials and
quantum cluster positivity}, Compos. Math. {\bf151} (2015), no.10, 1913--1944.




\bibitem{FZ02}
 S. Fomin and A. Zelevinsky,  \emph{Cluster algebras I. Foundations},
J. Amer. Math. Soc. {\bf 15} (2002), no. 2, 497--529. 


\bibitem{GLS05}
C. Gei\ss, B. Leclerc and J. Schr\"oer,
\emph{Semicanonical bases and preprojective algebras},
 Ann. Sci. \'Ecole Norm. Sup. (4) {\bf38} (2005), no. 2, 193--253.

\bibitem{GLS11}
\bysame, \emph{Kac-Moody groups and
cluster algebras}, Adv. Math. {\bf 228} (2011), 329--433.

\bibitem{GLS07}
\bysame , \emph{Cluster algebra structures and semicanonical bases
for unipotent groups}, arXiv:0703039\/v4 [math.RT].

\bibitem{GLS13} \bysame,
{\em Factorial cluster algebras}, Doc. Math. {\bf 18}  (2013), 249--274.

\bibitem{GLS}
\bysame,
{\em Cluster structures on quantum coordinate rings}. Selecta Math. (N.S.) {\bf19} (2013), no. 2, 337â-397.

\bibitem{HL10}
D. Hernandez and B. Leclerc,
{\em Cluster algebras and quantum affine algebras},
Duke Math. J. {\bf 154} (2) (2010), 265-341.
\bibitem{HL13}
\bysame, \emph{Monoidal categorifications of cluster algebras of type $A$ and $D$}, in
 Symmetries, Integrable Systems and Representations,
Springer Proc. Math. Stat., {\bf 40} (2013), 175--193.

%


%
\bibitem{K^3}
S.-J. Kang, M. Kashiwara and M. Kim, {\em Symmetric quiver
Hecke algebras and R-matrices of quantum affine algebras},
arXiv:1304.0323\/v2.
%
\bibitem{KKKO14}
S.-J. Kang, M. Kashiwara,  M. Kim  and   S.-j. Oh,
\newblock{\em Simplicity of heads and socles of tensor products},
Compos. Math. {\bf151}  (2015), no.2, 377--396.
%

%
%

\bibitem{Kash91} M. Kashiwara, {\em On crystal bases of the $q$-analogue of universal enveloping
algebras}, Duke Math. J. {\bf 63} (1991), 465--516.

\bibitem{Kas93} \bysame, \emph{Global bases of quantum groups}, Duke Math. J. \textbf{69} (1993), 455--485.

\bibitem{Kas93a} \bysame,
{\em Crystal base and Littelmann's refined Demazure
character formula}, Duke Math. J. {\bf 71} (1993), 839--858.

\bibitem{Kash94} \bysame, \emph{Crystal bases of modified quantized enveloping algebra}, Duke Math. J. \textbf{73}
(1994), no. 2, 383--413.

\bibitem{Kas95} \bysame,
{\em On crystal bases},  Representations of groups (Banff, AB, 1994),
155--197, CMS Conf. Proc. {\bf16}, Amer. Math. Soc., Providence, RI, 1995.

\bibitem{KL09}
M.~Khovanov and A. Lauda, \emph{A diagrammatic approach to
categorification of quantum groups
  {I}}, Represent. Theory \textbf{13} (2009), 309--347.

\bibitem{KL11}
\bysame, \emph{A diagrammatic approach to categorification of
  quantum groups {II}}, Trans. Amer. Math. Soc. \textbf{363} (2011),
  2685--2700.
%

\bibitem{Kimu12}
Y. Kimura, \emph{Quantum unipotent subgroup and dual canonical
basis}, Kyoto J. Math. {\bf 52} (2012), no. 2, 277--331.

\bibitem{KQ14}
Y. Kimura and F. Qin,
\emph{Graded quiver varieties, quantum cluster algebras, and dual canonical basis},
Adv. Math. {\bf 262} (2014), 261--312.


\bibitem{KNS11} A. Kuniba, T. Nakanishi and J. Suzuki, \emph{T-systems and Y-systems in integrable systems}, J. Phys.
A \textbf{44}, (2011) 103001, 146 pp.

\bibitem{Lampe11}
P. Lampe, \emph{A quantum cluster algebra of Kronecker type and the dual canonical basis},
Int. Math. Res. Not. (2011), no. 13, 2970--3005.

\bibitem{Lampe14}
\bysame \emph{Quantum cluster algebras of type $A$ and the dual canonical basis},
Proc. London Math. Soc. (3) {\bf108} (2014), 1--43.


\bibitem{LV11} A. Lauda and M. Vazirani,
{\em Crystals from categorified quantum groups},
Adv. Math. {\bf 228} (2011), 803-861.

\bibitem{L03}
B. Leclerc,
{\em Imaginary vectors in the dual canonical basis of $U_q(\mathfrak{n})$},
Transform. Groups {\bf 8}  (2003),  no. 1, 95--104.


\bibitem{LS13}
K. Lee and R. Schiffler, \emph{Positivity for cluster algebras},
Ann. of Math. {\bf182} (2015) 73--125.

\bibitem{Lusz90} G. Lusztig,
\emph{Canonical bases arising from quantized enveloping algebra}, J. Amer. Math. Soc. \textbf{3}(2) (1990), 447--498.



\bibitem{Lusz92} \bysame, \emph{Canonical bases in tensor products}, Proc. Natl. Acad. Sci. USA \textbf{89} (1992), 8177--8179.

\bibitem{Lus93} \bysame,
{\em Introduction to Quantum Groups}, Progress in Mathematics
{\bf110}, Birkh\"auser Boston (1993), 341 pp.


\bibitem{Mc14}
P. McNamara,
{\em Representations of Khovanov-Lauda-Rouquier algebras III:
Symmetric Affine Type}, arXiv:1407.7304\/v1.

\bibitem{Nak13}
H. Nakajima, \emph{Cluster algebras and singular supports of perverse sheaves},
 Advances in Representation Theory of Algebras, 211--230, EMS Ser. Congr. Rep., Eur. Math. Soc., Z\"urich, 2013.


%
\bibitem{Nak11}
H. Nakajima, \emph{Quiver varieties and cluster algebras}, Kyoto J. Math. {\bf 51} (2011), 71--126.

\bibitem{Qin15}
F. Qin,  \emph{Triangular bases in quantum cluster algebras and monoidal categorification conjectures}. arXiv:1501.04085 (2015).

\bibitem{R08}
R.~Rouquier, \emph{2-{K}ac-{M}oody algebras},   arXiv:0812.5023\/v1.

\bibitem{R11}
\bysame, {\em Quiver Hecke algebras and 2-Lie algebras},
Algebra Colloq. {\bf 19} (2012), no. 2, 359--410.

%
\bibitem{VV09}
M. Varagnolo and E. Vasserot,
 \emph{Canonical bases and KLR algebras},
J. reine angew. Math. \textbf{659} (2011), 67--100.
%




\end{thebibliography}
\end{document}